\newcommand{\tm}{{\tilde{m}}}
\newcommand{\tp}{{\tilde{p}}}
\newcommand{\Ind}{{\operatorname{\bf{Ind}}}}
\DeclareMathOperator*{\indlim}{``\varinjlim"}
\newcommand{\tE}{{\tilde{E}}}
\newcommand{\musupp}{{\operatorname{SS}}}
\newcommand{\indotimes}{\mathbin{\overset{!}{\otimes}}}
\renewcommand{\mod}{\mathop{\mathrm{\bf{mod}}}}
\newcommand{\hSs}{{\hSigma(\hsigma)}}
\newcommand{\mulSh}{\operatorname{\bf{\mu{Sh}}}^\diamondsuit}
\DeclareMathOperator{\Indcoh}{{\operatorname{\Ind\Coh}}}
\DeclareMathOperator{\spanning}{{\mathrm{span}}}
\newcommand{\op}{\mathrm{op}}
\newcommand{\Loc}{\mathbf{\mathrm{Loc}}}
\newcommand{\Lsbd}[1]{{\Lambda_{\hSigma_{#1},\beta}}}
\newcommand{\Lsb}{{\Lambda_{\hSigma,\beta}}}
\newcommand{\cXsb}{{\cX_{\hSigma,\beta}}}
\newcommand{\cXssb}{{\cX_{\hSs,\beta}}}
\newcommand{\ksb}{{\kappa_{\hSigma,\beta}}}
\newcommand{\muhom}{{\mu{hom}}}
\newcommand{\T}{{\bm{\mathrm{T}}}}
\newcommand{\varhocolim}[1]{\mathop{\holim{\substack{\longrightarrow \\ {#1}}}}}
\newcommand{\varholim}[1]{\mathbin{\holim{\substack{\longleftarrow \\ {#1}}}}}
\DeclareMathOperator{\Int}{\mathrm{Int}}
\newcommand{\Ls}{{\Lambda_\Sigma}}
\newcommand{\tens}[2]{%
  \mathbin{\mathop{\otimes}\limits_{#1}^{#2}}%
}
\newcommand{\tim}[2]{%
  \mathbin{\mathop{\times}\limits_{#1}^{#2}}%
}
\renewcommand{\lim}[1]{%
  \mathbin{\mathop{\mathrm{lim}}\limits_{#1}}%
}
\renewcommand{\holim}[1]{%
  \mathbin{\mathop{\mathrm{holim}}\limits_{#1}}%
}
\newcommand{\ssigma}{{\bm{\sigma}}}
\renewcommand{\S}{{\Sigma}}
\newcommand{\hsigma}{{\hat{\sigma}}}
\newcommand{\hSigma}{{\hat{\Sigma}}}
\newcommand{\crho}{{\check{\rho}}}
\title{The nonequivariant coherent-constructible correspondence for toric stacks}
\author{Tatsuki Kuwagaki}
\date{}
\begin{document}

\maketitle

\begin{abstract}The nonequivariant coherent-costructible correspondence is a microlocal-geometric interpretation of homological mirror symmetry for toric varieties conjectured by Fang--Liu--Treumann--Zaslow.
We prove a generalization of this conjecture for a class of toric stacks which includes any toric varieties and toric orbifolds. Our proof is based on gluing descriptions of $\infty$-categories of both sides.
\end{abstract}

\section{Introduction}
Mirror symmetry is a mysterious relationship between complex and symplectic geometry which predicts various mathematical consequences. To give a unified understanding of mirror symmetry, Kontsevich proposed homological mirror symmetry (HMS) in 1994 \cite{KontsevichHMS}. HMS predicts an equivalence between two different kinds of categories: the derived category of coherent sheaves on a variety $X$ and a Fukaya-type category of a mirror of $X$. If $X$ is a toric Fano variety, a mirror of $X$ is given by a regular function $W$ over $(\bC^*)^n$ \cite{Givental,HV}. Then HMS predicts an equivalence
\begin{equation}\label{HMS}
\Coh X\simeq \Fuk (W)
\end{equation}
where $\Coh X$ is the derived $\infty$-category of coherent sheaves on $X$ and $\Fuk(W)$ is the derived Fukaya--Seidel $\infty$-category of $W$. The $\infty$-category $\Fuk(W)$ is defined by Lagrangian intersection Floer theory of Lefschetz thimbles of $W$ \cite{Seidelmutation}.

Recent developments of microlocal geometry suggest that such symplectic geometry can be captured by microlocal sheaf theory. The key notion in microlocal sheaf theory developed by Kashiwara--Schapira \cite{KS} is {\em microsupport}, which assigns to a sheaf a certain subset of the cotangent bundle of the manifold on which the sheaf lives. For a constructible sheaf, the microsupport becomes a Lagrangian subvariety of the cotangent bundle. This relation was further developed by Nadler--Zaslow \cite{NZ, Nad}: their theorem says that the derived category of constructible sheaves on a real analytic manifold $Z$ is equivalent to the derived infinitesimally wrapped Fukaya category of the cotangent bundle $T^*Z$. 

The Nadler--Zaslow equivalence mentioned above can be considered as a first example of {\em topological Fukaya categories}, i.e., topological description of Fukaya categories. Nadler--Zaslow's work is based on Fukaya--Oh's work \cite{FO} on Morse theory which is motivated by Witten's pioneering work \cite{WittenCS}. More generally, Kontsevich \cite{Kon2} suggested that Fukaya categories of Stein manifolds can be captured in terms of topological language. In the work of Dyckerhoff--Kapranov \cite{DK, Dyc} and Haiden--Katzarkov--Kontsevich \cite{HKK}, they constructed Fukaya-type categories for marked punctured Riemann surfaces without using Floer theory. Later, Nadler \cite{Nadwrapped} constructed the categories equivalent to the ones described in \cite{DK, HKK} via microlocal sheaf theory. In general, Kontsevich's conjecture are pursued by Nadler and Ganatra--Pardon--Shende \cite{NadcatMorse,GPS}. Along this line, Tamarkin and Tsygan are trying to construct microlocal categories expected to be equivalent to Fukaya categories of closed symplectic manifolds \cite{Tamcat, Tsygan}.

Topological description is also expected for Fukaya categories for Landau--Ginzburg models and wrapped Fukaya categories. Fang--Liu--Treumann--Zaslow \cite{FLTZ, FLTZ2} provided candidates of topological Fukaya categories for mirrors of toric varieties, which is the central topic in this paper. Sibilla--Treumann--Zaslow \cite{SiTZ} provided topological description for mirrors of chains of projective lines. Nadler \cite{NadW, NadW2} provided such examples for mirrors of pair of pants. Also, in the aforementioned work of \cite{DK, HKK}, some marked punctured Riemann surfaces can be considered as Landau--Ginzburg models. Topological nature of wrapped Fukaya categories of cotangent bundles have been described by Abbondandolo--Schwarz and Abouzaid \cite{AbbondandoloSchwarz,Abloop,Abcot}. Microlocal approach to wrapped Fukaya categories pursued by Nadler \cite{Nadwrapped} will be described later in this section.

Homological mirror symmetry by using those topological Fukaya categories has also been discussed. Fang--Liu--Treumann--Zaslow \cite{FLTZ, FLTZ3} proved torus-equivariant version of the coherent-constructible correspondence for smooth complete toric varieties and toric orbifolds. Here, the coherent-constructible correspondence is a version of homological mirror symmetry which is the central topic of this paper. Sibilla--Treumann--Zaslow \cite{SiTZ} proved the coherent-constructible correspondence for chains of projective lines. Nadler proved homological mirror symmetry of pair of pants as both A and B models \cite{NadW, NadW2, Nadwrapped}. Pascaleff--Sibilla \cite{PS} proved homological mirror symmetry for punctured Riemann surfaces by using Dyckerhoff--Kapranov's topological Fukaya category.

In this paper, we will discuss homological mirror symmetry for toric stacks by using topological Fukaya categories originally introduced by Fang--Liu--Treumann--Zaslow mentioned above. Namely, we replace the right hand side of (\ref{HMS}) by a category defined in terms of microlocal geometry. The following setting was first introduced by Fang--Liu--Treumann--Zaslow \cite{FLTZ, FLTZ2, FLTZ3} after the pioneering work of Bondal \cite{BO}.

In this paper, we always work over $\bC$.
Let $M$ be a free abelian group of rank $n$ and $N$ be its dual. Let further $\Sigma$ be a fan defined in $N_\bR$. We write $X_\Sigma$ for the toric variety associated with $\Sigma$. We set
\begin{equation}
\Lambda_\Sigma:=\bigcup_{\sigma\in \Sigma}p(\sigma^\perp)\times (-\sigma)
\end{equation}
where $p\colon M_\bR\rightarrow M_\bR/M=:T^n$ is the projection and $\sigma^\perp:= \lc m\in M_\bR\relmid m(\sigma)=\{0\} \rc$. We consider $\Lambda_\Sigma$ as the subset in $T^*T^n\cong M_\bR/M\times N_\bR$. Then $\Ls$ is a Lagrangian subvariety of $T^*T^n$ with respect to its standard symplectic structure.

One can generalize these setups to a class of toric stacks of Tyomkin and Geraschenko--Satoriano\cite{Tyomkin, GStoric}. Namely, let $\beta\colon L\rightarrow N$ be a homomorphism between finite rank free abelian groups with finite cokernel and $\hSigma$ and $\Sigma$ are fans defined in $L_\bR$ and $N_\bR$ respectively. In this paper, we assume the following condition unless otherwise stated.
\begin{condition}\label{condition}
The map $\beta_\bR$ induces a combinatorial equivalence between $\hSigma$ and $\Sigma$.
\end{condition}
In other words, we assume that the images of cones in $\Sigma$ by $\beta$ again form a fan and the induced morphism between fans is an isomorphism as a morphism of posets. We write $\cXsb$ for the associated toric stack. This class of toric stacks contains any toric varieties and any toric orbifolds in the sense of \cite{FLTZ3} which is defined as toric DM stacks without generic stabilizers in the sense of Borisov--Chen--Smith \cite{BCS, Iwanari, FMN}. We can generalize the construction of $\Ls$ to $(\hSigma,\beta)$ and write it $\Lambda_{\hSigma,\beta}$ (see Section \ref{cccformulation} below). 

On the coherent side, we will use the following four stable $\infty$-categories:
\begin{enumerate}
\item $\Indcoh \cXsb$: the $\infty$-category of ind-coherent sheaves on $\cXsb$,
\item  $\Qcoh \cX_{\hSigma,\beta}$: the derived $\infty$-category of quasi-coherent sheaves on $\cX_{\hSigma,\beta}$,
\item $\Coh \cXsb$: the bounded derived $\infty$-category of coherent sheaves on $\cXsb$,
\item $\perf \cX_{\hSigma,\beta}$: the $\infty$-category of perfect complexes on $\cX_{\hSigma,\beta}$.
\end{enumerate}
The category $\perf \cXsb$ (resp. $\Coh \cXsb$) is the full subcategory of $\Qcoh \cXsb$ (resp. $\Indcoh \cXsb$) spanned by compact objects.

Let $\cSh(T^n)$ (resp. $\lSh(T^n)$) be the derived $\infty$-category of (quasi-)constructible sheaves on $T^n$  . Then we will use the following three stable $\infty$-categories on the constructible side:
\begin{enumerate}
\item $\lSh_\Lsb(T^n)$: the full subcategory of $\lSh(T^n)$ spanned by objects whose microsupports are contained in $\Lsb$,
\item $\wSh_{\Lsb}(T^n)$: the full subcategory of $\lSh_\Lsb(T^n)$ spanned by compact objects,
\item $\cSh_\Lsb(T^n)$: the full subcategory of $\cSh(T^n)$ spanned by objects whose microsupports are contained in $\Lsb$.
\end{enumerate}
The category of wrapped constructible sheaves $\wSh_{\Lsb}(T^n)$ recently introduced by Nadler \cite{Nadwrapped} is a microlocal counterpart of (patially) wrapped Fukaya category~\cite{ASviterbo,AurouxHeegard,Sylvan}.

Our main theorem in this paper is the following.
\begin{theorem}[Theorem \ref{main}, Corollary \ref{complete}]\label{thm: ccc} There exists an equivalence of  $\infty$-categories
\begin{equation}
\Coh\cX_{\hSigma,\beta} \simeq \wSh_\Lsb(T^n), \label{equivalence1}
\end{equation}
If $\cXsb$ is complete, there exists an equivalence of $\infty$-categories
\begin{equation}
\perf\cXsb\simeq  \cSh_\Lsb(T^n). \label{equivalence2}
\end{equation}
If $\cXsb$ is smooth and complete, we moreover have an equivalence of $\infty$-categories 
\begin{equation}
\wSh_\Lsb(T^n)\simeq \cSh_{\Lsb}(T^n),
\end{equation}
in particular, 
\begin{equation}\label{eq: ccc}
\Coh\cXsb\simeq \cSh_\Lsb(T^n).
\end{equation}
\end{theorem}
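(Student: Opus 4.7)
The strategy is to reduce all four equivalences to a cone-by-cone comparison by building compatible fan-indexed gluing descriptions of both sides. On the coherent side, the affine charts $\cX_{\hsigma,\beta}$ with $\hsigma\in\hSigma$ form a flat cover of $\cXsb$, so flat descent presents $\Qcoh\cXsb$ and $\Indcoh\cXsb$ as limits of the local presentable categories indexed by $\hSigma^\op$, whose compact objects $\perf\cXsb$ and $\Coh\cXsb$ are the corresponding colimits in the $\infty$-category of small stable $\infty$-categories. On the microlocal side, the decomposition $\Lsb=\bigcup_{\hsigma\in\hSigma}\Lambda_{\hsigma,\beta}$ is a stratification of $\Lsb$ by closed conic Lagrangians indexed by the same poset, and following Nadler one expects $\lSh_\Lsb(T^n)$ to be a limit and $\wSh_\Lsb(T^n)$ a colimit of the local pieces $\lSh_{\Lambda_{\hsigma,\beta}}(T^n)$ and $\wSh_{\Lambda_{\hsigma,\beta}}(T^n)$ along the same diagram.

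The core step is the affine local equivalence $\Coh\cX_{\hsigma,\beta}\simeq\wSh_{\Lambda_{\hsigma,\beta}}(T^n)$ for each single cone $\hsigma$. On the coherent side $\cX_{\hsigma,\beta}$ is a quotient of an affine toric variety by a diagonalizable group, and its coherent sheaves are described by finitely generated modules over the character-graded monoid algebra $\bC[\hsigma^\vee\cap M]$, with the grading inherited from $\beta$. On the microlocal side the Lagrangian $\Lambda_{\hsigma,\beta}=p(\hsigma^\perp)\times(-\hsigma)$ is the conormal to a stratification of $T^n$ by translates of the subtorus dual to $\hsigma^\perp$, and $\wSh_{\Lambda_{\hsigma,\beta}}(T^n)$ is generated under shifts and finite colimits by the standard objects associated to characters in $\hsigma^\perp\cap M$. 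Matching these generators in the manner of Bondal and Fang--Liu--Treumann--Zaslow and computing the Hom-algebras on both sides yields the local equivalence; passing to ind-completions gives $\Indcoh\cX_{\hsigma,\beta}\simeq\lSh_{\Lambda_{\hsigma,\beta}}(T^n)$.

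Next one must verify that these local equivalences intertwine the two gluing diagrams over $\hSigma$. A face inclusion $\tau\subset\hsigma$ gives on the coherent side an open immersion of affine charts with its pullback/pushforward pair, and on the microsheaf side a restriction associated to enlarging the set of allowed microsupports. Under the generator matching, both operations should correspond to inverting the characters lying in $\hsigma^\perp\setminus\tau^\perp$, which forces the two restriction functors to coincide. Taking the limit (respectively colimit) of the resulting $\hSigma$-diagram produces the first two equivalences: $\Coh\cXsb\simeq\wSh_\Lsb(T^n)$ directly, and $\perf\cXsb\simeq\cSh_\Lsb(T^n)$ in the complete case because completeness makes the projection of $\Lsb$ to $T^n$ proper, so $\cSh_\Lsb(T^n)$ is precisely the subcategory of compact objects in $\lSh_\Lsb(T^n)$, matching $\perf\cXsb$ as the compact objects of $\Qcoh\cXsb$. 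The last equivalence in the smooth complete case combines the standard identification $\perf\cXsb=\Coh\cXsb$ for smooth proper stacks with its microlocal counterpart $\wSh_\Lsb(T^n)=\cSh_\Lsb(T^n)$, which drops out once both sides have been identified with the same coherent category.

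The main obstacle is the affine local equivalence together with its functoriality: matching generators for a single cone is classical, but one must verify the matching is compatible with all face inclusions simultaneously, and this is precisely where Condition~\ref{condition} enters to keep the combinatorics on both sides aligned. A second delicate point is establishing the (co)descent properties for $\lSh_\Lsb$ and $\wSh_\Lsb$ along the fan stratification, which requires Nadler's results on microsheaves along stratified singular Lagrangians and a careful bookkeeping of which direction of the $\hSigma$ diagram governs limits on one side and colimits on the other.
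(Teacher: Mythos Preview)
Your overall architecture matches the paper's: affine local equivalences glued along the fan, with descent on both sides. But there are two genuine gaps.

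First, your derivation of \eqref{equivalence2} is incorrect. You write that completeness of $\Sigma$ makes $\cSh_\Lsb(T^n)$ ``precisely the subcategory of compact objects in $\lSh_\Lsb(T^n)$,'' and then match this with $\perf\cXsb\subset\Qcoh\cXsb$. But the compact objects of $\lSh_\Lsb(T^n)$ are by \emph{definition} $\wSh_\Lsb(T^n)$, and under the main equivalence these correspond to $\Coh\cXsb\subset\Indcoh\cXsb$, not to $\perf\cXsb\subset\Qcoh\cXsb$. When $\cXsb$ is complete but singular, $\Coh\cXsb\neq\perf\cXsb$, so $\wSh_\Lsb(T^n)\neq\cSh_\Lsb(T^n)$ and your argument collapses. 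The paper instead obtains \eqref{equivalence2} by applying the duality $\cSh_\Lambda\simeq\Fun^{\mathrm{ex}}((\wSh_\Lambda)^\op,\mod(\bC))$ of Theorem~\ref{dualityconst} together with the coherent duality $\perf\cXsb\simeq\Fun^{\mathrm{ex}}(\Coh\cXsb,\mod(\bC))$ valid for complete $\cXsb$; the equivalence $\Coh\cXsb\simeq\wSh_\Lsb(T^n)$ then transports one duality to the other.

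Second, you treat the microlocal gluing as a citation (``following Nadler one expects\ldots''), but this is Theorem~\ref{main2} and is one of the main contributions of the paper. The Kashiwara--Schapira stack gives locality in the \emph{open} direction on $\Lambda$; what is needed here is a \v{C}ech-type decomposition along the \emph{closed} cover $\Lsb=\bigcup_\sigma\Lambda_\sigma$, which is a different statement. The paper proves it not abstractly but simultaneously with Theorem~\ref{main} by induction on the fan, and the key input is the computation that $E\star\kappa_{\hSigma,\beta}(\cO_{\cXsb})\simeq E$ for $E\in\lSh_\Lsb(T^n)$ (Lemma~\ref{lemmaeq}), established via Tamarkin's projector techniques (Section~\ref{technique}). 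Without this, you have no way to show that the restriction functors $I^{\sigma_1\sigma_2}=(-)\star\Theta(\sigma_1,0)$ assemble into a limit diagram whose limit recovers $\lSh_\Lsb(T^n)$.

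A smaller point: in the affine case your generator-matching argument gives an equivalence only when $\hsigma$ is smooth (Lemma~\ref{generation2} requires smoothness). For singular $\hsigma$ the paper constructs a different functor $K_{\hsigma,\beta}$ using a smooth toric resolution and proves it is an equivalence by comparing with the already-established smooth case (Section~\ref{generalcase}); this is not just bookkeeping.
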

Although this theorem holds for arbitrary toric varieties and toric orbofolds, smooth cases had been conjectured and discussed.
Originally, Bondal \cite{BO} sketched an equivalence between the derived categories for a class of toric varieties (which are in particular Fano) and the category of ``constructible sheaves with respect to certain decompositions of $T^n$". After Bondal's announcement, Fang--Liu--Treumann--Zaslow formulated and conjectured Theorem \ref{main} for smooth complete fans and called it {\em the coherent-constructible correspondence} \cite{FLTZ, FLTZ2, Tr}. Scherotzke--Sibilla~\cite{SS} generalized the conjecture for complete oribfolds, and Ike and the author \cite{IK} generalized further for smooth fans.

For complete fans, the conjecture was proved when $\Sigma$ is (i) zonotpal by Treumann \cite{Tr}, (ii) cragged by Scherotzke--Sibilla \cite{SS}, and (iii) 2-dimensional by the present author \cite{Kuw}. For noncomplete fans, Ike and the author \cite{IK} proved that noncomplete cases follow from complete cases. In their original paper \cite{FLTZ}, Fang--Liu--Treumann--Zaslow proved the equivariant version. 

For complete fans, we can see a relation between mirror symmetry and the coherent-constructible correspondence explicitly. By the Nadler--Zaslow equivalence, the right hand side of (\ref{eq: ccc}) can be viewed as the full subcategory of the Fukaya category of $T^*T^n$ consisting of Lagrangian submanifolds along $\Ls$. Then Fang--Liu--Treumann--Zaslow \cite{FLTZ2} showed that the coherent-constructible correspondence for line bundles exhibits T-duality \cite{SYZ}. However, relations to HMS for toric varieties formulated in terms of other types of Fukaya categories  \cite{Abouzaidtoric, AKO, AKO2, Ueda, UY} are still unclear at present time (some discussions can be found in \cite{FLTZ2}).

We obtain Theorem \ref{thm: ccc} as a corollary of the following.
\begin{theorem}[Theorem \ref{main}]\label{thm: qccc}
There exists an equivalence of $\infty$-categories
\begin{equation}
\Indcoh \cXsb \simeq \lSh_{\Lsb}(T^n).
\end{equation}
\end{theorem}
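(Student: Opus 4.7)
The plan is to present both sides of the equivalence as limits indexed by the poset of cones of $\hSigma$ and match them cone by cone. On the coherent side, standard descent of $\Indcoh$ applied to the cover of $\cXsb$ by the affine toric substacks $\cX_{\hsigma,\beta}$ associated to individual cones $\hsigma\in\hSigma$ yields
\[
\Indcoh\cXsb \;\simeq\; \varprojlim_{\hsigma\in\hSigma}\Indcoh\cX_{\hsigma,\beta},
\]
with transition functors given by pullback along the open immersions induced by face inclusions.

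On the constructible side, the Lagrangian $\Lsb$ has a canonical decomposition $\Lsb=\bigcup_{\hsigma}\La{\hsigma,\beta}$ coming from $\hSigma$, and I would prove a parallel gluing statement
\[
\lSh_{\Lsb}(T^n) \;\simeq\; \varprojlim_{\hsigma\in\hSigma}\lSh_{\La{\hsigma,\beta}}(T^n).
\]
This is subtler than ordinary descent because the $\La{\hsigma,\beta}$ are conic pieces inside the cotangent bundle rather than opens in $T^n$; the natural tool is an $\infty$-categorical microlocalization in the spirit of Kashiwara--Schapira together with the fact that microsupport conditions can be tested locally in $T^*T^n$.

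The cone-wise comparison amounts to showing, for each $\hsigma\in\hSigma$,
\[
\Indcoh\cX_{\hsigma,\beta} \;\simeq\; \lSh_{\La{\hsigma,\beta}}(T^n),
\]
functorially in face inclusions. The affine stack $\cX_{\hsigma,\beta}$ is a quotient of an affine toric variety by a diagonalizable group dictated by $\hsigma$ and $\beta$, so its ind-coherent sheaves decompose along a character lattice. On the sheaf side, objects of $\lSh_{\La{\hsigma,\beta}}(T^n)$ are constructible along translates of a subtorus of $T^n$ determined by $\hsigma$, and the conic cotangent condition singles out the corresponding grading; matching the two descriptions produces the equivalence.

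The main obstacle is the microlocal descent step: establishing that $\hsigma\mapsto\lSh_{\La{\hsigma,\beta}}(T^n)$ is a sheaf of $\infty$-categories on $\hSigma$ whose limit recovers $\lSh_{\Lsb}(T^n)$. Once this is secured, combining it with descent on the coherent side and the cone-by-cone equivalence gives the theorem by passing to limits.
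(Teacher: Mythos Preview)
Your overall architecture---Zariski descent on the coherent side, a parallel ``microlocal descent'' on the constructible side, and a cone-by-cone affine comparison---is exactly the paper's strategy. But two of the three steps are substantially harder than your sketch acknowledges, and the method you propose for the constructible gluing is the wrong one.

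The decomposition $\Lsb=\bigcup_{\hsigma}\La{\hsigma,\beta}$ is into \emph{closed} conic pieces, not opens, so Kashiwara--Schapira locality does not apply; the paper explicitly remarks that the gluing proved here is different from the KS-stack sheaf property. More concretely, for a face inclusion $\tau\subset\sigma$ one has $\La{\tau,\beta}\subset\La{\sigma,\beta}$, so the tautological functor is an inclusion $\lSh_{\La{\tau,\beta}}\hookrightarrow\lSh_{\La{\sigma,\beta}}$, pointing the \emph{wrong} way to match $i^*$ on the coherent side. Identifying the correct transition functor is the crux: it is the Tamarkin-style projector $(-)\star\Theta(\tau,0)$, and the key input is that $\kappa_{\hSigma,\beta}(\cO)$ acts as a $\star$-unit on objects with microsupport in $\Lsb$ (Lemma~\ref{lemmaeq}, resting on Proposition~\ref{vanishing}). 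With this in hand, the paper does \emph{not} first establish constructible gluing and then pass to limits; rather, it proves the constructible gluing (Theorem~\ref{main2}) and the main equivalence \emph{simultaneously} by induction on subfans (Proposition~\ref{mainaux}), using full faithfulness of the global functor (Lemma~\ref{fullyfaithful}) plus essential surjectivity inherited from smaller fans.

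Your affine step is also too optimistic. For smooth $\hsigma$ a generation argument does yield the equivalence (Proposition~\ref{affine}), but for singular $\hsigma$ the natural functor $\kappa_{\hsigma,\beta}$ is only fully faithful, not essentially surjective. The paper constructs a separate equivalence $K_{\hsigma,\beta}$ in Section~\ref{generalcase} by a dualized construction and reduction to a smooth refinement (Lemma~\ref{okgo}, Corollary~\ref{singularequivalence}); only after this is Condition~\ref{condition2} verified for all cones, closing the induction.
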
Theorem \ref{thm: ccc} (\ref{equivalence1}) is obtained by taking compact objects on both sides of Theorem \ref{thm: qccc}. Then Theorem \ref{thm: ccc} (\ref{equivalence2}) is obtained as the dual of (\ref{equivalence1}) by taking the $\infty$-categories of exact functors to perfect $\bC$-modules.

Our proof of Theorem \ref{thm: qccc} is based on gluing argument, i.e., first we prove the affine cases (Proposition \ref{affine}), then we glue those equivalences and deduce Theorem \ref{thm: qccc}.

Roughly speaking, our proof of affine case is as follows. In smooth cases, the affine case is essentially known in the literature. In singular cases, we resolve singularities by toric resolutions and reduce to smooth cases.

Then we glue up the coheret-constructible correspondence for affines to that for $\cXsb$. On the coherent side, required gluing theorem is known as Zariski descent (Proposition \ref{gluingcoh}). On the constructible side, we give the following gluing theorem. We present $\Lsb$ as a union $\bigcup_{\sigma}\Lambda_\sigma$ of its closed subsets $\{\Lambda_\sigma\}_{\sigma\in \Sigma}$ which corresponds to the affine toric covering of $\cXsb$. Let $C(\Sigma)$ be the $\mathrm{\check{C}ech}$ poset of $\Sigma$ (see Sections \ref{section:gluecoh}).
\begin{theorem}[Theorem \ref{main2}]\label{thm: glue} There exists an equivalence of $\infty$-categories
\begin{equation}
\lSh_{\Lsb}(T^n)\simeq  \lim{\substack{\longleftarrow \\C(\Sigma)}}\lSh_{\Lambda_\bullet}(T^n).
\end{equation}
\end{theorem}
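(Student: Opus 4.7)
The plan is to prove descent for the presheaf of stable $\infty$-categories $\sigma \mapsto \lSh_{\Lambda_\sigma}(T^n)$ on the \v{C}ech poset $C(\Sigma)$, serving as the microlocal analog of the Zariski descent used on the coherent side. Since each $\Lambda_\sigma$ is a closed conic subset of $\Lsb$, the inclusion $\lSh_{\Lambda_\sigma}(T^n) \hookrightarrow \lSh_{\Lsb}(T^n)$ is a fully faithful embedding of Bousfield-localized subcategories and admits a right adjoint given by microlocal truncation. These adjoints are coherently compatible with face maps in $C(\Sigma)$ and assemble into a natural restriction functor
$$\lSh_{\Lsb}(T^n) \longrightarrow \lim{\substack{\longleftarrow \\ C(\Sigma)}} \lSh_{\Lambda_\bullet}(T^n).$$
The task is then to show this functor is an equivalence.

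First I would establish a binary microlocal Mayer--Vietoris pullback: for any two closed conic subsets $\Lambda_1, \Lambda_2 \subset T^*T^n$,
$$\lSh_{\Lambda_1 \cup \Lambda_2}(T^n) \simeq \lSh_{\Lambda_1}(T^n) \times_{\lSh_{\Lambda_1 \cap \Lambda_2}(T^n)} \lSh_{\Lambda_2}(T^n).$$
The key input is the Kashiwara--Schapira microlocal cut-off lemma, which yields, for each $F \in \lSh_{\Lambda_1 \cup \Lambda_2}(T^n)$, a fiber sequence expressing $F$ as the pullback of its projections to $\lSh_{\Lambda_i}(T^n)$ over their common projection to $\lSh_{\Lambda_1 \cap \Lambda_2}(T^n)$. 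Full faithfulness of the comparison functor then follows from the Mayer--Vietoris exact triangle controlling Homs via $\muhom$.

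To extend to the whole diagram over $C(\Sigma)$, I would first verify the combinatorial identity $\Lambda_\sigma \cap \Lambda_{\sigma'} = \Lambda_{\sigma \cap \sigma'}$: under Condition \ref{condition}, $\sigma \cap \sigma'$ is again a cone of $\Sigma$, and the product form $\Lambda_\sigma = p(\sigma^\perp) \times (-\sigma)$ reduces the identity to $p(\sigma^\perp) \cap p(\sigma'^\perp) = p((\sigma \cap \sigma')^\perp)$ together with $(-\sigma) \cap (-\sigma') = -(\sigma \cap \sigma')$. Granted this, the nerve of the closed cover $\{\Lambda_\sigma\}_{\sigma \in \Sigma_{\max}}$ coincides with $C(\Sigma)$, and iterating the binary pullback along this nerve identifies $\lSh_{\Lsb}(T^n)$ with the full limit.

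The main obstacle I anticipate is the binary Mayer--Vietoris step at the $\infty$-categorical level: the classical cut-off lemma is formulated for triangulated categories of sheaves on a manifold, whereas the statement we need requires the pullback identification to be coherent, not merely pointwise. This will require an $\infty$-categorical refinement, most naturally by presenting each $\lSh_{\Lambda_i}(T^n)$ as a smashing localization of $\lSh(T^n)$ and recognizing the Mayer--Vietoris square as arising from the excision triangle for closed complementary subsets of $T^*T^n$. Once this coherent version of excision is in place, the remaining verifications are essentially combinatorial consequences of the fan structure.
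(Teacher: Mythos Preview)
Your central step --- the binary microlocal Mayer--Vietoris
\[
\lSh_{\Lambda_1 \cup \Lambda_2}(T^n) \simeq \lSh_{\Lambda_1}(T^n) \times_{\lSh_{\Lambda_1 \cap \Lambda_2}(T^n)} \lSh_{\Lambda_2}(T^n)
\]
for arbitrary closed conic $\Lambda_1,\Lambda_2$ --- is not a theorem, and the Kashiwara--Schapira cut-off lemma does not supply it: that lemma concerns localization along \emph{open} conic subsets and yields quotient categories, not the fiber sequence you need for a pullback of full subcategories indexed by \emph{closed} conic sets. The abstract right adjoints to the inclusions exist by presentability, but you have neither identified them nor shown they satisfy a Mayer--Vietoris triangle; for generic $\Lambda_1,\Lambda_2$ they do not. (A minor but telling slip: you write $\Lambda_\sigma = p(\sigma^\perp)\times(-\sigma)$, but $\Lambda_\sigma$ is the union over all faces $\tau\subset\sigma$ and all $\chi\in M_{\tau,\beta}$, so your intersection argument is also not as stated.)

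The paper takes a completely different route, and the specific structure of the $\Lambda_\sigma$ is essential. The restriction functors in the diagram $\lSh_{\Lambda_\bullet}(T^n)$ are not abstract adjoints but Tamarkin projectors: convolution $(-)\star\Theta(\sigma,0)$ with $\Theta(\sigma,0)=p_!\bC_{\Int(\sigma^\vee)}[n]$ (this is Condition~\ref{condition2}(i), which is what makes these maps match $i^*$ on the coherent side). The key technical input is the unit Lemma~\ref{lemmaeq}, asserting $E\star\ksb(\cO_{\cXsb})\simeq E$ for $E\in\lSh_{\Lsb}(T^n)$; it is proved by induction on the fan via the vanishing Proposition~\ref{vanishing}, and it is precisely what converts the \v{C}ech triangle for structure sheaves into a Mayer--Vietoris triangle on the constructible side (see~\eqref{tri1}--\eqref{9.34}). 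Moreover, Theorem~\ref{main2} is \emph{not} proved independently of the coherent side: essential surjectivity of $\Phi$ comes from Lemma~\ref{lemmaeq}, but full faithfulness is obtained by running around the commutative square~\eqref{thediagram}, using Zariski descent for $\Indcoh$ and the already-established full faithfulness of $K_{\hSigma,\beta}$ (Lemma~\ref{fullyfaithful}) to close an induction proving Theorems~\ref{main} and~\ref{main2} simultaneously. None of this is visible from your outline.
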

This theorem is proved by techniques of Tamarkin's projector \cite{Tam} sophisticated by Guillermou--Schapira \cite{GS}. Gluing theorem for topological Fukaya categories for closed Lagrangian subsets are also considered in the work of Pascaleff--Sibilla \cite{PS}. They treat only 1-dimensional Lagrangians (graphs), and Theorem \ref{thm: glue} can be considered as a first example of its generalization to higher-dimensional cases.

Although it is known that $\cSh_\Lsb(T^n)$ has the sheaf property called Kashiwara--Schapira stack \cite{KS, Guillermou, NadcatMorse} (see also Section 3.3), gluing proerty presented here is different from it.
Relations to the locality expected to Fukaya categories \cite{Kon2,NadcatMorse,DK, Dyc, GPS} and some proofs of homological mirror symmetry obtained by gluing \cite{SiTZ, PS, Nadwrapped} are future interests.

There may also be potential applications of Theorem \ref{thm: ccc} to problems in the derived category of coherent sheaves on toric varieties. In fact, Fang--Liu--Treumann--Zaslow applied their equivariant version to prove Kawamata's semi-orthogonal decomposition \cite{FLTZ4} and Scherotzke--Sibilla applied to construction of tilting complexes in the derived categories of coherent sheaves of cragged toric stacks \cite{SS}.

This paper is organized as follows. In Section 2, we recall some categorical generalities to fix our notations. In Section 3, we recall ind-coherent sheaves from \cite{GaitsgoryIndCoh}. Section 4, we recall microlocal sheaf theory of (wrapped) constructible sheaves and its relation to Fukaya category. We also recall and expand Tamarkin's techniques for convolution products. In Section 5, we define toric stacks used in this paper. In Section 6, we formulate the coherent-constructible correspondence for toric stacks. In Section 7,we prove the smooth affine case. In Section 8, we provide a proof of Zariski descent. In Section 9, we give a candidate of the functor which provides the coherent-constructible correspondence and see it is a generalization of the original formulation of \cite{FLTZ}. In Section 10, we describe the unit objects for convolution products, which is crucial for the proof of main theorems. In Section 11, we prove Theorem \ref{thm: ccc}, Theorem \ref{thm: qccc} and Theorem \ref{thm: glue} for the smooth case. In Section 12, we define the functor for singular case and prove that it is an equivalence by reducing the singular case to the smooth case.

\section*{Acknowledgments}
The author expresses his gratitude to Kazushi Ueda for many valuable comments and continuous encouragements. He also thanks Yuichi Ike. Some ideas of this work stems from the collaboration with him \cite{IK}.
While preparing this paper, the author learned that a parallel work was announced by Dmitry Vaintrob \cite{Vaintrob} from Eric Zaslow. Thereom \ref{thm: qccc} for smooth complete toric varieties was also obtained by him independently.  He thanks Eric Zaslow for the information and Dmitry Vaintrob for his comments. He also thanks Vivek Shende for making some remarks on the construction of Tamarkin's projector and for his lectures on microlocal geometry which affects the second draft of this paper, David Nadler for giving the author a proof of presentability of microlocal categories, and Alexei Bondal for giving the author a thorough explanation of his work \cite{BO}. He also thanks Harold Williams who pointed out some errors in the statement of early preprints. This work was supported by a Grant-in-Aid for JSPS Fellows 16J02358 and the Program for Leading Graduate Schools, MEXT, Japan.

\section{Preliminaries on categorical generalities}
For $\infty$-categories, we refer to Lurie's \cite{HTT, HA}. For the Morita model structure, we refer to \cite{Tabuada, Cohn}. We also refer to Section~1 of \cite{DK} as a useful summery.

In this paper, we always work over $\bC$, hence dg-category always means $\bC$-linear dg-category. All functors in this paper are appropriately derived.
First, we recall the quasi-equivalent model structure of the dg-category of dg-categories. We say that a dg-functor between two dg-categories is a quasi-equivalence if it induces an equivalence between the graded $\bC$-linear categories which are the homotopy categories of the dg-categories. 
\begin{definition}[The quasi-equivalent model structure \cite{Tabuada}]
The quasi-equivalent model structure on the dg-category of dg-categories is specified as follows: A dg-functor $f\colon \cC\rightarrow \cD$ is 
\begin{enumerate}
\item a weak equivalence if it is a quasi-equivalence,
\item a fibration if it is surjective on Hom-spaces and there exists a quasi-isomorphism $y\rightarrow x$ for any object $x\in \cD$ with $y=f(y')$ for some $y'\in \cC$,
\item a cofibration if it has the left lifting property with respect to trivial fibrations.
\end{enumerate}
\end{definition}

Let $\Mod(\cC)$ be the dg-category of dg-modules over a dg-category $\cC$.
A dg functor $f\colon \cC\rightarrow \cD$ induces a dg-functor $\Mod(\cD)\rightarrow \Mod(\cC)$. We say $f$ is a Morita equivalence if the induced functor is a quasi-equivalence.

\begin{definition}[The Morita model structure \cite{Tabuada}]
The Morita model structure on the dg category of dg-categories is specified by the following: a dg-functor $f\colon \cC\rightarrow \cD$ is 
\begin{enumerate}
\item a weak equivalence if it is a Morita equivalence,
\item a cofibration if it is a cofibration of the quasi-equivalent model structure,
\item a fibration if it has right lifting property with respect to trivial cofibrations.
\end{enumerate}
\end{definition}
We use the following properties of the Morita model structure.
\begin{proposition}[\cite{Tabuada}]\label{propertyMorita}
\begin{enumerate}
\item The Morita model structure is a left Bousfield localization of the quasi-equivalent model structure. 
\item The class of fibrant objects of the Morita model structure is the class of idempotent-complete pretriangulated dg-categories.
\end{enumerate}
\end{proposition}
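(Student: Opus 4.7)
The plan is to apply the standard machinery of left Bousfield localization to the quasi-equivalent model structure. Tabuada's quasi-equivalent model structure is cofibrantly generated and combinatorial, so localization at any small set of morphisms exists, and the work is to choose the right set.

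For part (i), I would take the set $S$ to consist of the Yoneda-type inclusions $\cC \hookrightarrow \Perf(\cC)$, where $\cC$ runs over a small skeleton of cofibrant dg-categories and $\Perf(\cC)$ denotes a functorial model for the pretriangulated idempotent-complete envelope, for instance the full sub-dg-category of compact objects in $\Mod(\cC)$. Each map in $S$ is a Morita equivalence since restriction of modules along it induces a quasi-equivalence $\Mod(\Perf(\cC)) \xrightarrow{\sim} \Mod(\cC)$. To identify Morita equivalences with $S$-local weak equivalences, I would apply two-out-of-three to the naturality square comparing $f : \cC \to \cD$ with $\Perf(f) : \Perf(\cC) \to \Perf(\cD)$: the functor $f$ is a Morita equivalence iff $\Perf(f)$ is a quasi-equivalence, and the latter is equivalent to $f$ being $S$-local. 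The universal property of Bousfield localization then identifies the two model structures and in particular produces the description of fibrations and cofibrations given in the definition.

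For part (ii), in any left Bousfield localization the fibrant objects are precisely those fibrant objects of the original structure which are $S$-local. An object $\cD$ is $S$-local iff the restriction map $\mathrm{Map}(\Perf(\cC), \cD) \to \mathrm{Map}(\cC, \cD)$ is a weak equivalence for every $\cC$ appearing in $S$. Unpacking this condition against a generating family of Morita maps that individually encode shifts, mapping cones, and splittings of idempotents, one obtains exactly the assertion that $\cD$ is pretriangulated and idempotent-complete.

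The main obstacle is the set-theoretic bookkeeping: one needs $S$ to be a genuine set rather than a proper class while still detecting Morita equivalences between arbitrarily large dg-categories. This is handled by a cardinality-bounding argument showing that failure of a dg-functor to be a Morita equivalence is already visible after restriction to a cofinal system of small subcategories, so it suffices to localize at a small set. A secondary subtlety is that one typically replaces the single inclusion $\cC \hookrightarrow \Perf(\cC)$ by a finer set of explicit generating Morita maps whose $S$-local objects are manifestly the idempotent-complete pretriangulated dg-categories, since the monolithic inclusion into $\Perf$ conflates all the relevant universal properties at once.
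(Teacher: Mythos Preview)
Your approach is correct but takes a substantially more elaborate route than the paper. The paper's proof is two lines: for (i), since both model structures are already established by Tabuada and the Morita structure has \emph{by definition} the same cofibrations as the quasi-equivalent structure together with a larger class of weak equivalences, the assertion that one is a left Bousfield localization of the other is immediate from the definition of left Bousfield localization---no localizing set $S$ needs to be produced. For (ii), the paper simply cites the explicit form of the generating (trivial) cofibrations of the Morita model structure from Tabuada (\cite[Remarque~5.3]{Tabuadaadditif}), from which the characterization of fibrant objects can be read off directly.

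Your argument instead \emph{reconstructs} the Morita model structure as the Bousfield localization at an explicit set $S$ of maps $\cC\hookrightarrow\Perf(\cC)$, and then deduces both (i) and (ii) from the general theory of $S$-local objects and $S$-local equivalences. This has the virtue of being self-contained---it does not presuppose that the Morita model structure exists---but at the cost of the set-theoretic bookkeeping you flag and the nontrivial verification that $S$-local equivalences agree with Morita equivalences. The paper, taking the existence of both model structures as input from Tabuada, bypasses all of this.
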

\begin{proof}
(i) follows from the definition of left Bousfield localization. (ii) follows from the shape of generating cofibrations of the Morita model structure (See \cite[Remarque 5.3]{Tabuadaadditif}).
\end{proof}

In this paper, we always use $\bC$-linear stable $\infty$-categories for statements of theorems, but use $\bC$-linear dg-categories for proofs. This treatment is justified by the following theorem. Let $\cP r^L_{st,\omega}$ be the $\infty$-category of compactly generated stable $\infty$-categories whose 1-morphisms are functors preserving colimits and compact objects. Let $\Mod(H\bC)$ be the $\infty$-category of modules over the Eilenberg--Maclane spectrum  $H\bC$ of $\bC$, which defines a commutative algebra object in $\cP r^L_{st,\omega}$ (\cite{HA, Cohn}).
\begin{theorem}[\cite{Cohn}]\label{Cohn}
The following two $\infty$-categories are equivalent:
\begin{itemize}
\item $\mathrm{dgcat}^{\mathrm{Morita}}_\infty$ : the $\infty$-category obtained from the Morita model structure of the category of dg-categories. 
\item $\Mod_{\Mod(H\bC)}(\cP r^L_{st,\omega})$ : the $\infty$-category of modules over $\Mod(H\bC)$ in $\cP r^L_{st,\omega}$.
\end{itemize}
\end{theorem}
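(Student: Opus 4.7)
The plan is to produce a well-defined functor
$$\Phi\colon \mathrm{dgcat}^{\mathrm{Morita}}_\infty \longrightarrow \Mod_{\Mod(H\bC)}(\cP r^L_{st,\omega})$$
and then show it is both essentially surjective and fully faithful. The functor $\Phi$ sends a dg-category $\cC$ to the $\infty$-categorical localization of the dg-category $\Mod(\cC)$ of (right) dg-modules at quasi-isomorphisms. This target is compactly generated by the representable modules, is stable because $\Mod(\cC)$ is pretriangulated and idempotent-complete after localization, and carries a natural action of $\Mod(H\bC)$ via the Dold--Kan correspondence applied objectwise to the chain-level tensor product. A dg-functor $f\colon \cC\to \cD$ induces extension of scalars $f_!\colon \Mod(\cC)\to\Mod(\cD)$, which preserves colimits and compact objects, so $\Phi$ lands in $\cP r^L_{st,\omega}$. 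By the very definition of Morita equivalence recalled before Proposition~\ref{propertyMorita}, $f$ is a Morita equivalence exactly when $f_!$ is an equivalence, so $\Phi$ descends to the Morita $\infty$-localization.

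For essential surjectivity, take $\cM\in\Mod_{\Mod(H\bC)}(\cP r^L_{st,\omega})$ and choose a set of compact generators $\{x_i\}$. The full $\infty$-subcategory $\cM^\omega\subset \cM$ on compact objects is a small idempotent-complete stable $\infty$-category tensored over $\Mod(H\bC)^\omega=\Perf(H\bC)$. A rectification statement, proved by combining Shipley's Eilenberg--MacLane equivalence between $H\bC$-module spectra and $\bC$-chain complexes with the $\infty$-categorical straightening of enriched categories, promotes $\cM^\omega$ to a (small, idempotent-complete, pretriangulated) dg-category $\cC_\cM$. One then checks $\Phi(\cC_\cM)\simeq \cM$ by matching compact objects on both sides and invoking the fact that a compactly generated object of $\cP r^L_{st,\omega}$ is recovered as $\Ind$ of its compact part. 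The output is a candidate inverse on objects.

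For full faithfulness, mapping spaces on the right are spaces of $H\bC$-linear colimit-preserving functors that preserve compact objects; by compact generation, such a functor is determined by its restriction to the compact subcategories, so
$$\Map_{\Mod_{\Mod(H\bC)}(\cP r^L_{st,\omega})}(\Phi(\cC),\Phi(\cD))\simeq \Map^{H\bC\text{-lin}}(\cC,\Mod(\cD)^\omega).$$
By To\"en's description of mapping spaces in the Morita model structure in terms of right-quasi-representable $(\cC,\cD)$-bimodules, the left-hand side in $\mathrm{dgcat}^{\mathrm{Morita}}_\infty$ has the same shape, and the comparison map is an equivalence provided the enrichment comparison is. The main obstacle is precisely this last comparison: verifying that the Shipley--Schwede rectification between $H\bC$-spectral categories and $\bC$-dg-categories upgrades from a Quillen equivalence of individual enriched categories to a Quillen equivalence of the \emph{categories} of enriched categories with their Morita model structures, in a way that identifies mapping spaces and the symmetric monoidal structures. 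Once that is in place, the equivalence of the two $\infty$-categories above follows formally by taking homotopy coherent nerves.
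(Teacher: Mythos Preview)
The paper does not actually prove this theorem; it is cited from \cite{Cohn} and stated without proof. (There is a commented-out three-line sketch invoking Blumberg--Gepner--Tabuada and the enhanced Dold--Kan correspondence, but it is not part of the text.) So there is nothing on the paper's side to compare against beyond the fact that the author regards this as a known result to be quoted, not proved.

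Your outline is a reasonable summary of the shape of Cohn's argument: build $\Phi$ via $\cC\mapsto \Mod(\cC)$, reduce essential surjectivity to a rectification statement passing through $H\bC$-spectral categories, and reduce full faithfulness to identifying mapping spaces via To\"en's bimodule description. However, you explicitly flag the central step as an unresolved ``main obstacle'': upgrading the Shipley comparison between $H\bC$-module spectra and chain complexes to a Morita Quillen equivalence on the level of enriched categories, compatibly with the symmetric monoidal structures. That is exactly the content of Cohn's paper, and it is not a formality; it requires showing that the chain of Quillen equivalences in Shipley's zig-zag is (weakly) monoidal in the correct sense so that it induces equivalences of categories of enriched categories, and then checking this passes to Morita localizations. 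As written, your proposal is an accurate roadmap but not a proof: the hard part is named and deferred rather than carried out. If this were meant as a self-contained argument, that gap would be genuine; if it is meant as an expository sketch of a cited result, it is adequate and indeed more detailed than what the paper offers.
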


Moreover, by \cite[Proposition 1.3.4.23, 24]{HA}, (co)limits in $\mathrm{dgcat}^{\mathrm{Morita}}_\infty$ (equivalently, in $\Mod_{\Mod({H\bC})}(\cP r^L_{st,\omega})$) are calculated as homotopy (co)limits in the Morita model structure. Since the homotopy theory of ${\Mod}_{\Mod(H\bC)}(\cP r^L_{st,\omega})$ does not depend on any choice of model structures, Theorem \ref{Cohn} supports the canonicity of the Morita model structure.

\section{Preliminaries on Ind-coherent sheaves}\label{indcoherent}
For Ind-objects, we refer to Kashiwara-Schapira \cite{MR1827714} and Lurie \cite{HA}. For ind-coherent sheaves, we refer to Gaitsgory \cite{GaitsgoryIndCoh}.
\subsection{Ind-objects}
For an $\infty$-category $\cC$, the $\infty$-category of Ind-objects $\Ind(\cC)$ of $\cC$ is the filtered cocomplete closure of $\cC$ in the category of presheaves on $\cC$. We sometimes call this colimit {\em formal colimit} and write it $\indlim$. The original category is recovered by taking compact objects if the original category is idempotent-complete. Note that if $\cC$ is stable then $\Ind(\cC)$ is also stable.

Let $F\colon \cC\rightarrow \cD$ be a functor $\cC\rightarrow \cD$. Then there exists a natural functor $\Ind (F)\colon \Ind(\cC)\rightarrow \Ind(\cD)$ which extends $F$ and preserves formal colimits. We also use $F$ for $\Ind F$, if there are no confusions.
\begin{proposition}[{\cite[Proposition 1.1.3]{MR1827714}}]
If $F\colon \cC\rightarrow \cD$ is fully faithful, the fucntor $F\colon \Ind \cC\rightarrow \Ind \cD$ is also fully faithful.
\end{proposition}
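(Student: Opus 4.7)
The plan is to reduce the statement to the standard formula for mapping spaces in an $\Ind$-category. Any object of $\Ind(\cC)$ is representable as a formal filtered colimit $\indlim_{i \in I} X_i$ of objects $X_i \in \cC$, and mapping spaces are computed as
\begin{equation*}
\Hom_{\Ind(\cC)}\bigl(\indlim_i X_i,\, \indlim_j Y_j\bigr) \;\simeq\; \lim_{i \in I^{\mathrm{op}}}\, \colim_{j \in J}\, \Hom_\cC(X_i, Y_j).
\end{equation*}
The functor $\Ind(F)$ is characterized by commuting with formal filtered colimits and agreeing with $F$ on $\cC$, so it sends $\indlim_i X_i$ to $\indlim_i F(X_i)$. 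I would therefore write down the analogous formula for the target, namely
\begin{equation*}
\Hom_{\Ind(\cD)}\bigl(\indlim_i F(X_i),\, \indlim_j F(Y_j)\bigr) \;\simeq\; \lim_{i}\, \colim_{j}\, \Hom_\cD(F(X_i), F(Y_j)),
\end{equation*}
and compare the two expressions termwise.

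Fully faithfulness of $F$ on $\cC$ gives, for each pair $(i,j)$, an equivalence $\Hom_\cC(X_i, Y_j) \simeq \Hom_\cD(F(X_i), F(Y_j))$ of mapping spaces. Since taking limits and filtered colimits of diagrams of spaces preserves levelwise equivalences, applying $\lim_i \colim_j$ to this family of equivalences produces an equivalence between the two Hom-spaces above. This is exactly the assertion that the map $\Hom_{\Ind(\cC)}(-,-) \to \Hom_{\Ind(\cD)}(\Ind(F)(-), \Ind(F)(-))$ induced by $\Ind(F)$ is an equivalence, i.e.\ that $\Ind(F)$ is fully faithful.

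The only subtle step is justifying the mapping space formula and the characterization of $\Ind(F)$ in the $\infty$-categorical setting; both are standard and available in Lurie's \cite{HTT} (see the discussion of $\Ind$-categories and of functors preserving filtered colimits). Once these two facts are in place, the rest of the argument is entirely formal, so I do not expect any genuine obstacle beyond carefully invoking the correct references for the Hom-space description.
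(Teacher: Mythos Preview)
Your argument is correct and is exactly the standard proof: write both sides via the $\lim_i \colim_j$ formula for $\Hom$ in an $\Ind$-category and use that levelwise equivalences pass to the limit/colimit. The paper does not give its own proof of this statement; it simply cites \cite[Proposition~1.1.3]{MR1827714}, where the same formal computation is carried out, so there is nothing further to compare.
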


\subsection{Ind-coherent sheaves}
Let $\cX$ be a Deligne--Mumford stack of finite type over $\bC$. Let $\Qcoh \cX$ be the derived $\infty$-category of quasi-coherent sheaves on $\cX$ and $\Coh X$ be the bounded derived $\infty$-category of coherent sheaves on $\cX$. Since $\Qcoh \cX$ is cocomplete, we have the functor $\Psi_\cX:=\varhocolim \colon \Indcoh\cX\rightarrow \Qcoh \cX$. Let $\Qcoh^b \cX$ be the bounded part of $\Qcoh \cX$.

\begin{proposition}[{\cite[Lemma 1.1.6, Corollary 3.3.6]{GaitsgoryIndCoh}}]\label{propertyofphi} The restriction of $\Psi_\cX$ to $\Qcoh^b \cX$ gives an equivalence onto $\Qcoh^b \cX$ in $\Qcoh \cX$.
If $\cX$ is smooth, the functor $\Psi_{\cX} \colon \Indcoh\cX\rightarrow \Qcoh \cX$ is an equivalence.
\end{proposition}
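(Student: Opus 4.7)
My plan is to deduce both assertions from the universal property of $\Ind$-completion together with standard structural results on $\Qcoh$ and $\Coh$ of DM stacks. By construction, $\Psi_\cX$ is the unique colimit-preserving functor $\Indcoh \cX = \Ind(\Coh \cX) \to \Qcoh \cX$ extending the tautological inclusion $\Coh \cX \hookrightarrow \Qcoh \cX$; it is precisely the formal-colimit realization $\indlim F_i \mapsto \mathrm{colim}\, F_i$.

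For the first assertion, my strategy is to show that there is a natural fully faithful embedding $\Qcoh^b \cX \hookrightarrow \Indcoh \cX$, sending a bounded complex $F$ to the formal colimit of its coherent subobjects, whose composition with $\Psi_\cX$ recovers the inclusion into $\Qcoh \cX$. The key input I would invoke is that on a finite-type DM stack every bounded quasi-coherent complex is canonically a filtered colimit of its coherent subcomplexes; this reduces, via smooth descent, to the classical Noetherian approximation statement on an affine atlas. Combined with the fact that $\operatorname{Hom}$ out of a coherent sheaf commutes with filtered colimits whose colimit is bounded (which, via the t-structure, reduces to coherent sheaves being finitely presented in the heart), this shows that the composite is fully faithful and that the essential image of $\Psi_\cX$ on this subcategory is exactly $\Qcoh^b \cX$.

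For the second assertion, I would use smoothness to collapse the two notions of compactness. If $\cX$ is smooth of finite type over $\bC$, every coherent sheaf admits locally a finite resolution by vector bundles, so $\Coh \cX = \Perf \cX$ as full subcategories of $\Qcoh \cX$. Under the hypotheses that arise in this paper (smooth finite-type DM stack, e.g.\ a smooth toric stack), $\Qcoh \cX$ is compactly generated with compact objects exactly $\Perf \cX$. Thus
\begin{equation*}
\Indcoh \cX = \Ind(\Coh \cX) = \Ind(\Perf \cX) \simeq \Qcoh \cX,
\end{equation*}
and this equivalence is implemented by $\Psi_\cX$ because both sides are colimit-preserving extensions of the inclusion of the common generator $\Perf \cX$.

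The main obstacle is the last input of the second part: that $\Qcoh \cX$ of a smooth finite-type DM stack is compactly generated with compact objects equal to $\Perf \cX$. For schemes this is classical, going back to Bondal--Van den Bergh and Neeman, but for DM stacks it requires an auxiliary hypothesis such as the resolution property, which does hold in the toric setting relevant to this paper. The first part of the proposition, by contrast, is essentially a bookkeeping argument with the t-structure once the filtered-colimit presentation of bounded complexes is in hand.
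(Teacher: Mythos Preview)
The paper does not supply its own proof of this proposition: it is stated with a citation to \cite[Lemma 1.1.6, Corollary 3.3.6]{GaitsgoryIndCoh} and no argument is given. Your sketch is essentially the standard proof found in that reference---writing bounded complexes as filtered colimits of coherent ones and using the t-structure to reduce full faithfulness to the finitely-presented property in the heart, and for the smooth case using $\Coh\cX=\Perf\cX$ together with compact generation of $\Qcoh\cX$ by perfect complexes---so there is nothing to compare.
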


Let $f\colon \cX_1\rightarrow \cX_2$ be a morphism of Deligne--Mumford stacks.
\begin{proposition}[{\cite[Proposition 3.1.1, Lemma 3.5.8]{GaitsgoryIndCoh}}]\label{pushind}
There exists a unique cocontinuous functor
\begin{equation}
f^{\Ind}_*\colon \Indcoh\cX_1\rightarrow \Indcoh\cX_2
\end{equation}
which makes the diagram
\begin{equation}
\xymatrix{
\Indcoh \cX_1 \ar[d]^{\Psi_{\cX_1}}\ar[r]^-{f_*^{\Ind}}& \ar[d]_{\Psi_{\cX_2}}\Indcoh\cX_2 \\
\Qcoh\cX_1    \ar[r]^-{f_*} &\Qcoh\cX_2
}
\end{equation}
commute. If $f$ is proper, we have a natural isomorphsm $f_{*}^{\Ind}\simeq \Ind(f_*)$. If $f^*$ gives $\Coh \cX_2\rightarrow \Coh\cX_1$, the induced functor $f^*\colon \Indcoh \cX_2\rightarrow \Indcoh\cX_1$ is the left adjoint of $f^{\Ind}_*$.
\end{proposition}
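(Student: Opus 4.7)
\textbf{Proof plan for Proposition \ref{pushind}.}

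The plan is to exploit two facts: $\Indcoh\cX_i = \Ind(\Coh\cX_i)$ is compactly generated by $\Coh\cX_i$, and by Proposition~\ref{propertyofphi} the comparison $\Psi_{\cX_2}$ is an equivalence on the bounded subcategory $\Indcoh^b\cX_2 \simeq \Qcoh^b\cX_2$. Uniqueness is then immediate: a cocontinuous functor out of $\Indcoh\cX_1$ is determined by its restriction to the compact generators $\Coh\cX_1$, and for $\cF\in\Coh\cX_1$ the commutative square forces $f_*^{\Ind}\cF$ to have its image under $\Psi_{\cX_2}$ equal to $f_*\cF$; since $f_*\cF$ has bounded-below cohomology and its truncations $\tau^{\le n}f_*\cF$ are in $\Qcoh^b\cX_2$, the preimages under $\Psi_{\cX_2}|_{\Indcoh^b\cX_2}$ are well-defined, pinning down $f_*^{\Ind}\cF$ as a filtered colimit.

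For existence I would first define $f_*^{\Ind}$ on $\Coh\cX_1$ by the recipe suggested above. Given $\cF\in\Coh\cX_1$, since $f$ is of finite type between DM stacks the cohomological amplitude of $f_*$ is finite, so $f_*\cF\in\Qcoh^+\cX_2$. Write
\[
f_*\cF \simeq \varinjlim_n \tau^{\le n} f_*\cF \quad\text{in } \Qcoh\cX_2,
\]
apply $\Psi_{\cX_2}^{-1}$ (using Proposition~\ref{propertyofphi}) to each bounded truncation, and set
\[
f_*^{\Ind}\cF := \varinjlim_n \Psi_{\cX_2}^{-1}\bigl(\tau^{\le n}f_*\cF\bigr)\in \Indcoh\cX_2,
\]
where the colimit is taken in the cocomplete category $\Indcoh\cX_2$. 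Then extend $f_*^{\Ind}$ to all of $\Indcoh\cX_1$ by ind-extension; cocontinuity is automatic from this universal construction. Commutativity of the square on compact objects is the defining property, and it propagates to the whole of $\Indcoh\cX_1$ since both $\Psi_{\cX_2}\circ f_*^{\Ind}$ and $f_*\circ\Psi_{\cX_1}$ are cocontinuous and agree on a set of compact generators.

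For the proper case, $f_*$ preserves coherence, so $f_*|_{\Coh\cX_1}\colon\Coh\cX_1\to\Coh\cX_2$ admits an ind-extension $\Ind(f_*)$. Its values on compacts $\cF\in\Coh\cX_1$ already lie in the bounded part of $\Indcoh\cX_2$, hence $\Psi_{\cX_2}\circ\Ind(f_*)(\cF)\simeq f_*\cF$; by the uniqueness established above, $\Ind(f_*)\simeq f_*^{\Ind}$. For the adjunction statement, under the hypothesis that $f^*$ preserves coherence one defines $f^*\colon\Indcoh\cX_2\to\Indcoh\cX_1$ as $\Ind(f^*|_{\Coh\cX_2})$, then checks the adjunction on pairs of compact objects $(\cG,\cF)\in\Coh\cX_2\times\Coh\cX_1$ by transporting the classical $(f^*,f_*)$-adjunction in $\Qcoh$ through $\Psi_{\cX_1}$ and $\Psi_{\cX_2}$ (which are fully faithful on bounded parts). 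The adjunction on all ind-objects then follows by standard filtered-colimit arguments, as $f^*$ preserves compactness and $f_*^{\Ind}$ is cocontinuous.

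The main technical nuisance will be controlling boundedness and making the truncation argument precise for DM stacks (rather than schemes), in particular verifying the finite cohomological amplitude of $f_*$ so that the bounded truncations are cofinal in $f_*\cF$. Once that input is granted, the rest of the argument is formal manipulation with ind-categories and the universal property of $\Psi$ on the bounded subcategory.
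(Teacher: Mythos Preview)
The paper does not supply its own proof of this proposition: it is stated as a citation to \cite[Proposition 3.1.1, Lemma 3.5.8]{GaitsgoryIndCoh} and no proof environment follows. So there is no in-paper argument to compare against; the author simply imports the result from Gaitsgory.

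Your proposal is a faithful reconstruction of Gaitsgory's strategy. The key structural points---uniqueness from compact generation of $\Indcoh\cX_1$ by $\Coh\cX_1$, construction of $f_*^{\Ind}$ on coherents via the t-structure and the equivalence $\Psi_{\cX_2}|_{\Indcoh^b}$ from Proposition~\ref{propertyofphi}, identification with $\Ind(f_*)$ in the proper case, and the adjunction check on compacts---are exactly those used in \cite{GaitsgoryIndCoh}. One small point: rather than writing $f_*^{\Ind}\cF$ as a colimit of $\Psi_{\cX_2}^{-1}(\tau^{\le n}f_*\cF)$, it is cleaner (and closer to Gaitsgory) to observe that for $\cF\in\Coh\cX_1$ one already has $f_*\cF\in\Qcoh^+\cX_2$, and the t-structure on $\Indcoh\cX_2$ together with Proposition~\ref{propertyofphi} gives an equivalence $\Indcoh^+\cX_2\simeq\Qcoh^+\cX_2$, so no colimit-of-truncations presentation is needed. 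Your flagged technical point about finite cohomological amplitude of $f_*$ for finite-type morphisms of DM stacks is the right thing to verify, and is handled in the cited reference.
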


\section{Preliminaries on microlocal sheaf theory}
\subsection{Microsupports}
Let $Z$ be a real analytic manifold. A sheaf $E$ on $Z$ is called constructible if there exists an analytic Whitney stratification of $Z$ such that $E$ is finite rank locally constant $\bC$-sheaf on each stratum \cite{KS}. If we allow $E$ has infinite rank on each stratum, we say $E$ is quasi-constructible \cite{FLTZ} (or weakly constructible in \cite{KS}, or large constructible in \cite{Nadwrapped}). We write $\cSh(Z)$ (resp. $\lSh(Z)$). for the stable $\infty$-category of complexes of constructible (resp. weakly constructible) sheaves localized at quasi-isomorphisms.

Next we define microsupports due to Kashwara--Schapira \cite{KS}. Let $\pi\colon T^*Z\rightarrow Z$ be the cotangent bundle of $Z$.
\begin{definition}\label{microsupport}
For $E\in \lSh(Z)$, its microsupport $\musupp(E)$ is the subset of $T^*Z$ which is characterized as follows: $(x;\xi)\in T^*Z$ is not an element of $\musupp(E)$ if there is an open neighborhood $U$ of $(x,\xi)$ and any smooth function $f$ with $\mathrm{Graph}(df)\subset U$, $\Gamma_{\lc z\in Z\relmid f(z)\geq f(x')\rc}(E)_{x'}\simeq 0$ holds for any $x'\in \pi(U)$.
\end{definition}
Although this definition can be applied to any sheaves on manifolds, we can know much more for quasi-constructible sheaves. Fix a conic Lagrangian $\Lambda\subset T^*M$. Let $\lSh_\Lambda(M)$ be the full subcategory of $\lSh(M)$ spanned by objects whose microsupports are contained in $\Lambda$.

Take $E\in\lSh_\Lambda(M)$ and a smooth point $(x,\xi)\in \Lambda$.
\begin{proposition}[{\cite[Propositioin 7.5.3]{KS}}]\label{microstalk}
Let $f\colon M\rightarrow \bR$ be a smooth function with $\mathrm{Graph}(df)$ intersecting $\Lambda$ transversely at $(x,\xi)$ and $f(x)=0$. Then up to shifts, the complex
\begin{equation}
\Gamma_{\lc y\relmid f(y)\geq 0\rc}(E)_x
\end{equation}
depends only on $E$ and $(x,\xi)$ and is independent of choice of $f$. If this complex is acyclic, then $(x,\xi)\not\in \musupp(E)$.
\end{proposition}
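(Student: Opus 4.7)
The plan is to prove the proposition by the non-characteristic deformation argument of Kashiwara--Schapira, in two steps: independence of the complex from $f$ up to shift, and the vanishing criterion.

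For independence, suppose $f_0$ and $f_1$ are two smooth functions satisfying the transversality condition at $(x,\xi)$ with $f_i(x)=0$. I would form the affine family $f_t = (1-t)f_0 + tf_1$; then $f_t(x) = 0$ and $df_t(x) = \xi$ for all $t$, so $(x,\xi)$ lies in $\mathrm{Graph}(df_t)\cap \Lambda$ throughout. Since transversality is an open condition and $\Lambda$ is smooth at $(x,\xi)$, the intersection remains transverse and isolated in a small neighborhood of $x$ for every $t\in[0,1]$. I would then invoke the non-characteristic deformation lemma of \cite[Proposition 2.7.2]{KS} for the family of closed sets $Z_t = \{y : f_t(y)\geq 0\}$: the hypothesis that no microsupport of $E$ enters the moving boundary away from $x$ holds in a small $x$-neighborhood because $\musupp(E)\subset\Lambda$ and the only local incidence of $\Lambda$ with $\mathrm{Graph}(df_t)$ is at $(x,\xi)$. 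The conclusion is a canonical isomorphism $\Gamma_{Z_0}(E)_x\simeq \Gamma_{Z_1}(E)_x$, up to a shift induced by the orientation of the transverse intersection.

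For the vanishing criterion, assume $\Gamma_{\{f\geq 0\}}(E)_x\simeq 0$ for one transverse $f$; the first step upgrades this to every transverse $f$. To verify $(x,\xi)\notin \musupp(E)$ via Definition~\ref{microsupport}, I need a neighborhood $U$ of $(x,\xi)$ in $T^*M$ with $\Gamma_{\{g\geq g(x')\}}(E)_{x'}\simeq 0$ for every smooth $g$ with $\mathrm{Graph}(dg)\subset U$. I would perturb any such $g$ to a $g'$ whose graph meets $\Lambda$ transversely near the base point $x'$; the microstalk at the perturbed point vanishes by the first step. A further non-characteristic deformation back to $g$ preserves vanishing because the smoothness of $\Lambda$ at $(x,\xi)$ and the inclusion $\musupp(E)\subset\Lambda$ forbid new contributions from entering $U$.

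The main obstacle is the careful setup of the deformation: one must shrink $U$ so that throughout the family the only incidence between $\mathrm{Graph}(df_t)$ and $\musupp(E)$ is the single moving point, and one must track the shift arising from the transverse intersection sign between $\mathrm{Graph}(df_t)$ and the smooth germ of $\Lambda$ at $(x,\xi)$. The openness of transversality, the smoothness of $\Lambda$ at $(x,\xi)$, and the freedom to localize make these issues manageable.
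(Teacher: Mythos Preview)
The paper does not give its own proof of this proposition: it is quoted verbatim from \cite[Proposition~7.5.3]{KS} and used as a black box. So there is no argument in the paper to compare your proposal against.

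On the proposal itself, the strategy is broadly the right one and close in spirit to the Kashiwara--Schapira argument, but there are two genuine gaps. First, the affine interpolation $f_t=(1-t)f_0+tf_1$ need not remain transverse to $\Lambda$ for all $t\in[0,1]$: transversality of $\mathrm{Graph}(df_t)$ with $\Lambda$ at $(x,\xi)$ depends on the Hessian of $f_t$, and the set of Lagrangian planes transverse to a fixed one is open but not convex in the Lagrangian Grassmannian. You can repair this by choosing a path inside the transverse locus rather than the straight line, but then you must confront the fact that this locus is not simply connected---its fundamental group is $\bZ$, and the monodromy is exactly what produces the shift ambiguity in the statement. Your remark that the shift is ``induced by the orientation of the transverse intersection'' does not capture this; the shift is governed by the Maslov (inertia) index of the path of Lagrangians, and the actual KS proof goes through a normal-form reduction (or a quantized contact transformation) rather than a naive isotopy.

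Second, in the vanishing step your ``perturb $g$ to a transverse $g'$, apply step one, then deform back'' needs step one not just at $(x,\xi)$ but at nearby smooth points $(x',\xi')\in\Lambda$, and you also need to control what happens when $\mathrm{Graph}(dg)$ is not transverse to $\Lambda$. The KS argument handles this by localizing to a normal form for $\Lambda$ near a smooth point and computing directly, which avoids the circularity. Your outline is salvageable, but as written it presumes more uniformity than the non-characteristic deformation lemma alone supplies.
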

The complex $m_{(x,\xi),f}(E):=\Gamma_{\lc y\relmid f(y)\geq 0\rc}(E)_x$ is called the {\em microlocal stalk} of $E$ at $(x,\xi)$ with respect to $\Lambda$ and $f$. Proposition \ref{microstalk} says microlocal stalk completely captures microsupport.

There are standard estimates of microsupports which we will use later.
\begin{proposition}[\cite{KS}] \label{ssestimate}For $E,F\in \lSh(Z)$, the following estimates hold:
\begin{enumerate}
\item $\musupp(E\boxtimes F)\subset \musupp(E)\times \musupp(F)$,
\item $\musupp(\cHom_{Z\times Z}(p_1^{-1}E,p_2^{\bullet}F))\subset \musupp(E)^a\times \musupp(F)$ $(\bullet=-1, !)$, and
\item $\musupp(\Cone(E\rightarrow F))\subset \musupp(E)\cup \musupp(F)$.
\end{enumerate}
where $a$ is the fiberwise antipodal map and $p_i\colon Z\times Z\rightarrow Z$ is the projection to each factor.
\end{proposition}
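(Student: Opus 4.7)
The plan is to derive all three estimates from the standard Kashiwara--Schapira toolkit in \cite{KS}: the triangle inequality for distinguished triangles, the microsupport estimate for pullbacks along submersions, and the sum estimates for $\otimes$ and $\cHom$ in the non-characteristic regime.

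I would dispose of (iii) first, since it is the cleanest. For any distinguished triangle $A\to B\to C\to A[1]$ in $\lSh(Z)$, Kashiwara--Schapira show $\musupp(X)\subset \musupp(Y)\cup \musupp(W)$ whenever $(X,Y,W)$ is a cyclic permutation of $(A,B,C)$ (Proposition~5.1.3 of \cite{KS}). Applying this to $E\to F\to \Cone(E\to F)\to E[1]$ yields the claim.

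For (i), write $E\boxtimes F = p_1^{-1}E\otimes p_2^{-1}F$ on $Z\times Z$. Each projection $p_i$ is a submersion, so the pullback estimate gives
\[
\musupp(p_1^{-1}E)\subset \musupp(E)\times 0_Z,\qquad \musupp(p_2^{-1}F)\subset 0_Z\times \musupp(F),
\]
where $0_Z\subset T^*Z$ denotes the zero section. These two subsets intersect only along $0_{Z\times Z}$, so they are transverse (equivalently, non-characteristic with respect to one another). The tensor-product estimate of \cite{KS} therefore applies and gives
\[
\musupp(E\boxtimes F)\subset \musupp(p_1^{-1}E)+\musupp(p_2^{-1}F)\subset \musupp(E)\times \musupp(F).
\]

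For (ii), I would use the analogous $\cHom$ estimate $\musupp(\cHom(G,H))\subset \musupp(G)^a+\musupp(H)$ under the same non-characteristic hypothesis. With $G=p_1^{-1}E$ and $H=p_2^{\bullet}F$, the pullback estimate yields $\musupp(G)^a\subset \musupp(E)^a\times 0_Z$ and $\musupp(H)\subset 0_Z\times \musupp(F)$. For the case $\bullet=!$ one uses that $p_2$ is smooth, so $p_2^!\simeq p_2^{-1}\otimes \omega_{Z\times Z/Z}$ is the pullback up to a twist by the relative orientation sheaf and a degree shift; neither operation affects the microsupport. The transversality is again automatic because one factor of each microsupport lies in the zero section, so the sum estimate applies and collapses to $\musupp(E)^a\times \musupp(F)$.

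The only place that needs any real care is the bookkeeping around the non-characteristic hypothesis in \cite[\S 5.4]{KS}; once one observes that in all three setups the relevant microsupports either coincide in a distinguished triangle or are transverse because one factor is the zero section, the verifications are straightforward consequences of the general estimates. I expect the mild subtlety with $p_2^!$ in (ii) to be the only step requiring explicit comment.
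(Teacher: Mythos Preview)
Your argument is correct and is precisely the standard derivation from the Kashiwara--Schapira estimates (the triangle inequality, the submersion pullback bound, and the sum bounds for $\otimes$ and $\cHom$ in the non-characteristic situation). The paper itself gives no proof of this proposition---it is simply quoted from \cite{KS}---so there is nothing further to compare; your sketch supplies exactly the justification that the citation points to.
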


If $E$ is constructible, $\musupp(E)$ is a Lagrangian subvariety with respect to the standard symplectic structure of $T^*Z$. This observation leads to recent applications of microlocal geometry to symplectic geometry, such as Tamarkin \cite{Tam} and Nadler--Zaslow \cite{NZ, Nad}. Here we review the latter briefly in the Section \ref{Fukayacatandconst} (we also recall some techniques of the former in Section \ref{technique}).

\subsection{Fukaya categories and constructible sheaves}\label{Fukayacatandconst}
We assume that $Z$ is compact.
We consider the derived infinitesimally wrapped Fukaya category $\Fuk(T^*Z)$ of $T^*Z$ whose objects are generated by exact Lagrangian submanifolds supposed to be tame at infinity and hom-spaces are calculated with inifinitesimal Hamiltonian perturbations which induce Reeb flows on the contact boundary when we canonically compactify $T^*Z$. Then the Nadler--Zaslow equivalence is the following.
\begin{theorem}[\cite{NZ, Nad}]
There is a quasi-equivalence of $\infty$-categories
\begin{equation}
\cSh(Z)\simeq \Fuk(T^*Z).
\end{equation}
\end{theorem}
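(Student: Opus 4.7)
The plan is to prove the equivalence by exhibiting compatible systems of generators on both sides and verifying fully faithfulness and essential surjectivity on those generators. Fix a Whitney stratification $\mathcal{S}$ of $Z$. On the sheaf side, $\cSh_\mathcal{S}(Z)$ is generated by the \emph{standard sheaves} $j_{U!}\bC_U[d]$ for $U$ an open union of strata, with $j_U\colon U\hookrightarrow Z$ the inclusion; passing to a colimit over refinements of stratifications yields all of $\cSh(Z)$. On the Fukaya side, to each such $U$ I would associate a \emph{standard brane} $L_U\subset T^*Z$: letting $m_U$ be a boundary-defining function for $Z\setminus U$, take a tame compactification of the graph of $-d\log m_U$ over $U$, which is exact, tame at infinity, and has microsupport matching $\musupp(j_{U!}\bC_U)$.

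First I would construct a functor $\mu_Z\colon\cSh(Z)\to\Fuk(T^*Z)$ sending $j_{U!}\bC_U$ to $L_U$. The cleanest route is to define $\mu_Z$ on the pretriangulated hull of standard sheaves, modelled by a combinatorial $\infty$-category built from opens and inclusions, and then Kan-extend; the coherent $A_\infty$-structure on the brane side is supplied by the Fukaya category. Arranging that an inclusion morphism $j_{V!}\bC_V\to j_{U!}\bC_U$ is mapped to an explicit Floer-theoretic class between $L_V$ and $L_U$ requires matching the geometric boundary behaviour of the Lagrangians with the adjunctions on the sheaf side.

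Second, the fully-faithful step: compute $\mathrm{Ext}$ between standard sheaves via adjunction to get
\begin{equation}
\mathrm{Ext}^*(j_{U!}\bC_U,j_{V!}\bC_V)\simeq H^*(V,V\setminus U;\bC),
\end{equation}
while on the Fukaya side the Floer complex $CF^*(L_U,L_V)$, perturbed by the infinitesimal Reeb flow at infinity, reduces by Fukaya--Oh to the Morse complex of $\log m_V-\log m_U$ on $U\cap V$, which computes exactly the same relative cohomology. Matching this comparison compatibly with compositions promotes the functor to a fully faithful embedding; the heuristic is that both sides are governed by the same underlying Morse theory on $Z$.

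Third, for essential surjectivity I would appeal to the fact that every exact Lagrangian brane tame at infinity can be Hamiltonian-isotoped into ``standard position'' with respect to some Whitney stratification of $Z$, after which a Liouville/handle decomposition expresses it as an iterated cone of standard branes; this is the content of Nadler's follow-up work to the original Nadler--Zaslow paper. The main obstacle is precisely this last step: the fully-faithful comparison is essentially a matching of two parallel Morse-theoretic computations, but essential surjectivity requires genuine symplectic input---a flexibility statement for tame exact Lagrangians together with a Floer-theoretic surgery argument---and is where most of the depth of the theorem lies.
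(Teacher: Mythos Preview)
The paper does not prove this theorem; it is cited from \cite{NZ, Nad} and the paper supplies only a one-paragraph sketch.  Your outline is a faithful summary of the actual Nadler--Zaslow strategy and is correct at the level of detail you give.

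The main methodological difference between your outline and the paper's sketch lies in how the equivalence is completed.  You argue essential surjectivity directly: every tame exact Lagrangian can be isotoped into standard position relative to some Whitney stratification and then decomposed into standard branes (this is indeed the content of \cite{Nad}).  The paper's sketch instead describes an explicit \emph{quasi-inverse} functor via family Floer homology: to a brane $L$ one assigns the sheaf whose stalk at $z\in Z$ is $HF^*(T_z^*Z, L)$, and then one checks this really lands in $\cSh(Z)$ and inverts $\mu_Z$.  Both routes appear in the original papers and are complementary---your approach makes the generation statement on the Fukaya side the hard step, while the paper's sketch trades that for verifying constructibility and the sheaf axioms of the family-Floer assignment.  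A minor cosmetic difference is that the paper's sketch uses constant sheaves on small contractible \emph{closed} sets as generators, whereas you use $j_{U!}\bC_U$ on open star neighbourhoods; these are Verdier-dual pictures and the distinction is immaterial.
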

This quasi-equivalence is roughly given as follows; the left hand side is generated by constant sheaves on small contractible closed sets. For such a sheaf, we assign an exact Lagrangian submanifold obtained by smoothing the microsupport of the sheaf. This gives an embedding of the left hand side to the right hand side. The quasi-inverse functor is given along the philosophy of family Floer homology; for an object $L$ of the right hand side, the stalk at $z\in Z$ of the corresponding object in the left hand side is given by the Floer homology between the fiber $T^*_zZ$ and $L$. By the tameness of objects, we can use an easy version of family Floer homology without technical difficulties described in \cite{Fukayafam, Abouzaidfam}.

In a rough classification, there are two types of Fukya categories defined for non-compact symplectic manifolds. One has finite-dimensional hom-spaces, the other has infinite dimensional ones. Infinitesimally wrapped Fukaya category and Fukaya--Seidel category are of first type. Microlocal counterparts of these are considered as constructible sheaves.

Fukaya categories of second type are known as partially wrapped Fukaya categories \cite{ASviterbo, AurouxHeegard, Sylvan}. A microlocal counterpart of this notion is recently proposed by Nadler \cite{Nadwrapped}. Let $\Lambda$ be a conic Lagrangian subvariety of $T^*Z$. We write $\lSh_\Lambda(Z)$ for the full subcategory of $\lSh(Z)$ spanned by objects whose microsupports are contained in $\Lambda$.
\begin{definition}[{\cite[Definition 3.12]{Nadwrapped}}]
The stable $\infty$-category $\wSh_\Lambda(Z)$ of wrapped constructible sheaves along $\Lambda$ is the full subcategory of compact objects of $\lSh_\Lambda(Z)$.
\end{definition}
The stability follows from Proposition \ref{ssestimate} (iii).
There is another (more geometric) definition of this category also due to Nadler \cite{Nadwrapped}. We will use the notation of the definition of microlocal stalk presented in Section \ref{Fukayacatandconst}. Take a smooth point $(x,\xi)$ of $\Lambda$ and a smooth function $f\colon M\rightarrow \bR$ as in Proposition \ref{microstalk}. By microlocal Bertini--Sard theorem \cite[Proposition 8.12]{KS} and the non-characteristic deformation lemma \cite[Proposition 2.7.2]{KS}, we have a neighborhood $B$ of $x$ such that
\begin{equation}
m_{(x,\xi),f}(E):=\Gamma_{\lc z\in Z\relmid f(z)\geq f(x)\rc}(E)_x\simeq \Gamma_{\lc z\in Z\relmid f(z)\geq f(x)\rc}(B, E).
\end{equation}
Note that $B$ depends on $\Lambda$, but does not depend on $E$. Since $\lSh_\Lambda(M)$ is presentable (cf. Proposition \ref{presentable} and Proposition \ref{Neemanrepresentability} below) and $\Gamma_{\lc z\in Z\relmid f(z)\geq f(x)\rc}(B, -)$ preserves limits and colimits, $m_{(x,\xi),f}\colon \lSh_\Lambda(M)\rightarrow \Mod(\bC)$ is representable by a compact object $F_{(x,\xi),f}\in \wSh_\Lambda(M)$. This $F_{(x,\xi),f}$ is called {\em microlocal skyscraper sheaf} at $(x,\xi)$ with respect to $\Lambda$ and $f$. By Proposition \ref{microstalk}, $F_{(x,\xi),f}$ does not depend on $f$ up to shifts.

\begin{lemma}[{\cite[Lemma 3.15]{Nadwrapped}}]\label{3.15}
The $\infty$-category $\wSh_\Lambda(Z)$ is split-generated by the microlocal skyscrapers $F_{(x,\xi),f}$ for $(x.\xi)$ smooth points of $\Lambda$.
\end{lemma}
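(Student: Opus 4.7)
The plan is to reduce split-generation to a zero-detection statement and then to apply the microlocal stalk recognition principle together with the involutivity of microsupport. Granting the presentability and compact generation of $\lSh_\Lambda(Z)$ via Propositions \ref{presentable} and \ref{Neemanrepresentability}, the identification $\wSh_\Lambda(Z) = \lSh_\Lambda(Z)^{\omega}$ yields $\lSh_\Lambda(Z) \simeq \Ind(\wSh_\Lambda(Z))$. A standard criterion for compactly generated stable $\infty$-categories then asserts that a family of compact objects split-generates the compact part if and only if it jointly detects the zero object in the ambient category. Hence the lemma reduces to verifying the following claim: if $E \in \lSh_\Lambda(Z)$ satisfies $\Hom(F_{(x,\xi),f}, E) \simeq 0$ for every smooth point $(x,\xi) \in \Lambda$ and every admissible test function $f$, then $E \simeq 0$.

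By the defining representability $\Hom_{\lSh_\Lambda(Z)}(F_{(x,\xi),f}, E) \simeq m_{(x,\xi),f}(E)$ recorded just before the lemma, the hypothesis translates into the vanishing of every microlocal stalk of $E$ at a smooth point of $\Lambda$. Proposition \ref{microstalk} then yields $\musupp(E) \cap \Lambda^{\mathrm{sm}} = \emptyset$, where $\Lambda^{\mathrm{sm}}$ denotes the smooth locus of $\Lambda$. Combined with the standing hypothesis $\musupp(E) \subset \Lambda$, this gives $\musupp(E) \subset \Lambda^{\mathrm{sing}}$.

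To finish, I would invoke the Kashiwara--Schapira involutivity theorem: the microsupport of any sheaf is a closed conic coisotropic subset of $T^*Z$; in particular, every nonempty $\musupp(E)$ has local dimension at least $n = \dim Z$ at every point. Since $\Lambda$ is a conic Lagrangian subvariety of dimension $n$ and $\Lambda^{\mathrm{sm}}$ is open and dense in $\Lambda$, the singular locus $\Lambda^{\mathrm{sing}}$ has strictly smaller dimension. The containment $\musupp(E) \subset \Lambda^{\mathrm{sing}}$ is therefore incompatible with nonvanishing, forcing $\musupp(E) = \emptyset$; intersecting with the zero section gives $\mathrm{supp}(E) = \emptyset$ and hence $E \simeq 0$.

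The main obstacle is the structural input in the first step: the presentability and compact generation of $\lSh_\Lambda(Z)$, together with the corresponding identification $\lSh_\Lambda(Z) \simeq \Ind(\wSh_\Lambda(Z))$, without which the split-generation criterion has no traction. A secondary subtlety, worth flagging during the write-up, is that a quasi-constructible sheaf $E \in \lSh_\Lambda(Z)$ need not itself have Lagrangian microsupport, so one must genuinely appeal to the full involutivity (coisotropic lower dimension bound) rather than a cheaper Lagrangianity argument when ruling out the inclusion $\musupp(E) \subset \Lambda^{\mathrm{sing}}$.
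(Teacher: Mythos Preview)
Your argument is correct and is essentially the standard one. The paper itself does not supply a proof of this lemma; it simply cites \cite[Lemma~3.15]{Nadwrapped}, and the argument you give is the one found there: microlocal stalks at smooth points of $\Lambda$ detect the zero object in $\lSh_\Lambda(Z)$ by Proposition~\ref{microstalk} combined with the involutivity theorem of Kashiwara--Schapira, and hence the corepresenting objects $F_{(x,\xi),f}$ compactly generate.

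Two minor remarks. First, once you have shown that the $F_{(x,\xi),f}$ detect zero in $\lSh_\Lambda(Z)$, you have in fact shown that these compact objects generate $\lSh_\Lambda(Z)$ under colimits; this already yields $\lSh_\Lambda(Z)\simeq \Ind(\wSh_\Lambda(Z))$ and the desired split-generation, so you do not need to invoke Proposition~\ref{presentable} separately for compact generation. What you genuinely need from the ambient structure is only the presentability of $\lSh_\Lambda(Z)$, which is what guarantees the existence of the corepresenting objects $F_{(x,\xi),f}$ in the first place (as the paper itself notes just before the lemma). Second, your caveat about involutivity versus Lagrangianity is well placed: for $E$ merely quasi-constructible one must use the coisotropic lower bound on $\dim\musupp(E)$, since $\musupp(E)$ need not be Lagrangian.
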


The category of wrapped constructible sheaves have some properties expected for partially wrapped Fukaya categories, for example, mirror symmetry for pair of pants \cite{Nadwrapped} and categorical localization \cite{IK}. Like partially wrapped Fukaya categories sometimes have only finite dimensional hom-spaces, all wrapped costructible sheaves are sometimes constructible sheaves, for example, $\Lambda=\Lambda_{\hat{\Sigma},\beta}$ for complete $\Sigma$ (see Corollary~\ref{complete} below, see also appendix of \cite{IK}).

Finally we recall the duality between constructible and wrapped constructible sheaves from \cite{Nadwrapped}. Let $\mod(\bC)$ be the derived $\infty$-category of bounded complexes of finite-dimensional $\bC$-vector spaces. Let further $\Fun{}^{ex}(\cC,\cD)$ be the $\infty$-category of exact functors for stable $\infty$-categories $\cC$ and $\cD$.
\begin{theorem}[{\cite[Theorem 3.2.1]{Nadwrapped}}]\label{dualityconst}
There exists an equivalence of $\infty$-categories
\begin{equation}
\cSh_\Lambda(Z)\simeq \Fun{}^{\text{ex}}((\wSh_\Lambda(Z))^\op, \mod(\bC)) 
\end{equation}
given by $E\mapsto \hom(-,E)$.
\end{theorem}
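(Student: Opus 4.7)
\textbf{Proof plan for Theorem~\ref{dualityconst}.} The strategy is to view this as a refinement of the standard ind-duality for compactly generated stable $\infty$-categories. Since $\wSh_\Lambda(Z)$ is by definition the full subcategory of compact objects of $\lSh_\Lambda(Z)$, and since $\lSh_\Lambda(Z)$ is presentable (as invoked just above the microlocal skyscraper construction), we have a canonical equivalence
\begin{equation}
\lSh_\Lambda(Z)\;\simeq\;\Ind(\wSh_\Lambda(Z)),
\end{equation}
and the universal property of Ind-completion identifies the right-hand side with $\Fun^{ex}((\wSh_\Lambda(Z))^\op,\Mod(\bC))$ via $E\mapsto \hom(-,E)$. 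Thus the only content beyond general nonsense is that this equivalence restricts to one between the subcategory $\cSh_\Lambda(Z)\subset\lSh_\Lambda(Z)$ and the subcategory $\Fun^{ex}((\wSh_\Lambda(Z))^\op,\mod(\bC))$ of exact functors landing in finite-dimensional complexes.

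The plan is therefore to verify the two inclusions that this restriction demands. First I would show that for $E\in\cSh_\Lambda(Z)$ and any $X\in\wSh_\Lambda(Z)$ the complex $\hom(X,E)$ lies in $\mod(\bC)$. By Lemma~\ref{3.15}, it suffices to check this on the split-generators $F_{(x,\xi),f}$, and by construction $\hom(F_{(x,\xi),f},E)\simeq m_{(x,\xi),f}(E)$. The microlocal stalk of a bounded constructible sheaf is bounded with finite-dimensional cohomology by the standard Kashiwara--Schapira calculus (e.g.\ by reducing to the stratified model where $E$ is a local system of finite rank on each stratum), so the output is finite-dimensional.

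Conversely, I would show that if $E\in\lSh_\Lambda(Z)$ satisfies $\hom(X,E)\in\mod(\bC)$ for every $X\in\wSh_\Lambda(Z)$, then $E$ is constructible. Applying the hypothesis to microlocal skyscrapers at smooth points $(x,0)$ of $\Lambda$ lying in the zero section recovers the ordinary stalk $E_x$ (with appropriate shift) at generic points of strata, forcing the generic rank on each stratum to be finite; applying it at smooth points $(x,\xi)$ with $\xi\neq 0$ controls the jumps across strata. Boundedness is extracted analogously, by testing against microlocal skyscrapers in sufficiently many cohomological degrees. Together with the quasi-constructibility already assumed, this upgrades $E$ to an object of $\cSh_\Lambda(Z)$.

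The main obstacle I anticipate is the converse direction: converting pointwise finite-dimensionality of $\hom(F_{(x,\xi),f},E)$ at every smooth point of $\Lambda$ into the global constructibility (finite rank on each stratum plus boundedness) of $E$. This requires choosing enough microlocal skyscrapers to probe the combinatorics of the Whitney stratification underlying $\musupp(E)\subset\Lambda$, and then invoking the non-characteristic deformation lemma \cite[Proposition 2.7.2]{KS} together with microlocal Bertini--Sard \cite[Proposition 8.12]{KS} to propagate finite-dimensionality from generic smooth points across each stratum. Once these two inclusions are in place, fully faithfulness and essential surjectivity of $E\mapsto\hom(-,E)$ follow formally from the ind-completion equivalence above.
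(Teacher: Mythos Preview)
The paper does not supply a proof of Theorem~\ref{dualityconst}: it is quoted verbatim from \cite[Theorem 3.2.1]{Nadwrapped} and used as a black box (only later, in the proof of Corollary~\ref{complete}). So there is no ``paper's own proof'' to compare against.

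That said, your outline is essentially the argument Nadler gives in \cite{Nadwrapped}. The identification $\lSh_\Lambda(Z)\simeq\Ind(\wSh_\Lambda(Z))\simeq\Fun^{ex}((\wSh_\Lambda(Z))^\op,\Mod(\bC))$ is formal from compact generation, and the remaining content is exactly the two inclusions you isolate. Your forward direction is fine. For the converse, you are right that this is where the work lies; the point you should make more explicit is that Lemma~\ref{3.15} says the microlocal skyscrapers \emph{split-generate} $\wSh_\Lambda(Z)$, so finite-dimensionality of $\hom(F_{(x,\xi),f},E)$ for all smooth $(x,\xi)$ already forces finite-dimensionality of $\hom(X,E)$ for every $X\in\wSh_\Lambda(Z)$. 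In particular, constant sheaves on small open balls (which are compact in $\lSh_\Lambda(Z)$) give finite-dimensionality of local sections, and from there one gets finite rank on strata and boundedness by a standard stratification argument. Your invocation of non-characteristic deformation and microlocal Bertini--Sard is in the right spirit but somewhat heavier than needed once split-generation is in hand.
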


\subsection{Kashiwara--Schapira stack}\label{ksstack}
The notion of Kashiwara--Schapira stack is essentially established in the theory of microlocal categories of Kashiwara--Schapira \cite{KS} and is clarified in recent various literatures \cite{NadcatMorse, Nadarb, Nadnonchar, SiTZ, STW, Guillermou}.

Firstly, we introduce microlocal categories. Let $Z$ be a real analytic manifold, $\Lambda$ be a conic Lagrangian subset of $T^*Z$ and $U$ be an open subset of ${T}^*Z$. Then we set
\begin{equation}
{\mulSh_\Lambda}^{pre}(U):=\lSh_{\Lambda}(Z)/\lSh_{(\Lambda\bs (U\cap \Lambda))}(Z).
\end{equation}
Then the assignment ${\mulSh_\Lambda}^{pre} \colon U\mapsto \mulSh_{\Lambda}(U)$ form a presheaf on ${T}^*Z$. We write $\mulSh_\Lambda$ for the sheafification of ${\mulSh_\Lambda}^{pre}$. Moreover, the support of this sheaf is $\Lambda$.

\begin{definition}
The sheaf $\mulSh_\Lambda$ on $\Lambda$ is called the Kashiwara--Schapira stack (or the stack of microlocal sheaves) along $\Lambda$.
\end{definition}

The global section of this sheaf is known as follows.
\begin{proposition}[{\cite[Propsitioin 3.5]{STW}}]
The global section of $\mulSh_\Lambda(Z)$ is equivalent to $\lSh_{T^*_ZZ\cup\Lambda}(Z)/\lSh_{T^*_ZZ}(Z)$.
\end{proposition}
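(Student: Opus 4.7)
The plan is to construct an equivalence between $\lSh_{T^*_Z Z\cup\Lambda}(Z)/\lSh_{T^*_Z Z}(Z)$ and $\Gamma(T^*Z,\mulSh_\Lambda)$ by hand. The guiding intuition is that although the presheaf value on the top open is $\mulSh_\Lambda^{pre}(T^*Z)=\lSh_\Lambda(Z)$, sheafification forces one to allow arbitrary locally constant (microsupport on the zero section) behavior and then to quotient it out; this is exactly what the right-hand side records. The zero section plays no role away from itself, so the ``off-zero-section'' microlocal content is captured by $\Lambda\setminus T^*_Z Z$, while the zero-section direction contributes only an ambiguity that we mod out.

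First I would construct a comparison functor $\Phi\colon \lSh_{T^*_Z Z\cup\Lambda}(Z)\to \Gamma(T^*Z,\mulSh_\Lambda)$ via the conic open cover $T^*Z=V_1\cup V_2$ with $V_1$ a small conic neighborhood of $T^*_Z Z$ and $V_2=T^*Z\setminus T^*_Z Z$. Given $E\in\lSh_{T^*_Z Z\cup\Lambda}(Z)$, the intersection $\musupp(E)\cap V_2$ lies in $\Lambda$, so $E$ determines a class in $\mulSh_\Lambda^{pre}(V_2)=\lSh_\Lambda(Z)/\lSh_{\Lambda\cap T^*_Z Z}(Z)$ after choosing a representative modulo the locally constant subcategory. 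Over $V_1$ shrunk conically, the presheaf admits a canonical trivialization, and the gluing datum over $V_1\cap V_2$ is simply the projection one has already written down. By Proposition~\ref{ssestimate}(iii), $\Phi$ annihilates $\lSh_{T^*_Z Z}(Z)$, so it descends to the desired quotient.

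Next I would check that $\Phi$ is an equivalence. Fully faithfulness comes from Proposition~\ref{microstalk}: if $\Phi(E)=0$ then all microlocal stalks of $E$ at smooth points of $\Lambda\setminus T^*_Z Z$ vanish, forcing $\musupp(E)\subset T^*_Z Z$. Essential surjectivity requires taking a global section of $\mulSh_\Lambda$, presented as a compatible family on a conic cover of $T^*Z$, and assembling it into an actual sheaf on $Z$ with microsupport in $T^*_Z Z\cup\Lambda$; this can be carried out by gluing local lifts using microlocal cut-off functors in the spirit of Tamarkin, following Kashiwara--Schapira \cite{KS} and Guillermou--Schapira \cite{GS}, with the resulting indeterminacy living in $\lSh_{T^*_Z Z}(Z)$.

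The main obstacle is the precise control of the sheafification. Concretely, one must verify that the two-term cover $\{V_1,V_2\}$ is enough, i.e., that higher \v{C}ech terms for $\mulSh_\Lambda^{pre}$ collapse on such conic opens. This reduces to a Mayer--Vietoris property for $\mulSh_\Lambda^{pre}$ restricted to conic opens, which one can establish by decomposing an object of $\lSh_{T^*_Z Z\cup\Lambda}(Z)$ into a locally constant part and a part microsupported in $\Lambda\setminus T^*_Z Z$ (up to $\lSh_{T^*_Z Z\cap\Lambda}(Z)$), and checking that the decomposition is compatible with restriction to $V_1\cap V_2$. This is the most delicate step: everything else is essentially formal manipulation once this microlocal descent is in place.
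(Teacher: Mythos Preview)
The paper does not supply its own proof of this proposition; it is quoted from \cite{STW}. So there is no argument here to compare against, and one must evaluate your proposal on its own terms.

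There is a genuine gap in your construction of $\Phi$. You want to send $E\in\lSh_{T^*_ZZ\cup\Lambda}(Z)$ to a section of $\mulSh_\Lambda$ by first producing a class in the \emph{presheaf} value $\mulSh_\Lambda^{pre}(V_2)=\lSh_\Lambda(Z)/\lSh_{\Lambda\cap T^*_ZZ}(Z)$, ``after choosing a representative modulo the locally constant subcategory.'' But such a representative in $\lSh_\Lambda(Z)$ need not exist: the functor $\lSh_\Lambda(Z)\to\lSh_{T^*_ZZ\cup\Lambda}(Z)/\lSh_{T^*_ZZ}(Z)$ is in general \emph{not} essentially surjective. Already for $Z=\bR$, $\Lambda=T^*_0\bR$, the object $\bC_{(-\infty,0)}$ lies in $\lSh_{T^*_ZZ\cup\Lambda}(\bR)$ but is not isomorphic, modulo constant sheaves, to any skyscraper at $0$ (which is all that $\lSh_\Lambda(\bR)$ contains). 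So the very first arrow of your $\Phi$ is undefined. The related difficulty is that you conflate sections of the sheafification with presheaf values: $\mulSh_\Lambda(V_2)$ is typically strictly larger than $\mulSh_\Lambda^{pre}(V_2)$, and your two-set cover $\{V_1,V_2\}$ cannot detect this, so the ``Mayer--Vietoris collapse'' you invoke at the end does not hold at the level you need it.

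The standard route, which avoids this problem entirely, is to compare $\mulSh_\Lambda^{pre}$ with the alternative presheaf
\[
U\longmapsto \lSh_{T^*_ZZ\cup\Lambda}(Z)\big/\lSh_{T^*_ZZ\cup(\Lambda\setminus U)}(Z),
\]
show that the natural map between them is an equivalence on a basis of conic opens (this is a local statement, where microlocal cut-off genuinely applies), and then observe that the second presheaf already satisfies descent, so its global sections coincide with its value on $T^*Z$, namely $\lSh_{T^*_ZZ\cup\Lambda}(Z)/\lSh_{T^*_ZZ}(Z)$. Your essential-surjectivity sketch is in fact closer to this local step than to your stated global construction; if you reorganize the argument so that the lifting is only performed \emph{locally} and the globalization is handled by the sheaf condition for the second presheaf, the approach becomes viable.
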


The author learned the following proposition from David Nadler.
\begin{proposition}\label{presentable} Assume $Z$ is compact. The global section of $\mulSh_\Lambda$ is a compactly generated $\infty$-category.
\end{proposition}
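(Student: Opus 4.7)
By the preceding proposition we have $\mulSh_\Lambda(\Lambda)\simeq \cC/\cC_0$ with $\cC:=\lSh_{T^*_ZZ\cup\Lambda}(Z)$ and $\cC_0:=\lSh_{T^*_ZZ}(Z)$, so it suffices to show that this Verdier quotient is compactly generated. The plan is to verify the hypotheses of Neeman's localization theorem: if $\cC$ is a compactly generated presentable stable $\infty$-category and $\cC_0\subset\cC$ is a localizing subcategory generated by a set of objects compact in $\cC$, then $\cC/\cC_0$ is again compactly generated, with compact generators given by the images of the compact generators of $\cC$.

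For both choices $\Lambda'=T^*_ZZ\cup\Lambda$ and $\Lambda'=T^*_ZZ$, I would first argue that $\lSh_{\Lambda'}(Z)$ is compactly generated. Presentability is obtained by exhibiting $\lSh_{\Lambda'}(Z)$ as a localizing subcategory of the presentable ambient $\lSh(Z)$ cut out by the microsupport condition, using the cone estimate of Proposition~\ref{ssestimate}(iii) together with Proposition~\ref{Neemanrepresentability}. Lemma~\ref{3.15} then exhibits the microlocal skyscrapers $F_{(x,\xi),f}$ at smooth points of $\Lambda'$, together with a finite set of local systems on $Z$ (available since $Z$ is compact and hence of finite homotopy type), as split-generators of $\wSh_{\Lambda'}(Z)$; Proposition~\ref{microstalk} ensures these suffice, since vanishing of all microstalks together with vanishing of stalks of the underlying local system detects absence from $\Lambda'$.

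Next I would check that the inclusion $\cC_0\hookrightarrow\cC$ sends compact objects to compact objects. The compact generators of $\cC_0$ are finite-rank local systems $L$ on $Z$, and $\hom_Z(L,-)\simeq \Gamma(Z,L^\vee\otimes -)$ commutes with filtered colimits because $Z$ is compact and of finite cohomological dimension; hence each such $L$ is compact already in $\lSh(Z)$, a fortiori in $\cC$. With both categories compactly generated and compactness preserved under the inclusion, Neeman's theorem delivers the compact generation of $\cC/\cC_0\simeq \mulSh_\Lambda(\Lambda)$.

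\textbf{Main obstacle.} The step that I expect to demand the most care is the presentability and compact generation of $\lSh_{\Lambda'}(Z)$ itself, because Lemma~\ref{3.15} characterizes $\wSh_{\Lambda'}(Z)$ only after presentability is assumed. To avoid any circular reliance on the very proposition under proof, I would first isolate the presentability step — applying Proposition~\ref{Neemanrepresentability} to the reflective localization $\lSh(Z)\to \lSh_{\Lambda'}(Z)$ given by the microsupport estimates — and only then invoke Lemma~\ref{3.15} to identify the compact objects and extract the generating set. Once this order is respected, the application of Neeman's localization theorem is formal.
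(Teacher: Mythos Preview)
Your route through the Verdier-quotient description and Neeman's localization theorem is genuinely different from the paper's proof. The paper works locally on $\Lambda$: it invokes Nadler's arborealization to obtain compact generation of $\mulSh_\Lambda(U)$ for small $U$, and then uses the sheaf property together with closure of compactly generated stable $\infty$-categories under finite limits, compactness of $Z$ entering only to guarantee that a finite cover suffices. Your argument, once completed, has the merit of avoiding arborealization entirely and staying on $Z$ rather than on $\Lambda$.

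The soft spot is your treatment of presentability of $\cC=\lSh_{T^*_ZZ\cup\Lambda}(Z)$. Proposition~\ref{Neemanrepresentability} is Brown representability: it produces adjoints to functors \emph{out of} an already compactly generated category, and does not establish presentability of a subcategory; the cone estimate of Proposition~\ref{ssestimate}(iii) by itself does not give closure under infinite coproducts in $\lSh(Z)$; and Lemma~\ref{3.15} only names split-generators of $\wSh_{\Lambda'}(Z)$, presupposing that the microlocal skyscrapers exist as representing objects, which is exactly the presentability you are trying to secure. A non-circular repair is to first choose a Whitney stratification $\cS$ of $Z$ with $\Lambda'\subset\Lambda_\cS$: then $\lSh_{\Lambda_\cS}(Z)$ is equivalent to a functor category on the exit-path category of $\cS$, hence manifestly presentable and compactly generated, and inside it $\lSh_{\Lambda'}(Z)$ is the right orthogonal to a finite set of microlocal skyscrapers (one per component of $\Lambda_\cS^{\mathrm{sm}}$ not contained in $\Lambda'$), each already compact in $\lSh_{\Lambda_\cS}(Z)$. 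One application of Neeman's theorem then yields presentability and compact generation of $\lSh_{\Lambda'}(Z)$; after that your detection argument via Proposition~\ref{microstalk} and your second application of Neeman for the quotient by $\Loc(Z)$ go through as written.
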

\begin{proof} 
By the arborealization \cite{Nadarb, Nadnonchar}, $\mulSh_\Lambda(U)$ is compactly generated for any sufficiently small open set $U$ of $\Lambda$. Since the $\infty$-category of compactly generated $\infty$-category is closed under finite limits, the global section of $\mulSh_\Lambda$ is also compactly generated.
\end{proof}
The presentability allows us to use Neeman's Brown representability theorem.
\begin{proposition}[{\cite{Neeman}}]\label{Neemanrepresentability}
Let $\cC$ be a compactly generated stable $\infty$-category and $f\colon \cC\rightarrow \cD$ is an exact $\infty$-functor from $\cC$ to a stable $\infty$-category $\cD$. If $f$ is product-preserving, $f$ has a left adjoint. If $f$ is coproduct-preserving, $f$ has a right adjoint.
\end{proposition}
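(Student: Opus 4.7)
The plan is to deduce both claims from Brown representability applied to the compactly generated stable $\infty$-category $\cC$. Recall there are two forms of Brown representability in this setting: (a) an exact functor $G\colon \cC^{\op} \to \mathrm{Sp}$ is representable if and only if it sends coproducts in $\cC$ to products in $\mathrm{Sp}$; (b) dually, an exact functor $H\colon \cC \to \mathrm{Sp}$ is corepresentable if and only if it sends products in $\cC$ to products in $\mathrm{Sp}$---this is Neeman's ``Brown representability for the dual''. Both inputs are proved by approximating an arbitrary such functor by a tower built from a chosen set of compact generators of $\cC$.

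For the coproduct-preserving half, I would fix $d \in \cD$ and consider the functor
\begin{equation}
G_d \colon \cC^{\op} \longrightarrow \mathrm{Sp}, \qquad c \longmapsto \mathrm{Map}_\cD(f(c), d).
\end{equation}
Since $f$ is exact and coproduct-preserving, $G_d$ is exact and sends coproducts in $\cC$ to products in $\mathrm{Sp}$. By (a), there exists $R(d) \in \cC$ together with a natural equivalence $\mathrm{Map}_\cC(c, R(d)) \simeq \mathrm{Map}_\cD(f(c), d)$. Running this construction over all $d \in \cD$ and using standard Yoneda bookkeeping upgrades $d \mapsto R(d)$ to a functor $R\colon \cD \to \cC$ right adjoint to $f$.

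For the product-preserving half, I would analogously consider
\begin{equation}
H_d \colon \cC \longrightarrow \mathrm{Sp}, \qquad c \longmapsto \mathrm{Map}_\cD(d, f(c)).
\end{equation}
Since $f$ is exact and product-preserving, $H_d$ is exact and product-preserving. By (b), there is $L(d) \in \cC$ corepresenting $H_d$, and the same packaging upgrades this to a left adjoint $L\colon \cD \to \cC$.

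The hard part is Brown representability for the dual. Standard Brown representability fits naturally into the presentable adjoint functor theorem, but its dual form is subtler: $\cC^{\op}$ is generally not compactly generated (nor even presentable), so one cannot simply dualize the standard argument. Neeman's proof in \cite{Neeman} instead filters an arbitrary product-preserving $H$ by a tower of functors approximating it on the chosen compact generators of $\cC$, and controls the Milnor $\mathrm{lim}^1$ obstruction that arises when passing to the homotopy limit of this tower. Once this technically substantive step is granted, both halves of the proposition follow formally from the Yoneda-style packaging sketched above.
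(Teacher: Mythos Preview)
Your argument is correct and is the standard deduction of adjoint existence from Brown representability and its dual, as in Neeman's original work. Note that the paper does not give its own proof of this proposition: it is quoted directly from \cite{Neeman} without argument, so there is no ``paper's approach'' to compare against beyond the citation itself.
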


\subsection{Techniques of Tamarkin (after Guillermou--Schapira)}\label{technique}
In this section, we recall and slightly generalize some techniques of Tamarkin \cite{Tam}. We follow the survey by Guillermou--Schapira \cite{GS}. Although almost all statements before Lemma \ref{vanishing} can be obtained by the argument in \cite{GS}, we present details for reader's convenience.

Let $M$ be a rank $n$ free abelian group, $N$ be its dual, and let $M_\bR:=M\otimes_\bZ\bR, N_\bR:=N\otimes_\bZ\bR$. Then the cotangent bundle of the real torus $T^n:=M_\bR/M$ has a canonical trivialization $T^*T^n \simeq T^n\times N_\bR$. We set the quotient map $p\colon M_\bR\rightarrow T^n$.

We write $m\colon T^n\times T^n\rightarrow T^n$ for the multiplication of the torus. Note that $m$ is proper. Then $\lSh(T^n)$ have the convolution product $\star$ defined by 
\begin{equation}
E\star F:=m_!(E\boxtimes F) =m_!(p_1^{-1}E\otimes p_2^{-1}F)=m_*(p_1^{-1}E\otimes p_2^{-1}F)
\end{equation}
for $E,F\in \lSh(T^n)$ where $p_i$ is the projection from $T^n\times T^n$ to each factor. Since $p_i$ are proper, the push-forward $p_*$ is equal to the proper push-forward $p_!$.

The convolution has the right adjoint. We define
\begin{equation}
\cHom^\star(E,F):=p_{1*}\cHom(p_2^{-1}E, m^!F)
\end{equation}
for $E, F\in \lSh(T^n)$.

\begin{lemma}\label{staradjunction} For $E, F, G\in \lSh(T^n)$, we have
\begin{align}
\hom(E\star F, G)\simeq \hom(E, \cHom^\star(F, G)), \\
\cHom^\star(E, F)\simeq m_*\cHom(p_2^{-1}(-1)^{-1}E, p_1^!F),\label{explicit2}
\end{align}
where $-1\colon T^n\rightarrow T^n$ takes an element to its inverse.
\end{lemma}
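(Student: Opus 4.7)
The plan is to prove (1) by a standard chain of adjunctions, and to prove (2) by introducing a group-theoretic change of coordinates on $T^n\times T^n$ and concluding via Yoneda.

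For (1), I would expand $\hom(E\star F, G)$ using the definition of $\star$ and apply in succession the $(m_!, m^!)$ adjunction (which coincides with $(m_*, m^!)$ since $m$ is proper), the tensor-Hom adjunction on $T^n\times T^n$, and the $(p_1^{-1}, p_{1*})$ adjunction. Unrolling the definition of $\cHom^\star$ then produces the claimed isomorphism.

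For (2), the essential step is to establish the identity
\begin{equation*}
X \star E \;\simeq\; p_{1!}\bigl(m^{-1}X\otimes p_2^{-1}(-1)^{-1}E\bigr).
\end{equation*}
To prove it, I would introduce the diffeomorphism $\sigma\colon T^n\times T^n\to T^n\times T^n$ defined by $\sigma(x,y) = (xy, y^{-1})$, whose inverse is $\sigma^{-1}(u,v) = (uv, v^{-1})$. A direct computation yields $p_1\circ\sigma = m$, $p_1\circ\sigma^{-1} = m$, and $p_2\circ\sigma^{-1} = (-1)\circ p_2$. Since $\sigma$ is an iso, $\sigma_* = \sigma_!$ commutes with tensor products and with pullbacks via the formulas just listed, so we obtain $m_! = p_{1!}\sigma_*$, $\sigma_*p_1^{-1}X = m^{-1}X$, and $\sigma_*p_2^{-1}E = p_2^{-1}(-1)^{-1}E$; applying $\sigma_*$ inside $m_!$ to $p_1^{-1}X\otimes p_2^{-1}E$ then yields the displayed identity.

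Given this identity, I would run the analogue of the chain in (1), now using in succession $(m^{-1}, m_*)$, tensor-Hom, and $(p_{1!}, p_1^!)$, starting from $\hom(X, m_*\cHom(p_2^{-1}(-1)^{-1}E, p_1^!F))$ to show that it equals $\hom(X\star E, F)$. Thus $m_*\cHom(p_2^{-1}(-1)^{-1}E, p_1^!F)$ represents the functor $X\mapsto \hom(X\star E, F)$, and by (1) so does $\cHom^\star(E, F)$; Yoneda then delivers (\ref{explicit2}). The only substantive step is the change-of-variables identity above, and it holds because $\sigma$ is a diffeomorphism of equidimensional manifolds; the remaining manipulations are formal adjunction chases, with the group structure of $T^n$ entering the argument exclusively through the existence of $\sigma$.
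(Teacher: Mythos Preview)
Your proof is correct, and the key ingredient is the same as the paper's: the involution $\sigma(x,y)=(xy,y^{-1})$ is exactly the paper's $f=(m,-p_2)$. The difference is in execution. The paper does not pass through Yoneda or an alternative formula for $\star$; instead it inserts $f$ directly into the definition $\cHom^\star(E,F)=p_{1*}\cHom(p_2^{-1}E,m^!F)$, using $p_2\circ f=(-1)\circ p_2$, $p_1\circ f=m$, and the identities $f^{-1}=f^!$, $f_*=f^{-1}$ (valid since $f$ is an involutive isomorphism), to rewrite it in one chain as $m_*\cHom(p_2^{-1}(-1)^{-1}E,p_1^!F)$. Your route---first proving $X\star E\simeq p_{1!}(m^{-1}X\otimes p_2^{-1}(-1)^{-1}E)$ and then matching representable functors---is a legitimate alternative and arguably makes the adjunction structure more transparent, but the paper's direct rewriting is shorter.
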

\begin{proof}
The first one is clear from the definitions. The second one follows from the argument of \cite[Lemma 4.10]{GS}, namely, set $f:=(m, -p_2)\colon T^n\times T^n\rightarrow T^n\times T^n$. We can see $f\circ f=\id$. Since $f$ is an isomorphism, $f^{-1}=f^!$ and $f^{-1}=f_*$. Hence 
\begin{equation}
\begin{split}
\cHom^\star(E, F)&\simeq p_{1*}\cHom(p_2^{-1}E, m^!F)\\
&\simeq p_{1*}\cHom(f^{-1}p_2^{-1}(-1)^{-1}E, f^{!}p_1^{!}F)\\
&\simeq p_{1*}f^{-1}\cHom(p_2^{-1}(-1)^{-1}E, p_1^{!}F)\\
&\simeq (p_{1}\circ f)_*\cHom(p_2^{-1}(-1)^{-1}E, p_1^!F)\\
&\simeq m_*\cHom(p_2^{-1}(-1)^{-1}E, p_1^!F).
\end{split}
\end{equation}
This completes the proof.
\end{proof}

\begin{lemma}
For $E\in \lSh(T^n\times T^n)$, we have
\begin{align}
\musupp(m_!E)&\subset m_\#(\musupp(E)).\\
\end{align}
where $m_\#(\musupp(E))$ is defined by 
\begin{equation}
m_\#(\musupp(E)):=\lc(x, \xi)\relmid \exists (x_1,x_2)\in T^n\times T^n \text{ such that } x_1+x_2=x \text { and } (x_1,\xi, x_2,\xi)\in \musupp(E)\rc.
\end{equation}
\end{lemma}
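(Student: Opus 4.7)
The plan is to apply the general Kashiwara--Schapira microsupport estimate for proper direct images and then compute explicitly what it yields for the multiplication map $m$. Recall that for any morphism of real analytic manifolds $f\colon X\to Y$, Kashiwara--Schapira give associated maps
\[
T^*X \xleftarrow{\,f_d\,} X\times_Y T^*Y \xrightarrow{\,f_\pi\,} T^*Y,
\]
and for $f$ proper and $E\in \lSh(X)$ one has the estimate
\[
\musupp(f_! E)\subset f_\pi\bigl(f_d^{-1}(\musupp E)\bigr).
\]
The map $m\colon T^n\times T^n\to T^n$ is smooth and proper (as noted above the statement), so this estimate applies directly to $f=m$.

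The next step is to identify $m_d$ and $m_\pi$ concretely. Using the canonical trivialization $T^*T^n\simeq T^n\times N_\bR$ (and similarly for $T^n\times T^n$), the differential of $m$ at a point $(x_1,x_2)$ is the addition map $N_\bR\oplus N_\bR\to N_\bR$, since $T^n$ is an abelian Lie group and $m$ is its group law. Its transpose is the diagonal inclusion $\xi\mapsto (\xi,\xi)$ of $N_\bR$ into $N_\bR\oplus N_\bR$. Therefore
\[
m_d^{-1}(\musupp E) = \bigl\{(x_1,x_2;\xi)\in T^n\times T^n\times N_\bR \,\bigm|\, (x_1,\xi,\,x_2,\xi)\in \musupp E\bigr\},
\]
and applying $m_\pi$, which sends $(x_1,x_2;\xi)$ to $(x_1+x_2,\xi)$, produces exactly the set
\[
\bigl\{(x,\xi)\,\bigm|\, \exists (x_1,x_2),\ x_1+x_2=x,\ (x_1,\xi,x_2,\xi)\in \musupp E\bigr\} = m_\#(\musupp E).
\]
Combining with the Kashiwara--Schapira estimate gives the claim.

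The proof is essentially bookkeeping and there is no serious obstacle: the only non-routine ingredient is the general proper direct image estimate, which is classical (see \cite[Prop.~5.4.4]{KS}). The mild care needed is to ensure that one may apply it in the quasi-constructible setting; this is fine because the estimate in \cite{KS} is formulated for arbitrary sheaves, and the quasi-constructibility is preserved by $m_!$ since $m$ is proper with smooth fibers. Thus the argument reduces to the computation of $m_d$ and $m_\pi$ via the group structure, which is a short direct check.
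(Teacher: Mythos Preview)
Your proof is correct. You invoke the general Kashiwara--Schapira proper-pushforward estimate $\musupp(f_!E)\subset f_\pi\bigl(f_d^{-1}(\musupp E)\bigr)$ and then identify $f_d$ and $f_\pi$ for $f=m$; since $m$ is proper the estimate applies as stated, and the explicit description of $m_\#$ drops out immediately. (A tiny slip: the differential of $m$ is addition on tangent spaces $M_\bR\oplus M_\bR\to M_\bR$, not on $N_\bR$; but the transpose, which is what you actually use, is indeed the diagonal $N_\bR\hookrightarrow N_\bR\oplus N_\bR$, so the computation is unaffected.)

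The paper deliberately takes a different route. It remarks that the lemma is a variant of \cite[Theorem~2.16]{GS} and could be proved in that style---which is essentially what you do---but instead gives a direct argument via microlocal stalks: for $(x,\xi)\notin m_\#(\musupp E)$ one observes that the microlocal stalk of $E$ in the $\xi$-direction vanishes at every point of the compact fibre $m^{-1}(x)$, uses compactness to obtain a uniform radius for the test balls, and then pushes down the vanishing of local cohomology along a single test function $f$ on the base to conclude that the microlocal stalk of $m_*E$ at $(x,\xi)$ vanishes. Your approach is shorter and leverages the standard machinery; the paper's argument is more self-contained and makes the role of the compactness of the fibre completely explicit without appealing to the general functorial bound.
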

\begin{proof}
This is a version of \cite[Theorem 2.16]{GS}. Although we can prove in a similar manner to \cite{GS}, we will give a different proof here.

Take a point $(x,\xi)\in T^*T^n\bs m_\#(\musupp(E))$. We trivialize the fibration $m$ as $T^n\times m^{-1}(x)\rightarrow T^n$. Since $(x,\xi)\not\in m_\#(\musupp(E))$, the microsupport $\musupp(E)$ does not have $\xi$ on $m^{-1}(x)$. Hence for any point $y\in m^{-1}(x)$, the microlocal stalk along $\xi$ vanishes. Since $m^{-1}(x)$ is compact, the radii of open disks centered on elements of $m^{-1}(x)$ on which one evaluates microlocal stalk (we wrote as $B$ in Section 3.1) have a lower bound. Hence we can take a neighbourhood $U=m^{-1}(V)$ of $m^{-1}(x)$ for some open set $V$ and a function $f$ on $V$ such that $f(x)=0$, $df(x)=\xi$, and $\mathrm{Graph}(df)\pitchfork m_\#(\musupp(E))$ with
\begin{equation}
\Gamma_{\{m\circ f\geq 0\}}(U, E)\simeq 0.
\end{equation}
This implies
\begin{equation}
m_{x,\xi}(E)\simeq \Gamma_{f\geq 0}(V,m_*E)\simeq \Gamma_{\{m\circ f\geq 0\}}(U, E)\simeq 0.
\end{equation}
Hence $(x,\xi)\not \in \musupp(m_*E)= \musupp(m_!E)$. This completes the proof.
\end{proof}

\begin{proposition}\label{mestimate}
Let $\gamma_i$ be closed cones in $N_\bR$ for $i=1,2$. For any objects $E_i\in \lSh_{T^n\times \gamma_i}(T^n)$ for  $i=1,2$, we have
\begin{align}
\musupp(E_1\star E_2)&\subset T^n\times (\gamma_1\cap \gamma_2),\\
\musupp(\cHom^\star(E_1,E_2))&\subset T^n \times (\gamma_1\cap \gamma_2).
\end{align}
\end{proposition}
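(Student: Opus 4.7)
The plan is to reduce both estimates to the pushforward microsupport bound $\musupp(m_!E) \subset m_\#(\musupp(E))$ established in the lemma immediately preceding the proposition, combined with parts (i) and (ii) of Proposition \ref{ssestimate}.

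For the first inclusion, I would start from the definition $E_1 \star E_2 = m_!(E_1 \boxtimes E_2)$. Proposition \ref{ssestimate}(i) gives $\musupp(E_1 \boxtimes E_2) \subset (T^n \times \gamma_1) \times (T^n \times \gamma_2)$, so it suffices to compute $m_\#$ of this set. By the explicit description of $m_\#$, a covector $(x,\xi)$ lies in the image only if there exist $(x_1,x_2)$ with $x_1+x_2=x$ and $(x_1,\xi,x_2,\xi) \in \musupp(E_1 \boxtimes E_2)$. The crucial feature is that the same covector $\xi$ appears in both slots, which forces $\xi \in \gamma_1 \cap \gamma_2$; meanwhile $x$ remains arbitrary since $(x_1,x_2)$ are free. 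This yields the bound $T^n \times (\gamma_1 \cap \gamma_2)$.

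For the second inclusion, I would use the explicit formula $\cHom^\star(E_1,E_2) \simeq m_*\cHom(p_2^{-1}(-1)^{-1}E_1, p_1^!E_2)$ from Lemma \ref{staradjunction}, together with $m_* = m_!$ (which holds because $m$ is proper). The first step is to pull the antipodal map through microsupport: since the differential of the inversion map on $T^n$ is $-\mathrm{id}$, pullback negates both base and covector, giving $\musupp((-1)^{-1}E_1) \subset T^n \times (-\gamma_1)$. Then Proposition \ref{ssestimate}(ii), applied (after relabeling projections, and using that $p_1^!$ has the same microsupport as $p_1^{-1}$) to the inner $\cHom$, bounds its microsupport in coordinates $(x_1,x_2,\xi_1,\xi_2)$ by the locus where $\xi_1 \in \gamma_2$ and $-\xi_2 \in -\gamma_1$, i.e.\ $\xi_2 \in \gamma_1$. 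Applying $m_\#$ once more, the constraint $\xi_1 = \xi_2$ forces this common covector into $\gamma_1 \cap \gamma_2$, producing the desired bound.

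The main obstacle is simply the sign and projection bookkeeping in the second part: the two antipodes — one from $(-1)^{-1}$ on the first argument and one from the first-slot antipode in Proposition \ref{ssestimate}(ii) — must cancel so that $\xi_2$ genuinely lands in $\gamma_1$ rather than $-\gamma_1$, which is exactly what makes the intersection $\gamma_1 \cap \gamma_2$ appear rather than some spurious larger cone. Once this tracking is done, both inclusions become a clean two-step application of the microsupport estimates already assembled in this subsection.
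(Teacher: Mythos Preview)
Your proposal is correct and follows essentially the same approach as the paper: both arguments reduce to the pushforward bound $\musupp(m_!E)\subset m_\#(\musupp(E))$ combined with Proposition~\ref{ssestimate}(i) for the convolution and with formula~(\ref{explicit2}) plus Proposition~\ref{ssestimate}(ii) for $\cHom^\star$. The paper records the second estimate more tersely as $m_\#(\musupp(E_2)\times\musupp(E_1))$, absorbing in one step the sign cancellation you spell out between the antipode from $(-1)^{-1}$ and the first-slot antipode in Proposition~\ref{ssestimate}(ii).
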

\begin{proof}
This is a version of \cite[Corollary 4.14]{GS}. By the definition of $\star$, Lemma \ref{mestimate}, and Proposition \ref{ssestimate},
\begin{equation}
\begin{split}
\musupp(E_1\star E_2)&\subset m_\#\musupp(E_1\boxtimes E_2)\\
&\subset m_\#(\musupp(E_1) \times \musupp(E_2))\\
&\subset m_\#(T^n\times \gamma_1\times T^n\times \gamma_2)\\
&\subset T^n\times (\gamma_1\cap \gamma_2).
\end{split}
\end{equation}

Similarly, by using (\ref{explicit2}), Lemma \ref{mestimate}, and Proposition \ref{ssestimate},  we have
\begin{equation}
\begin{split}
\musupp(\cHom^\star(E_1,E_2))&\subset m_\#\musupp(\cHom(p_2^{-1}(-1)^{-1}E_1,p_1^!E_2))\\
&\subset m_\#(\musupp(E_2)\times \musupp(E_1))\\
&\subset T^n\times (\gamma_1\cap \gamma_2). 
\end{split}
\end{equation}
This completes the proof.
\end{proof}

As a direct corollary, we have the following, which is a counterpart of {\cite[Proposition 3.17]{GS}. Let $\gamma$ be a closed convex cone in a vector space $M_\bR$ and $\gamma^\vee\subset N_\bR$ be its polar dual:
\begin{equation}
\gamma^\vee:=\lc n\in N_\bR\relmid n(m)\geq 0 \text{ for any } m\in \gamma \rc.
\end{equation}

\begin{corollary}\label{cutoff} For an object $E\in \lSh(T^n)$, we have
\begin{equation}
\musupp(E\star p_!\bC_{\Int(\gamma)})\subset T^n\times (-\gamma^\vee).
\end{equation}
\end{corollary}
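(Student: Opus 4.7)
The plan is to obtain this as an immediate application of Proposition \ref{mestimate}. I would apply that proposition to $E_1 := E$ with the vacuous bound $\gamma_1 := N_\bR$ and $E_2 := p_!\bC_{\Int(\gamma)}$ with $\gamma_2 := -\gamma^\vee$; since $\gamma_1 \cap \gamma_2 = -\gamma^\vee$, the proposition outputs exactly the desired conclusion. Everything thus reduces to checking the single microsupport bound
$$\musupp(p_!\bC_{\Int(\gamma)}) \subset T^n \times (-\gamma^\vee).$$

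For this I would descend to $M_\bR$. The projection $p\colon M_\bR \to T^n$ is a covering map, hence in particular a local diffeomorphism, so microsupport is compatible with $p_!$: under the canonical trivializations $T^*M_\bR \cong M_\bR \times N_\bR$ and $T^*T^n \cong T^n \times N_\bR$, one has $\musupp(p_!F) \subset \{(p(x),\xi) : (x,\xi) \in \musupp(F)\}$ for any $F \in \lSh(M_\bR)$. Hence it suffices to show $\musupp(\bC_{\Int(\gamma)}) \subset M_\bR \times (-\gamma^\vee)$ on $M_\bR$.

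This last bound is a classical Kashiwara--Schapira fact. The open convex cone $\Int(\gamma)$ satisfies $\Int(\gamma) + \gamma \subseteq \Int(\gamma)$, so $\bC_{\Int(\gamma)}$ is a $\gamma$-invariant sheaf, and any such sheaf has microsupport contained in $M_\bR \times (-\gamma^\vee)$ (cf.\ \cite[Proposition 5.3.2]{KS}). Informally, if $d\phi(x_0) = \xi_0 \notin -\gamma^\vee$, then $\phi$ strictly increases in some direction $v \in \gamma$, and the $\gamma$-invariance lets one translate along $v$ to show that the local cohomology $\Gamma_{\{\phi \geq \phi(x_0)\}}(\bC_{\Int(\gamma)})_{x_0}$ is acyclic, verifying Definition \ref{microsupport}. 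I do not anticipate any genuine obstacle here: as the text indicates, the corollary is a direct consequence of Proposition \ref{mestimate} combined with the standard microsupport estimate for the characteristic sheaf of an open convex cone.
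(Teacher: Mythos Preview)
Your proposal is correct and follows exactly the route the paper intends: the paper states this as a ``direct corollary'' of Proposition~\ref{mestimate} without further argument, and you have supplied precisely the missing verification, namely the microsupport bound $\musupp(p_!\bC_{\Int(\gamma)})\subset T^n\times(-\gamma^\vee)$, via the standard Kashiwara--Schapira estimate for constant sheaves on $\gamma$-open sets together with the compatibility of microsupport with the covering map $p$.
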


We set 
\begin{align}
Z_\gamma&:=T^*M_\bR\bs \lb M_\bR \times \Int(\gamma^\vee)\rb, \\
\hat{Z}_\gamma&:=T^*T^n\bs \lb T^n\times \Int(\gamma^\vee)\rb
\end{align}
where $\Int$ is the interior. We say a closed convex cone $\gamma$ is strictly convex if $\gamma\cap (-\gamma)=\{0\}$.
\begin{proposition}[\cite{Tam}, {\cite[Proposition 4.17]{GS}}]\label{GSprop4.17} Suppose that $\gamma$ is strictly convex. Then
\begin{equation}
\Hom(\bC_\gamma, E)\simeq 0
\end{equation}
for $E\in \lSh_{Z_\gamma}(M_\bR)$.
\end{proposition}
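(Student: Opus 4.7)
I would follow the translation argument of Tamarkin, as formalized in Guillermou--Schapira. By the adjunction for the closed inclusion $i\colon\gamma\hookrightarrow M_\bR$, one has $\Hom(\bC_\gamma,E)\simeq R\Gamma(\gamma,i^!E)\simeq R\Gamma_\gamma(M_\bR,E)$, so it suffices to show that the latter vanishes. If $\gamma=\{0\}$ then $\Int(\gamma^\vee)=N_\bR$ forces $Z_\gamma=\emptyset$ and $E=0$, so assume $\gamma\neq\{0\}$.

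Pick $v$ in the relative interior of $\gamma$ and set $\gamma_t:=\gamma+tv$ for $t\geq 0$. Since $v\in\gamma$ and $\gamma$ is a cone, $\gamma_t\subset\gamma$ is a decreasing family of closed subsets. Strict convexity forces $\bigcap_{t\geq 0}\gamma_t=\emptyset$: since $\gamma^\vee$ has nonempty interior, pick $w\in\Int(\gamma^\vee)$; then on $\gamma_t$ one has $\langle w,x\rangle\geq t\langle w,v\rangle$ with $\langle w,v\rangle>0$, so $\gamma_t$ eventually leaves every bounded region. The plan is to prove that $R\Gamma_{\gamma_s}(M_\bR,E)\to R\Gamma_{\gamma_t}(M_\bR,E)$ is an equivalence for all $0\leq s\leq t$, and then to conclude by $t\to\infty$: the stalk of $R\Gamma_{\gamma_t}E$ at any $x$ vanishes once $x\notin\gamma_t$.

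The key microlocal input is the non-characteristic deformation lemma (Proposition 2.7.2 of \cite{KS}) applied to the open complements $U_t:=M_\bR\setminus\gamma_t$. One first checks that the outward conormal to $\partial\gamma_t$ is contained in $\gamma^\vee$ (a direct computation from the KS definition of $\musupp(\bC_\gamma)$, as illustrated by the one-dimensional case $\gamma=[0,\infty)$). The deformation velocity $v$, being in the relative interior of $\gamma$, pairs strictly positively with any $\xi\in\gamma^\vee$ outside the lineality space $L:=\spanning(\gamma)^\perp\subset\gamma^\vee$. Combined with the hypothesis $\musupp(E)\cap(M_\bR\times\Int(\gamma^\vee))=\emptyset$, this yields the non-characteristicity needed to run the deformation and conclude the $t$-independence of the local cohomology.

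The principal obstacle is that $L$ lies in $\partial\gamma^\vee$, so the microsupport bound on $E$ does \emph{not} exclude characteristic directions along $L$; this is problematic precisely when $\gamma$ fails to be full-dimensional. I would dispose of this by splitting $M_\bR=\spanning(\gamma)\oplus W$ and using the microlocal pullback formula for $i_W^!$ (with $i_W\colon\spanning(\gamma)\hookrightarrow M_\bR$) to reduce to the full-dimensional case inside $\spanning(\gamma)$, where $L=0$ and the non-characteristicity is automatic. Alternatively, one can first approximate $\gamma$ by a smooth strictly convex body before deforming, at the cost of a limiting argument.
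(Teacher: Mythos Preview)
The paper does not supply a proof here, citing [Tam] and [GS, Proposition 4.17]; the argument in those references is indeed a deformation argument, so the high-level plan is sound. But your invocation of KS Proposition 2.7.2 for the family $U_t = M_\bR\setminus\gamma_t$ does not go through, and the obstruction is broader than the one you isolate. First, that lemma requires $\overline{U_t\setminus U_s}\cap\supp(E)=\overline{\gamma_s\setminus\gamma_t}\cap\supp(E)$ to be compact; once $\dim\gamma>1$ these slabs are unbounded (already for $\gamma$ the first quadrant in $\bR^2$ with $v=(1,1)$ they are infinite L-shapes), and nothing bounds $\supp(E)$. Second, even setting compactness aside, the microlocal vanishing condition asks that the inward conormal directions to $\gamma_s$ at each $x\in\partial\gamma_s$ miss $\musupp(E)_x$; but a facet normal of $\gamma$ is an extremal ray of $\gamma^\vee$ and so lies in $\partial\gamma^\vee$, which the hypothesis $\musupp(E)\cap(M_\bR\times\Int\gamma^\vee)=\emptyset$ does \emph{not} exclude. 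This already occurs whenever $\dim\gamma>1$, full-dimensional or not --- your lineality space $L$ is only the most degenerate instance. The sentence ``$\langle v,\xi\rangle>0$ combined with the hypothesis yields non-characteristicity'' does not match any hypothesis of KS 2.7.2: that lemma has no slot for a deformation velocity, and positive pairing with $v$ says nothing about whether $\xi\in\musupp(E)_x$.

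The role of strict convexity in the actual argument is to manufacture compactness in a different way: choosing $w\in\Int\gamma^\vee$, the slices $\gamma\cap\{\langle w,x\rangle\leq c\}$ are compact, so a Morse-type deformation of $R\Gamma_\gamma E$ (which is supported in $\gamma$) along the linear form $\langle w,\cdot\rangle$ becomes legitimate --- as $c\to 0^-$ one gets $0$, and as $c\to\infty$ one recovers $\Hom(\bC_\gamma,E)$. The remaining microlocal step is to show that $w$ is non-characteristic for $R\Gamma_\gamma E$, and this is where the hypothesis on $\musupp(E)$ genuinely enters; the cited references handle it via the convolution (``$\gamma$-topology'') description of the condition $\musupp(E)\subset Z_\gamma$, not by a bare appeal to KS 2.7.2. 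Your splitting reduction to the full-dimensional case is fine and is indeed needed, but it does not by itself repair the argument.
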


\begin{proposition}\label{prevanishing}
Suppose that $\gamma$ is strictly convex. Then
\begin{equation}
E\star p_!\bC_\gamma\simeq 0
\end{equation}
for $E\in \lSh_{\hat{Z}_\gamma}(T^n)$.
\end{proposition}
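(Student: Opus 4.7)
My plan is to reduce the convolution vanishing on $T^n$ to Proposition~\ref{GSprop4.17} on $M_\bR$ via adjunctions, using the covering $p\colon M_\bR\to T^n$ as the interface. By Yoneda in $\lSh(T^n)$ (take $G=E\star p_!\bC_\gamma$), it suffices to show $\hom_{T^n}(E\star p_!\bC_\gamma, G)\simeq 0$ for every $G\in\lSh(T^n)$. Commutativity of the convolution (since $T^n$ is an abelian group, so $E\star p_!\bC_\gamma\simeq p_!\bC_\gamma\star E$) together with Lemma~\ref{staradjunction} rewrites this as $\hom_{T^n}(p_!\bC_\gamma, \cHom^\star(E,G))$. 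Since $p$ is a covering map $p^!=p^{-1}$, so the $(p_!,p^!)$-adjunction further identifies this with $\hom_{M_\bR}(\bC_\gamma, p^{-1}\cHom^\star(E,G))$; Proposition~\ref{GSprop4.17} will then conclude, provided $p^{-1}\cHom^\star(E,G)$ lies in $\lSh_{Z_\gamma}(M_\bR)$.

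The substantive step, which I expect to be the main obstacle, is to verify that $\cHom^\star(E,G)\in\lSh_{\hat{Z}_\gamma}(T^n)$ for every $G$. This should come from the explicit formula (\ref{explicit2}) presenting $\cHom^\star(E,G)$ as $m_*$ of an ordinary internal $\cHom$, combined with the microsupport bound of Proposition~\ref{ssestimate}(ii) and the $m_*$-version of the estimate already used for $m_!$ in the proof of Proposition~\ref{mestimate} (the same bound applies because $m$ is proper, so $m_*=m_!$). The key observation is that $m_\#$ forces the two covector components on $T^*(T^n\times T^n)$ to agree: any covector $\xi$ appearing in $\musupp(\cHom^\star(E,G))$ must therefore appear as a covector in $\musupp(E)$, through the factor $p_2^{-1}(-1)^{-1}E$ in (\ref{explicit2}). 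In particular, if no covector of $\musupp(E)$ lies in $\Int(\gamma^\vee)$, then no covector of $\musupp(\cHom^\star(E,G))$ does either, and we obtain $\cHom^\star(E,G)\in\lSh_{\hat{Z}_\gamma}(T^n)$.

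Pulling back by $p$ preserves the cone constraint on covectors, because $p$ is a local isomorphism whose cotangent map is the identity on fibers; hence $\musupp(p^{-1}\cHom^\star(E,G))\subset M_\bR\times(N_\bR\setminus\Int(\gamma^\vee))=Z_\gamma$. The strict convexity of $\gamma$ then triggers Proposition~\ref{GSprop4.17}, yielding the vanishing of $\hom_{M_\bR}(\bC_\gamma, p^{-1}\cHom^\star(E,G))$. Since this holds for arbitrary $G$, Yoneda gives $E\star p_!\bC_\gamma\simeq 0$. Conceptually, the argument transports the Tamarkin--Guillermou--Schapira vanishing on $M_\bR$ to the torus $T^n$ by threading $p^{-1}$ through the internal hom $\cHom^\star$ associated with the convolution structure.
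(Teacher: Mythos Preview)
Your proof is correct and follows essentially the same route as the paper: both rewrite $\hom(E\star p_!\bC_\gamma,-)$ via the $\star$-adjunction and the $(p_!,p^!)$-adjunction as $\hom(\bC_\gamma, p^!\cHom^\star(E,-))$, bound the microsupport of $\cHom^\star(E,-)$ using the $m_\#$-estimate, and invoke Proposition~\ref{GSprop4.17}. The one organizational difference is that the paper tests only against $F\in\lSh_{\hat Z_\gamma}(T^n)$ (concluding that $E\star p_!\bC_\gamma$ lies in the left orthogonal of that subcategory) and then separately shows $E\star p_!\bC_\gamma\in\lSh_{\hat Z_\gamma}(T^n)$, whereas your observation that $\cHom^\star(E,G)\in\lSh_{\hat Z_\gamma}(T^n)$ for \emph{arbitrary} $G$ (since the $m_\#$-constraint on covectors is already forced by the $E$-factor alone) lets Yoneda conclude directly and makes that extra step unnecessary.
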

\begin{proof}
Consider cone of the restriction map $\bC_\gamma\rightarrow \bC_0$, we have an exact triangle
\begin{equation}
E\star p_!\bC_\gamma\rightarrow E\rightarrow E\star p_!\bC_{\gamma\bs\{0\}}\rightarrow.
\end{equation}
Then Proposition \ref{cutoff} implies that $\musupp(E\star p_!\bC_{\gamma})\subset \hat{Z}_\gamma$. On the other hand, we can prove $E\star p_!\bC_\gamma$ is in the left orthogonal of $\lSh_{\hat{Z}_\gamma}(T^n)$ in $\lSh(T^n)$. In fact, for $F\in \lSh_{\hat{Z}_\gamma}(T^n)$, we have 
\begin{equation}
\hom(E\star p_!\bC_\gamma, F)\simeq \hom(p_!\bC_\gamma, \cHom^\star(E, F))\simeq \hom(\bC_\gamma, p^!\cHom^\star(E, F)).
\end{equation}
Since $\musupp(p^!\cHom^*(E,F))\subset Z_\gamma$, vanishing of the right hand side follows from Proposition \ref{GSprop4.17}. Thus $E\star p_!\bC_\gamma$ belongs to both $\lSh_{\hat{Z}_\gamma}(T^n)$ and the left orthogonal, and this gives $E\star p_!\bC_\gamma\simeq 0$.
\end{proof}

\begin{proposition}\label{vanishing}Let $\gamma_0$ be a closed full-dimensional convex rational polyhedral cone in $M_\bR$. Let $\gamma'$ be a closed rational polyhedral cone such that $\gamma_0\subset \gamma'$ and $\gamma'^\vee$ is  not contained in any face of $\gamma_0^\vee$. 
For an object $E\in \lSh_{\hat{Z}_{\gamma_0}}(T^n)$, we have
\begin{equation}
E\star p_!\bC_{\gamma'}\simeq 0.
\end{equation}
\end{proposition}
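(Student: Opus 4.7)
The plan is to extend the two-step argument of Proposition~\ref{prevanishing}, showing that $E\star p_!\bC_{\gamma'}$ lies both in $\lSh_{\hat{Z}_{\gamma_0}}(T^n)$ and in its left orthogonal, whence it must vanish. The first containment is a microsupport calculation: $\musupp(\bC_{\gamma'})\subset M_\bR\times(-\gamma'^\vee)$ is the standard bound for the constant sheaf on a closed convex cone, so since $p$ is \'etale one has $\musupp(p_!\bC_{\gamma'})\subset T^n\times(-\gamma'^\vee)$, and the convolution bound $\musupp(m_!H)\subset m_\#\musupp(H)$ from the lemma preceding Corollary~\ref{cutoff} transfers it to $\musupp(E\star p_!\bC_{\gamma'})\subset T^n\times(-\gamma'^\vee)$. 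Since $\gamma_0\subset\gamma'$ gives $\gamma'^\vee\subset\gamma_0^\vee$ and full-dimensionality of $\gamma_0$ makes $\gamma_0^\vee$ strictly convex, we have $(-\gamma'^\vee)\cap\Int(\gamma_0^\vee)=\emptyset$, placing the microsupport in $\hat{Z}_{\gamma_0}$.

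For the left-orthogonality, take $F\in\lSh_{\hat{Z}_{\gamma_0}}(T^n)$ and combine the $\star$-adjunction of Lemma~\ref{staradjunction} with $(p_!,p^!)$-adjunction:
\begin{equation*}
\hom(E\star p_!\bC_{\gamma'},F)\simeq\hom(\bC_{\gamma'},G),\quad G:=p^!\cHom^\star(E,F).
\end{equation*}
A microsupport analysis modelled on Proposition~\ref{mestimate}, applied to the explicit formula (\ref{explicit2}) for $\cHom^\star$ and using only that $\musupp(E),\musupp(F)\subset T^n\times(N_\bR\setminus\Int(\gamma_0^\vee))$, yields $\musupp(G)\subset M_\bR\times(N_\bR\setminus\Int(\gamma_0^\vee))=Z_{\gamma_0}$. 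To apply Proposition~\ref{GSprop4.17}, which demands a strictly convex cone, I would quotient out the lineality of $\gamma'$: let $L:=\gamma'\cap(-\gamma')$, choose a rational linear complement $L^\circ$ of $L$ in $M_\bR$, and decompose $\gamma'=L\oplus\gamma''$ with $\gamma''\subset L^\circ$ strictly convex and full-dimensional. Writing $\pi_L\colon M_\bR\to L^\circ$ for the projection, $\bC_{\gamma'}=\pi_L^{-1}\bC_{\gamma''}$, so
\begin{equation*}
\hom(\bC_{\gamma'},G)\simeq\hom(\bC_{\gamma''},(\pi_L)_*G).
\end{equation*}

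The hypothesis that $\gamma'^\vee$ is not contained in any face of $\gamma_0^\vee$ enters exactly here: it forces $\gamma_0\cap L=\{0\}$ (otherwise a nonzero $m\in\gamma_0\cap L$ yields a supporting hyperplane $\{v(m)=0\}$ of $\gamma_0^\vee$ defining a proper face that contains $\gamma'^\vee\subset L^\perp$), and together with the decomposition $\gamma'=L\oplus\gamma''$ and strict convexity of $\gamma''$ this yields $\Int_{L^\perp}(\gamma'^\vee)\subset\Int_{N_\bR}(\gamma_0^\vee)$, equivalently $L^\perp\cap(N_\bR\setminus\Int(\gamma_0^\vee))\subset L^\perp\setminus\Int_{L^\perp}(\gamma'^\vee)$. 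Since the fiber directions of $\musupp((\pi_L)_*G)$ in $T^*L^\circ\cong L^\circ\times L^\perp$ are constrained to $L^\perp\cap(N_\bR\setminus\Int(\gamma_0^\vee))$, this places $(\pi_L)_*G\in\lSh_{Z_{\gamma''}}(L^\circ)$, and Proposition~\ref{GSprop4.17} on $L^\circ$ applied to the strictly convex cone $\gamma''$ delivers the required vanishing.

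The step I expect to be most delicate is the microsupport bound for $(\pi_L)_*G$: $\pi_L$ is non-proper (its fibers are copies of $L$), so the usual Kashiwara--Schapira estimate $\musupp(f_*F)\subset f_\#\musupp(F)$ is not automatic. I would handle this by exploiting the specific structure of $G=p^!\cHom^\star(E,F)$ together with a propagation argument along the non-characteristic directions $L^\perp\cap\Int(\gamma_0^\vee)$, which is non-empty precisely by the hypothesis of the proposition.
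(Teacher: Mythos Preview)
Your overall strategy—showing $E\star p_!\bC_{\gamma'}$ lies in both $\lSh_{\hat Z_{\gamma_0}}(T^n)$ and its left orthogonal—is sound, and the first containment as well as the reduction to $\hom(\bC_{\gamma'},G)\simeq 0$ with $\musupp(G)\subset Z_{\gamma_0}$ are correct. Your convex-geometry observation that ``$\gamma'^\vee$ not contained in a proper face of $\gamma_0^\vee$'' forces $\Relint(\gamma'^\vee)\subset \Int(\gamma_0^\vee)$ is also right (if a relative-interior point of a convex subset lies on a face, the whole subset does). This is a genuinely different route from the paper's.

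The gap is exactly where you flagged it, and it is not minor. You need
\[
\musupp\bigl((\pi_L)_*G\bigr)\subset L^\circ\times\bigl(L^\perp\cap(N_\bR\setminus\Int\gamma_0^\vee)\bigr),
\]
but $\pi_L$ has non-compact fibres $L$, and there is no general Kashiwara--Schapira bound $\musupp(f_*F)\subset f_\#\musupp(F)$ without properness. The ``propagation along non-characteristic directions $L^\perp\cap\Int(\gamma_0^\vee)$'' you invoke does not obviously control microsupport of a non-proper pushforward: those directions are codirections, not directions of a deformation of open sets in $M_\bR$, and $G=p^!\cHom^\star(E,F)$ carries no a priori decay or periodicity along $L$ that would let you replace $(\pi_L)_*$ by $(\pi_L)_!$ or truncate to compact fibres. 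Without such an argument, Proposition~\ref{GSprop4.17} is not applicable and the left-orthogonality step is unproved.

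The paper avoids this difficulty entirely. Instead of quotienting by the lineality $L$, it runs an induction on $\codim\gamma'^\vee$: one chooses cones $\gamma_1^\vee,\gamma_2^\vee,\gamma_3^\vee\subset\gamma_0^\vee$ of codimension one less, none contained in a proper face of $\gamma_0^\vee$, with $\gamma_2^\vee\cup\gamma_3^\vee=\gamma_1^\vee$ and $\gamma_2^\vee\cap\gamma_3^\vee=\gamma'^\vee$, and uses the exact triangle
\[
\bC_{\gamma'}\rightarrow \bC_{\gamma_2}\oplus\bC_{\gamma_3}\rightarrow \bC_{\gamma_1}\rightarrow
\]
to reduce to the inductive hypothesis; the base case is the strictly convex one, already handled by Proposition~\ref{prevanishing}. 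This keeps every step inside situations where Proposition~\ref{GSprop4.17} applies directly, and never requires a microsupport estimate for a non-proper pushforward. If you want to repair your argument, the cleanest fix is to adopt this inductive resolution of $\bC_{\gamma'}$ by constant sheaves on strictly convex cones rather than trying to control $(\pi_L)_*G$.
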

\begin{proof} Since $\gamma_0^\vee\supset \gamma'^\vee$, we have $Z_{\gamma_0}\subset Z_{\gamma'}$. Then $E\in \lSh_{\hat{Z}_{\gamma'}}(T^n)$. Hence the statement has already been proved in Proposition \ref{prevanishing} with the assumption $\gamma'$ is strictly convex. 

Let us assume that $\gamma'$ is not strictly convex. We will prove by induction on the codimension of $\gamma'^\vee$. First, we assume $\codim\gamma'^\vee=1$. Then $\gamma'^\vee$ divides some full-dimensional cone $\gamma_1^\vee$ in $\gamma_0^\vee$ to two full-dimensinal closed cones $\gamma_2^\vee$ and $\gamma_3^\vee$, i.e., $\gamma_2^\vee\cap \gamma_3^\vee=\gamma'^\vee$ and $\gamma_2^\vee\cup \gamma_3^\vee=\gamma_1^\vee\subset\gamma_0^\vee$. Then we have $\hat{Z}_{\gamma_0}\subset \hat{Z}_{\gamma_1}=\hat{Z}_{\gamma_2}\cap \hat{Z}_{\gamma_3}$. It follows that $E\in \lSh_{\hat{Z}_{\gamma_0}}(T^n)$ satisfies
\begin{equation}
E\star p_!\bC_{\gamma_i}\simeq 0
\end{equation}
for $i=1,2,3$. Since we have an exact triangle
\begin{equation}\label{resolution}
\bC_{\gamma'}\rightarrow \bC_{\gamma_2}\oplus \bC_{\gamma_3}\rightarrow \bC_{\gamma_1}\rightarrow,
\end{equation}
$E\star p_!\bC_{\gamma'}\simeq 0$ holds.

Suppose that the statement holds up to $\codim=k-1\geq 1$. Since $\gamma'^\vee\subset \gamma_0^\vee$ is not a face, we can take ($k-1$)-codimensional cones $\gamma_i^\vee\subset \gamma_0^\vee$ for $i=1,2,3$ such that $\gamma_2^\vee\cap \gamma_3^\vee=\gamma'^\vee$ and $\gamma_2^\vee\cup \gamma_3^\vee=\gamma_1^\vee$. We can take a resolution of $\gamma_1^\vee$ by $\gamma_2^\vee$ and $\gamma_3^\vee$ as in (\ref{resolution}). Moreover, all those cones can be taken as being not faces of $\gamma_0^\vee$. Then the statement can be shown by an iteration of arguments similar to the previous paragraph.
\end{proof}

To use in Section \ref{generalcase}, we give some calculations of $\cHom^\star$ on $M_\bR$. Let $\sigma$ be a convex cone in $N_\bR$, $E$ be a constructible sheaf over $M_\bR$, $\tilde{p}_i, \tilde{m}\colon M_\bR\times M_\bR\rightarrow M_\bR$ ($i=1,2$) be the projections and the multiplication. We set
\begin{equation}
\begin{split}
E\star_\bR F&:=\tm_!(E\boxtimes F) =\tilde{m}_!(\tilde{p}_1^{-1}E\otimes \tilde{p}_2^{-1}F)\\
\cHom^{\star_\bR}(E,F)&:=\tilde{p}_{1*}\cHom(\tilde{p}_2^{-1}E, \tilde{m}^!F)
\end{split}
\end{equation}
for $E,F\in \lSh(M_\bR)$. As noted in the proof of Lemma \ref{staradjunction}, Lemma \ref{staradjunction} holds for $\star_\bR$ as proved in \cite[Lemma 4.10]{GS}. We write $\bD$ for the Verdier duality functor.
\begin{lemma}\label{Dformula}
There exists a quasi-isomorphism
\begin{equation}
\cHom^\star(E, \bC_{\Int(\sigma^\vee)})\simeq \tilde{m}_*(-1)^*\bD(\tilde{p}_2^{-1}E\otimes \tilde{p}_1^{-1}\bC_{-\sigma^\vee})\\
\end{equation}
\end{lemma}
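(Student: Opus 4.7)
The plan is to start from the $\star_\bR$–analog of the explicit formula \eqref{explicit2} and then unwind the right adjoint into a Verdier dual using standard six-functor identities.

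First, applying Lemma \ref{staradjunction} (or rather its $\star_\bR$ version, which is the content of \cite[Lemma 4.10]{GS}) to $F = \bC_{\Int(\sigma^\vee)}$ gives
\begin{equation}
\cHom^{\star_\bR}(E, \bC_{\Int(\sigma^\vee)}) \simeq \tilde{m}_*\,\cHom\bigl(\tilde{p}_2^{-1}(-1)^{-1}E,\ \tilde{p}_1^!\bC_{\Int(\sigma^\vee)}\bigr).
\end{equation}
So everything reduces to computing the inner $\cHom$ on $M_\bR\times M_\bR$ as a Verdier dual of a tensor product.

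Next I would invoke two standard duality identities: the commutation $\tilde{p}_1^! G \simeq \bD\,\tilde{p}_1^{-1}\bD G$ (valid for any morphism) and the tensor–Hom adjunction in dualized form $\cHom(A,\bD B) \simeq \bD(A\otimes B)$. Composing these yields
\begin{equation}
\cHom\bigl(\tilde{p}_2^{-1}(-1)^{-1}E,\ \tilde{p}_1^!\bC_{\Int(\sigma^\vee)}\bigr) \simeq \bD\bigl(\tilde{p}_2^{-1}(-1)^{-1}E\otimes \tilde{p}_1^{-1}\bD\bC_{\Int(\sigma^\vee)}\bigr).
\end{equation}
A direct computation of the Verdier dual of the constant sheaf on the open convex cone shows that $\bD\bC_{\Int(\sigma^\vee)}\simeq \bC_{\sigma^\vee}$ (with the orientation shift of $M_\bR$ absorbed into the convention for $\bD$).

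Finally, I would repackage the two remaining $(-1)^{-1}$'s on the second factor using the involution $\mathrm{inv}\colon (x,y)\mapsto(-x,-y)$ on $M_\bR\times M_\bR$. Since $\tilde{p}_i\circ\mathrm{inv} = (-1)\circ \tilde{p}_i$, one has $\tilde{p}_2^{-1}(-1)^{-1}E = \mathrm{inv}^{-1}\tilde{p}_2^{-1}E$ and $\bC_{\sigma^\vee} = (-1)^{-1}\bC_{-\sigma^\vee}$, so $\tilde{p}_1^{-1}\bC_{\sigma^\vee} = \mathrm{inv}^{-1}\tilde{p}_1^{-1}\bC_{-\sigma^\vee}$. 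Together with $\bD\circ\mathrm{inv}^{-1} = \mathrm{inv}^*\bD$ (since $\mathrm{inv}$ is an isomorphism) and the compatibility $\tilde{m}\circ\mathrm{inv} = (-1)\circ \tilde{m}$ (so $\tilde{m}_*\mathrm{inv}^* = (-1)^*\tilde{m}_*$), the expression simplifies to the stated form
\begin{equation}
\tilde{m}_*(-1)^*\bD\bigl(\tilde{p}_2^{-1}E\otimes \tilde{p}_1^{-1}\bC_{-\sigma^\vee}\bigr).
\end{equation}

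The main point is really a chain of six-functor manipulations; the only genuine subtlety is pinning down $\bD\bC_{\Int(\sigma^\vee)}$ in the convention of the paper, which (for a strictly convex $\sigma$ with full-dimensional polar dual) follows from the explicit stalk computation of $Rj_*\bC$ for the open inclusion $j\colon\Int(\sigma^\vee)\hookrightarrow M_\bR$ together with $\bD(j_!\bC) \simeq j_*\bD\bC$. The bookkeeping of the $(-1)^*$ and the equivariance of the $\tilde{p}_i$ and $\tilde{m}$ is routine once the duality chain is in place.
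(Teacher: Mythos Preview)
Your proof is correct and follows essentially the same route as the paper's. Both start from the explicit formula \eqref{explicit2} (in its $\star_\bR$ version), rewrite the inner $\cHom$ as a Verdier dual of a tensor product via $\cHom(A,\bD B)\simeq\bD(A\otimes B)$ together with $\bD\,\tilde p_1^! = \tilde p_1^{-1}\bD$, identify $\bD\bC_{\Int(\sigma^\vee)}$ with $\bC_{\sigma^\vee}$, and then pull the involution through using $\tilde m\circ\mathrm{inv}=(-1)\circ\tilde m$. The paper compresses these steps into two lines; your version just spells out the six-functor bookkeeping and the role of $\mathrm{inv}$ more explicitly. Your caveat about the shift in $\bD\bC_{\Int(\sigma^\vee)}$ is apt: the paper is silently using the same identification (and does so again in the proof of Lemma~\ref{polytopedual}), so this is a convention issue rather than a gap in your argument.
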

\begin{proof}
We have
\begin{equation}
\begin{split}
\cHom^\star(\tE, \bC_{\Int(\sigma^\vee)})&:=\tm_*\cHom(\tp_2^{-1}(-1)^*E, \tp_1^!\bC_{\Int(\sigma^\vee)})\\
&\simeq \tm_*\bD(\tp_2^{-1}(-1)^*E\otimes \bD\tp_1^!\bC_{\Int(\sigma^\vee)})\\
&\simeq  \tm_*(-1)^*\bD(\tp_2^{-1}E\otimes \tp_1^{-1}\bC_{-\sigma^\vee})
\end{split}
\end{equation}
This completes the proof.
\end{proof}

Let $\{v_{\rho_1},..., v_{\rho_s}\}$ and $\{v_{\upsilon_1},...,v_{\upsilon_t}\}$ be sets of ray generators of rays (not necessarily being edges) in $\sigma$. Let $n_1,...,n_s,l_1,...,l_t$ be real numbers such that the subset $D$ in $M_\bR$ defined by the inequalities
\begin{equation}
\la m, v_{\rho_i}\ra> n_i \text{ for } i=1,...,s
\end{equation}
and
\begin{equation}
\la m, v_{\upsilon_j}\ra \leq l_i \text{ for } j=1,...,t
\end{equation}
is bounded and contained in a fundamental domain of $M_\bR\rightarrow T^n$. We further assume that there are no redundancy on $v$'s. We also write $\bD D$ for the set defined by the inequalities
\begin{equation}\label{DDclosed}
\la m, v_{\rho_i}\ra\geq  n_i \text{ for } i=1,...,s
\end{equation}
and
\begin{equation}\label{DDopen}
\la m, v_{\upsilon_j}\ra < l_j \text{ for } j=1,...,t.
\end{equation}
Then $-\bD D$ is defined by
\begin{equation}\label{-DDclosed}
\la m, v_{\rho_i}\ra\leq  -n_i \text{ for } i=1,...,s
\end{equation}
and
\begin{equation}\label{-DDopen}
\la m, v_{\upsilon_j}\ra >- l_j \text{ for } j=1,...,t.
\end{equation}

\begin{lemma}\label{polytopedual}
There exists a quasi-isomorphism
\begin{equation}
\cHom^\star(\bC_{D}, \bC_{\Int(\sigma^\vee)})\simeq \bC_{-\bD D}.
\end{equation}
\end{lemma}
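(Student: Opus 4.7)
The plan is to apply Lemma~\ref{Dformula} with $E=\bC_D$, which rewrites the left-hand side as $\tm_*(-1)^*\bD(\bC_{(-\sigma^\vee)\times D})$. The K\"unneth formula for Verdier duality factors this as $\tm_*(-1)^*(\bD\bC_{-\sigma^\vee}\boxtimes\bD\bC_D)$. The first factor is standard: since $-\sigma^\vee$ is a closed full-dimensional convex cone in $M_\bR$ (as $\sigma$ is strictly convex and full-dimensional), one has $\bD\bC_{-\sigma^\vee}\simeq\bC_{-\Int(\sigma^\vee)}[n]$. For the second factor, $D$ is a bounded polyhedral region with both strict bounds $\la m,v_{\rho_i}\ra>n_i$ and non-strict bounds $\la m,v_{\upsilon_j}\ra\le l_j$; Verdier duality for such a locally closed polyhedron swaps the two types of boundary inequalities, giving $\bD\bC_D\simeq\bC_{\bD D}[n]$ with $\bD D$ as in \eqref{DDclosed}--\eqref{DDopen}. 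This swap can be established by induction on the number of facets of $D$, using the distinguished triangle that adds or removes a single facet and reducing to the standard Verdier duals of closed and open half-spaces.

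After applying the antipode $(-1)^*$ and collecting shifts, the problem reduces to computing the pushforward $\tm_*\bC_{\Int(\sigma^\vee)\times(-\bD D)}$ up to the appropriate cohomological shift. I would evaluate this using the change of coordinates $\phi(x_1,x_2)=(x_1+x_2,x_2)$, so that $\tm=\tp_1\circ\phi$ and $\phi_*\bC_{\Int(\sigma^\vee)\times(-\bD D)}$ is the constant sheaf on the locally closed subset $S=\{(y,x_2)\in M_\bR\times M_\bR : x_2\in -\bD D,\ y-x_2\in\Int(\sigma^\vee)\}$. The stalk of $\tp_{1,*}\bC_S$ at $y\in M_\bR$ is then the hypercohomology of $\bC_S$ over a small vertical neighborhood $V_y\times M_\bR$ of $\{y\}\times M_\bR$.

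The main obstacle is this final fiber analysis. For $y\in -\bD D$, the slice of $S$ over $V_y$ is nonempty and, after careful bookkeeping of its open/closed boundary faces, contributes $\bC$ in the expected degree; for $y\notin -\bD D$, at least one defining inequality \eqref{-DDclosed}--\eqref{-DDopen} of $-\bD D$ is violated, and the slice then splits into configurations whose associated distinguished triangle forces the hypercohomology to vanish. This case analysis is most cleanly organized by induction on the number of facets of $D$: removing one inequality at a time yields a distinguished triangle relating the pushforward of $\bC_{\Int(\sigma^\vee)\times(-\bD D)}$ to that for $D$ with fewer facets, with the single-half-space base case handled directly by the Verdier-dual computation above.
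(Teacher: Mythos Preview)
Your approach is essentially the same as the paper's: apply Lemma~\ref{Dformula}, compute the Verdier dual of $\bC_{(-\sigma^\vee)\times D}$ (the paper simply states $(-1)^*\bD(\bC_{D\times(-\sigma^\vee)})\simeq\bC_{-\bD D\times\Int(\sigma^\vee)}$, which is your K\"unneth computation), and then analyze the pushforward along $\tm$ fiber by fiber.

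The divergence is in how the fiber analysis is executed, and here your sketch has a gap. You work with $\tm_*$ and describe the stalk at $y$ as the hypercohomology of $\bC_S$ over $V_y\times M_\bR$. But the slice $S\cap(V_y\times M_\bR)$ is a nonempty convex set for every $y$ in the Minkowski sum $\Int(\sigma^\vee)+(-\bD D)$, which is strictly larger than $-\bD D$; naively reading off ordinary cohomology of this convex set gives $\bC$ in degree $0$ on the wrong support. What makes the computation work is that $\tm$ is \emph{proper} on the support of $\bC_{\Int(\sigma^\vee)\times(-\bD D)}$ (since $-\bD D$ is bounded), so $\tm_*=\tm_!$ and the stalk at $r$ is the \emph{compactly supported} cohomology $H_c^*(C_r)$ of the fiber $C_r=(-\bD D)\cap(r-\Int(\sigma^\vee))$. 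The paper makes exactly this move (``since $\bC_D$ is compactly-supported''), and you should too.

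Once one is computing $H_c^*(C_r)$, the paper dispenses with induction entirely: a direct manipulation of the defining inequalities \eqref{-DDclosed}--\eqref{-DDopen} shows that $C_r$ is open and nonempty precisely when $r\in-\bD D$, while for $r\notin-\bD D$ the set $C_r$ is either empty or a convex locally closed set that is neither open nor closed (it has both an included boundary face, reached by moving toward $r$, and an excluded one, reached by moving away), hence $H_c^*(C_r)=0$. This is shorter and more transparent than the facet induction you propose.
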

\begin{proof}
First, note that $\tilde{p_2}^{-1}\bC_{D}\otimes \tilde{p}_1^{-1}\bC_{-\sigma^\vee}\simeq \bC_{D\times (-\sigma^\vee)}$. Hence we have
\begin{equation}
(-1)^*\bD (\tilde{p}_2^{-1}\bC_{D}\otimes \tilde{p}_1^{-1}\bC_{-\sigma^\vee})\simeq \bC_{-\bD D\times \Int(\sigma^\vee)}.
\end{equation}
We will prove that $\tilde{m}_!\bC_{-\bD D\times \Int(\sigma^\vee)}\simeq \bC_{-\bD D}$. To simplify the notation, we set $C:=-\bD D$. Then it suffices to show that the following: For $r\in M_\bR$, the set 
\begin{equation}
C_r:=\tilde{m}^{-1}(r) \cap (C\times \Int(\sigma^\vee)).
\end{equation}
is open and nonempty in $\tilde{m}^{-1}(r)$ if and only if $r\in C$, and if $C_r$ is not open and not empty then $C_r$ is locally closed and not closed. In particular, if $C_r$ is not open, then the cohomology of $\bC_{C_r}$ vanishes.

Assume that $r\in C$. Let $(x,y)$ be a point in $\tilde{m}^{-1}(r)$ but not in $C\times \Int(\sigma^\vee)$. Since the condition (\ref{-DDopen}) and $y\in \Int(\sigma^\vee)$ are open conditions, we only have to take care of (\ref{-DDclosed}). Assume $\la x, v_{\rho_i}\ra=-n_i$ for some $i$. Since $r\in C$, we also have $\la r, v_{\rho_i}\ra=\la x+y, v_{\rho_i}\ra \leq -n_i$. Then we have $\la y, v_{\rho_i}\ra <0$, but this implies $y\not\in \Int(\sigma^\vee)$. Hence $C_r$ is open in this case. Take $v\in \Int(\sigma^\vee)$. By choosing sufficiently small $\epsilon$, the vector $r-\epsilon v$ satisfies (\ref{DDopen}) and $\la r-\epsilon v, v_{\rho_i}\ra<-n_i$ for any $i$. This implies $(r-\epsilon v, \epsilon v)\in C_r$, and hence $C_r$ is nonempty.

Conversely, assume that $r\not\in C$. Then we have (i) $\la r, v_{\upsilon_j}\ra \leq -l_j$ for some $j$ or (ii) $\la r, v_{\rho_i}\ra >-n_i$ for some $i$. If $(i)$ holds, the vector $(x, y)\in C_r$ satisfies
\begin{equation}
\la x, v_{\upsilon_j}\ra \leq -l_j-\la y, v_{\upsilon_j}\ra 
\end{equation}
for some $j$. Since $\la y, v_{\upsilon_j}\ra >0$, we have $\la x, v_{\upsilon_j}\ra <-l_j$, but this contradicts to $x\in C$. Hence $C_r$ is empty.

Hereafter we assume that $C_r$ is nonempty, then (i) never occurs. Hence (ii) holds. Let $I$ be the subset of $\{1,..., s\}$ consisting of $i$ satisfying 
\begin{equation}
\la r, v_{\rho_i}\ra > -n_i.
\end{equation}
Take $(x,y)\in C_r$. Since $\la x, v_{\rho_i}\ra \leq-n_i$ for any $i$, there exists $\epsilon_i\in (0,1)$ such that 
\begin{equation}
\la (1-\epsilon_i)x+\epsilon_i r, v_{\rho_i}\ra =-n_i
\end{equation} 
for each $i\in I$. Let $\epsilon_0$ be the smallest one among $\epsilon_i$'s. Then the vector $(1-\epsilon)x+\epsilon r$ for $\epsilon\in (0, \epsilon_0]$ is contained in $C$, but not contained in $C$ for $\epsilon>\epsilon_0$. Hence $((1-\epsilon_0)x+\epsilon_0 r, (1-\epsilon_0)y)$ is the boundary point of $C_r$. Hence $C_r$ is not open. On the other hand, if we proceed $\epsilon$ to the minus direction, the condition (\ref{DDclosed}) is stable for $(1-\epsilon)x+\epsilon r$. Since $C$ is bounded, the vector $(1-\epsilon)x+\epsilon r$ eventually violates (\ref{DDopen}). The largest one among such $\epsilon$'s gives a boundary point of $C_r$ which is not contained $C_r$. Hence $C_r$ is not closed. Hence we have $\tm_!\bC_{-\bD D\times \Int(\sigma^\vee)}\simeq \bC_{-\bD D}$.

In Lemma \ref{Dformula}, we set $E:=\bC_D$. Since $\bC_D$ is compactly-supported, we have
\begin{equation}
\begin{split}
\cHom^\star(E, \bC_{\Int(\sigma^\vee)})&\simeq m_*(-1)^*\bD(\tilde{p}_2^{-1}E\otimes \tilde{p}_1^{-1}\bC_{-\sigma^\vee})\\
&\simeq m_!(-1)^*\bD(\tilde{p}_2^{-1}E\otimes \tilde{p}_1^{-1}\bC_{-\sigma^\vee})\\
&\simeq m_!\bC_{-\bD D\times \Int(\sigma^\vee)}\\
&\simeq \bC_{-\bD D}.
\end{split}
\end{equation}
This completes the proof.
\end{proof}

We give one more lemma which is a counterpart of $\cHom_{\cO_X}(\cE, \cF)\simeq \cHom_{\cO_X}(\cE, \cO)\otimes_{\cO_X}\cF$ on the coherent side.
\begin{lemma}\label{tensorfactor}
For $E, F\in \lSh(M_\bR)$ and $G\in \cSh(M_\bR)$ such that $G$ is compactly-supported, there exists a quasi-isomorphism
\begin{equation}
\cHom^{\star_\bR}(E, F\star_\bR G)\simeq \cHom^{\star_\bR}(E, F)\star G.
\end{equation}
\end{lemma}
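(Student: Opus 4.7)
The strategy is to reduce, via Yoneda and the adjunction of Lemma~\ref{staradjunction}, to a dualizability property for the compactly supported sheaf $G$ with respect to the convolution $\star_\bR$. By Yoneda it suffices to exhibit, for every $A\in \lSh(M_\bR)$, a natural equivalence
\begin{equation*}
\hom(A,\cHom^{\star_\bR}(E,F\star_\bR G))\simeq \hom(A,\cHom^{\star_\bR}(E,F)\star_\bR G).
\end{equation*}
The left-hand side rewrites as $\hom(A\star_\bR E,F\star_\bR G)$ by Lemma~\ref{staradjunction} (whose $\star_\bR$-analogue on $M_\bR$ holds by the same proof, as remarked there).

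The main technical ingredient is the sub-claim that for $G\in\cSh(M_\bR)$ compactly supported, $-\star_\bR G$ admits a left adjoint of the form $-\star_\bR G^\vee$ with $G^\vee:=(-1)^*\bD G$; equivalently, there is a natural equivalence
\begin{equation*}
\cHom^{\star_\bR}(G^\vee,Y)\simeq Y\star_\bR G.
\end{equation*}
I would prove this by unfolding $\cHom^{\star_\bR}(G^\vee,-)$ via the alternative expression analogous to (\ref{explicit2}) for $\star_\bR$, rewriting the internal $\cHom(\tilde p_2^{-1}\bD G,\tilde p_1^!Y)$ as a tensor involving $\tilde p_2^{-1}G\otimes \tilde p_1^{-1}Y$ via Verdier biduality for constructible $\bD G$ and the smooth-pullback identity $\bD\circ\tilde p_2^{-1}\simeq \tilde p_2^!\circ\bD$, and then using that compactness of the support of $G$ ensures $\tilde m_!\simeq \tilde m_*$ on the resulting sheaf---its support is proper with respect to $\tilde m$ since $\tilde m^{-1}(u)\cap(S\times\operatorname{supp}G)$ is compact for any bounded $S$.

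Granted the sub-claim, the right-hand side of the Yoneda test becomes $\hom(A\star_\bR G^\vee,\cHom^{\star_\bR}(E,F))\simeq \hom(A\star_\bR G^\vee\star_\bR E,F)$ by Lemma~\ref{staradjunction}; symmetrically the left-hand side becomes $\hom(A\star_\bR E\star_\bR G^\vee,F)$. These two expressions agree by associativity and commutativity of $\star_\bR$.

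The hardest step is the sub-claim: one must carefully track the shifts arising from $\tilde p_i^!\simeq\tilde p_i^{-1}[n]$, the expression of $\tilde m^!$ via the shear isomorphism $f(a,b)=(a+b,b)$, and the conventions of Verdier duality, possibly absorbing a dimensional shift into the definition of $G^\vee$. Once the correct normalization is fixed, the remainder of the argument is a formal adjunction chase in the symmetric monoidal $\infty$-category $(\lSh(M_\bR),\star_\bR)$, with compactness of $G$ entering exactly where it is needed to swap $\tilde m_!$ with $\tilde m_*$ and to extract $\bD G$ from the relevant $\cHom$.
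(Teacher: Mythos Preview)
Your approach is correct and takes a genuinely different route from the paper. The paper proves the lemma by a direct diagram chase over six copies $M_1,\dots,M_6$ of $M_\bR$: it unpacks both sides via the various multiplications and projections, and matches them using base change, the identity $p^{!}\simeq p^{-1}[n]$ for the projections, the projection formula, and the compactness of $\operatorname{supp}G$ (to trade $\tilde m_!$ for $\tilde m_*$). Your argument instead packages the content as the dualizability of a compactly supported constructible $G$ in the symmetric monoidal $\infty$-category $(\lSh(M_\bR),\star_\bR,\bC_{\{0\}})$, with dual $G^\vee=(-1)^*\bD G$; once that is known, the lemma is a purely formal adjunction chase, exactly as you outline.

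Your identification of the sub-claim as the hard step is accurate. The point that needs care is the K\"unneth-type identification $\cHom(\tilde p_2^{-1}\bD G,\tilde p_1^{!}Y)\simeq Y\boxtimes G$: this is true, but ``biduality plus smooth pullback'' alone does not quite give it, since for general $Y\in\lSh(M_\bR)$ one cannot simply rewrite $\cHom(\bD(-),Y)$ as a tensor product. One clean way is to reduce (by d\'evissage in $G$) to $G=\bC_U$ and then verify $(\id\times j)_*(Y\boxtimes\omega|_U)\simeq Y\boxtimes j_*\omega|_U$ via smooth base change and the projection formula, using that $\tilde p_1^{-1}Y$ is constant in the $\tilde p_2$-direction. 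The paper's computation uses essentially the same ingredient at the step $\cHom(A,B)\otimes (p_4^{12})^{-1}G\simeq\cHom(A,B\otimes (p_4^{12})^{-1}G)$, which is an isomorphism precisely because $(p_4^{12})^{-1}G$ is constructible. Your route has the advantage of isolating a reusable statement---indeed, the dualizability reappears for the special shapes $G=\bC_D$ in Lemma~\ref{polytopedual}---while the paper's route stays entirely inside explicit six-functor manipulations.
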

\begin{proof}
We first prepare the notation. Let $M_i$ ($i=1,..,6$) be copies of $M_\bR$. Then we set the multiplication maps
\begin{align}
m_{12}&\colon M_1\times M_2\rightarrow M_3\\
m_{14}&\colon M_1\times M_4\rightarrow M_5\\
m_{34}&\colon M_3\times M_4\rightarrow M_6\\
m_{25}&\colon M_2\times M_5\rightarrow M_6\\
\end{align}
and projections
\begin{align}
p_{ij}^k &\colon M_{i}\times M_{j}\times M_k\rightarrow M_{i}\times M_{j}\\
p_i^j& \colon M_i\times M_j\rightarrow M_i\\
p_{i}^{jk}&\colon M_{i}\times M_{j}\times M_k\rightarrow M_{i}
\end{align}
and identities $\id_i\colon M_i\rightarrow M_i$.

Then we have
\begin{equation}
\begin{split}
\cHom^{\star_\bR}(E, F)\star_\bR G&\simeq m_{34!}((p_3^4)^{-1}m_{12*}\cHom((p_2^1)^{-1}(-1)^*E, (p_1^2)^!F)\otimes (p_4^3)^{-1}G)\\
&\simeq m_{34*}((p_3^4)^{-1}m_{12*}\cHom((p_2^1)^{-1}(-1)^*E, (p_1^2)^!F)\otimes (p_4^3)^{-1}G)
\end{split}
\end{equation}
by the assumption that $G$ is compactly-supported.

Since we have the pull-back diagram
\begin{equation}
\xymatrix{
M_1\times M_2\times M_4 \ar[d]_{m_{12}\times \id} \ar[r]^{p_{12}^4}& M_1\times M_2 \ar[d]^{m_{12}}\\
M_3\times M_4\ar[r]_{p_3^4}&M_3,
}
\end{equation}
the base change implies
\begin{equation}
\begin{split}
m_{34*}&((p_3^4)^{-1}m_{12*}\cHom((p_2^1)^{-1}(-1)^*E,  (p_1^2)^!F)\otimes (p_4^3)^{-1}G)\\
&\simeq m_{34*}((m_{12}\times \id_4)_*((p_{12}^4)^{-1}\cHom((p_2^1)^{-1}(-1)^*E, (p_1^2)^!F))\otimes (p_4^3)^{-1}G)\\
&\simeq m_{34*}(m_{12}\times \id_4)_*((p_{12}^4)^{-1}\cHom((p_2^1)^{-1}(-1)^*E, (p_1^2)^!F))\otimes (m_{12}\times \id_4)^{-1}(p_4^3)^{-1}G).
\end{split}
\end{equation}
By using $p_4^3\circ (m_{12}\times \id_4)=p_4^{12}$, $p_2^5\circ (m_{14}\times \id_2)=p_2^{14}$, and  $m_{34}\circ (m_{12}\times \id_4)= m_{25}\circ(m_{14}\times \id_2)$, we can further calculate as
\begin{equation}
\begin{split}
m_{34*}(m_{12}\times \id_4)_*&((p_{12}^4)^{-1}\cHom((p_2^1)^{-1}(-1)^*E, (p_1^2)^!F))\otimes (m_{12}\times \id_4)^{-1}(p_4^3)^{-1}G)\\
&\simeq m_{34*}(m_{12}\times \id_4)_*((p_{12}^4)^{-1}\cHom((p_2^1)^{-1}(-1)^*E, (p_1^2)^!F))\otimes (p_4^{12})^{-1}G)\\
&\simeq m_{34*}(m_{12}\times \id_4)_*((p_{12}^4)^{-1}\cHom((p_2^1)^{-1}(-1)^*E, (p_1^2)^{-1}F[n]))\otimes (p_4^{12})^{-1}G)\\
&\simeq m_{34*}(m_{12}\times \id_4)_*(\cHom((p_2^{14})^{-1}(-1)^*E, (p_1^{24})^{-1}F[n]))\otimes (p_4^{12})^{-1}G)\\
&\simeq m_{34*}(m_{12}\times \id_4)_*(\cHom((p_2^{14})^{-1}(-1)^*E, (p_{14}^2)^{-1}((p_1^{4})^{-1}F[n]\otimes (p_4^{1})^{-1}G)))\\
&\simeq m_{25*}(m_{14}\times \id_2)_*(\cHom((p_2^{14})^{-1}(-1)^*E, (p_{14}^2)^{!}((p_1^{4})^{-1}F\otimes (p_4^{1})^{-1}G)))\\
&\simeq m_{25*}(m_{14}\times \id_2)_!(\cHom((m_{14}\times \id_2)^{-1}(p_{2}^5)^{-1}(-1)^*E, (p_{14}^2)^{!}((p_1^{4})^{-1}F\otimes (p_4^{1})^{-1}G)))\\
&\simeq m_{25*}(\cHom((p_2^5)^{-1}E, (p_{5}^2)^{!}(F\star_\bR G))\\
&\simeq \cHom^{\star_\bR}(E, F\star_\bR G)\\
\end{split}
\end{equation}
This completes the proof.
\end{proof}

\begin{corollary}\label{cptadjunction}
For $F\in \lSh(M_\bR)$ and compactly-supported $E\in \cSh(M_\bR)$, there exits a quasi-isomorphism
\begin{equation}
\hom(E, F\star \bC_{D})\simeq \hom(E\star \bC_{-\bD D}, F\star \bC_{\Int(\sigma^\vee)}).
\end{equation}
\end{corollary}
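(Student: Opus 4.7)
The plan is to derive the claimed quasi-isomorphism through a chain of equivalences combining Lemmas \ref{staradjunction}, \ref{tensorfactor}, and \ref{polytopedual}. First, applying the adjunction of Lemma \ref{staradjunction} to the right-hand side gives
\[
\hom(E \star \bC_{-\bD D}, F \star \bC_{\Int(\sigma^\vee)}) \simeq \hom(E, \cHom^\star(\bC_{-\bD D}, F \star \bC_{\Int(\sigma^\vee)})).
\]
Next, I would use a projection-formula step based on Lemma \ref{tensorfactor}, combined with the commutativity $F \star \bC_{\Int(\sigma^\vee)} \simeq \bC_{\Int(\sigma^\vee)} \star F$ of convolution on the abelian group $M_\bR$, to commute $F$ past $\cHom^\star(\bC_{-\bD D}, -)$ and obtain $\hom(E, F \star \cHom^\star(\bC_{-\bD D}, \bC_{\Int(\sigma^\vee)}))$. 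Finally, I would identify $\cHom^\star(\bC_{-\bD D}, \bC_{\Int(\sigma^\vee)}) \simeq \bC_D$ by applying Lemma \ref{polytopedual} with $-\bD D$ in place of $D$, yielding $\hom(E, F \star \bC_D)$, which is the left-hand side.

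The application of Lemma \ref{polytopedual} to $-\bD D$ requires verifying that $-\bD D$ fits its hypotheses and that $-\bD(-\bD D) = D$. After reorganizing the defining inequalities of $-\bD D$ into the canonical strict/non-strict form used by the lemma, the relevant ray generators form the same family $\{v_{\rho_i}\} \cup \{v_{\upsilon_j}\} \subset \sigma$ (with the roles of the two subfamilies interchanged and the bounds negated), while boundedness and containment in a fundamental domain are preserved from $D$. A direct inspection tracing the two operations $\bD$ (which swaps strict and non-strict) and negation shows that applying each twice restores the original inequalities $\langle m, v_{\rho_i}\rangle > n_i$ and $\langle m, v_{\upsilon_j}\rangle \leq l_j$, giving the involutive identity $-\bD(-\bD D) = D$, so Lemma \ref{polytopedual} produces exactly $\bC_D$.

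The main obstacle is the projection-formula step, since Lemma \ref{tensorfactor} as stated requires its third argument to be compactly supported, yet neither $F$ nor $\bC_{\Int(\sigma^\vee)}$ is. My plan is to exploit the compact support of $E$ by writing $F$ as a filtered colimit $F \simeq \indlim_\alpha F_\alpha$ of compactly supported sheaves---for instance, taking $F_\alpha := j_{K_\alpha!}j_{K_\alpha}^{-1}F$ for an exhausting family of open relatively compact sets $K_\alpha \subset M_\bR$---so that Lemma \ref{tensorfactor} applies term by term and produces $\cHom^\star(\bC_{-\bD D}, F_\alpha \star \bC_{\Int(\sigma^\vee)}) \simeq F_\alpha \star \cHom^\star(\bC_{-\bD D}, \bC_{\Int(\sigma^\vee)})$ for each $\alpha$. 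Because $E$ and $E \star \bC_{-\bD D}$ are both compactly supported constructible sheaves---hence compact objects in $\lSh(M_\bR)$---the functors $\hom(E, -)$ and $\hom(E \star \bC_{-\bD D}, -)$ commute with this filtered colimit, allowing one to pass to the colimit and conclude the equivalence.
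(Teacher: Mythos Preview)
Your overall strategy---adjunction via Lemma \ref{staradjunction}, then Lemma \ref{tensorfactor}, then Lemma \ref{polytopedual} applied to $-\bD D$, all mediated by a colimit presentation of $F$---is exactly the paper's. The involutive identity $-\bD(-\bD D)=D$ that you spell out is indeed what the paper uses without comment.

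The gap is in the compactness step. Your assertion that a compactly supported constructible sheaf is a compact object in $\lSh(M_\bR)$ is false: for example, on $\bR$ the sheaves $G_n=\bC_{[0,1/n)}$ have $\varhocolim G_n\simeq\bC_{\{0\}}$, yet $\hom(\bC_{\{0\}},G_n)\simeq 0$ for all $n$ while $\hom(\bC_{\{0\}},\bC_{\{0\}})\simeq\bC$. The problem is that the microsupports of the $G_n$ are not contained in any fixed conic Lagrangian. Relatedly, your $F_\alpha:=j_{K_\alpha!}j_{K_\alpha}^{-1}F$ are only quasi-constructible, so Lemma \ref{tensorfactor} (which requires its third argument to lie in $\cSh(M_\bR)$) does not apply to them as stated.

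The paper fixes both issues at once: it first chooses a stratification $\cS$ refining $\musupp(F)$ and writes $F\simeq\varhocolim F_i$ with $F_i\in\cSh_\cS(M_\bR)$, then truncates by balls to get compactly supported constructible $F_i^r$. All the resulting objects $F_i^r\star_\bR\bC_D$ have microsupport in a single conic set $\Lambda$, and $E$ is compact in $\lSh_\Lambda(M_\bR)$ (not in all of $\lSh(M_\bR)$). This is the missing ingredient in your argument; once you add the preliminary stratification step and work in $\lSh_\Lambda$, your chain of isomorphisms goes through.
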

\begin{proof}
Let $\cS$ be a stratification of $M_\bR$ which refines $\musupp(F)$. Then we can take $F_i \in \cSh_\cS(M_\bR)$ such that $\varhocolim F_i\simeq F$.

Let $B_r$ ($r\in \bZ$) be a ball in $M_\bR$ with radius $r$ and $i_r\colon B_r\hookrightarrow M_\bR$ be an open inclusion. Then we have a sequence $\{F^r:=i_{r!}i_r^{-1}F\}_{r\in \bN}$ satisfying $\varhocolim{r} F_r\simeq F$ where the colimit is taken with respect to the morphisms which correspond to the identity via the isomorphisms
\begin{equation}
\hom(i_{r!}i_{r}^{-1}F, i_{r'!}i_{r'}^{-1}F)\simeq \hom(i_{r!}i_{r}^{-1}F,i_{r!}i_{r}^{-1}F)
\end{equation}
for $r<r'$. Then $\varhocolim{i, r}F_i^r\simeq F$.

Set $\Lambda:=\musupp(E)\cup \bigcup_{i,r}\musupp(F^i_r\star_\bR \bC_D)$. Then $E$ is compact in $\lSh_{\Lambda}(T^n)$. By Lemma \ref{polytopedual} and Lemma \ref{tensorfactor}, we have
\begin{equation}
\begin{split}
\hom(E, F\star_\bR \bC_D)&\simeq\varhocolim{i,r}\hom(E, F^i_r\star_\bR \bC_D)\\
&\simeq \varhocolim{i,r}\hom(E, F^i_r\star_\bR \cHom^\star_\bR(\bC_{-\bD D}, \bC_{\Int(\sigma^\vee)}))\\
&\simeq \varhocolim{i,r}\hom(E, \cHom^\star_\bR(\bC_{-\bD D}, F^i_r\star_\bR \bC_{\Int(\sigma^\vee)}))\\
&\simeq \varhocolim{i,r}\hom(E\star_\bR \bC_{-\bD D}, F^i_r\star_\bR \bC_{\Int(\sigma^\vee)})\\
&\simeq \hom(E\star_\bR \bC_{-\bD D}, F\star_\bR \bC_{\Int(\sigma^\vee)}).
\end{split}
\end{equation}
This completes the proof.
\end{proof}

\section{Preliminaries on toric stacks}\label{toricstack}
Stacky generalization of toric varieties has been considered by various authors. Borisov--Chen--Smith \cite{BCS} defined a certain combinatorial object called {\em stacky fan} and used it to construct smooth Deligne--Mumford stacks called {\em toric DM stacks}. Later, Iwanari \cite{Iwanari} and Fantechi--Mann--Nironi \cite{FMN} give an intrinsic characterization of toric DM stacks of \cite{BCS}. However, toric DM stacks of \cite{BCS} does not include singular toric varieties. Tyomkin \cite{Tyomkin} generalized \cite{BCS} to include any toric varieties via local construction. Geraschenko--Satoriano \cite{GStoric} proposed more generalized version and gives a global construction as quotient stacks.

In this paper, we will use a special class of toric stacks in the sense of Tyomkin, but we follow the construction of Geraschenko--Satoriano which is easier to describe.

Let $L, N$ be free abelian groups and $\beta\colon L\rightarrow N$ be a homomorphism with a finite cokernel. Let $L^\vee$ and $M$ be the dual of $L$ and $N$ respectively.
Let further $\hat{\Sigma}$ and $\Sigma$ be finite fans consisting of rational strictly convex cones defined in $L_\bR:=L\otimes_\bZ\bR$ and $N_\bR$. As noted in Introduction, we always asuume Condition \ref{condition} on $\beta_\bR$: The map $\beta_\bR:=\beta\otimes_\bZ\bR$ induces a {\em combinatorial isomorphism} between $\hat{\Sigma}$ and $\Sigma$, i.e., $\beta_\bR(\hsigma)$ conicides with an element of $\Sigma$ for any $\hsigma\in\hSigma$ and the assignment $\hSigma\in \hsigma\mapsto\beta_\bR(\hsigma)\in \Sigma$ defines an isomorphism of posets between $\hSigma$ and $\Sigma$.
\begin{lemma}\label{combinatorial} Let $\beta, L, N$ as above. Then
\begin{enumerate}
\item $\dim\hsigma=\dim\beta_\bR(\hsigma)$, and
\item $\beta|_{\spanning{\hsigma}\cap L}$ is injective for any $\hsigma\in \hSigma$.
\end{enumerate}
\end{lemma}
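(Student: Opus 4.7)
The plan is to prove (i) by a direct combinatorial argument and then deduce (ii) from (i) via elementary linear algebra. Both parts hinge on the hypothesis of Condition~\ref{condition} that $\hsigma \mapsto \beta_\bR(\hsigma)$ defines an isomorphism of posets from $\hSigma$ to $\Sigma$ with respect to the face relation.

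For (i), the key observation is that the dimension of a rational strictly convex polyhedral cone $\tau$ is determined by its face poset: $\dim \tau + 1$ equals the maximum cardinality of a strictly increasing chain of faces of $\tau$. One inequality is immediate, since in any strictly increasing chain $\{0\} = \tau_0 \subset \tau_1 \subset \cdots \subset \tau_\ell = \tau$ one has $\dim \tau_i \geq i$, hence $\ell \leq \dim \tau$. The reverse direction follows by induction on $\dim \tau$ using that every nonzero strictly convex rational polyhedral cone admits a facet of codimension one (via a supporting hyperplane), which is standard. Applying this characterization to both $\hsigma$ and $\beta_\bR(\hsigma)$, and noting that the poset isomorphism restricts to a bijection between the face lattice of $\hsigma$ and that of $\beta_\bR(\hsigma)$, one concludes $\dim \hsigma = \dim \beta_\bR(\hsigma)$.

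For (ii), granted (i), consider the restriction $\beta_\bR|_{\spanning(\hsigma)}$. Its image in $N_\bR$ is a linear subspace containing $\beta_\bR(\hsigma)$, and every element of the image is a real linear combination of elements of $\beta_\bR(\hsigma)$, so the image equals $\spanning(\beta_\bR(\hsigma))$. By (i) the domain and codomain have the same real dimension, so this surjection is a linear isomorphism, in particular injective. Since $\spanning(\hsigma) \cap L$ is a subgroup of $\spanning(\hsigma)$, the integral restriction $\beta|_{\spanning(\hsigma) \cap L}$ is injective as well.

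The main (though modest) technical ingredient is the chain-length characterization of dimension used in (i); this rests on the existence of maximal flags of faces in rational strictly convex polyhedral cones, a standard fact that I would simply cite. Note that the finite-cokernel hypothesis on $\beta$ plays no direct role in either part beyond what is already implicit in Condition~\ref{condition}.
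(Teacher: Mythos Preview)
Your proof is correct and follows essentially the same approach as the paper: the paper also argues that the dimension of a cone is determined by the poset structure of the fan (together with $\beta_\bR(0)=0$) and then deduces (ii) from (i) by noting that a nontrivial kernel of $\beta|_{\spanning(\hsigma)\cap L}$ would force $\beta_\bR|_{\spanning(\hsigma)}$ to have nontrivial kernel, contradicting the dimension equality. Your version is simply more explicit about the chain-length characterization of dimension and the surjectivity-plus-dimension-count argument for (ii).
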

\begin{proof}
Note that the dimension of a cone in a fan is inductively determined by the structure of the fan. This assertion and $\beta_\bR(0)=0$ imply (i).

If there is a nontrivial element in $\ker(\beta|_{\hsigma\cap L})$, this element gives a nontrivial kernel of $\beta_{\bR}|_{\spanning(\hsigma)}$ which contradicts to (i). This proves (ii).
\end{proof}

The map $\beta$ induces a surjective homomorphism $\T_\beta:=\beta\otimes_\bZ\bC^*$ between two tori
\begin{equation}
\T_\beta\colon \T_L:=L\otimes_\bZ\bC^*\rightarrow \T_N:=N\otimes_\bZ\bC^*.
\end{equation}
We set $G_\beta:=\ker\lb\T_\beta\rb$. We abbreviate $(L,N,\hSigma,\Sigma,\beta)$ as $(\hSigma,\beta)$.

\begin{definition}\label{ourtoricstack}
The toric stack $\cX_{\hSigma,\beta}$ associated to the above data $(\hat{\Sigma},\beta)$ is defined as the quotient stack
\begin{equation}
\cX_{\hSigma,\beta}:=\left[X_{\hat{\Sigma}}\left.\right/G_\beta\right].
\end{equation}
\end{definition}

\begin{remark}
\begin{enumerate}
\item Our data $(\hSigma,\beta)$ induces Tyomkin's {\em toric stacky data} \cite{Tyomkin}.
\item This definition without the assumption of combinatorial isomorphism is due to Geraschenko--Satoriano \cite{GStoric}. Moreover, our definition is a special case of their notion of {\em fantastack} \cite{GStoric}.
\end{enumerate}
\end{remark}

\begin{lemma}
The toric stack $\cX_{\hSigma,\beta}$ is a Deligne--Mumford stack without generic stabilizers.
\end{lemma}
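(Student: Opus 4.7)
The plan is to verify directly that the stabilizers of the $G_\beta$-action on $X_{\hSigma}$ are finite (hence, working over $\bC$, automatically reduced) and trivial at a generic point, so that the quotient stack $\cX_{\hSigma,\beta} = [X_{\hSigma}/G_\beta]$ is Deligne--Mumford and has no generic stabilizer.

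First I would recall the standard orbit-stabilizer picture for toric varieties: the $\T_L$-orbits of $X_{\hSigma}$ are indexed by cones $\hsigma \in \hSigma$, and for any point in the orbit $O_{\hsigma}$ the $\T_L$-stabilizer is the subtorus $\T_{L_{\hsigma}} \subset \T_L$ associated with the saturated sublattice $L_{\hsigma} := L \cap \spanning(\hsigma)$. Since $G_\beta$ acts on $X_{\hSigma}$ via the inclusion $G_\beta \hookrightarrow \T_L$, the $G_\beta$-stabilizer at such a point is
\begin{equation*}
G_\beta \cap \T_{L_{\hsigma}} \;=\; \ker\!\bigl((\beta|_{L_{\hsigma}}) \otimes_{\bZ} \bC^{*} \colon \T_{L_{\hsigma}} \longrightarrow \T_N\bigr).
\end{equation*}

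The next step is the elementary observation that the kernel of the homomorphism of algebraic tori induced by a map of free abelian groups is finite if and only if the underlying lattice map is injective (its Lie algebra is $\ker(\beta|_{L_{\hsigma}}) \otimes_\bZ \bC$). By Lemma \ref{combinatorial}(ii), $\beta|_{L_{\hsigma}}$ is injective for every $\hsigma \in \hSigma$, so every stabilizer above is finite, and in characteristic zero automatically reduced. This is exactly the stabilizer condition characterising Deligne--Mumford quotient stacks (see for instance the fantastack criterion of \cite{GStoric}), so $\cX_{\hSigma,\beta}$ is Deligne--Mumford.

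For the last clause, the open dense orbit $O_{\{0\}}$ corresponds to the zero cone, for which $L_{\{0\}} = 0$ and $\T_{L_{\{0\}}}$ is trivial; hence the $G_\beta$-stabilizer of a generic point is trivial, and $\cX_{\hSigma,\beta}$ has no generic stabilizer. The main step is genuinely just the stabilizer calculation combined with Lemma \ref{combinatorial}(ii); I do not expect any serious obstacle, since Condition \ref{condition} was arranged precisely to ensure the injectivity of $\beta$ on every $\spanning(\hsigma)\cap L$, which is what the argument consumes.
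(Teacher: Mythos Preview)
Your argument is correct and follows essentially the same strategy as the paper: show that every $G_\beta$-stabilizer is finite, use characteristic zero to conclude reducedness, and deduce the Deligne--Mumford property; then check triviality of the generic stabilizer. The only difference is in bookkeeping: the paper invokes the forward reference Lemma~\ref{finite} (which presents each affine piece as a quotient by a finite group $H_\beta$) together with faithfulness of the $G_\beta$-action, whereas you compute the stabilizer on each torus orbit directly and feed in Lemma~\ref{combinatorial}(ii). Your route is slightly more self-contained, avoiding the forward reference; the paper's route packages the same injectivity input inside the structural decomposition of Lemma~\ref{finite}, which it needs later anyway.
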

\begin{proof} This lemma follows from the argument of Section 4 of Tyomkin \cite{Tyomkin}, but we reproduce it here for reader's convenience.

Since $G_\beta$ acts on $X_{\hSigma}$ faithfully, $\cXsb$ has no generic stabilizers. By Lemma \ref{finite} below, the action of $G_\beta$ has only finite stabilizers.
Since we are working over $\bC$, any finite group schemes are reduced. Then the statement follows from \cite[Corollary 2.2]{Edidin}. 
\end{proof}

\begin{example}
\begin{enumerate}
\item Let $\Sigma$ be a fan defined in $N_\bR$. We set $L=N$ and $\beta=\id$. Then $\cX_{\Sigma,\beta}=X_\Sigma$. Hence Definition \ref{ourtoricstack} includes all toric varieties.
\item Let $\Sigma$ be a simplicial fan defined in $N_\bR$. We write $\Sigma(1):=\{\rho_1,..,\rho_s\}$ for the set of 1-dimensional cones in $\Sigma$. Set $L:=\bigoplus_{i=1}^s\bZ\cdot e_i$ and define $\beta$ by choosing $\beta(e_i)\in \rho_i\cap N$ for $i=1,...,s$. Let $\hat{\Sigma}$ be the fan in $L_\bR$ consisting of cones 
\begin{equation}
\hat{\sigma}:=\Cone\lb e_i\relmid e_i\in \sigma\rb
\end{equation}
for any $\sigma\in \Sigma$. Then $\beta$ induces a combinatorial isomorphism between $\hSigma$ and $\Sigma$. The resulting toric stack is a toric DM stack without generic stabilizers in the sense of \cite{BCS}. As noted in Introduction, this is equivalent to toric orbifold in the sense of \cite{FLTZ3}.
\end{enumerate}
\end{example}

The toric stack $\cX_{\hSigma,\beta}$ can be written as the union of $[U_\hsigma/G_\beta]$ for all $\hsigma\in \hat{\Sigma}$, where $U_\hsigma$ is the affine toric subvariety of $X_\hSigma$ corresponding to $\hsigma$. We set $\cU_\sigma:=[U_\hsigma/G_\beta]$.

\section{The coherent-constructible correspondence}\label{cccformulation}
Let $(\hSigma, \beta)$ be a stacky fan satisfying Condition \ref{condition} and $\rank N=n$. We set $M:=\Hom_\bZ(N,\bZ)$. The cotangent bundle of the torus $M_\bR/M=:T^n$ has a canonical trivialization $T^*T^n\cong T^n\times N_\bR$.

Since $\beta$ has finite cokernel, the induced map $\beta_\bR$ is surjective. Hence there exists induced isomorphism $[\beta_\bR]\colon L_\bR/\ker(\beta_\bR)\cong N_\bR$. We set $N_\beta:=[\beta_\bR](L/L\cap \ker(\beta_\bR))$. We also set the dual of $\beta$ by $\beta^\vee$, then we have $\beta^\vee_\bR\colon M_\bR\xrightarrow{\cong} \beta^\vee_\bR(M_\bR)$. We set $M_\beta:=(\beta_\bR^\vee)^{-1}(L^\vee)$.

\begin{lemma}
\begin{enumerate}
\item There exists a canonical inclusion $M\subset M_\beta$.
\item The natural pairing induces an isomorphism $N_\beta^\vee\cong M_\beta$.
\end{enumerate}
\end{lemma}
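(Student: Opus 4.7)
The proof is essentially a direct chase through the definitions, relying on two observations: that the dual map $\beta^\vee$ exists integrally, and that $N_\beta$ can be identified concretely as $\beta(L)\subset N$.

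For (i), the plan is to note that $\beta\colon L\to N$ has an integral dual $\beta^\vee\colon M=N^\vee\to L^\vee$ (sending $m$ to $m\circ \beta$), and that this integral dual is simply the restriction of $\beta_\bR^\vee$ to $M\subset M_\bR$. Hence $\beta_\bR^\vee(M)\subset L^\vee$, which immediately gives $M\subset (\beta_\bR^\vee)^{-1}(L^\vee)=M_\beta$.

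For (ii), the first step is to identify $N_\beta$ explicitly. Since $N$ is torsion-free, the intersection $L\cap \ker(\beta_\bR)$ coincides with the honest kernel $\ker(\beta)$, so $L/(L\cap \ker(\beta_\bR))\cong \beta(L)$, and this is precisely the image of $[\beta_\bR]$ on this subgroup. Thus $N_\beta=\beta(L)\subset N$, and because $\beta$ has finite cokernel this is a finite-index sublattice of $N$. In particular $N_\beta\otimes_\bZ\bR=N_\bR$, so via the natural pairing $N_\beta^\vee$ embeds canonically into $\Hom_\bR(N_\bR,\bR)=M_\bR$.

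It then remains to show the equality $N_\beta^\vee=M_\beta$ inside $M_\bR$. For $m\in M_\bR$, membership in $N_\beta^\vee$ means $m(\beta(l))\in\bZ$ for every $l\in L$, which by the definition of the dual is equivalent to $\beta_\bR^\vee(m)(l)\in\bZ$ for every $l\in L$, i.e.\ $\beta_\bR^\vee(m)\in L^\vee$, i.e.\ $m\in M_\beta$. This chain of equivalences gives the desired isomorphism, and (i) follows as a special case.

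There is no real obstacle here; the only point requiring a moment's care is checking $L\cap \ker(\beta_\bR)=\ker(\beta)$ so that the identification $N_\beta=\beta(L)$ is correct, after which both statements reduce to the defining adjunction between $\beta$ and $\beta^\vee$.
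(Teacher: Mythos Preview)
Your proof is correct and follows essentially the same route as the paper's: both arguments for (i) amount to observing that $\beta_\bR^\vee$ restricts to the integral dual $\beta^\vee\colon M\to L^\vee$, and both arguments for (ii) trace the same duality, the only cosmetic difference being that you first identify $N_\beta=\beta(L)$ explicitly (using $L\cap\ker(\beta_\bR)=\ker(\beta)$) whereas the paper works directly with the abstract isomorphism $[\beta_\bR]$ and the chain $N_\beta^\vee\cong (L/L\cap\ker(\beta_\bR))^\vee\cong L^\vee\cap\operatorname{image}(\beta_\bR^\vee)\cong (\beta_\bR^\vee)^{-1}(L^\vee)$.
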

\begin{proof}
Since $\beta^\vee_\bR$ is an inclusion $M_\bR\hookrightarrow L_\bR$, we only have to check that the pairing of $\beta_\bR^\vee(M)$ with $L$ is in integer. This is true since $\beta_\bR^\vee\cong \beta^\vee\otimes_\bZ\bR$. This completes the proof of (i).

Since $[\beta_\bR]$ is an isomorphsim, we have
\begin{equation}
N_\beta^\vee\cong (L/L\cap \ker(\beta_\bR))^\vee\cong L^\vee\cap \image(\beta_\bR^\vee)\cong (\beta_\bR^\vee)^{-1}(L^\vee).
\end{equation}
This completes the proof.
\end{proof}

Let $\la -,-\ra\colon M\times N\rightarrow \bZ$ be the natural pairing. For $\sigma\in \Sigma$, we set 
\begin{equation}
\begin{split}
\sigma^\perp&:=\lc m\in M_\bR\relmid \la m, s\ra=0 \text{ for any } s\in \sigma\rc, \\
\sigma^\vee&:=\lc m\in M_\bR\relmid \la m, s\ra\geq 0 \text{ for any } s\in \sigma\rc. \\
\end{split}
\end{equation}
We also define $\hsigma^\vee\subset L^\vee_\bR$ in a similar way. Then we have an induced inclusion map $M_\bR/\spanning(\sigma)^\perp\hookrightarrow L^\vee_\bR/\spanning(\hsigma)^\perp$. We define $M_{\sigma,\beta}$ the inverse image of $L^\vee/\spanning(\hsigma)^\perp$ under this induced map.

For $\chi\in M_{\sigma,\beta}$, we also set
\begin{equation}
\begin{split}
\sigma^\perp_\chi&:=\sigma^\perp+\chi, \\
\sigma^\vee_\chi&:=\sigma^\vee+\chi. \\
\end{split}
\end{equation}

Let $p$ be the quotient map $M_\bR\rightarrow T^n$. Both $p(\sigma_\chi^\perp)$ and $p(\sigma_\chi^\vee)$ depend only on the class of $\chi$ in $M_{\sigma, \beta}/M$. We write $[\chi]$ for the class of $\chi$ in $M_{\sigma,\beta}/M$.

We set 
\begin{equation}\label{Shard}
\Lambda_{\hSigma,\beta}:=\bigcup_{\sigma\in \Sigma}\bigcup_{[\chi]\in M_{\sigma,\beta}/M}p(\sigma_\chi^\perp)\times(-\sigma) \subset T^*T^n.
\end{equation}
The main result in this paper is the following.
\begin{theorem}\label{main}
There exists an equivalence of $\infty$-categories
\begin{equation}
\Indcoh\cX_{\hSigma,\beta}\simeq \lSh_{\Lambda_{\hSigma,\beta}}(T^n).
\end{equation}
\end{theorem}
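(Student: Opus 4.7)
\medskip

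\noindent\textbf{Proof proposal.} The plan is to follow a gluing strategy whose local model is the affine case: first construct a functor
\[
\Phi_{\hSigma,\beta}\colon \Indcoh \cX_{\hSigma,\beta}\longrightarrow \lSh_{\Lambda_{\hSigma,\beta}}(T^n),
\]
that extends the classical FLTZ assignment (sending the structure sheaf twisted by a character $\chi\in M_{\sigma,\beta}$ to a standard sheaf supported on $p(\sigma^\vee_\chi)$, i.e.\ a shifted version of $\bC_{p(\sigma^\vee_\chi)}$), check that it is an equivalence for each affine chart $\cU_\sigma$, and then promote both sides to the global statement by a common gluing diagram over $\Sigma$.

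First, I would treat the smooth affine case: here $\hsigma$ is generated by part of a $\bZ$-basis of $L$, so $\cU_\sigma$ is a quotient of an affine space by $G_\beta$ and $\Indcoh\cU_\sigma$ is compactly generated by the $G_\beta$-equivariant line bundles indexed by $M_{\sigma,\beta}/M$. On the constructible side, $\Lambda_\sigma$ has its smooth locus indexed by the same lattice, and Proposition~\ref{microstalk} together with Lemma~\ref{3.15} and Proposition~\ref{presentable} identify a compact generating set of $\lSh_{\Lambda_\sigma}(T^n)$ in bijection with $M_{\sigma,\beta}/M$. The task is to match hom-spaces between these generators; this reduces to a direct computation using the convolution techniques of Section~\ref{technique}, essentially an application of Lemma~\ref{polytopedual} and Corollary~\ref{cptadjunction} to pair $\bC_{p(\sigma^\vee_\chi)}$ with $\bC_{p(\sigma^\vee_{\chi'})}$ and recover the graded piece of $\bC[\sigma^\vee\cap M_{\sigma,\beta}]$. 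For the singular affine case, I would choose a toric resolution $\widetilde{\cU}_\sigma\to \cU_\sigma$ (refining $\hsigma$ into smooth cones), apply the smooth case to every chart of $\widetilde{\cU}_\sigma$, and transport the equivalence along the proper push-forward $f^{\Ind}_*$ of Proposition~\ref{pushind}; on the constructible side the refinement corresponds to a geometrically obvious closed embedding of the associated Lagrangians, and descent along this refinement should match the Zariski descent on the coherent side.

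Next, I would glue the affine equivalences. On the coherent side, $\cX_{\hSigma,\beta}=\bigcup_{\sigma\in\Sigma}\cU_\sigma$ is covered by its Zariski affines and $\Indcoh$ satisfies Zariski descent (this is Proposition~\ref{gluingcoh} in the paper), giving
\[
\Indcoh \cX_{\hSigma,\beta}\simeq \lim{\substack{\longleftarrow\\ C(\Sigma)}}\Indcoh \cU_\bullet.
\]
On the constructible side, I would invoke Theorem~\ref{thm: glue} to obtain
\[
\lSh_{\Lambda_{\hSigma,\beta}}(T^n)\simeq \lim{\substack{\longleftarrow\\ C(\Sigma)}}\lSh_{\Lambda_\bullet}(T^n),
\]
so it suffices to produce the equivalence over the whole Čech diagram $C(\Sigma)$. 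For intersections $\sigma\cap\tau$, combinatorial equivalence of $\hSigma$ and $\Sigma$ (Condition~\ref{condition}) ensures that the gluing data on both sides is controlled by the same poset structure, and the functor $\Phi_{\hSigma,\beta}$ built affine-locally is visibly compatible with restriction maps since it is given by a uniform formula on generators; checking this naturality is straightforward once one writes the connecting functors on both sides in terms of the universal $\chi\mapsto \bC_{p(\sigma^\vee_\chi)}$ recipe.

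The main obstacle I expect is the constructible-side gluing (Theorem~\ref{thm: glue}), which is what the Tamarkin/Guillermou--Schapira machinery of Section~\ref{technique} is set up to handle. Concretely, one must realize the natural restriction $\lSh_{\Lambda_{\hSigma,\beta}}(T^n)\to \lSh_{\Lambda_\sigma}(T^n)$ as a Bousfield localization by a Tamarkin-type projector built from convolution with $p_!\bC_\gamma$ for suitable cones $\gamma$, and then identify its kernel with sheaves whose microsupport lies in the complementary pieces $\bigcup_{\tau\not\subset\sigma}\Lambda_\tau$. Proposition~\ref{vanishing} provides precisely the vanishing needed to make these projectors compatible on overlaps, but verifying the full Čech exactness requires iterating this vanishing and confirming that the resulting tower is in fact a limit diagram of presentable stable $\infty$-categories (via Proposition~\ref{presentable} and Proposition~\ref{Neemanrepresentability}). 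Once this descent is in place, the theorem follows by combining the affine equivalences with the two parallel descent statements.
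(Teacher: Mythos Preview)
Your strategy is correct in broad outline and matches the paper's: establish the affine case, then glue over the \v{C}ech poset $C(\Sigma)$ using Zariski descent on the coherent side and a parallel gluing on the constructible side. The key structural difference is that the paper does \emph{not} prove the constructible gluing (Theorem~\ref{main2}, your Theorem~\ref{thm: glue}) independently and then combine; instead it proves Theorem~\ref{main} and Theorem~\ref{main2} \emph{simultaneously} by induction on subfans (Proposition~\ref{mainaux}). Concretely, the paper only directly shows essential surjectivity of the constructible gluing functor $\Phi$, via Lemma~\ref{lemmaeq} (convolving with $\kappa_{\hSigma,\beta}(\cO_{\cXsb})$ acts as the identity on $\lSh_{\Lsb}(T^n)$). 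Full faithfulness of $\Phi$ is then imported from the coherent side through the commutative square~(\ref{thediagram}), using the inductively known equivalences $K_{\hSigma_i,\beta}$ for proper subfans together with the separately established full faithfulness of $K_{\hSigma_0,\beta}$ (Lemma~\ref{fullyfaithful}). Your proposal to establish the constructible gluing independently via Tamarkin projectors would require a direct full-faithfulness argument for $\Phi$ that the paper never gives; Proposition~\ref{vanishing} is the right local ingredient, but turning it into \v{C}ech exactness is not obviously a matter of ``iteration,'' and you have not indicated how.

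Two further places where you understate the difficulty. First, the claim that compatibility with restriction is ``straightforward once one writes the connecting functors in terms of the universal recipe'' is precisely Condition~\ref{condition2}, and verifying it (together with the adjoint statement that the inclusion $\lSh_{\Lambda_\sigma}\hookrightarrow\lSh_{\Lsb}$ corresponds to $i^{\Ind}_*$) is the content of Lemma~\ref{lemmaeq} and Corollary~\ref{lemma9.2}, which occupy all of Section~\ref{section:identity} and are the technical heart of the smooth case. Second, for singular affine $\hsigma$ the paper does not simply ``transport along $f^{\Ind}_*$'': it constructs $K_{\hsigma,\beta}$ via a dual functor $\kappa^D_{\hsigma,\beta}\circ\frakD$ and proves representability of its values using Lemma~\ref{okgo}, a delicate computation comparing $f^*\hom(\Theta(\sigma),-)$ with $\kappa^{-1}_{\hSigma',\beta}\circ I$ for a smooth refinement $\hSigma'$, which takes up most of Section~\ref{generalcase}.
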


\section{Affine case}\label{affinecase}
Let $(\hSigma(\hsigma),\beta)$ be as in Section \ref{toricstack} with the following additional assumption: $\hSigma(\hsigma)$ is a fan consisting of faces of a single rational strictly convex cone in $L_\bR$. Take $\chi\in M_{\sigma, \beta}$. Since $M_{\sigma,\beta}\subset L^\vee$, $\tilde{\chi}$ a lift of $\chi$ defines a character of $\T_L$. We define $\cO_{\cX_{\hSigma(\hsigma),\beta}}(\tilde{\chi})$ to be the structure sheaf on $\cX_{\hSigma(\hsigma),\beta}$ twisted by the character $\chi|_{G_\beta}$ of $G_\beta$. 
\begin{lemma}\label{welldef}
The sheaf $\cO_{\cX_{\hSigma(\hsigma),\beta}}(\tilde{\chi})$ depends only on the class $[\chi]$ of $\chi$ in $M_{\sigma, \beta}/M$.
\end{lemma}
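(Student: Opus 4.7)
The plan is to unwind the construction and show that passing from one representative $\tilde\chi$ to another that gives the same class $[\chi] \in M_{\sigma,\beta}/M$ produces an isomorphic twisted structure sheaf on the quotient stack. Since $\cO_{\cX_{\hSigma(\hsigma),\beta}}(\tilde\chi)$ descends from the $G_\beta$-equivariant line bundle on $U_\hsigma$ obtained by twisting the trivial bundle by the character $\tilde\chi|_{G_\beta}$, it suffices to show that for any two lifts $\tilde\chi, \tilde\chi' \in L^\vee$ of classes in $M_{\sigma,\beta}$ satisfying $[\chi]=[\chi']$, the $G_\beta$-equivariant line bundles $\cO_{U_\hsigma}(\tilde\chi)$ and $\cO_{U_\hsigma}(\tilde\chi')$ are isomorphic.

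First I would decompose the difference $\tilde\chi - \tilde\chi'$ into two contributions. The ambiguity of choosing a lift of a given class of $M_{\sigma,\beta} \subset L^\vee/\spanning(\hsigma)^\perp$ is measured by $\spanning(\hsigma)^\perp \cap L^\vee = \hsigma^\perp \cap L^\vee$. The ambiguity coming from the identification $[\chi]=[\chi']$ modulo $M$ is measured by $\beta^\vee(M) \subset L^\vee$. Thus $\tilde\chi - \tilde\chi' = \delta + \beta^\vee(m)$ for some $\delta \in \hsigma^\perp \cap L^\vee$ and $m \in M$.

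Next I would handle the two contributions separately. For the $\beta^\vee(m)$-contribution, the character $\beta^\vee(m)$ of $\T_L$ is by construction the pullback of the character $m$ of $\T_N$ along $\T_\beta\colon \T_L \to \T_N$; since $G_\beta = \ker \T_\beta$, this character restricts trivially to $G_\beta$, so twisting by $\beta^\vee(m)$ does not change the $G_\beta$-equivariant structure on $\cO_{U_\hsigma}$. For the $\delta$-contribution, I would observe that the group of units of $\Gamma(U_\hsigma,\cO)$ consists of monomials $c\cdot z^{\delta}$ with $c\in \bC^\times$ and $\delta \in \hsigma^\vee \cap (-\hsigma^\vee) \cap L^\vee = \hsigma^\perp \cap L^\vee$; multiplication by $z^\delta$ is a $\T_L$-equivariant (hence $G_\beta$-equivariant) isomorphism $\cO_{U_\hsigma}(\tilde\chi)\xrightarrow{\sim} \cO_{U_\hsigma}(\tilde\chi+\delta)$.

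Combining these two observations yields an isomorphism $\cO_{U_\hsigma}(\tilde\chi) \cong \cO_{U_\hsigma}(\tilde\chi')$ of $G_\beta$-equivariant line bundles on $U_\hsigma$, which descends to an isomorphism of the twisted structure sheaves on $\cX_{\hSigma(\hsigma),\beta}$. The only genuinely delicate point is the bookkeeping: one has to confirm carefully that the two kinds of ambiguity (choice of lift, and change of representative modulo $M$) together span exactly $(\hsigma^\perp \cap L^\vee)+\beta^\vee(M)$, so that nothing outside this subgroup can contribute when we require $[\chi]=[\chi']$. Once this is verified, the rest is formal.
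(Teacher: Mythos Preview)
Your proof is correct and in fact slightly more thorough than the paper's. The paper's argument treats only the $M$-ambiguity: for $m\in M$, the associated character of $\T_L$ is the composite $\T_L\xrightarrow{\T_\beta}\T_N\xrightarrow{m}\bC^*$, which vanishes on $G_\beta=\ker\T_\beta$. This is precisely your handling of the $\beta^\vee(m)$-contribution. The paper does not explicitly address the lift ambiguity $\delta\in\hsigma^\perp\cap L^\vee$; you dispose of it cleanly by noting that $z^\delta$ is a $\T_L$-semi-invariant unit in $\Gamma(U_\hsigma,\cO)$, so multiplication by $z^\delta$ gives a $G_\beta$-equivariant isomorphism $\cO_{U_\hsigma}(\tilde\chi)\xrightarrow{\sim}\cO_{U_\hsigma}(\tilde\chi+\delta)$. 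Your bookkeeping of the two ambiguities is correct: any two lifts $\tilde\chi,\tilde\chi'\in L^\vee$ with $[\chi]=[\chi']$ in $M_{\sigma,\beta}/M$ indeed differ by an element of $(\hsigma^\perp\cap L^\vee)+\beta^\vee(M)$, since the lift from $L^\vee/(L^\vee\cap\hsigma^\perp)$ to $L^\vee$ contributes the first summand and the passage modulo $M$ contributes the second.
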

\begin{proof}
For $\chi\in M$, a character of $\T_L$ given by the inclusion $M\hookrightarrow M_{\sigma,\beta}\subset L^\vee$ is given by the composition $\T_L\xrightarrow{\T_\beta}\T_N\xrightarrow{\chi}\bC^*$. By the definition of $G_\beta$, this composition is trivial.
\end{proof}

We set
\begin{equation}
\Theta'(\sigma,\chi):=\cO_{\cX_{\hSigma(\hsigma),\beta}}(\tilde{\chi}) \in \Qcoh(\cX_{\hSigma(\hsigma),\beta}),
\end{equation}
which depends only the class $[\chi]$ of $\chi$ in $M_{\sigma,\beta}/M$ by Lemma \ref{welldef}.
For a face $\tau$ of $\sigma$, we have an open substack $\cU_\tau$ of $\cX_{\hSigma(\hsigma),\beta}$. The restriction of the sheaf $\Theta'(\sigma,\chi)$ to $\cU_\tau$ is equal to $\Theta'(\tau,\chi)$.

We also set
\begin{equation}
\Theta(\sigma,\chi):=p_!\bC_{\Int(\sigma_\chi^\vee)}[n]\in \lSh_{\Lambda_{\hSs,\beta}}(T^n)
\end{equation}
where $\Int$ denotes the interior. 
This also depends only on $\sigma$ and the class $[\chi]\in M_{\sigma,\beta}/M$ since $p(\sigma_\chi^\vee)$ depends only on $[\chi]$.


\begin{lemma}\label{finite} There exists an isomorphism
\begin{equation}
\begin{split}
\cX_{\hSigma(\hsigma),\beta}:=\left[\Spec\bC[\hsigma^\vee\cap L^\vee]\big/G_\beta\right]
&\cong\left[\Spec\bC[(\sigma^\vee/\sigma^\perp)\cap M_{\sigma,\beta}]\big/H_\beta\right].
\end{split}
\end{equation}
where $H_\beta:=G_\beta''$ is defined in the proof, which is a finite group.
\end{lemma}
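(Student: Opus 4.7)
The plan is to reduce $[U_\hsigma/G_\beta]$ to a quotient of a lower-dimensional toric variety by a finite group, by using the saturated sublattice $L_\hsigma := \spanning_\bR(\hsigma)\cap L$ to peel off a free torus factor on which part of $G_\beta$ acts freely.

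First I choose a splitting $L = L_\hsigma\oplus L_1$, which exists because $L/L_\hsigma$ is free. Dually $L^\vee = L_\hsigma^\vee\oplus L_1^\vee$, and since $\hsigma\subset L_{\hsigma,\bR}$ its dual cone splits as $\hsigma^\vee = \hsigma^{\vee,\circ} \oplus L_{1,\bR}^\vee$, with $\hsigma^{\vee,\circ}\subset L_{\hsigma,\bR}^\vee$ the strictly convex dual of $\hsigma$ taken inside $L_{\hsigma,\bR}$. Passing to semigroup algebras gives
\[
U_\hsigma \;=\; V_\hsigma\times \T_{L_1}, \qquad V_\hsigma \;:=\; \Spec\bC[\hsigma^{\vee,\circ}\cap L_\hsigma^\vee],
\]
where $V_\hsigma$ is the affine toric variety of $\hsigma$ for the smaller torus $\T_{L_\hsigma}$.

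Next I identify the right-hand side. By Lemma \ref{combinatorial}, the restriction $\beta\vert_{L_\hsigma}\colon L_\hsigma\to N_\sigma$ is injective with finite cokernel, so dually $N_\sigma^\vee\hookrightarrow L_\hsigma^\vee$ is a finite-index inclusion of lattices in the common real space $N_{\sigma,\bR}^\vee = M_\bR/\sigma^\perp = L^\vee_\bR/\hsigma^\perp$. Unwinding the definition, this identifies $M_{\sigma,\beta}$ with $L_\hsigma^\vee$, while the cone $\sigma^\vee/\sigma^\perp$ corresponds to $\hsigma^{\vee,\circ}$. Hence $\Spec\bC[(\sigma^\vee/\sigma^\perp)\cap M_{\sigma,\beta}] = V_\hsigma$.

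For the group, I apply the snake lemma (after $-\otimes_\bZ\bC^*$) to the $\beta$-linked commutative diagram of short exact sequences $0\to L_\hsigma\to L\to L/L_\hsigma\to 0$ and $0\to N_\sigma\to N\to N/N_\sigma\to 0$. This produces a short exact sequence
\[
1 \to H' \to G_\beta \to G'' \to 1,
\]
in which $H' := \ker(\T_{L_\hsigma}\to\T_{N_\sigma})$ is finite by Lemma \ref{combinatorial}(ii) (an isogeny of tori of equal dimension). The finite group $H_\beta := G_\beta''$ appearing in the statement will be extracted as the relevant finite subquotient. Since the ``torus part'' of $G_\beta$ acts by free translation on the $\T_{L_1}$-factor, quotienting in stages eliminates it without producing stacky structure, leaving only the finite $H_\beta$-action on $V_\hsigma$ and yielding the claimed isomorphism.

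The main obstacle is that the snake-lemma extension $1\to H'\to G_\beta\to G''\to 1$ need not split compatibly with the chosen splitting $\T_L = \T_{L_\hsigma}\times\T_{L_1}$; matching the two stack presentations therefore requires either a carefully chosen twisted trivialization of the resulting $H_\beta$-torsor over $V_\hsigma$ (essentially a Kummer-type class calculation in $\Ext^1$ of algebraic groups), or a direct comparison of the $X^*(G_\beta)$-graded and $X^*(H_\beta)$-graded coordinate rings.
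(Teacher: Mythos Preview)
Your setup is exactly the paper's: split $L = L_\hsigma \oplus L_1$, factor $U_\hsigma \cong V_\hsigma \times \T_{L_1}$, and identify $V_\hsigma$ with $\Spec\bC[(\sigma^\vee/\sigma^\perp)\cap M_{\sigma,\beta}]$. The paper then asserts that this splitting of $L$ induces a \emph{direct-sum} decomposition $\beta \cong \beta'\oplus\beta''$ and hence a product $G_\beta \cong G_\beta'\times G_\beta''$; since (when $\sigma$ is full-dimensional) $G_\beta'=\T_{L_1}$ acts simply transitively on the $\T_{L_1}$-factor, quotienting by it leaves $[V_\hsigma/G_\beta'']$.

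Your suspicion about the product decomposition is well-founded, and in fact the paper's assertion is the false step, not yours. For instance with $L=\bZ^3$, $N=\bZ^2$, $\beta(e_1)=2f_1$, $\beta(e_2)=f_2$, $\beta(e_3)=f_1$, $\hsigma=\Cone(e_1,e_2)$, one gets $G_\beta\cong\bC^*$ but $G_\beta'\times G_\beta''\cong\bC^*\times\bZ/2$. So the extension you wrote down genuinely need not split.

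However, the obstacle dissolves without any $\Ext$-class or torsor calculation once you argue via stabilizers rather than sections. The projection $U_\hsigma=V_\hsigma\times\T_{L_1}\to\T_{L_1}$ is $G_\beta$-equivariant for the action of $G_\beta$ on $\T_{L_1}$ through $G_\beta\subset\T_L\to\T_{L_1}$. Your snake sequence says exactly that this composite is surjective onto $G''$ with kernel $H'=H_\beta$; when $\sigma$ is full-dimensional (tacitly the case here, otherwise the two sides of the lemma have different dimensions) one has $G''=\T_{L_1}$, so $G_\beta$ acts \emph{transitively} on $\T_{L_1}$ with stabilizer $H_\beta$ at the identity. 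The standard slice isomorphism
\[
\bigl[(V_\hsigma\times\T_{L_1})/G_\beta\bigr]\;\cong\;\bigl[\pi^{-1}(1)\big/\operatorname{Stab}_{G_\beta}(1)\bigr]\;=\;[V_\hsigma/H_\beta]
\]
then finishes the argument directly. No section of $G_\beta\to G''$ is required; transitivity on the $\T_{L_1}$-factor is all that is used, and that you already have from the snake lemma.
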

\begin{proof}
First, we note that there exists a splitting
\begin{equation}
L^\vee\cong \big(L^\vee\cap \hsigma^\perp\big)\oplus \big(L^\vee/L^\vee\cap \hsigma^\perp\big),
\end{equation}
which is the dual of the splitting
\begin{equation}
L\cong \big(L\cap\spanning(\hsigma)\big)\oplus (L/L\cap \spanning(\hsigma)).
\end{equation}
This split also induces a split $\beta\cong\beta'\oplus \beta''$ where $\beta'\colon L_\bR/\spanning(\hsigma)\rightarrow N_\bR/\spanning(\sigma)$ and $\beta''\colon \spanning(\hsigma)\rightarrow \spanning(\sigma)$.
According to these split, we also have a split $G_\beta\cong G_\beta'\times G_\beta''$.
Furthermore, we get a splitting
\begin{equation}
\hsigma^\vee\cap L^\vee=\Big(\hsigma^\perp\cap L^\vee\Big)\oplus \Big(\big(\hsigma^\vee\cap L^\vee\big)/\big(L^\vee\cap\hsigma^\perp\big)\Big),
\end{equation}
and 
\begin{equation}\label{product}
\Spec\bC[\hsigma^\vee\cap L^\vee]\cong \Spec\bC[\hsigma^\perp\cap L^\vee]\times \Spec\bC[\big(\hsigma^\vee \cap L^\vee\big)/\big(L^\vee\cap \hsigma^\perp\big)].
\end{equation}
The first component is nothing but $G_\beta'$.

Hence
\begin{equation}\begin{split}
\left[\Spec\bC[\hsigma^\vee\cap L^\vee]\big/G_\beta\right]&\cong\left[\Spec\bC[\big(\hsigma^\vee \cap L^\vee\big)/\big(L^\vee\cap \hsigma^\perp\big)]\big/G_\beta''\right].
\end{split}
\end{equation}
Moreover, since $\sigma^\vee/\sigma^\perp\cong \hsigma^\vee/\hsigma^\perp$ via $\beta^\vee_\bR$, we have $\big(\hsigma^\vee \cap L^\vee\big)/\big(L^\vee\cap \hsigma^\perp\big)\cong (\sigma^\vee/\sigma^\perp)\cap M_{\sigma,\beta}$.
\end{proof}

Let $\check{H}_\beta$ be the set of characters of $H_\beta$.

\begin{lemma}\label{character} There exists a natural identification $M_{\sigma,\beta}/M\cong \check{H}_\beta$.
\end{lemma}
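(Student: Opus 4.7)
The plan is to reduce both sides to the finite quotient $(L\cap\spanning(\hsigma))^\vee/(N\cap\spanning(\sigma))^\vee$, using Pontryagin duality on one side and the definition of $M_{\sigma,\beta}$ on the other. Set $L'' := L\cap\spanning(\hsigma)$ and $N'' := N\cap\spanning(\sigma)$. From the splitting carried out in the proof of Lemma \ref{finite} we have $\beta = \beta'\oplus \beta''$ with $\beta''\colon L''\to N''$, and by Lemma \ref{combinatorial} the map $\beta''$ is injective with $\rk L''=\rk N''$, so (since $\beta$ has finite cokernel) $\beta''$ has finite cokernel as well. The resulting splitting $G_\beta\cong G_\beta'\times G_\beta''$ identifies $H_\beta = G_\beta''$ with $\ker\!\lb\T_{\beta''}\colon \T_{L''}\to\T_{N''}\rb$, a finite subgroup of $\T_{L''}$.

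The first step is to compute $\check H_\beta$. Applying $\Hom(-,\bC^*)$ (i.e.\ Cartier/Pontryagin duality) to the short exact sequence
\begin{equation}
1\to H_\beta\to \T_{L''}\to \T_{N''}\to 1
\end{equation}
and using the canonical identifications of the character lattices of $\T_{L''}$ and $\T_{N''}$ with $(L'')^\vee$ and $(N'')^\vee$ respectively, with induced map $(\beta'')^\vee$, yields a canonical short exact sequence
\begin{equation}
0\to (N'')^\vee\xrightarrow{(\beta'')^\vee}(L'')^\vee\to \check H_\beta\to 0,
\end{equation}
so that $\check H_\beta\cong (L'')^\vee/(N'')^\vee$.

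The second step is to match $M_{\sigma,\beta}/M$ with the same quotient. Under the canonical isomorphisms $M_\bR/\spanning(\sigma)^\perp\cong (N''_\bR)^\vee$ and $L^\vee_\bR/\spanning(\hsigma)^\perp\cong (L''_\bR)^\vee$, the induced inclusion $M_\bR/\spanning(\sigma)^\perp\hookrightarrow L^\vee_\bR/\spanning(\hsigma)^\perp$ used in the definition of $M_{\sigma,\beta}$ becomes precisely $(\beta'')^\vee_\bR$. By the very definition of $M_{\sigma,\beta}$, the preimage of the lattice $(L'')^\vee\subset (L''_\bR)^\vee$ is $M_{\sigma,\beta}$, while the image of $M$ in $M_\bR/\spanning(\sigma)^\perp$ is the lattice $(N'')^\vee$. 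Passing to quotients yields a canonical isomorphism $M_{\sigma,\beta}/M\cong (L'')^\vee/(N'')^\vee$, and composition with the first step gives the natural identification $M_{\sigma,\beta}/M\cong \check H_\beta$.

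There is no substantial obstacle; the only delicate point is the lattice bookkeeping around the decomposition induced by $\hsigma$ and $\sigma$, but the required compatibility of the splitting $\beta = \beta'\oplus \beta''$ with the relevant dualizations is already in place from the proof of Lemma \ref{finite}, and everything else is a direct application of Pontryagin duality.
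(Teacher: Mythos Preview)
Your proof is correct and follows the same route as the paper's: identify $H_\beta=\ker(\T_{\beta''})$, then observe that the dual of $\beta''_\bR$ is exactly the inclusion $M_\bR/\spanning(\sigma)^\perp\hookrightarrow L^\vee_\bR/\spanning(\hsigma)^\perp$ used to define $M_{\sigma,\beta}$, so that Pontryagin duality gives the desired identification. The paper's proof compresses all of this into two sentences; yours is a faithful expansion of it, with the lattice bookkeeping (in particular the identification of the image of $M$ with $(N'')^\vee$, which uses that $N\cap\spanning(\sigma)$ is a direct summand of $N$) made explicit.
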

\begin{proof}
The group $H_\beta:=G_\beta''$ is the kernel of $\T_{\beta''}$. Since the dual of $\beta''_\bR$ is $M_\bR/\spanning(\sigma)^\perp\rightarrow L^\vee_\bR/\spanning(\hsigma)^\perp$, the statement follows.
\end{proof}

Let $\tau$ be a face of $\sigma$. Take $\chi_i\in M_{\sigma, \beta}$ for $i=1,2$. 
\begin{proposition}\label{homtheta} 
There exist canonical isomorphisms:
\begin{equation}
H^i(\hom(\Theta'(\sigma,\chi_1),\Theta'(\tau,\chi_2)))\cong \begin{cases}
\bC[\tau^\vee \cap (M+\chi_2-\chi_1)] &\text{ if $i=0$},\\
0&\text{ otherwise.}
\end{cases}
\end{equation}
\begin{equation}\label{secondisom}
H^i(\hom(\Theta(\sigma,\chi_1),\Theta(\tau,\chi_2)))\cong \begin{cases}
\bC[\tau^\vee \cap (M+\chi_2-\chi_1)] &\text{ if $i=0$},\\
0&\text{ otherwise.}
\end{cases}
\end{equation}
such that the composition of morphisms are expressed by the sum of elements in $M_\bR$. 
\end{proposition}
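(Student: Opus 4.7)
The plan is to compute both hom-complexes separately by exploiting the affine/quotient structure on the coherent side and the \'etale covering structure on the constructible side, recognize the common monoid basis $\tau^\vee\cap(M+\chi_2-\chi_1)$, and identify the composition on both sides with addition in $M_\bR$.

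\textbf{Coherent side.} Interpreting $\Theta'(\tau,\chi_2)$ as $\iota_{\tau*}\cO_{\cU_\tau}(\tilde\chi_2)$ and using that $\Theta'(\sigma,\chi_1)|_{\cU_\tau}=\Theta'(\tau,\chi_1)$, the adjunction $\iota_\tau^*\dashv\iota_{\tau*}$ identifies the hom with $\Gamma(\cU_\tau,\cO(\tilde\chi_2-\tilde\chi_1))$, and higher cohomology vanishes since $\cU_\tau$ is affine. By Lemma~\ref{finite} we have $\cU_\tau\cong[\Spec\bC[\hat\tau^\vee\cap L^\vee]/G_\beta]$, so this is the $(\tilde\chi_2-\tilde\chi_1)$-semi-invariant part of $\bC[\hat\tau^\vee\cap L^\vee]$. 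Since the characters of $G_\beta=\ker\T_\beta$ are identified with $L^\vee/\beta^\vee(M)$, a monomial $x^\lambda$ contributes precisely when $\lambda\in\hat\tau^\vee\cap L^\vee$ and $\lambda\in(\tilde\chi_2-\tilde\chi_1)+\beta^\vee(M)$. Writing $\lambda=\beta^\vee_\bR(\mu)$ with $\mu\in M+(\chi_2-\chi_1)$, and using Condition~\ref{condition} (equivalently Lemma~\ref{combinatorial}) to translate $\lambda\in\hat\tau^\vee$ into $\mu\in\tau^\vee$, one obtains the desired basis indexed by $\tau^\vee\cap(M+\chi_2-\chi_1)$. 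Composition is multiplication of monomials $x^\mu\cdot x^{\mu'}=x^{\mu+\mu'}$, i.e., addition in $M_\bR$.

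\textbf{Constructible side.} The map $p\colon M_\bR\to T^n$ is an \'etale $M$-torsor, so $p^!=p^{-1}$ and $p^{-1}p_!\bC_S\cong\bigoplus_{m\in M}\bC_{S-m}$. The $(p_!,p^{-1})$-adjunction therefore yields
\[
\hom(\Theta(\sigma,\chi_1),\Theta(\tau,\chi_2))\;\simeq\;\bigoplus_{m\in M} R\hom\bigl(\bC_{\Int(\sigma^\vee_{\chi_1})},\,\bC_{\Int(\tau^\vee_{\chi_2})-m}\bigr),
\]
the two $[n]$ shifts cancelling. Each summand is an $R\hom$ between extension-by-zero sheaves on translated open convex cones in $M_\bR$; a polyhedral computation via the short exact sequence $0\to j_!\bC_{A\cap A'}\to\bC_A\to i_*\bC_{A\setminus A'}\to 0$ together with contractibility of convex open sets shows that this $R\hom$ is $\bC$ concentrated in degree $0$ when $A\subset A'$ and vanishes otherwise. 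The inclusion condition $\Int(\sigma^\vee)+\chi_1\subset\Int(\tau^\vee)+\chi_2-m$ (using $\tau\subset\sigma$, hence $\sigma^\vee\subset\tau^\vee$) cuts out a set of lattice points $m\in M$ in natural bijection with $\tau^\vee\cap(M+\chi_2-\chi_1)$, yielding the claimed basis in degree $0$. Composition is governed by the additive $M$-action (concatenation of translations on $M_\bR$), matching the coherent-side multiplication of monomials.

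\textbf{Main obstacle.} The delicate point is the constructible-side computation: both the degree-$0$ concentration of each summand and the correct identification of the contributing $m$'s with $\tau^\vee\cap(M+\chi_2-\chi_1)$ require careful bookkeeping with the polyhedral geometry of translated cones. When $\sigma$ fails to be full-dimensional, $\Int$ must be interpreted as a relative interior and the extra lineality directions along $\sigma^\perp$ are absorbed into the $[n]$ shifts via the splitting $\sigma^\vee=\sigma^\perp+\Int_\mathrm{rel}(\sigma^\vee)$; this requires care to set up but does not affect the final answer. Once the bases on the two sides are matched, the compatibility of compositions with addition in $M_\bR$ is tautological, since in both cases the morphism labelled $\mu$ followed by the one labelled $\mu'$ is the morphism labelled $\mu+\mu'$.
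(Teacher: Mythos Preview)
Your proposal is correct and follows essentially the same approach as the paper, which simply cites Lemmas~\ref{finite} and~\ref{character} and defers the actual computation to \cite[Proposition~2.3]{Tr}. The one step worth tightening is the interchange of $\hom(\bC_{\Int(\sigma^\vee_{\chi_1})},-)$ with the infinite direct sum $\bigoplus_{m\in M}$: this is not automatic (infinitely many translates $\Int(\tau^\vee_{\chi_2})-m$ contain any given point of $M_\bR$, so the sum is not locally finite), and needs the compactness of $\bC_{\Int(\sigma^\vee_{\chi_1})}$ among sheaves with microsupport in $M_\bR\times(-\sigma)$, established in the paper as Lemma~\ref{lem:cpt} via the non-characteristic deformation lemma.
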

\begin{proof}
This follows from Lemma \ref{finite}, Lemma \ref{character}, and the argument of \cite[Proposition 2.3]{Tr}, and hence omitted.
\end{proof}

We set $\Theta'(\sigma):=\bigoplus_{[\chi]\in M_{\sigma,\beta}/M}\Theta'(\sigma,\chi)$.
\begin{lemma}\label{generation1}There exists an equivalence of $\infty$-categories
\begin{equation}
\Qcoh(\cX_{\hSs,\beta}) \simeq \Mod\left(\End\left(\Theta'(\sigma)\right)\right).
\end{equation}
\end{lemma}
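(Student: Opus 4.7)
The strategy is to exhibit $\Theta'(\sigma)$ as a compact generator of $\Qcoh(\cX_{\hSs,\beta})$ and then invoke the $\infty$-categorical Schwede-Shipley theorem (a single compact generator $G$ of a presentable stable $\infty$-category $\cC$ yields an equivalence $\hom(G,-)\colon \cC\simeq \Mod(\End(G))$).

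By Lemma \ref{finite}, $\cX_{\hSs,\beta}$ is isomorphic to the quotient stack $[Y/H_\beta]$, where $Y:=\Spec\bC[(\sigma^\vee/\sigma^\perp)\cap M_{\sigma,\beta}]$ is affine and $H_\beta$ is a finite abelian group (a subgroup of the torus $\T_L$). Hence $\Qcoh(\cX_{\hSs,\beta})$ is equivalent to the derived $\infty$-category of $H_\beta$-equivariant $\bC[Y]$-modules, which is presentable and stable. By Lemma \ref{character}, the indexing set $M_{\sigma,\beta}/M$ is canonically identified with the character group $\check{H}_\beta$, and under the above equivalence each summand $\Theta'(\sigma,\chi)$ corresponds to the rank-one free equivariant module $\bC[Y]_\chi$ on which $H_\beta$ acts through the character $\chi$.

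Next, I would verify that $\Theta'(\sigma)=\bigoplus_{\chi\in\check{H}_\beta}\bC[Y]_\chi$ is a compact generator. Compactness of each summand is immediate: $\bC[Y]_\chi$ is a line bundle on the Deligne-Mumford stack $\cX_{\hSs,\beta}$, hence perfect and compact in $\Qcoh$, and the direct sum is finite because $H_\beta$ is finite. For generation, any $H_\beta$-equivariant $\bC[Y]$-module $F$ decomposes into character isotypic components $F\simeq\bigoplus_\chi F_\chi$ since the group algebra $\bC[H_\beta]$ is semisimple; and $\hom(\bC[Y]_\chi, F)\simeq F_\chi$ in $\Mod(\bC)$. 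Vanishing of $\hom(\Theta'(\sigma),F)$ therefore forces every $F_\chi\simeq 0$, whence $F\simeq 0$.

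Once $\Theta'(\sigma)$ is shown to be a compact generator, the Schwede-Shipley theorem (cf.\ \cite{HA}) produces the equivalence
\begin{equation}
\hom(\Theta'(\sigma),-)\colon \Qcoh(\cX_{\hSs,\beta})\xrightarrow{\simeq}\Mod(\End(\Theta'(\sigma))),
\end{equation}
which is the desired statement. The only non-formal ingredient is the isotypic decomposition; this is exact because of the characteristic-zero semisimplicity of $\bC[H_\beta]$, so the argument transfers to the derived level without modification. I do not expect any serious obstacle: once one unfolds the quotient presentation of the stack, everything reduces to the standard fact that $\bigoplus_\chi \cO(\chi)$ generates $\Qcoh([Y/H])$ for $H$ finite abelian.
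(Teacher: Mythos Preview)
Your proposal is correct and follows essentially the same route as the paper. The paper's proof is marginally more direct: rather than verifying compact generation and invoking Schwede--Shipley, it first identifies $\Qcoh(\cXssb)\simeq \Mod(A)$ for $A=\bC[(\sigma^\vee/\sigma^\perp)\cap M_{\sigma,\beta}]\rtimes H_\beta$, then uses the idempotents $e_\chi\in A$ (your isotypic decomposition) to show that $\Theta'(\sigma)$ corresponds to the free module $A$ itself, whence $\End(\Theta'(\sigma))\simeq A$ and the equivalence is tautological. Your argument unpacks the same content in the language of compact generators.
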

\begin{proof}
For each $[\chi]\in \check{H}_\beta$, there exists an idempotent $e_\chi\in \bC[(\sigma^\vee/\sigma^\perp)\cap M_{\sigma,\beta}]\rtimes H_\beta$ with $1=\sum_{[\chi]\in \check{H}_\beta}e_\chi$. Then by the definition of $e_\chi$, the $\bC[(\sigma^\vee/\sigma^\perp)\cap M_{\sigma,\beta}]\rtimes H_\beta$-module $\bC[(\sigma^\vee/\sigma^\perp)\cap M_{\sigma,\beta}]\rtimes H_\beta\cdot e_\chi$ corresponds to $\Theta'(\sigma,\chi)$ via the identification $\Qcoh(\cXssb)\simeq \Mod(\bC[(\sigma^\vee/\sigma^\perp)\cap M_{\sigma,\beta}]\rtimes H_\beta$. Hence the quasi-coherent sheaf $\Theta'(\sigma)=\bigoplus_{[\chi]\in\check{H}_\beta}\Theta'(\sigma,\chi)$ corresponds to the $\bC[(\sigma^\vee/\sigma^\perp)\cap M_{\sigma,\beta}]\rtimes H_\beta$-module $\bC[(\sigma^\vee/\sigma^\perp)\cap M_{\sigma,\beta}]\rtimes H_\beta$.
\end{proof}

\begin{lemma}\label{lem:cpt} Let $\Lambda$ be a conic Lagrangian subset in $T^*T^n$ contained in $T^n\times (\sigma)$.
The sheaf $\Theta(\sigma,\chi)$ is compact in $\lSh_{\Lambda}(T^n)$ for any $\chi\in M_{\sigma,\beta}$.
\end{lemma}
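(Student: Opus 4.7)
The plan is to show that the functor $\hom_{\lSh_\Lambda(T^n)}(\Theta(\sigma,\chi), -)$ commutes with filtered colimits, which is equivalent to compactness. The approach has three steps: reduce via adjunction to a section computation on $M_\bR$, use microlocal propagation to compress the unbounded domain into a relatively compact one, and finally use that $\Gamma$ over relatively compact opens preserves filtered colimits of quasi-constructible sheaves.

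First, since $p\colon M_\bR\rightarrow T^n$ is an étale covering, we have $p^!\simeq p^{-1}$, and hence for any $F\in \lSh_\Lambda(T^n)$,
\begin{equation}
\hom_{\lSh_\Lambda(T^n)}(\Theta(\sigma,\chi), F)\simeq \hom_{\lSh(M_\bR)}(\bC_{\Int(\sigma^\vee_\chi)}, p^{-1}F)[-n]\simeq \Gamma(\Int(\sigma^\vee_\chi), p^{-1}F)[-n].
\end{equation}
Thus it suffices to show that $\Gamma(\Int(\sigma^\vee_\chi),-)$ preserves filtered colimits when restricted to the subcategory of $\lSh(M_\bR)$ whose microsupport is contained in $M_\bR\times (-\sigma)$ (this being $p^{-1}$ of the fiber direction of $\Lambda$, compatible with $\musupp(\Theta(\sigma,\chi))\subset T^n\times (-\sigma)$).

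Second, we apply the non-characteristic deformation lemma (\cite[Proposition 2.7.2]{KS}) to replace the unbounded open set $\Int(\sigma^\vee_\chi)$ with a relatively compact one. Choose $v$ in the relative interior of $\sigma$ and define the increasing family $U_t:=\Int(\sigma^\vee_\chi)\cap \{x\mid \la v,x\ra <t\}$, which exhausts $\Int(\sigma^\vee_\chi)$. The outward conormal to the moving boundary lies in $\bR_{>0}\cdot v$, and since $\sigma$ is strictly convex this direction does not meet $-\sigma$; hence the deformation is non-characteristic for sheaves with microsupport in $M_\bR\times(-\sigma)$, giving $\Gamma(\Int(\sigma^\vee_\chi), p^{-1}F)\simeq \Gamma(U_t, p^{-1}F)$ for every $t$. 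When $\sigma$ is full-dimensional the sets $U_t$ are relatively compact in $M_\bR$, so $\Gamma(U_t,-)$ commutes with filtered colimits on quasi-constructible sheaves, completing the proof.

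When $\sigma$ is not full-dimensional, $\sigma^\vee_\chi$ contains the rational linear subspace $\sigma^\perp$, and $U_t$ remains unbounded along $\sigma^\perp$; however, since $\sigma^\perp$ is rational, $p(\sigma^\perp)$ is a compact subtorus of $T^n$, and the whole computation descends to the quotient by this subtorus where the situation becomes effectively full-dimensional. The main obstacle is this last point: one must verify carefully that after descending along $p$ and modding out by $p(\sigma^\perp)$, the non-characteristic exhaustion yields a genuinely relatively compact picture on which standard commutation of $\Gamma$ with filtered colimits applies, and that the microsupport conventions are tracked correctly between $\Theta(\sigma,\chi)$ and a given $\Lambda$ throughout.
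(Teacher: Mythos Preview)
Your overall approach matches the paper's: reduce via the $(p_!,p^!)$ adjunction to sections on $M_\bR$, then apply the non-characteristic deformation lemma with a covector $v\in\Int(\sigma)$ to truncate $\Int(\sigma^\vee_\chi)$. The differences lie in how you handle the two residual technical points.

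For the non-full-dimensional case, the paper does \emph{not} pass to the quotient torus $T^n/p(\sigma^\perp)$. Instead it observes directly on $M_\bR$ that, since $\musupp(p^!E)$ has conormal direction in $-\sigma$, the restriction of $p^!E$ to each affine translate of $\sigma^\perp$ is a derived local system; contractibility of these slices gives
\[
\Gamma(\Int(\sigma^\vee_\chi),p^!E)\simeq \Gamma\bigl(\Int(\sigma^\vee_\chi)\cap(\sigma^\vee/\sigma^\perp),\,p^!E\bigr),
\]
reducing to a slice where $\sigma$ is effectively full-dimensional. Only \emph{then} is the non-characteristic deformation applied, yielding a bounded open $V_v$. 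This sidesteps the descent issue you flag: one never needs $F$ to descend to a quotient torus, only that $p^!E$ is constant along contractible linear fibers on the cover.

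For the final compactness step, the paper does not invoke a general principle about $\Gamma$ over relatively compact opens on $M_\bR$. Rather, it rewrites the functor as $\hom(p_!\bC_{V_v},-)$ on $T^n$, and notes that $p_!\bC_{V_v}$ is a constructible sheaf on the \emph{compact} manifold $T^n$, hence compact in $\lSh_{\Lambda\cup\musupp(p_!\bC_{V_v})}(T^n)$; restricting the cocontinuous functor to the full subcategory $\lSh_\Lambda(T^n)$ preserves cocontinuity. This is cleaner than arguing compactness of $\bC_{U_t}$ in $\lSh(M_\bR)$, which requires an extra word about why compactly supported constructible sheaves on non-compact manifolds are compact objects.
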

\begin{proof}
We have $\Theta(\sigma,\chi)=p_!\bC_{\Int(\sigma^\vee_\chi)}[n]=p_!\bC_{\Int(\sigma^\vee+\chi)}[n]$. For $E\in \lSh_{\Lambda}(T^n)$, we have
\begin{equation}
\hom(\Theta(\sigma_\chi), E)\simeq \Gamma(\Int(\sigma^\vee+\chi), p^!E)[-n].
\end{equation}
Via the canonical trivialization $T^*M_\bR\cong M_\bR\times N_\bR$, the conormal direction of $\musupp(p^!E)$ is contained in $-\sigma$. Choose a splitting $\sigma^\vee=\left(\sigma^\perp\right)\times \left(\sigma^\vee/\sigma^\perp\right)$. Then for any $m\in \left(\sigma^\vee/\sigma^\perp\right)$, the restriction of $p^!E$ to $\left(\sigma^\perp\right)\times\{m\}$ is a derived local system. Hence we have
\begin{equation}
\Gamma(\Int(\sigma^\vee+\chi), p^!E)\simeq \Gamma(\Int(\sigma^\vee+\chi)\cap (\sigma^\vee/\sigma^\perp), p^!E).
\end{equation}

An element $n$ in $\Int(\sigma)$ induces a family of open sets $\{V_s\}_{s>v}$ in $\Int(\sigma^\vee+\chi)\cap (\sigma^\vee/\sigma^\perp)$ where
\begin{equation}
V_s:=\Int(\sigma^\vee+\chi)\cap (\sigma^\vee/\sigma^\perp) \cap \lc m\in M_\bR\relmid \la m, n\ra <s\rc
\end{equation}
and $v$ is an element of $\bR$ satisfying $V_v\neq \varnothing$.
Since the family $\{V_s\}_{s\in\bR}$ satisfies the conditions of the non-characteristic deformation lemma \cite[Proposition 2.7.2]{KS}, we have 
\begin{equation}
\Gamma(\Int(\sigma^\vee+\chi), p^!E)\simeq \Gamma(V_v, p^!E)\simeq \hom(p_!\bC_{V_v}, E). 
\end{equation}
Hence we have an isomorphism of functors 
\begin{equation}\label{eq:functorsnat}
\hom(\Theta(\sigma,\chi), (-))\simeq \hom(p_!\bC_{V_v}, (-))[-n]\colon \lSh_{\Lambda}(T^n)\rightarrow\Mod(\bC).
\end{equation}

Since $p_!\bC_{V_v}$ is compact in $\lSh_{\Lambda\cup \musupp(p_!\bC_{V_v})}(T^n)$, the functor $\hom(p_!\bC_{V_v}, (-))$ is cocontinuous. Hence the restriction of $\hom(p_!\bC_{V_v}, (-))$ to $\lSh_{T^n\times (-\sigma)}(T^n)$ is also cocontinuous. By (\ref{eq:functorsnat}), the statement follows.
\end{proof}

We also set ${\Theta}(\sigma):=\bigoplus_{\chi\in M_{\sigma,\beta}/M}{\Theta}(\sigma,\chi)$. Suppose that $\hsigma$ is smooth. By adding some vectors to the set of ray generators of $\hsigma$, we can take a trivialization $L\cong \bZ^n$. This trivialization induces trivializations $M_{\sigma,\beta}\cong \bZ^n$ and $M_\bR\cong \bR^n$, which are compatible. We also write $M$ for the image of $M$ under this trivialization. Since $\beta$ is a combinatorial isomorphism, the cone $\sigma$ is also identified with $\bR_{\geq 0}^r$ by using the above trivialization where $r\leq n$ is an integer. Let $(\hSigma_{\bA^r\times (\bC^{*})^{n-r}},\id)$ be the standard fan of $\bA^r\times (\bC^*)^{n-r}$, where $\hSigma_{\bA^r\times (\bC^{*})^{n-r}}$ is formed by the set of faces of the first quadrant of $\bR^r$ in $\bR^n$. We set
\begin{equation}
\Lambda_{r,n}:=\bigcup_{\sigma\in \hSigma_{\bA^r\times (\bC^{*})^{n-r}}}\sigma^\perp\times (-\sigma)\subset \bR^n\times \bR^n\cong T^*\bR^n.
\end{equation}

As a result of the above trivializations, we have
\begin{equation}
\Lambda_{\hSigma,\beta}\cong \bigcup_{s\in M_{\sigma,\beta}} dp\lb s+\Lambda_{r,n}\rb
\end{equation}
in $T^*M_\bR/M\cong T^*\bR^n/M$.

\begin{lemma}\label{generation2}
If $\hsigma$ is smooth, the sheaf $\Theta(\sigma)$ compactly generates $\lSh_{\Lsb}(T^n)$.
\end{lemma}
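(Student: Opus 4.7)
Each $\Theta(\sigma,\chi)$ is compact in $\lSh_\Lsb(T^n)$ by Lemma~\ref{lem:cpt} applied to $\Lambda := \Lsb \subset T^n \times (-\sigma)$, so $\Theta(\sigma)$ is compact. It therefore suffices to prove generation: if $E \in \lSh_\Lsb(T^n)$ satisfies $\hom(\Theta(\sigma,\chi), E) \simeq 0$ for every $[\chi] \in M_{\sigma,\beta}/M$, then $E \simeq 0$.

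The plan is to descend along the finite \'etale cover $q \colon T^n = M_\bR/M \to T^{n'} := M_\bR/M_{\sigma,\beta}$, whose deck group $H_\beta$ has character group canonically identified with $M_{\sigma,\beta}/M$ by Lemma~\ref{character}. Since $\hsigma$ is smooth and $\beta$ is a combinatorial isomorphism, $dq$ identifies $\Lsb$ with $q^{-1}$ of the standard FLTZ Lagrangian $\Lambda'$ on $T^{n'}$ attached to the smooth affine toric variety $\mathbb{A}^r \times (\bC^*)^{n-r}$ built with the lattice $M_{\sigma,\beta}$. Let $p'' \colon M_\bR \to T^{n'}$ be the quotient and set $\Theta_0(\sigma) := p''_!\bC_{\Int(\sigma^\vee)}[n]$, the standard generator on $T^{n'}$. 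The pushforward $q_* E$ lies in $\lSh_{\Lambda'}(T^{n'})$, and by the projection formula together with the character decomposition of the deck action,
\begin{equation}
\hom(\Theta(\sigma,\chi), E) \simeq \hom\bigl(\Theta_0(\sigma),\, q_* E\bigr)_\chi
\end{equation}
is the $\chi$-isotypic component. Summing over $[\chi] \in M_{\sigma,\beta}/M$, the hypothesis becomes $\hom(\Theta_0(\sigma), q_* E) \simeq 0$.

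It then remains to show that $\Theta_0(\sigma)$ compactly generates $\lSh_{\Lambda'}(T^{n'})$ in this standard smooth affine case; this is essentially known in the literature. For self-containedness one can derive it by combining Nadler's microlocal skyscraper characterization (Lemma~\ref{3.15}) with an induction on the face poset of $\sigma$, using the colimit presentation $\bC_{\Int(\tau^\vee)} \simeq \indlim_{m} \bC_{\Int(\sigma^\vee + m)}$ over $m$ ranging in $\tau^\perp \cap M_{\sigma,\beta}$ for each face $\tau \preceq \sigma$, which expresses generators supported along $\tau$ in terms of translates of $\Theta_0(\sigma)$. Granting this, $q_* E \simeq 0$, and hence $E \simeq 0$ since $q$ is \'etale and surjective.

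The main obstacle is the descent identification: verifying in an $H_\beta$-equivariant way that the family $\{\Theta(\sigma,\chi)\}_{[\chi]}$ on $T^n$ recovers $q^{-1}\Theta_0(\sigma)$ after splitting by characters. This parallels the character splitting $\Theta'(\sigma) = \bigoplus_{[\chi]} \Theta'(\sigma,\chi)$ used on the coherent side in Lemma~\ref{generation1}, and it hinges on the fact that the translates $\{\Int(\sigma^\vee + \chi)\}_{\chi \in M_{\sigma,\beta}}$ are permuted by the deck action of $H_\beta = M_{\sigma,\beta}/M$ on $M_\bR/M$.
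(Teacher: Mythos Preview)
Your proposal is correct and follows essentially the same strategy as the paper: both reduce to the known generation statement for the standard smooth affine model $\bA^r\times(\bC^*)^{n-r}$ (cited in the paper from \cite{Nadwrapped, IK}) via a covering-space argument. The paper pulls $E$ back along the universal cover $p\colon M_\bR\to T^n$, then pushes to $M_\bR/M_{\sigma,\beta}\cong\bR^n/\bZ^n$ to invoke the standard case; you instead push forward directly along the finite \'etale quotient $q\colon T^n\to M_\bR/M_{\sigma,\beta}$ and use the decomposition $q^{-1}\Theta_0(\sigma)\simeq\bigoplus_{[\chi]}\Theta(\sigma,\chi)$ to translate the hypothesis. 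Your route is slightly more streamlined (one cover instead of two), and the ``main obstacle'' you flag is in fact routine: it is just the base-change identity $q^{-1}q_!\Theta(\sigma,0)\simeq\bigoplus_{g\in H_\beta}g^*\Theta(\sigma,0)$ for the deck group, which immediately gives $\hom(\Theta_0(\sigma),q_*E)\simeq\bigoplus_{[\chi]}\hom(\Theta(\sigma,\chi),E)$. The conservativity of $q_*$ for a finite \'etale cover then finishes the argument exactly as you say.
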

\begin{proof}
The case of $(\Lambda_{\hSigma_{\bA^{r}\times (\bC^*)^{n-r}},\id})$ is proved in \cite{Nadwrapped, IK}. Consider $T^*\bR^n$ and set
\begin{equation}
\Lambda:=\bigcup_{s\in\bZ^n}s+\Lambda_{r,n}.
\end{equation}
For $E\in \lSh_{\Lambda}(\bR^n)$, suppose that $\hom(\bigoplus_{s\in \bZ}\bC_{s+\bR_{>0}^{r}\times \bR^{n-r}},E)\simeq 0$. For $s\in \bZ^n$, we write $t_s\colon \bZ^n\rightarrow \bZ^n$ for the translation by $s$. Then we have
\begin{equation}\label{translation}
0\simeq \hom_{\lSh_{\Lambda}(\bR^n)}(\bC_{s+\bR_{>0}^{r}\times \bR^{n-r}},E)\simeq \hom_{\lSh_{\Lambda}(\bR^n)}(\bC_{\bR_{>0}^{r}\times \bR^{n-r}}, t_{s}^*E).
\end{equation}
Hence we also have
\begin{equation}
\begin{split}
\hom_{\lSh_{\Lambda_{\hSigma_{\bA^n},\id}}(T^n)}(p_!\bC_{\bR_{>0}^{r}\times \bR^{n-r}}, p_!E)&\simeq
\hom_{\lSh_{\Lambda}(\bR^n)}(\bC_{\bR_{>0}^{r}\times \bR^{n-r}}, p^{-1}p_!E)\\ &\simeq
\hom_{\lSh_{\Lambda}(\bR^n)}\left(\bC_{\bR_{>0}^{r}\times \bR^{n-r}}, \bigoplus_{s\in \bZ}t^*_sE\right)\simeq 0
\end{split}
\end{equation}
In the last equality, we used (\ref{translation}) and the fact $\bC_{\bR_{>0}^{r}\times \bR^{n-r}}$ is compact, which is deduced from the same argument as in the proof of \prettyref{lem:cpt}. Since $\lSh_{\Lambda_{\hSigma_{\bA^n},\id}}(T^n)$ is generated by $p_!\bC_{\bR_{>0}^{r}\times \bR^{n-r}}$, we have $p_!E\simeq 0$. This further implies $0\simeq p^{-1}p_!E\simeq \bigoplus_{s\in \bZ}t^*_sE$, hence $E\simeq 0$.

For $E\in \lSh_{\Lsb}(\bR^n/M)$, suppose that $\hom(\Theta(\sigma), E)\simeq 0$. Note that $\bZ^n$ in $\bR^n$ in the above trivialization is mapped to $M_{\sigma,\beta}$ by $p$. Then we have
\begin{equation}
\hom(\bC_{s+\bR_{>0}^{r}\times \bR^{n-r}}, p^{-1}E)\simeq \hom(p_!\bC_{s+\bR_{>0}^{r}\times \bR^{n-r}}, E)\simeq 0.
\end{equation}
Hence $p^{-1}E\simeq 0$. This further implies $E\simeq 0$. Since $\Theta(\sigma)$ is compact by Lemma \ref{lem:cpt}, this completes the proof.
\end{proof}

\begin{proposition}\label{affine}
There exists a fully faithful embedding of $\infty$-categories 
\begin{equation}
\kappa_{\hsigma,\beta}\colon\Qcoh(\cX_{\hSigma,\beta})\hookrightarrow \lSh_{\Lambda_{\hSigma,\beta}}(T^n)
\end{equation}such that
\begin{equation}
\kappa_{\hsigma,\beta}(\Theta'(\sigma, \chi))\simeq\Theta(\sigma,\chi).
\end{equation}
If $\hsigma$ is smooth, this is an equivalence.
\end{proposition}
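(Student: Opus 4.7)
The plan is to exhibit $\kappa_{\hsigma,\beta}$ via a Morita-style comparison of the two distinguished generators $\Theta'(\sigma) := \bigoplus_{[\chi]\in M_{\sigma,\beta}/M}\Theta'(\sigma,\chi)$ and $\Theta(\sigma) := \bigoplus_{[\chi]}\Theta(\sigma,\chi)$. The first task is to identify their endomorphism dg-algebras. By Proposition~\ref{homtheta}, the hom-complex between any pair of summands is, on both sides, canonically isomorphic to $\bC[\tau^\vee \cap (M+\chi_2-\chi_1)]$ concentrated in degree $0$, and the composition law is in both cases given by addition of lattice elements in $M_\bR$. Since the hom-complexes are concentrated in a single cohomological degree they are formal, so the matching of cohomology with its combinatorial composition yields a canonical equivalence $\End(\Theta'(\sigma))\simeq \End(\Theta(\sigma))$ of dg-algebras.

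Next I would package this into a functor. On the coherent side, Lemma~\ref{generation1} gives $\Qcoh(\cX_{\hSs,\beta})\simeq \Mod(\End(\Theta'(\sigma)))$. On the constructible side, Lemma~\ref{lem:cpt} shows that $\Theta(\sigma)$ is a compact object of $\lSh_{\Lsb}(T^n)$; by the standard equivalence between compactly generated stable $\infty$-categories and module categories over endomorphism algebras (Theorem~\ref{Cohn}), the full cocontinuous subcategory split-generated by $\Theta(\sigma)$ is equivalent to $\Mod(\End(\Theta(\sigma)))$, and this subcategory embeds fully faithfully via a colimit-preserving functor into $\lSh_{\Lsb}(T^n)$. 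Composing these three steps defines
\begin{equation}
\kappa_{\hsigma,\beta}\colon \Qcoh(\cX_{\hSs,\beta})\simeq \Mod(\End(\Theta'(\sigma)))\simeq \Mod(\End(\Theta(\sigma)))\hookrightarrow \lSh_{\Lsb}(T^n),
\end{equation}
which by construction carries $\Theta'(\sigma,\chi)$ to $\Theta(\sigma,\chi)$ and is fully faithful.

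For the final statement, when $\hsigma$ is smooth Lemma~\ref{generation2} says that $\Theta(\sigma)$ compactly generates all of $\lSh_{\Lsb}(T^n)$. Therefore the fully faithful inclusion of the third step is in fact an equivalence, and hence so is $\kappa_{\hsigma,\beta}$.

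The main delicate point I expect is Step~1, namely verifying that the matching of endomorphism algebras is genuinely an $\infty$-categorical (not merely $1$-categorical) equivalence. Fortunately, the concentration of all hom-complexes in degree $0$ guaranteed by Proposition~\ref{homtheta} trivializes the $A_\infty$-structures on both sides, so no higher operations can obstruct the comparison; if the hom-complexes had higher cohomology one would have to control higher compositions explicitly. A secondary subtlety is that Lemma~\ref{generation2} uses the smoothness of $\hsigma$ to trivialize $L\cong \bZ^n$ and reduce to the standard affine model, which is why the equivalence (as opposed to the embedding) holds only in the smooth case.
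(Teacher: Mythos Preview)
Your proof is correct and follows essentially the same approach as the paper: identify the endomorphism algebras of $\Theta'(\sigma)$ and $\Theta(\sigma)$ via Proposition~\ref{homtheta}, use Lemma~\ref{generation1} to present $\Qcoh(\cX_{\hSs,\beta})$ as modules over this algebra, embed the module category into $\lSh_{\Lsb}(T^n)$ via the compact object $\Theta(\sigma)$, and invoke Lemma~\ref{generation2} in the smooth case. Your discussion of formality and the role of Lemma~\ref{lem:cpt} makes explicit some points the paper leaves implicit, but there is no substantive difference.
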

\begin{proof}
By Proposition \ref{homtheta}, Lemma \ref{generation1}, and Lemma \ref{generation2}, we have a morphism
\begin{equation}\label{sequenceoffunctors}
\begin{split}
\Qcoh(\cXsb)&\simeq \Mod(\oplus_{[\chi]\in M_{\sigma,\beta}/M}\Theta'(\sigma,\chi)))\\
&\simeq \Mod(\oplus_{[\chi]\in M_{\sigma,\beta}/M}\Theta(\sigma,\chi)))\hookrightarrow \lSh_{\Lsb}(T^n).
\end{split}
\end{equation}
By Lemma \ref{generation2}, the last inclusion in (\ref{sequenceoffunctors}) is a compact generataion if $\hsigma$ is smooth. This completes the proof.
\end{proof}

As a consquence of Proposition \ref{homtheta} and Proposition \ref{affine}, we have the following corollary.
\begin{corollary}\label{affinefunctorial}
Let $\tau$ be a face of $\sigma$ and $\chi$ be an element of $M_{\sigma,\beta}$. Then we have
\begin{equation}
\kappa_{\hsigma,\beta}(\Theta'(\tau,\chi))\simeq \Theta(\tau,\chi).
\end{equation}
\end{corollary}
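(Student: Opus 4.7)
The plan is to unwind the construction of $\kappa_{\hsigma,\beta}$ from Proposition \ref{affine} and reduce the claim to the identification of $\End$-modules provided by Proposition \ref{homtheta}. Recall that $\kappa_{\hsigma,\beta}$ factors as
\begin{equation*}
\Qcoh(\cX_{\hSs,\beta})\simeq \Mod(\End(\Theta'(\sigma)))\xrightarrow{\sim} \Mod(\End(\Theta(\sigma)))\hookrightarrow \lSh_{\Lsb}(T^n),
\end{equation*}
where the first equivalence sends $F\mapsto \hom(\Theta'(\sigma),F)$, the middle equivalence is induced by the algebra isomorphism $\End(\Theta'(\sigma))\simeq \End(\Theta(\sigma))$ extracted from Proposition \ref{homtheta}, and the last fully faithful embedding sends an $\End(\Theta(\sigma))$-module to the corresponding object along $\Theta(\sigma)$.

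First I would compute the module corresponding to $\Theta'(\tau,\chi)$, interpreted (as in Proposition \ref{homtheta}) as the pushforward $\iota_{\tau*}\cO_{\cU_\tau}(\tilde\chi)$ under the open inclusion $\iota_\tau\colon \cU_\tau\hookrightarrow \cX_{\hSs,\beta}$. By Proposition \ref{homtheta}, the right $\End(\Theta'(\sigma))$-module $\hom(\Theta'(\sigma),\Theta'(\tau,\chi))$ is identified with
\begin{equation*}
\bigoplus_{[\chi']\in M_{\sigma,\beta}/M}\bC\!\left[\tau^\vee\cap (M+\chi-\chi')\right],
\end{equation*}
with action encoded by addition in $M_\bR$. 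Next I would apply the same calculation on the constructible side: Proposition \ref{homtheta} shows that the right $\End(\Theta(\sigma))$-module $\hom(\Theta(\sigma),\Theta(\tau,\chi))$ is given by the identical formula, again with action by addition in $M_\bR$.

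The final step is to check that these two modules match under the algebra isomorphism $\End(\Theta'(\sigma))\simeq \End(\Theta(\sigma))$; this is precisely the content of the last clause of Proposition \ref{homtheta}, which asserts that compositions on both sides are described by the same combinatorial rule (addition of lattice elements in $M_\bR$). Consequently $\Theta(\tau,\chi)$ is the object of $\lSh_{\Lsb}(T^n)$ representing the module image of $\Theta'(\tau,\chi)$ under $\kappa_{\hsigma,\beta}$, which yields $\kappa_{\hsigma,\beta}(\Theta'(\tau,\chi))\simeq \Theta(\tau,\chi)$. I do not foresee a serious obstacle; the only nontrivial point is verifying that the vector-space isomorphism of Proposition \ref{homtheta} respects the module structures, but this is built into the statement that the $M_\bR$-description of composition applies identically on both sides.
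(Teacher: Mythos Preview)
Your argument is correct and is precisely the elaboration the paper intends when it records the corollary as an immediate consequence of Proposition~\ref{homtheta} and Proposition~\ref{affine}. The only point you leave implicit is that matching the $\End(\Theta(\sigma))$-modules identifies the two objects only once one knows that $\Theta(\tau,\chi)$ already lies in the essential image of $\Mod(\End(\Theta(\sigma)))\hookrightarrow\lSh_{\Lambda_{\hSs,\beta}}(T^n)$; this is automatic when $\hsigma$ is smooth by Lemma~\ref{generation2}, and in general follows because $\Int(\tau^\vee_\chi)$ is an increasing union of $M$-translates of $\Int(\sigma^\vee_\chi)$, exhibiting $\Theta(\tau,\chi)$ as a filtered colimit of copies of $\Theta(\sigma,\chi)$.
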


\section{Gluing description on the coherent side}\label{section:gluecoh}
Let $G$ be a reductive group and $X$ be a scheme over $\bC$ with a $G$-action. We set $\cX:=\left[X/G\right]$. Let $\{U_i\}_{i\in I}$ be a Zariski open covering of $X$ which is finite and $G$-invariant. We set $C(I)_0:=2^I\bs\{\varnothing\}$ where $2^I$ is the power set of $I$. We introduce the poset structure on $C(I)_0$ by the inclusion relation. This poset will be called {\em the ${\check{C}ech}$ poset of $I$}. 
We also view the poset as a category by assigning a morphism $\bm{i}_1\rightarrow \bm{i}_2$ to $\bm{i}_1,\bm{i}_2\in C(I)_0$ satisfying $\bm{i}_1\subset \bm{i}_2$. Let further $C(I)$ be the nerve of the $\mathrm{\check{C}ech}$ poset of $I$.

For $\bm{i}\in C(I)_0$, we set $U_{\bm{i}}:=\bigcap_{i\in \bm{i}}U_i$ and $\cU_{\bm{i}}:=\left[ U_{\bm{i}}\big/G\right]$. Let $\iota_{\bm{i}_1\bm{i}_2}$ be the open inclusion $\cU_{\bm{i}_2}\hookrightarrow \cU_{\bm{i}_1}$ for $\bm{i}_1, \bm{i}_2\in C(I)_0$ satisfying $\bm{i}_1\subset \bm{i}_2$. Then there exists a morphism of $\infty$-categories $\Indcoh \cU_\bullet \colon C(I)\rightarrow \Mod_{\Mod(H\bC)}(\cP r^L_{st,\omega})$ sending $\bm{i}$ to $\Indcoh \cU_{\bm{i}}$ and $\bm{i}_1\subset \bm{i}_2$ to $\Ind\iota_{\bm{i}_1,\bm{i}_2}^*$. We write $\iota_{\bm{i}_1,\bm{i}_2}^*$ for $\Ind\iota_{\bm{i}_1,\bm{i}_2}^*$ for simplicity.

The following proposition is proved in \cite{GaitsgoryIndCoh}, but we present a detailed proof for our later discussion.
\begin{proposition}[{\cite[Proposition 4.2.1]{GaitsgoryIndCoh}}]\label{gluingcohgeneral}
There exists an equivalence of $\infty$-categories
\begin{equation}
\Indcoh \cX\simeq \lim{\substack{\longleftarrow \\C(I)}}\Indcoh\cU_\bullet.
\end{equation}
\end{proposition}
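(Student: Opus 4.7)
My plan is to construct the restriction functor $\cR \colon \Indcoh \cX \to \lim{\substack{\longleftarrow \\ C(I)}} \Indcoh \cU_\bullet$ and exhibit its right adjoint --- the gluing functor --- by an explicit Čech formula, then verify that the unit and counit of the resulting adjunction are equivalences. For each $\bm i \in C(I)_0$, the open immersion $\iota_{\bm i} \colon \cU_{\bm i} \hookrightarrow \cX$ preserves coherence, so Proposition \ref{pushind} gives a cocontinuous pullback $\iota_{\bm i}^*$ on $\Indcoh$ with right adjoint $\iota_{\bm i *}^{\Ind}$. The compatibilities $\iota_{\bm i_1 \bm i_2}^* \circ \iota_{\bm i_1}^* \simeq \iota_{\bm i_2}^*$ for $\bm i_1 \subset \bm i_2$ assemble these pullbacks into a cocontinuous functor $\cR$ between compactly generated stable $\infty$-categories --- the target remains compactly generated because $C(I)$ is a finite diagram and finite limits preserve compact generation in $\cP r^L_{st,\omega}$. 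By Proposition \ref{Neemanrepresentability}, $\cR$ has a right adjoint $\cG$, computed pointwise by
$$\cG(\cF_\bullet) \;=\; \holim{\bm i \in C(I)} \iota_{\bm i *}^{\Ind} \cF_{\bm i}.$$

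To show $\cR$ is fully faithful, cocontinuity reduces the unit identity $\cF \simeq \cG \circ \cR(\cF)$ to the case of compact generators $\cF \in \Coh \cX$. Both sides then lie in the bounded part of $\Indcoh$, since $C(I)$ is a finite diagram and pushforwards along open immersions preserve boundedness; so Proposition \ref{propertyofphi} transports the identity to $\Qcoh^b \cX$, where it becomes classical Zariski--Čech descent for quasi-coherent sheaves on the $G$-invariant cover $\{U_{\bm i}\}$ of $X$.

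For essential surjectivity, given a compatible family $\cF_\bullet = (\cF_{\bm i})$ one needs $\iota_{\bm j}^* \cG(\cF_\bullet) \simeq \cF_{\bm j}$ for every $\bm j$. Since $\iota_{\bm j}^*$ is cocontinuous between stable $\infty$-categories it is exact and commutes with the finite limit defining $\cG$. Base change for the Cartesian squares $\cU_{\bm j \cup \bm i} = \cU_{\bm j} \times_{\cX} \cU_{\bm i}$, combined with the gluing data in $\cF_\bullet$, gives $\iota_{\bm j}^* \iota_{\bm i *}^{\Ind} \cF_{\bm i} \simeq (\iota_{\bm j, \bm j \cup \bm i})_*^{\Ind} \cF_{\bm j \cup \bm i}$. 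The resulting $C(I)_0$-diagram depends on $\bm i$ only through $\bm j \cup \bm i$, and the subposet $\{\bm i \supset \bm j\} \subset C(I)_0$ has $\bm j$ as its initial object; a standard cofinality argument then collapses the limit to $\cF_{\bm j}$.

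The main obstacle is establishing base change for pushforward by open immersions in the ind-coherent setting. This compatibility must be extracted from the defining square of $f_*^{\Ind}$ in Proposition \ref{pushind} together with standard base change for $\Qcoh$, with care to keep everything in the bounded range where $\Psi_\cX$ is an equivalence. Once base change is secured, the remainder of the argument is combinatorial bookkeeping with the Čech poset.
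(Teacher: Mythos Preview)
Your argument is correct. Both halves --- the unit via reduction to compacts and transport through $\Psi_\cX$ to classical \v{C}ech descent in $\Qcoh^b$, and the counit via base change plus the cofinality of $\bm j\cup(-)\colon C(I)_0\to\{\bm k\supset\bm j\}$ --- go through as you describe, once one notes that open immersions between finite-type DM stacks have finite cohomological dimension (so pushforwards of coherent sheaves stay bounded) and that $\cG$ is cocontinuous because it is a \emph{finite} limit of cocontinuous functors in a stable setting.

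The paper, however, takes a deliberately different route. It first reduces by induction on $|I|$ to the case $|I|=2$, then computes the homotopy pullback of categories explicitly via a functorial factorization in the Morita model structure, producing concrete mutually inverse functors $\phi$ and $\psi$ between $\Indcoh_G X$ and the strict fiber product $\cA\times_{\Indcoh_G(U\cap V)}\Indcoh_G V$. The equivalence $\phi\circ\psi\simeq\id$ is then the Mayer--Vietoris triangle, and full faithfulness of $\phi$ is checked by an explicit hom calculation. The reason for this hands-on approach is stated just before the proof: ``we present a detailed proof for our later discussion.'' In Section~\ref{section:main} the identical architecture --- factorization, explicit $\Phi$ and $\Psi$, fiber-product-as-cone --- is replayed on the constructible side to prove Theorem~\ref{main2}, with the Mayer--Vietoris triangle replaced by the $\star$-unit identity of Lemma~\ref{lemmaeq}. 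Your adjoint-functor argument is cleaner and more conceptual for Proposition~\ref{gluingcohgeneral} in isolation, but it does not set up the model-categorical scaffolding that the paper reuses verbatim for the harder gluing of $\lSh_{\Lambda_\bullet}$.
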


We use the Morita model structure of the dg-category of dg-categories to prove Proposition \ref{gluingcohgeneral}.
\begin{proof}
In this proof, we view $C(I)$ as a category. Let $i$ be an element of $I$. The morphisms $\iota_{\bm{i},\bm{i}\cup\{i\}}$ for $\bm{i}\in C(I\bs\{i\})_0\subset C(I)_0$ together give a morphism
\begin{equation}\label{7.2}
\holim{\substack{\longleftarrow \\C(I\bs\{i\})}}\Indcoh \cU_\bullet\rightarrow \holim{\substack{\longleftarrow \\C(I\bs \{i\})}}\Indcoh \cU_{\bullet\cup\{i\}}.
\end{equation}
The morphisms $\iota_{\{i\},\bm{i}\cup \{i\}}$ also give a morphism
\begin{equation}\label{7.3}
\Indcoh \cU_i \rightarrow \holim{\substack{\longleftarrow \\C(I\bs \{i\})}}\Indcoh \cU_{\bullet\cup\{i\}}.
\end{equation}
By the universality of homotopy limits, the homotopy pullback of (\ref{7.2}) and (\ref{7.3}) satisfies
\begin{equation}
\holim{\substack{\longleftarrow \\C(I)}}\Indcoh \cU_\bullet\simeq \Indcoh \cU_{i} \tim{\holim{\substack{\longleftarrow \\C(I\bs\{i\})}}\Indcoh \cU_{\bullet\cup\{i\}}}{h}\holim{\substack{\longleftarrow \\C(I\bs\{i\})}}\Indcoh \cU_\bullet.
\end{equation}
Hence by induction on the cardinality of $I$, it suffices to show the case $|I|=2$. Namely, for a $G$-invariant covering $\{U, V\}$ of $X$, we have to show
\begin{equation}
\Indcoh[X/G]\simeq \Indcoh[U/G]\tim{\Indcoh [(U\cap V)/G]}{h}\Indcoh [V/G].
\end{equation}
Note that $\Indcoh [X/G]$ is equivalent to the dg-category $\Indcoh_G X$ of $G$-equivariant ind-coherent sheaves over $X$.

Denote the open inclusions by
\begin{equation}\label{7.6}
\xymatrix{
X &\ar[l]_-{i_U} U \\
V  \ar[u]^{i_V}  & \ar[l]^-{j_V} U\cap V. \ar[u]_{j_U}\ar[ul]_{k}
}
\end{equation}
By using the functorial factorization of the Morita model structure, we have a factorization
\begin{equation}
\Indcoh_G U\xrightarrow{\gamma(j_{U}^*)} \cA \xrightarrow{\delta(j_U^*)} \Indcoh_G (U\cap V)
\end{equation} 
where $\gamma$ and $\delta$ are the functorial factorizations which produce a trivial cofibration and a fibration respectively. Since the dg-categories of $G$-equivariant ind-coherent sheaves are idempotent-complete pretriangulated dg-categories, they are fibrant objects in the Morita model structure by Proposition \ref{propertyMorita}. We have
\begin{equation}
\Indcoh_G U\tim{\Indcoh_G(U\cap V)}{h}\Indcoh_G V\simeq \cA\tim{\Indcoh_G (U\cap V)}{} \Indcoh_G V
\end{equation}
where the right hand side is the strict pullback. We have a functor $l\colon \cA\rightarrow \Indcoh_G U$ which fits into the commutative diagram
\begin{equation}
\xymatrix{
\Indcoh_G U \ar[r]_-{\id} \ar[d]_{\gamma(j_U^*)}   & \Indcoh_G U \ar[d]\ar[r]_{i^{\Ind}_{U*}}& \Indcoh_G X \\
\cA  \ar[r]\ar@{-->}[ru]_{l}& \ast &
}
\end{equation}
since $\gamma(j_U^*)$ is a trivial cofibration and $\Indcoh_G U$ is a fibrant object.

We have two canonical morphisms coming from adjunctions:
\begin{equation}
\begin{split}
l&\rightarrow j^{\Ind}_{U*}j_U^*l \text{ and }\\
\id &\rightarrow j^{\Ind}_{V*}j_V^*.
\end{split}
\end{equation}
By applying $i^{\Indcoh}_{U*}$ to the first line and $i^{\Ind}_{V*}$ to the second line, we have
\begin{equation}
\begin{split}
i^{\Ind}_{U*}l&\rightarrow k^{\Ind}_*\delta(j_U^*), \\
i^{\Ind}_{V*}&\rightarrow k^{\Ind}_*j_{V}^*. \\
\end{split}
\end{equation}
An object in $\cA\tim{\Indcoh_G (U\cap V)}{} \Indcoh_G V$ is an object $(a,\cE)$ of $\cA\times \Indcoh_G V$ satisfying $\delta(j_U^*)(a)=j_V^*\cE$. Hence we have a diagram
\begin{equation}
i^\Ind_{U *}l(a)\rightarrow k^\Ind_*\delta(j_U^*)(a)=k^\Ind_*j_V^*(\cE)\leftarrow i^\Ind_{V*}(\cE)
\end{equation}
in $\Indcoh_G X$. Therefore, there exists a functor $\phi\colon \cA\tim{\Indcoh_G (U\cap V)}{} \Indcoh_G V\rightarrow \Indcoh_G X$, which is defined on objects as
\begin{equation}
\phi(a,\cE):=i^\Ind_{U*}l(a)\tim{k^\Ind_*j_V^*(\cE)}{h}i^\Ind_{V*}(\cE).
\end{equation}
We show that the functor $\psi\colon \Indcoh_G X\rightarrow \cA\tim{\Indcoh_G (U \cap V)}{} \Indcoh_G V$ given by 
\begin{equation}
\psi(\cE):=(\gamma(j_U^*)i_{U}^*(\cE), i_{V}^*(\cE))
\end{equation}
on objects is the quasi-inverse of $\phi$.
For $\cE\in \Indcoh_G X$, we have
\begin{equation}
\phi\circ \psi(\cE)=i^\Ind_{U*}l\gamma(j_U^*)i_{U}^*(\cE)\tim{k^\Ind_*j_V^* i_{V}^*(\cE)}{h}i^\Ind_{V*} i_{V}^*(\cE)=i^\Ind_{U*}i_{U}^*(\cE)\tim{k^\Ind_*k^*(\cE)}{h}i^\Ind_{V*} i_{V}^*(\cE),
\end{equation}
We write $\indlim \cE_i$ for $\cE$, then we have
\begin{equation}
i^\Ind_{U*}i_{U}^*(\cE)\tim{k^\Ind_*k^*(\cE)}{h}i^\Ind_{V*} i_{V}^*(\cE)\simeq \indlim i_{U*}i_{U}^*(\cE_i)\tim{k_*k^*(\cE_i)}{h}i_{V*} i_{V}^*(\cE_i),
\end{equation}
where we used Proposition \ref{propertyofphi} and Proposition \ref{pushind} in order to deduce $i^{\Ind}_{\bullet*}i_{}^*\cE_i\simeq i_{\bullet*}i^*\cE_i$ for $\bullet=U, V$ and $k_*^{\Ind} k_{}^*\cE_i\simeq k_* k^*\cE_i$

Moreover, we have the right hand side fits into an exact triangle in $\Coh_G X$
\begin{equation}
i_{U*}i_{U}^*(\cE_i)\tim{k_*k^*(\cE)}{h}i_{V*} i_{V}^*(\cE_i)\rightarrow i_{U*}i_{U}^*(\cE_i)\oplus i_{V*} i_{V}^*(\cE_i)\rightarrow k_*k^*(\cE_i)\rightarrow.
\end{equation}
This is nothing but a Mayer-Vietoris sequence, hence $\cE_i\simeq i_{U*}i_{U}^*(\cE_i)\tim{k_*k^*(\cE_i)}{h}i_{V*} i_{V}^*(\cE_i)$. This gives the essential surjectivity of $H^0(\phi)$.

Set $\cC:=\cA\tim{\Indcoh_G(U\cap V)}{} \Indcoh_G V$ for short.
For $(a_i,\cE_i)\in \cC$ for $i=1,2$, we have
\begin{equation}
\hom_\cC((a_1,\cE_1), (a_2,\cE_2))\simeq \hom_\cA(a_1, a_2)\tim{\hom_{\Indcoh_G (U\cap V)}(j_V^*\cE_1, j_V^*\cE_2)}{}\hom_{\Indcoh_G V}(\cE_1,\cE_2).
\end{equation}
by the definition of strict pullback of dg-categories, there the pullback of the left hand side is taken in $\Mod(\bC)$. Note the fact the Morita model structure is a left Bousfield localization of the quasi-equivalent model structure. Hence fibrations of the Morita model structure are fibrations of the quasi-equivalent model structure. Since fibrations of the quasi-equivalent model structure are in particular surjective, they are fibrations in the projective model structure of the dg-category of dg-vector spaces $\Mod(\bC)$. Moreover, all objects are fibrant objects in the projective model structure. Hence we have
\begin{equation}
\begin{split}
&\hom_\cA(a_1, a_2)\tim{\hom_{\Indcoh_G (U\cap V)}(j_V^*\cE_1, j_V^*\cE_2)}{}\hom_{\Indcoh_G V}(\cE_1,\cE_2)\\
&\quad \simeq \hom_\cA(a_1, a_2)\tim{\hom_{\Indcoh_G (U\cap V)}(j_V^*\cE_1, j_V^*\cE_2)}{h}\hom_{\Indcoh_G V}(\cE_1,\cE_2).
\end{split}
\end{equation}
Since $l$ is a quasi-equivalence, we further have
\begin{equation}\label{7.19}
\begin{split}
&\hom_\cA(a_1, a_2)\tim{\hom_{\Indcoh_G (U\cap V)}(j_V^*\cE_1, j_V^*\cE_2)}{h}\hom_{\Indcoh_G V}(\cE_1,\cE_2)\\
&\quad \simeq \hom_{\Indcoh_G U}(l(a_1), l(a_2))\tim{\hom_{\Indcoh_G (U\cap V)}(j_V^*\cE_1, j_V^*\cE_2)}{h}\hom_{\Indcoh_G V}(\cE_1,\cE_2).
\end{split}
\end{equation}

On the other hand, we have 
\begin{equation}
\hom\lb i^\Ind_{U*}l(a_1)\tim{k^\Ind_*j_V^*(\cE_1)}{h}i^\Ind_{V*}(\cE_1),i^\Ind_{U*}l(a_2)\tim{k^\Ind_*j_V^*(\cE_2)}{h}i^\Ind_{V*}(\cE_2)\rb \simeq X_1\tim{X_3}{h} X_2
\end{equation}
where
\begin{equation}
\begin{split}
X_1&:=\hom\lb i^\Ind_{U*}l(a_1)\tim{k^\Ind_*j_V^*(\cE_1)}{h}i^\Ind_{V*}(\cE_1),i^\Ind_{U*}l(a_2)\rb, \\
X_2&:=\hom \lb i^\Ind_{U*}l(a_1)\tim{k^\Ind_*j_V^*(\cE_1)}{h}i^\Ind_{V*}(\cE_1), i^\Ind_{V*}(\cE_2)\rb, \text{ and}\\
X_3&:=\hom \lb i^\Ind_{U*}l(a_1)\tim{k^\Ind_*j_V^*(\cE_1)}{h}i^\Ind_{V*}(\cE_1), k^\Ind_*j_V^*(\cE_2)\rb.
\end{split}
\end{equation}
Since 
\begin{align}
i_U^*\lb i^\Ind_{U*}l(a_1)\tim{k^\Ind_*j_V^*(\cE)}{h}i^\Ind_{V*}(\cE_1) \rb&\simeq l(a_1),\\
i_V^*\lb i^\Ind_{U*}l(a_1)\tim{k^\Ind_*j_V^*(\cE)}{h}i^\Ind_{V*}(\cE_1)\rb &\simeq \cE_1, \text{ and}\\
k^*\lb  i^\Ind_{U*}l(a_1)\tim{k^\Ind_*j_V^*(\cE)}{h}i^\Ind_{V*}(\cE_1)\rb&\simeq j_V^*\cE_1,
\end{align}
we have a quasi-isomorphism
\begin{equation}\label{7.26}
X_1\tim{X_3}{h} X_2\simeq \hom(l(a_1),l(a_2))\tim{\hom(j_V^*\cE_1, j_V^*\cE_2)}{h}\hom (\cE_1,\cE_2).
\end{equation}
The equations (\ref{7.19}) and (\ref{7.26}) show that $H^0(\phi)$ is fully faithful.
\end{proof}

Let $(\hSigma,\beta)$ be a stacky fan satisfying Condition \ref{condition}. Let $\hSigma_{\mathrm{max}}$ be the set of maximal cones in $\Sigma$. The set $\hSigma_{\mathrm{max}}$ gives the index set of the open $G_\beta$-invariant covering $\{U_{\hsigma}\}_{\sigma\in \hSigma_{\mathrm{max}}}$ of $X_{\hSigma}$. We set $C(\Sigma):=C(\hSigma_\mathrm{max})$. Then we have the following corollary of Proposition \ref{gluingcohgeneral}.

\begin{corollary}\label{gluingcoh}
We have an equivalence of $\infty$-categories
\begin{equation}
\Indcoh \cXsb\simeq \lim{\substack{\longleftarrow \\C(\Sigma)}}\Indcoh\cU_\bullet.
\end{equation}
\end{corollary}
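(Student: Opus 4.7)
The plan is to deduce this corollary as a direct application of Proposition~\ref{gluingcohgeneral} to the specific geometric setup at hand. The first step is to verify that the ingredients of Proposition~\ref{gluingcohgeneral} are all satisfied for the toric quotient presentation $\cXsb = [X_{\hSigma}/G_\beta]$. Concretely, $X_{\hSigma}$ is a scheme of finite type over $\bC$, and $G_\beta := \ker(\T_\beta\colon \T_L \to \T_N)$ is a closed subgroup of the torus $\T_L$, hence diagonalizable and in particular reductive. This is exactly the setting to which Proposition~\ref{gluingcohgeneral} applies.

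Next, I would fix the covering. The family $\{U_{\hsigma}\}_{\sigma \in \hSigma_\mathrm{max}}$ is a finite Zariski open cover of $X_{\hSigma}$ because the maximal cones of $\hSigma$ produce an open cover of the toric variety by the standard toric affine charts, and each $U_{\hsigma}$ is $\T_L$-invariant, hence in particular $G_\beta$-invariant. For any non-empty subset $\bm{\sigma} \subset \hSigma_\mathrm{max}$, the intersection $\bigcap_{\sigma \in \bm{\sigma}} U_{\hsigma}$ equals $U_{\widehat{\bigcap_{\sigma \in \bm{\sigma}}\sigma}}$ by a standard fact about toric varieties: the intersection of affine toric opens corresponding to cones $\hsigma_i$ is the affine open associated with the common face $\bigcap \hsigma_i$ (which lies in $\hSigma$ since $\hSigma$ is a fan). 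Therefore the $\mathrm{\check{C}ech}$ diagram $\bm{\sigma} \mapsto \cU_{\bm{\sigma}} := [\bigcap_{\sigma \in \bm{\sigma}} U_{\hsigma}/G_\beta]$ coincides with the diagram of open substacks $\cU_{\bigcap \sigma}$ of $\cXsb$ indexed by $C(\Sigma) = C(\hSigma_\mathrm{max})$.

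Finally, I would invoke Proposition~\ref{gluingcohgeneral} verbatim: it yields
\begin{equation}
\Indcoh \cXsb \simeq \lim{\substack{\longleftarrow \\ C(\Sigma)}} \Indcoh \cU_\bullet,
\end{equation}
with transition functors given by the Ind-extensions of the restriction pullbacks $\iota_{\bm{\sigma}_1 \bm{\sigma}_2}^*$ along the open inclusions $\cU_{\bm{\sigma}_2} \hookrightarrow \cU_{\bm{\sigma}_1}$. Since everything reduces to checking that the hypotheses of Proposition~\ref{gluingcohgeneral} hold, there is no substantial obstacle; the only minor subtlety is the matching of the intersection combinatorics of the cones with the $\mathrm{\check{C}ech}$ poset $C(\Sigma)$, but this is immediate from the definition of a fan.
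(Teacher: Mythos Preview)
Your proposal is correct and follows exactly the approach the paper intends: the corollary is stated immediately after Proposition~\ref{gluingcohgeneral} as a direct consequence, with the paper only noting that $\{U_{\hsigma}\}_{\sigma\in \hSigma_{\mathrm{max}}}$ is a finite $G_\beta$-invariant open cover and setting $C(\Sigma):=C(\hSigma_\mathrm{max})$. You have simply made explicit the verification of hypotheses (reductivity of $G_\beta$, invariance of the toric affine opens, and the intersection combinatorics) that the paper leaves to the reader.
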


\section{Gluing the functors $K_{\hsigma,\beta}$}\label{functor}
Let $(\hSigma,\beta)$ be a stacky fan satisfying Condition \ref{condition}. In the present section, we further assume the following Condition \ref{condition2} below.
For $\ssigma=\{\sigma_1,...,\sigma_s\}\in C(\Sigma)_0$, we set $|\ssigma|:=\bigcap_{i=1}^s\sigma_i\in \Sigma$ and
\begin{equation}
\Lambda_\ssigma:=\Lambda_{\hSigma(\widehat{|\ssigma|}),\beta}.
\end{equation}
Evidently $\bigcup_{\ssigma}\Lambda_\ssigma=\Lambda_{\hSigma,\beta}$ and $\Lambda_\sigma\cap \Lambda_\tau=\Lambda_{\sigma\cap \tau}$ for $\sigma, \tau\in \Sigma$.

\begin{condition}\label{condition2}
For any $\hsigma\in\hSigma$, there exists an equivalence
\begin{equation}
K_{\hsigma,\beta}\colon \Coh \cX_{\hSs, \beta}\xrightarrow{\simeq} \wSh_{\hSs,\beta}(T^n)
\end{equation}
which satisfies the following:
\begin{enumerate}
\item Set the image of $i_{\sigma_1\sigma_2}^*$ under $K_{\hsigma,\beta}$ as
\begin{equation}
I^{\sigma_1\sigma_2}:=\Ind K_{\hsigma_1,\beta}\circ i^*_{\sigma_1\sigma_2}\circ \Ind K_{\hsigma_2,\beta}^{-1}\colon \lSh_{\Lambda_{\sigma_2}}(T^n)\rightarrow \lSh_{\Lambda_{\sigma_1}}(T^n) 
\end{equation}
for  faces $\sigma_2\subset \sigma_1\subset \sigma$. Then there exists a natural isomorphism
\begin{equation}
I^{\sigma_1\sigma_2}(-)\simeq (-)\star \Theta(\sigma_1,0).
\end{equation}
\item The restriction of $K_{\hsigma,\beta}$ to $\perf \cX_{\hSs,\beta}\otimes \omega_{\cX_{\hSs,\beta}}$ is $\kappa_{\hsigma,\beta}\circ D\circ \frakD$ for any $\sigma\in \Sigma$ where $D:=\cHom(-, \cO_{\cX_{\hSs,\beta}})$ and $\frakD:=\cHom(-, \omega_{\cX_{\hSs,\beta}})$.
\end{enumerate}
\end{condition}

We assume Condition \ref{condition2}. We set $I^{\ssigma_1\ssigma_2}:=I^{|\ssigma_1||\ssigma_2|}$. As in Section \ref{section:gluecoh}, let $\lSh_{\Lambda_\bullet}\colon C(\Sigma)\rightarrow  \Mod_{\Mod(H\bC)}(\cP r^L_{st,\omega})$ be a morphism of $\infty$-categories sending $\ssigma\mapsto \lSh_{\Lambda_\ssigma}(T^n)$ and $(\ssigma_1\subset \ssigma_2)\mapsto I^{\ssigma_1\ssigma_2}\colon \lSh_{\Lambda_{\ssigma_1}}(T^n)\rightarrow \lSh_{\Lambda_{\ssigma_2}}(T^n)$.

The adjunction $\id\rightarrow i^\Ind_{\sigma_1\sigma_2*} \circ i_{\sigma_1\sigma_2}^*$ induces a natural transformation
\begin{equation}
A(\sigma_1\sigma_2)\colon \id\rightarrow \Ind K_{\hsigma_1,\beta}\circ i^{\Ind}_{\sigma_1\sigma_2*}\circ i^*_{\sigma_1\sigma_2}\circ \Ind K_{\hsigma_1,\beta}^{-1}.
\end{equation}

Then by using Proposition \ref{gluingcoh}, we can define
\begin{equation}\label{defofkappa}
K_{\hSigma,\beta}\colon 
\Indcoh \cXsb\xrightarrow{\simeq} \lim{\substack{\longleftarrow \\C(\Sigma)}}\Indcoh \cU_\bullet\xrightarrow[\lim{\substack{\longleftarrow \\C(\Sigma)}}\Ind K_{\hsigma,\beta}]{\simeq} \lim{\substack{\longleftarrow \\C(\Sigma)}} \lSh_{\Lambda_{\hSigma(\bullet),\beta}}(T^n)\rightarrow \lSh_{\Lsb}(T^n)
\end{equation}
where the rightmost arrow is given by the gluing of the natural inclusions $\lSh_{\Lambda_{\hSigma,\beta}}(T^n)\hookrightarrow \lSh_{\Lambda_{\hSigma,\beta}}(T^n)$ and the natural transformations $A(\sigma_i\sigma_j)$ in the same manner as the $\mathrm{\check{C}ech}$ resolution on the coherent side (see Section \ref{section:main} for details).

Let $i_{\sigma}\colon \cU_\sigma\rightarrow \cXsb$ be the open inclusion for $\sigma\in \Sigma$.

\begin{lemma}\label{fullyfaithful}
The functor $K_{\hSigma,\beta}$ is fully faithful.
\end{lemma}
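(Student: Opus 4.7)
The plan is to reduce the claim to a single descent/full-faithfulness statement for the gluing functor, then verify that the constructible-side Čech assembly mirrors the coherent-side descent via Condition~\ref{condition2}(i).

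Since the first two arrows in (\ref{defofkappa}) are equivalences---by Corollary~\ref{gluingcoh} and by taking the limit of the equivalences $\mathrm{Ind}\,K_{\hsigma,\beta}$ levelwise---it suffices to show that the rightmost ``gluing'' functor
\begin{equation}
G\colon \lim{\substack{\longleftarrow \\ C(\Sigma)}} \lSh_{\Lambda_\bullet}(T^n)\longrightarrow \lSh_{\Lsb}(T^n)
\end{equation}
is fully faithful. For objects $\cE,\cF\in\Indcoh\cXsb$, Corollary~\ref{gluingcoh} already gives descent on the coherent side:
\begin{equation}
\hom_{\Indcoh\cXsb}(\cE,\cF)\simeq \lim{\substack{\longleftarrow \\ \ssigma\in C(\Sigma)}} \hom_{\Indcoh\,\cU_\ssigma}(i_\ssigma^*\cE, i_\ssigma^*\cF),
\end{equation}
and then Condition~\ref{condition2} transports this to the constructible side term by term, yielding the putative answer $\lim_\ssigma\hom_{\lSh_{\Lambda_\ssigma}}(K_{\hsigma,\beta}i_\ssigma^*\cE,K_{\hsigma,\beta}i_\ssigma^*\cF)$.

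The content of the lemma is therefore to identify $\hom_{\lSh_\Lsb}(K_{\hSigma,\beta}\cE, K_{\hSigma,\beta}\cF)$ with this same limit. The plan is to exploit the construction of $K_{\hSigma,\beta}$ as the $\mathrm{\check{C}ech}$ assembly along the natural transformations $A(\sigma_1\sigma_2)$. By Condition~\ref{condition2}(i) the transition functors are realized as convolution $(-)\star\Theta(\sigma_1,0)$, and by Lemma~\ref{staradjunction} and Proposition~\ref{mestimate} such convolutions preserve the relevant microsupport conditions and are part of an adjoint pair. Since each inclusion $j_\ssigma\colon \lSh_{\Lambda_\ssigma}(T^n)\hookrightarrow \lSh_\Lsb(T^n)$ is a fully faithful embedding of a full subcategory (compactly generated by Proposition~\ref{presentable}, so admitting a right adjoint via Proposition~\ref{Neemanrepresentability}), the natural transformation $A(\sigma_1\sigma_2)$ is the unit of the corresponding Bousfield localization, matching under $K$ the adjunction unit $\id\to i_{\sigma_1\sigma_2 *}^\Ind i_{\sigma_1\sigma_2}^*$ on the coherent side.

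Having set this up, the proof splits $\hom(K_{\hSigma,\beta}\cE,K_{\hSigma,\beta}\cF)$ by distributing hom over the $\mathrm{\check{C}ech}$ holim in the target coordinate, reducing hom out of the holim to the limit of level-wise homs; each term then collapses to $\hom_{\lSh_{\Lambda_\ssigma}}(K_{\hsigma,\beta}i_\ssigma^*\cE, K_{\hsigma,\beta}i_\ssigma^*\cF)$ via the fully faithfulness of $j_\ssigma$ and Condition~\ref{condition2}(i). The main obstacle is precisely the compatibility check in this last step: that the combinatorially-defined $\mathrm{\check{C}ech}$ diagram built from the inclusions $j_\ssigma$ and the convolution units $A(\sigma_1\sigma_2)$ on the constructible side is levelwise naturally isomorphic, via the $K_{\hsigma,\beta}$'s, to the $\mathrm{\check{C}ech}$ diagram built from $(i_{\sigma_1\sigma_2*}^\Ind,i^*_{\sigma_1\sigma_2})$ on the coherent side. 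Once this functorial identification is in place the two limit computations coincide and full faithfulness follows.
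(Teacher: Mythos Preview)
Your overall strategy matches the paper's: reduce to a \v{C}ech computation on both sides and match the transition data via Condition~\ref{condition2}(i). However, there is a genuine gap at exactly the point you flag as ``the main obstacle.''

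You assert that $A(\sigma_1\sigma_2)$ is the unit of the Bousfield localization associated to the inclusion $j_{\ssigma}\colon \lSh_{\Lambda_\ssigma}(T^n)\hookrightarrow \lSh_\Lsb(T^n)$, but this is not automatic from presentability and Neeman's theorem. What those give you is that $j_\ssigma$ admits \emph{some} left adjoint; they do not tell you that this left adjoint is $(-)\star\Theta(|\ssigma|,0)$. Equivalently, the right adjoint of $(-)\star\Theta(|\ssigma|,0)$ is $\cHom^\star(\Theta(|\ssigma|,0),-)$ by Lemma~\ref{staradjunction}, and you need to know that this is naturally isomorphic to the identity on $\lSh_{\Lambda_\ssigma}(T^n)$. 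That is the content of Lemma~\ref{lemmaeq} (specifically equation~(\ref{adjunit})), whose proof is a nontrivial induction on the fan using Proposition~\ref{vanishing}. Without it, the step where ``each term collapses to $\hom_{\lSh_{\Lambda_\ssigma}}(K_{\hsigma,\beta}i_\ssigma^*\cE, K_{\hsigma,\beta}i_\ssigma^*\cF)$'' cannot be justified: when $\ssigma\neq\ssigma'$, full faithfulness of $j_\ssigma$ alone says nothing about $\hom_{\lSh_\Lsb}(j_\ssigma E_\ssigma, j_{\ssigma'}F_{\ssigma'})$.

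The paper's proof makes this explicit: after applying Condition~\ref{condition2}(i) and the $\star$/$\cHom^\star$ adjunction, one lands on
\[
\hom_{\lSh(T^n)}\bigl(K_{\hSigma(\hat{|\ssigma|}),\beta}(\cE|_{\cU_\ssigma}),\ \cHom^\star(\Theta(|\ssigma'|,0),\,K_{\hSigma(\hat{|\ssigma'|}),\beta}(\cF|_{\cU_{\ssigma'}}))\bigr),
\]
and Lemma~\ref{lemmaeq} is precisely what lets you drop the $\cHom^\star(\Theta(|\ssigma'|,0),-)$. Once you insert that lemma at the point where you currently invoke only ``full faithfulness of $j_\ssigma$ and Condition~\ref{condition2}(i),'' your argument goes through and coincides with the paper's.
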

\begin{proof}
For $\cE,\cF\in\Coh \cXsb$, by the $\mathrm{\check{C}ech}$ resolution, we have
\begin{equation}\label{eq11}
\hom_{\Coh \cXsb}(\cE,\cF)\simeq \holim{\substack{\longrightarrow \\ \ssigma\in C(\Sigma)}} \holim{\substack{\longleftarrow \\ \ssigma'\in C(\Sigma)}} \hom_{\Ind\Coh \cXsb}(\cE|_{\cU_\ssigma},\cF|_{\cU_{\ssigma'}}).
\end{equation}
By the adjunction, we have 
\begin{equation}
\holim{\substack{\longrightarrow \\ \ssigma\in C(\Sigma)}} \holim{\substack{\longleftarrow \\ \ssigma'\in C(\Sigma)}} \hom_{\Ind\Coh \cXsb}(\cE|_{\cU_\ssigma},\cF|_{\cU_{\ssigma'}}) \simeq \holim{\substack{\longrightarrow \\ \ssigma\in C(\Sigma)}} \holim{\substack{\longleftarrow \\ \ssigma'\in C(\Sigma)}}\hom_{\Ind\Coh \cU_{\ssigma'}}(\cE|_{\cU_{\ssigma\cap \ssigma'}},\cF|_{\cU_{\ssigma'}}).
\end{equation}
Since $K_{\hsigma,\beta}$ is an equivalence by Condition \ref{condition2}, we have
\begin{equation}
\begin{split}
&\holim{\substack{\longrightarrow \\ \ssigma\in C(\Sigma)}} \holim{\substack{\longleftarrow \\ \ssigma'\in C(\Sigma)}}\hom_{\Ind\Coh \cU_{\ssigma'}}(\cE|_{\cU_{\ssigma\cap \ssigma'}},\cF|_{\cU_{\ssigma'}})\\
&\quad \simeq \holim{\substack{\longrightarrow \\ \ssigma\in C(\Sigma)}} \holim{\substack{\longleftarrow \\ \ssigma'\in C(\Sigma)}}\hom_{\lSh_{\Lambda_{\hSigma(\hat{|\ssigma'|}),\beta}}(T^n)}(K_{\hSigma(\hat{|\ssigma'|}),\beta}(\cE|_{\cU_{\ssigma\cap \ssigma'}}),K_{\hSigma(\hat{|\ssigma'|}),\beta}(\cF|_{\cU_{\ssigma'}})).
\end{split}
\end{equation}
By Condition \ref{condition2} (i), we further have
\begin{equation}
\begin{split}
&\holim{\substack{\longrightarrow \\ \ssigma\in C(\Sigma)}} \holim{\substack{\longleftarrow \\ \ssigma'\in C(\Sigma)}}\hom_{\lSh_{\Lambda_{\hSigma(\hat{|\ssigma'|}),\beta}}(T^n)}(K_{\hSigma(\hat{|\ssigma'|}),\beta}(\cE|_{\cU_{\ssigma\cap \ssigma'}}),K_{\hSigma(\hat{|\ssigma'|}),\beta}(\cF|_{\cU_{\ssigma'}}))\\
&\quad\simeq \holim{\substack{\longrightarrow \\ \ssigma\in C(\Sigma)}} \holim{\substack{\longleftarrow \\ \ssigma'\in C(\Sigma)}}\hom_{\lSh_{\Lambda_{\hSigma(\hat{|\ssigma'|}),\beta}}(T^n)}(K_{\hSigma(\hat{|\ssigma|}),\beta}(\cE|_{\cU_{\ssigma}})\star \Theta(\sigma',0),K_{\hSigma(\hat{|\ssigma'|}),\beta}(\cF|_{\cU_{\ssigma'}})).
\end{split}
\end{equation}
The adjunction in Lemma \ref{staradjunction} implies
\begin{equation}
\begin{split}
&\holim{\substack{\longrightarrow \\ \ssigma\in C(\Sigma)}} \holim{\substack{\longleftarrow \\ \ssigma'\in C(\Sigma)}}\hom_{\lSh_{\Lambda_{\hSigma(\hat{|\ssigma'|}),\beta}}(T^n)}(K_{\hSigma(\hat{|\ssigma|}),\beta}(\cE|_{\cU_{\ssigma}})\star \Theta(\sigma',0),K_{\hSigma(\hat{|\ssigma'|}),\beta}(\cF|_{\cU_{\ssigma'}}))\\
&\quad\simeq \holim{\substack{\longrightarrow \\ \ssigma\in C(\Sigma)}} \holim{\substack{\longleftarrow \\ \ssigma'\in C(\Sigma)}}\hom_{\lSh(T^n)}(K_{\hSigma(\hat{|\ssigma|}),\beta}(\cE|_{\cU_{\ssigma}}),\cHom^\star(\Theta(\sigma',0),K_{\hSigma(\hat{|\ssigma'|}),\beta}(\cF|_{\cU_{\ssigma'}}))).
\end{split}
\end{equation}
Lemma \ref{lemmaeq} below shows
\begin{equation}
\begin{split}
&\holim{\substack{\longrightarrow \\ \ssigma\in C(\Sigma)}} \holim{\substack{\longleftarrow \\ \ssigma'\in C(\Sigma)}}\hom_{\lSh(T^n)}(K_{\hSigma(\hat{|\ssigma|}),\beta}(\cE|_{\cU_{\ssigma}}),\cHom^\star(\Theta(\sigma',0),K_{\hSigma(\hat{|\ssigma'|}),\beta}(\cF|_{\cU_{\ssigma'}})))\\
&\quad\simeq \holim{\substack{\longrightarrow \\ \ssigma\in C(\Sigma)}} \holim{\substack{\longleftarrow \\ \ssigma'\in C(\Sigma)}}\hom_{\lSh(T^n)}(K_{\hSigma(\hat{|\ssigma|}),\beta}(\cE|_{\cU_{\ssigma}}),K_{\hSigma(\hat{|\ssigma'|}),\beta}(\cF|_{\cU_{\ssigma'}})).
\end{split}
\end{equation}
Finally, by the definition of $K_{\hSigma,\beta}$, we have
\begin{equation}
\holim{\substack{\longrightarrow \\ \ssigma\in C(\Sigma)}} \holim{\substack{\longleftarrow \\ \ssigma'\in C(\Sigma)}}\hom_{\lSh(T^n)}(K_{\hSigma(\hat{|\ssigma|}),\beta}(\cE|_{\cU_{\ssigma}}),K_{\hSigma(\hat{|\ssigma'|}),\beta}(\cF|_{\cU_{\ssigma'}}))
\simeq \hom_{\lSh(T^n)}(K_{\hSigma,\beta}(\cE),K_{\hSigma,\beta}(\cF)).
\end{equation}
This completes the proof.
\end{proof}

We set another functor
\begin{equation}\label{khsb}
\kappa_{\hSigma,\beta}:=\varhocolim{} \Ind (K_{\hSigma_\beta}\circ ((-)\otimes \omega_{\cXsb}))\colon \Qcoh \cX_{\hSigma,\beta}\rightarrow \lSh_{\Lambda_{\hSigma,\beta}}(T^n)
\end{equation}
where $\frakD:=\cHom(-,\omega_{\cX_{\hSigma,\beta}})$, which means the composition of the functors
\begin{align}
\Ind (K_{\hSigma_\beta}\circ ((-)\otimes \omega_{\cXsb}))&\colon  \Qcoh \cX_{\hSigma,\beta}\simeq \Ind(\perf(\cX_{\hSigma,\beta}))\rightarrow \Ind(\lSh_{\Lambda_{\hSigma,\beta}}(T^n))\\
\text{and  }\varhocolim{} &\colon \Ind(\lSh_{\Lambda_{\hSigma,\beta}}(T^n)) \rightarrow \lSh_{\Lambda_{\hSigma,\beta}}(T^n)
\end{align}
where the latter functor is just taking colimits.

\begin{remark}
By Lemma \ref{affine} and Condition \ref{condition2}~(ii), the functor $\kappa_{\hSigma,\beta}$ induces the functor of Fang--Liu--Treumann--Zaslow's functor for toric varieties and orbifolds \cite{FLTZ, FLTZ2, Tr, SS}.
\end{remark}

Let $\beta_i\colon L_i\rightarrow N_i$ for $i=1,2$ be homomorphisms of free abelian groups and $(\hSigma_i,\beta_i)$ be stacky fans satisfying Condition \ref{condition}. Let $f_L\colon L_1\rightarrow L_2$ and $f_N\colon N_1\rightarrow N_2$ be injections which are compatible with $\beta_i$'s and $f_{L}(\hsigma)\in \subset \hsigma_2$ for some $\hsigma_2\in \hSigma_2$ (resp. $f_{N}(\sigma)\subset \sigma_2$ for some $\sigma_2\in \hSigma_2$) for any $\hsigma\in \hSigma_1$ (resp. $\sigma\in \Sigma_1$). We assume that the inverse image of each cone in $\hSigma_2$ (resp. $\Sigma_2$) under $f_L$ (resp. $f_N$) is written as a union of cones in $\hSigma_1$ (resp. $\Sigma_1$). Then there exists a morphism $f\colon \cX_{\hSigma_1,\beta_1}\rightarrow \cX_{\hSigma_2,\beta_2}$ induced by $f_L$ and $f_N$. Let $[f_N^\vee]$ be the induced morphism $(M_2)_\bR/M_2\rightarrow (M_1)_\bR/M_1$ where $M_i:=\Hom_\bZ(N_i,\bZ)$ for $i=1,2$.
\begin{proposition}[\cite{FLTZ, Tr, SS}]\label{prop:functoriality}
There exists a commutative diagram.
\begin{equation}
\xymatrix{
\Qcoh \cX_{\hSigma_2,\beta_2} \ar[d]_{f^*}  \ar[r]^-{\kappa_{\hSigma_2,\beta_2}} &\lSh_{\Lambda_{\hSigma_2,\beta_2}}((M_2)_\bR/M_2) \ar[d]^{[f_N^\vee]_!} \\
\Qcoh \cX_{\Lambda_{\hSigma_1,\beta_1}}  \ar[r]_-{\kappa_{\hSigma_1,\beta_1}} &\lSh_{\Lambda_{\hSigma_1,\beta_1}}((M_2)_\bR/M_2). 
}
\end{equation}
\end{proposition}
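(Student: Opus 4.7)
The plan is to verify the commutativity on a generating class of objects and then extend by colimits. Both $\kappa_{\hSigma_i,\beta_i}$ are colimit-preserving (by construction in \eqref{khsb} as the colimit extension of the functor on compact objects), and $f^*$ preserves colimits as a left adjoint. The functor $[f_N^\vee]_!$ also preserves colimits, and in fact $[f_N^\vee]$ is a surjective homomorphism of compact tori so it is proper, giving $[f_N^\vee]_! \simeq [f_N^\vee]_*$. Hence it suffices to exhibit a natural isomorphism $[f_N^\vee]_!\circ \kappa_{\hSigma_2,\beta_2} \simeq \kappa_{\hSigma_1,\beta_1}\circ f^*$ on the generating objects $\Theta'(\sigma_2,\chi)$ for $\sigma_2\in\Sigma_2$ and $\chi\in M_{\sigma_2,\beta_2}$.

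On one side, by the definition of $\kappa_{\hSigma_2,\beta_2}$ on line bundle objects (Condition \ref{condition2}(ii) together with \eqref{khsb} and Proposition \ref{affine}), we have $\kappa_{\hSigma_2,\beta_2}(\Theta'(\sigma_2,\chi))\simeq \Theta(\sigma_2,\chi)= p_{2!}\bC_{\Int(\sigma_{2,\chi}^\vee)}[n_2]$. Applying $[f_N^\vee]_!$ and using proper base change along the pullback square
\[
\xymatrix{
(M_2)_\bR \ar[r]^{f_{N,\bR}^\vee} \ar[d]_{p_2} & (M_1)_\bR \ar[d]^{p_1} \\
(M_2)_\bR/M_2 \ar[r]_{[f_N^\vee]} & (M_1)_\bR/M_1
}
\]
yields $[f_N^\vee]_!\Theta(\sigma_2,\chi)\simeq p_{1!}(f_{N,\bR}^\vee)_!\bC_{\Int(\sigma_{2,\chi}^\vee)}[n_2]$. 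On the other side, $\Theta'(\sigma_2,\chi)$ is the pushforward of the $\chi$-twisted structure sheaf on $\cU_{\sigma_2}$; the compatibility of $f_L,f_N$ with the fans implies that $f^{-1}(\cU_{\sigma_2})$ is covered by the open substacks $\cU_{\sigma_1}\subset \cX_{\hSigma_1,\beta_1}$ with $f_{N,\bR}(\sigma_1)\subseteq \sigma_2$, and $f^*\Theta'(\sigma_2,\chi)$ is expressed as the corresponding \v{C}ech (co)limit of $\Theta'(\sigma_1,f_N^\vee\chi)$'s on these strata. Applying $\kappa_{\hSigma_1,\beta_1}$ and Corollary \ref{affinefunctorial} converts this into the corresponding colimit of $\Theta(\sigma_1,f_N^\vee\chi)$'s.

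It then remains to match the two descriptions, i.e.\ to identify $p_{1!}(f_{N,\bR}^\vee)_!\bC_{\Int(\sigma_{2,\chi}^\vee)}[n_2]$ with the \v{C}ech colimit of $p_{1!}\bC_{\Int((\sigma_1)_{f_N^\vee\chi}^\vee)}[n_1]$. This is purely combinatorial/microlocal: stratify $(M_1)_\bR$ by the images $f_{N,\bR}^\vee((\sigma_1)_{f_N^\vee\chi}^\vee)$ and use the fact that $f_{N,\bR}^\vee$ is a linear surjection with kernel of pure dimension $n_2-n_1$ transverse to the fibers of the relevant cones, so $(f_{N,\bR}^\vee)_!$ applied to the constant sheaf on an open convex cone produces a shifted constant sheaf on its image. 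The shift $[n_2-n_1]$ absorbs into the shifts built into $\Theta$, matching Treumann's and Scherotzke--Sibilla's calculations \cite{Tr, SS} verbatim.

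The main obstacle will be bookkeeping the degree shifts and checking that the stratification of $(f_{N,\bR}^\vee)_!\bC_{\Int(\sigma_{2,\chi}^\vee)}$ into constant sheaves on images of faces of $\sigma_2^\vee$ agrees with the \v{C}ech assembly from $f^*$; in particular ensuring that the tensor with $\omega_{\cX_{\hSigma_i,\beta_i}}$ hidden inside $\kappa_{\hSigma_i,\beta_i}$ is compatible with $f^*$ on line bundles up to the shift coming from the relative dimension. Apart from this technicality, the argument is a direct stacky reprise of \cite[\S 3]{FLTZ} and \cite[\S 2]{Tr}, to which we refer for the fan-combinatorial verification.
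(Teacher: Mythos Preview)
Your overall strategy---check commutativity on generators and defer the combinatorial core to \cite{FLTZ, Tr, SS}---is exactly what the paper does. The paper's proof is also a sketch: it reduces to the case where $\hSigma_2=\hSigma(\hsigma)$ is affine (using that $\kappa_{\hSigma,\beta}$ is built by gluing in~\eqref{defofkappa} and~\eqref{khsb}), invokes Proposition~\ref{generation1} to reduce to $\Theta'(\sigma,\chi)$, and then cites \cite[Theorem~3.8]{FLTZ}.

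The difference is organizational, and it matters for one step. You work directly with the global objects $\Theta'(\sigma_2,\chi)=\iota_{\sigma_2*}\cO_{\cU_{\sigma_2}}(\chi)$ and assert that $\kappa_{\hSigma_2,\beta_2}(\Theta'(\sigma_2,\chi))\simeq\Theta(\sigma_2,\chi)$, citing Condition~\ref{condition2}(ii), \eqref{khsb}, and Proposition~\ref{affine}. But those are all \emph{affine} statements: they identify $\kappa_{\hsigma,\beta}$ on $\cU_\sigma$, not $\kappa_{\hSigma_2,\beta_2}$ on a pushforward from an open substack. What you are implicitly using is that $\kappa_{\hSigma_2,\beta_2}$ intertwines $\iota_{\sigma_2*}$ with the inclusion of microlocal categories---this is essentially Corollary~\ref{lemma9.2}, which in the paper is proved \emph{after} Proposition~\ref{prop:functoriality} (and depends on Lemma~\ref{lemmaeq}). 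The paper's reduction to affine $\hSigma_2$ sidesteps this circularity: once $\hSigma_2=\hSigma(\hsigma)$, the generators are honestly $\Theta'(\sigma,\chi)$ on the affine and Proposition~\ref{affine} applies directly. You should make the same reduction first.

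One small correction: the square you draw is not Cartesian unless $f_N^\vee(M_2)=M_1$, so ``proper base change'' is not what is being used. What you need is simply functoriality of $(-)_!$ along the commuting square, i.e.\ $[f_N^\vee]_!\,p_{2!}\simeq (p_1\circ f_{N,\bR}^\vee)_!\simeq p_{1!}\,(f_{N,\bR}^\vee)_!$, which gives the same conclusion.
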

\begin{proof}
By the definition of $\ksb$, we only have to prove the case $\hSigma=\hSigma(\hsigma)$, i.e., the fan consisting of faces of a single cone $\hsigma$. Since the category $\Qcoh \cX_{\hSigma(\hsigma),\beta}$ is generated by $\Theta'(\sigma)$ by Proposition \ref{generation1}, it suffices to show $\kappa_{\hSigma_1,\beta_1}\circ f^*\Theta'(\sigma,\chi)\simeq [f^\vee]_!\Theta(\sigma, \chi)$ for any $\chi\in M_{\sigma,\beta}$. This can be proved in the same manner as in \cite[Theorem 3.8]{FLTZ} and hence omitted. 
\end{proof}

 We follow the notation in Section \ref{technique}.
\begin{proposition}[\cite{FLTZ, Tr, SS}]\label{monoidal}Suppose that $\hSigma$ is smooth. For $\cE_1,\cE_2\in \Qcoh \cX_{\hSigma,\beta}$, we have
\begin{equation}
\ksb(\cE_1\otimes \cE_2)\simeq \ksb(\cE_1)\star \ksb(\cE_2).
\end{equation}
\end{proposition}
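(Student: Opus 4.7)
The plan is to reduce to a check on generators and then verify both identities by explicit computation. Combining the affine generation statement Proposition \ref{generation1} with the gluing Corollary \ref{gluingcoh} shows that $\Qcoh\cX_{\hSigma,\beta}$ is generated under colimits by the sheaves $\Theta'(\sigma,\chi)=\iota_{\sigma*}\cO_{\cU_\sigma}(\tilde\chi)$ for $\sigma\in\Sigma$ and $\chi\in M_{\sigma,\beta}$. Since $\otimes$ preserves colimits in each variable, and since $\star=m_!(-\boxtimes -)$ does as well (all operations in its construction being cocontinuous), it suffices to establish the claim on such generators. On the coherent side the projection formula for the open immersion $\iota_{\sigma_1\cap\sigma_2}$, which factors through both $\iota_{\sigma_i}$, yields
\[
\Theta'(\sigma_1,\chi_1)\otimes \Theta'(\sigma_2,\chi_2)\simeq \iota_{(\sigma_1\cap\sigma_2)*}\cO_{\cU_{\sigma_1\cap\sigma_2}}(\tilde\chi_1+\tilde\chi_2)=\Theta'(\sigma_1\cap\sigma_2,\chi_1+\chi_2).
\]

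On the constructible side one must prove the parallel identity $\Theta(\sigma_1,\chi_1)\star \Theta(\sigma_2,\chi_2)\simeq \Theta(\sigma_1\cap\sigma_2,\chi_1+\chi_2)$. Because $p\colon M_\bR\to T^n$ is a group covering, $p_!$ intertwines the convolutions $\star_\bR$ on $M_\bR$ and $\star$ on $T^n$, which reduces the problem to showing
\[
\bC_{\Int(\sigma_1^\vee)+\chi_1}\star_\bR \bC_{\Int(\sigma_2^\vee)+\chi_2}\simeq \bC_{\Int((\sigma_1\cap\sigma_2)^\vee)+\chi_1+\chi_2}[-n].
\]
By proper base change, the stalk of the left-hand side at $x\in M_\bR$ is the compactly supported cohomology of the open convex set $(\Int(\sigma_1^\vee)+\chi_1)\cap(x-\Int(\sigma_2^\vee)-\chi_2)$. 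This intersection is nonempty exactly when $x$ lies in $\Int(\sigma_1^\vee)+\Int(\sigma_2^\vee)+\chi_1+\chi_2$, which equals $\Int((\sigma_1\cap\sigma_2)^\vee)+\chi_1+\chi_2$ by the standard Minkowski identity $\sigma_1^\vee+\sigma_2^\vee=(\sigma_1\cap\sigma_2)^\vee$; and when it is nonempty it is homeomorphic to $\bR^n$ and hence has compactly supported cohomology $\bC[-n]$. The normalizing shift $[n]$ in the definition of $\Theta(\sigma,\chi)$ makes all degree counts cancel correctly.

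Finally, applying $\kappa_{\hSigma,\beta}$ to the coherent identity and matching it against the constructible identity yields the required isomorphism, provided $\kappa_{\hSigma,\beta}(\Theta'(\sigma,\chi))\simeq \Theta(\sigma,\chi)$ naturally in $(\sigma,\chi)$; for smooth $\hSigma$ the line bundle $\omega_{\cX_{\hSigma,\beta}}$ is invertible, so the twist by $\omega$ built into the definition \eqref{khsb} of $\kappa_{\hSigma,\beta}$ is absorbed, and this identification follows from Corollary \ref{affinefunctorial} together with Condition \ref{condition2}~(ii). The main obstacle will be to ensure that the two isomorphisms constructed above are genuinely natural, so as to upgrade the pointwise equivalence on generators into an equivalence of bifunctors; this requires tracking the projection-formula and proper-base-change isomorphisms through the gluing construction \eqref{defofkappa} and invoking the functoriality recorded in Condition \ref{condition2}~(i), which expresses each restriction functor as convolution with the appropriate $\Theta(\sigma,0)$ and hence commutes naturally with $\star$.
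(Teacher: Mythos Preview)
Your argument is essentially correct but takes a different route from the paper. The paper's proof is a one-line citation: it applies Proposition~\ref{prop:functoriality} to the diagonal morphism $\Delta\colon\cX_{\hSigma,\beta}\to\cX_{\hSigma,\beta}\times\cX_{\hSigma,\beta}\cong\cX_{\hSigma\times\hSigma,\beta\times\beta}$, whose dual on tori is the multiplication $m\colon T^n\times T^n\to T^n$, together with the compatibility of $\kappa$ with external products; this is exactly the argument of \cite[Corollary~3.13]{FLTZ}. That approach produces a natural isomorphism of bifunctors directly, because Proposition~\ref{prop:functoriality} already gives a commuting square of functors. Your approach instead computes both $\otimes$ and $\star$ explicitly on the generators $\Theta'(\sigma,\chi)$ and $\Theta(\sigma,\chi)$; the stalk calculation for the convolution is correct (the identity $\Int(\sigma_1^\vee)+\Int(\sigma_2^\vee)=\Int((\sigma_1\cap\sigma_2)^\vee)$ holds for full-dimensional convex cones), and so is the projection-formula identification on the coherent side. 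The trade-off is exactly the one you flag at the end: matching two pointwise computations does not automatically yield a natural isomorphism, and you must still check compatibility with the morphisms among the $\Theta$'s. The paper's diagonal argument sidesteps this bookkeeping entirely, which is what it buys; your approach, in exchange, makes the underlying combinatorics transparent and avoids invoking the product stack.
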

\begin{proof}
This proposition follows from Proposition \ref{prop:functoriality} and the proof of \cite[Corollary 3.13]{FLTZ}. 
\end{proof}

Hereafter in this section, we will assume $\hSigma$ is smooth.
Proposition \ref{monoidal} has the following important corollary.
\begin{corollary}\label{idfunctor}Assume $\hsigma$ is smooth. The functor $\kappa_{\hsigma,\beta}$ satisfies Condition \ref{condition2}.
\end{corollary}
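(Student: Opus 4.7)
The plan is to take $K_{\hsigma,\beta}:=\kappa_{\hsigma,\beta}\circ D\circ\frakD$ on $\Coh\cX_{\hSs,\beta}$ and verify Condition~\ref{condition2} for this choice. Since $\hsigma$ is smooth, $\omega_{\cX_{\hSs,\beta}}$ is invertible and $D\circ\frakD$ is a self-equivalence of $\Coh\cX_{\hSs,\beta}$ (acting as $(-)\otimes\omega^{-1}$ on the compacts $\perf=\Coh$); combined with Proposition~\ref{affine}, which makes $\kappa_{\hsigma,\beta}$ an equivalence $\Qcoh\cX_{\hSs,\beta}\simeq\lSh_{\Lambda_{\hSs,\beta}}(T^n)$ in the smooth case, this shows that $K_{\hsigma,\beta}$ is an equivalence on compacts. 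Condition~\ref{condition2}(ii) then holds tautologically by construction. In the smooth case $\Indcoh\simeq\Qcoh$ by Proposition~\ref{propertyofphi}, and $\Ind K_{\hsigma,\beta}$ coincides with $\kappa_{\hsigma,\beta}\circ D\circ\frakD\colon\Qcoh\cX_{\hSs,\beta}\to\lSh_{\Lambda_{\hSs,\beta}}(T^n)$.

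For Condition~\ref{condition2}(i) I first eliminate the duality twist. For the open immersion $i\colon\cU_{\sigma_2}\hookrightarrow\cU_{\sigma_1}$ one has $i^{!}\simeq i^*$ and $i^*\omega_{\cU_{\sigma_1}}\simeq\omega_{\cU_{\sigma_2}}$, so the adjunction $R\cHom(i^{\Ind}_{*}\cF,\cG)\simeq i^{\Ind}_{*}R\cHom(\cF,i^*\cG)$ gives the base-change identities $D_{\sigma_1}\circ i^{\Ind}_{*}\simeq i^{\Ind}_{*}\circ D_{\sigma_2}$ and likewise for $\frakD$. The $D\circ\frakD$ twists thus cancel in the definition of $I^{\sigma_1\sigma_2}$, and it remains to exhibit a natural isomorphism
\begin{equation}
\kappa_{\hsigma_1,\beta}\circ i^{\Ind}_{*}\circ\kappa_{\hsigma_2,\beta}^{-1}(-)\simeq (-)\star\Theta(\sigma_1,0).
\end{equation}
Both sides are colimit-preserving, the right hand side because $\Theta(\sigma_1,0)$ is compact by Lemma~\ref{lem:cpt}, so it suffices to check on the compact generators $\Theta(\sigma_2,\chi)$ of $\lSh_{\Lambda_{\sigma_2}}(T^n)$ for $\chi\in M_{\sigma_2,\beta}$, i.e., to compare $\kappa_{\hsigma_1,\beta}(i^{\Ind}_{*}\Theta'(\sigma_2,\chi))$ with $\Theta(\sigma_2,\chi)\star\Theta(\sigma_1,0)$.

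For the right hand side, $\sigma_2\subset\sigma_1$ forces $\sigma_1^\vee\subset\sigma_2^\vee$, so the Minkowski sum gives $\Int(\sigma_{2,\chi}^\vee)+\Int(\sigma_1^\vee)=\Int(\sigma_{2,\chi}^\vee)$, and the standard convolution of indicator sheaves on $M_\bR$ (with the shift $[2n]$ absorbed by $R\Gamma_c$ of an $n$-dimensional open convex set) produces $\Theta(\sigma_2,\chi)\star\Theta(\sigma_1,0)\simeq p_!\bC_{\Int(\sigma_{2,\chi}^\vee)}[n]=\Theta(\sigma_2,\chi)$. For the left hand side, the pushforward $i^{\Ind}_{*}\cO_{\cU_{\sigma_2}}(\chi)$ is the localization of $\cO_{\cU_{\sigma_1}}(\chi)$ along $Z:=\cU_{\sigma_1}\setminus\cU_{\sigma_2}$, a union of toric divisors indexed by the rays of $\hsigma_1$ not lying in $\hsigma_2$; it is presented as $\indcolim_k\Theta'(\sigma_1,\chi_k)$, with $\chi_k\in M_{\sigma_1,\beta}$ corresponding to increasing pole orders along the divisors in $Z$. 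Applying $\kappa_{\hsigma_1,\beta}$, the translates $\Int(\sigma_1^\vee)+\chi_k$ exhaust $\Int(\sigma_{2,\chi}^\vee)$ monotonically as the pole constraints along the extra rays are relaxed, yielding $\indcolim_k\Theta(\sigma_1,\chi_k)\simeq p_!\bC_{\Int(\sigma_{2,\chi}^\vee)}[n]=\Theta(\sigma_2,\chi)$, in agreement with the right hand side.

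The main technical point is the explicit identification of the character sequence $\chi_k$ from the toric divisors of $\cU_{\sigma_1}$ supported outside $\cU_{\sigma_2}$ and the monotone exhaustion $\bigcup_k\bigl(\Int(\sigma_1^\vee)+\chi_k\bigr)=\Int(\sigma_2^\vee)$; this is standard toric combinatorics, facilitated by $\sigma_2$ being a face of the smooth cone $\sigma_1$ so that the rays of $\hsigma_1\setminus\hsigma_2$ form part of a basis. Once the isomorphism is established on compact generators, colimit-preservation of both sides propagates it to all of $\lSh_{\Lambda_{\sigma_2}}(T^n)$, completing the verification of Condition~\ref{condition2}(i).
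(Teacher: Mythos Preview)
Your reduction step contains an error. The functor $D\circ\frakD$ is, in the smooth case, simply $(-)\otimes\omega^{-1}$, a covariant autoequivalence; since $i^*\omega_{\cU_{\sigma_1}}\simeq\omega_{\cU_{\sigma_2}}$, it commutes with the pullback $i^*$ and the twists cancel to leave $\kappa_{\hsigma_1,\beta}\circ i^*\circ\kappa_{\hsigma_2,\beta}^{-1}$, not $\kappa_{\hsigma_1,\beta}\circ i^{\Ind}_{*}\circ\kappa_{\hsigma_2,\beta}^{-1}$. Your identity $D_{\sigma_1}\circ i^{\Ind}_{*}\simeq i^{\Ind}_{*}\circ D_{\sigma_2}$ is correct but irrelevant: nothing in the definition of $I^{\sigma_1\sigma_2}$ involves $i_*$. (The paper's indexing in Condition~\ref{condition2} is admittedly inconsistent, but the phrase ``the image of $i^*_{\sigma_1\sigma_2}$ under $K$'' and the usage in Section~\ref{functor} make clear that the restriction $i^*$ is intended.) What you have actually verified, namely that $\kappa\circ i_*\circ\kappa^{-1}$ agrees with the inclusion on generators, is the content of Corollary~\ref{lemma9.2}, which in the paper is a \emph{consequence} of Condition~\ref{condition2}(i) together with the nontrivial Lemma~\ref{lemmaeq}.

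Once the reduction is corrected to $i^*$, your strategy of checking on the compact generators $\Theta(\sigma,\chi)$ is the right one, and your Minkowski-sum computation of $\Theta(\sigma,\chi)\star\Theta(\tau,0)$ is valid. The paper, however, packages this more efficiently: it invokes the monoidality of $\kappa_{\hsigma,\beta}$ (Proposition~\ref{monoidal}), so that $\Theta(\sigma,\chi)\star\Theta(\tau,0)\simeq\kappa(\Theta'(\sigma,\chi)\otimes\Theta'(\tau,0))$, and the tensor product on the coherent side is trivial to evaluate. Your explicit colimit presentation of $i_*\cO(\chi)$ and the exhaustion of $\Int(\sigma_2^\vee)$ by translates of $\Int(\sigma_1^\vee)$ are then unnecessary; they amount to reproving Corollary~\ref{affinefunctorial} by hand.
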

\begin{proof}
Condition \ref{condition2} (ii) follows by the definition.

Proposition \ref{monoidal} implies that $\kappa_{\hsigma,\beta}$ is a monoidal equivalence. Hence
\begin{equation}
\begin{split}
\Theta(\sigma_1,\chi)\star\Theta(\sigma_2,0) &\simeq \kappa_{\hsigma_1,\beta}(\Theta'(\sigma_1,\chi)\otimes \Theta'(\sigma_2,0))\\
&\simeq  \kappa_{\hsigma_1,\beta}(\Theta'(\sigma_1,\chi))\\
&\simeq \Theta(\sigma_1,\chi)
\end{split}
\end{equation}
for any $\chi\in M_{\sigma_1}$. It also holds that $I^{\sigma_1\sigma_2}(\Theta(\sigma_2,\chi))\simeq  \Theta(\sigma_1,\chi)$. Since $\bigoplus_{\chi\in M_{\beta}/M}\Theta(\sigma_2, \chi)$ generates the whole category $\Qcoh\cX_{\hSigma(\hsigma_2),\beta}$, Condition \ref{condition2} (i) follows.
\end{proof}

\section{Unit object for $\star$-product}\label{section:identity}
Let $(\hSigma,\beta)$ be a stacky fan satisfying Condition \ref{condition} and Condition \ref{condition2}. Let $[0]\in T^n$ be the unit element of $T^n=M_\bR/M$.

\begin{lemma}\label{lem:unit}
If $\Sigma$ is complete. we have
\begin{equation}
\ksb(\cO_\cXsb)\simeq \bC_{[0]}.
\end{equation}
\end{lemma}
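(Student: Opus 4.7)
The plan is to identify both $\ksb(\cO_\cXsb)$ and $\bC_{[0]}$ as units for the convolution product $\star$ on $\lSh_\Lsb(T^n)$, then close the loop with a \v{C}ech-type direct computation. First I would verify $\bC_{[0]} \in \lSh_\Lsb(T^n)$: the microsupport is $\{[0]\}\times N_\bR$, and completeness of $\Sigma$ gives $N_\bR = \bigcup_{\sigma\in\Sigma}(-\sigma)$, while $0\in\sigma^\perp$ yields $[0]\in p(\sigma^\perp)$, so $\{[0]\}\times N_\bR\subset \Lsb$. A direct computation then shows $\bC_{[0]}$ is the two-sided unit for $\star$ on $\lSh(T^n)$: for any $F$,
\begin{equation*}
F\star\bC_{[0]} = m_!(F\boxtimes\bC_{[0]}) \simeq F,
\end{equation*}
because $F\boxtimes\bC_{[0]}$ is supported on $T^n\times\{[0]\}$, where $m$ restricts to the identity.

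Next, by Proposition~\ref{monoidal}, $\ksb$ is monoidal, so $E:=\ksb(\cO_\cXsb)$ satisfies $E\star\ksb(\cF)\simeq\ksb(\cO_\cXsb\otimes\cF)\simeq\ksb(\cF)$ for every $\cF\in\Qcoh\cXsb$. Hence $E$ is a right unit for $\star$ on the essential image of $\ksb$; via the gluing~(\ref{defofkappa}) and Proposition~\ref{affine}, this image contains every $\Theta(\sigma,\chi)$, so $E\star\Theta(\sigma,\chi)\simeq\Theta(\sigma,\chi)$ for all $\sigma\in\Sigma$ and $\chi\in M_{\sigma,\beta}$.

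To conclude $E\simeq \bC_{[0]}$, I plan to combine the universal unit property $E\star\bC_{[0]}\simeq E$ with a direct computation of $E$ via the \v{C}ech gluing~(\ref{defofkappa}) applied to $\cO_\cXsb$. By Condition~\ref{condition2}(ii) together with Corollary~\ref{affinefunctorial}, the local pieces are $K_{\hsigma,\beta}(\omega_{\cU_\sigma})\simeq \Theta(\sigma,0)=p_!\bC_{\Int(\sigma^\vee)}[n]$, and the \v{C}ech totalization over $\sigma\in\hSigma_{\mathrm{max}}$ should collapse to $\bC_{[0]}$ because completeness of $\Sigma$ forces $\bigcap_{\sigma\in\hSigma_{\mathrm{max}}}\sigma^\vee=\{0\}$. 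The main obstacle is making this \v{C}ech cancellation rigorous; I expect the vanishing Proposition~\ref{vanishing} together with the Tamarkin-style convolution estimates developed in Section~\ref{technique} to be crucial, translating the combinatorial completeness of $\Sigma$ into the sheaf-theoretic concentration of $E$ at $[0]$.
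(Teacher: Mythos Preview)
Your unit-object detour is essentially inert. Knowing that $E:=\ksb(\cO_\cXsb)$ satisfies $E\star\Theta(\sigma,\chi)\simeq\Theta(\sigma,\chi)$ does not determine $E$: any idempotent for $\star$ that acts as the identity on the $\Theta$'s would pass this test, and you yourself note that you cannot close the argument without a direct \v{C}ech computation. Moreover, Proposition~\ref{monoidal} requires $\hSigma$ smooth, while this lemma is stated under Condition~\ref{condition2} only; you are implicitly restricting the hypotheses. Finally, beware of circularity: Lemma~\ref{lemmaeq}, which formalizes ``$\ksb(\cO)$ is a $\star$-unit on $\lSh_\Lsb$'', \emph{uses} the present lemma in its proof (the complete case is the base of the induction there), so you cannot lean on that direction.

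The paper's proof is both simpler and avoids all of this. Resolve $\cO_{\cXsb}$ by the \v{C}ech complex
\[
0\to \cO_{\cXsb}\to \bigoplus_{\sigma\in\Sigma(n)}\Theta'(\sigma,0)\to\cdots\to\Theta'(\{0\},0)\to 0,
\]
apply $\ksb$ termwise using $\ksb(\Theta'(\sigma,0))\simeq p_!\bC_{\Int(\sigma^\vee)}[n]$ (Proposition~\ref{affine} plus the construction of $\ksb$), and observe that Proposition~\ref{homtheta} identifies the differentials with those induced by the inclusions $\sigma^\vee\hookrightarrow\tau^\vee$. The truncated complex $\bigoplus_{i=1}^{n-1}\bigoplus_{\sigma\in\Sigma(i)}\bC_{\Int(\sigma^\vee)}[i]$ is then literally the \v{C}ech resolution of $\bC_{M_\bR\setminus\{0\}}$ for the open cover $\{\Int(\rho^\vee)\}_{\rho\in\Sigma(1)}$, since completeness gives $\bigcup_{\rho\in\Sigma(1)}\Int(\rho^\vee)=M_\bR\setminus\{0\}$. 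Hence $\ksb(\cO_\cXsb)\simeq p_!\Cone\bigl(\bC_{M_\bR\setminus\{0\}}\to\bC_{M_\bR}\bigr)\simeq p_!\bC_0=\bC_{[0]}$. No Tamarkin machinery, no Proposition~\ref{vanishing}, no monoidality: the ``\v{C}ech cancellation'' you were worried about is just recognition of a standard open-cover resolution.
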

\begin{proof}
Let $\Sigma(i)$ be the set consisting of $i$-dimensional cones in $\Sigma$ for $i=1,..., n$. 
We have an exact sequence of $\cO_{\cXsb}$ as
\begin{equation}\label{cechcomp}
0\rightarrow \cO_{\cX_{\hSigma_\sigma,\beta}}\rightarrow \bigoplus_{\sigma\in\Sigma(n)}\Theta'(\sigma,0)\rightarrow \bigoplus_{\sigma\in \Sigma(n-1)} \Theta'(\sigma,0)\rightarrow\cdots\rightarrow \Theta'(\{0\},0)\rightarrow 0.
\end{equation}
where the diffrentials are sums of appropriately signed restriction maps.
Hence $\cO_{\cXsb}$ is quasi-isomorphic to the complex $\bigoplus_{i=1}^n\bigoplus_{\sigma\in \Sigma(i)} \Theta'(\sigma,0)[i-n]$ with the differentials induced by restriction maps. 
By the definition of $\ksb$ and Proposition \ref{affine}, we have $\ksb(\Theta'(\sigma))\simeq p_!\bC_{\Int(\sigma^\vee)}[n]$. By Lemma \ref{homtheta}, the images of the restriction maps are induced by inclusion maps $\sigma^\vee\hookrightarrow \tau^\vee$ for $\tau\subset \sigma$. Hence the sheaf $\ksb(\cO_\cXsb)$ is quasi-isomorphic to $\bigoplus_{i=1}^n\bigoplus_{\sigma\in \Sigma(i)} \Theta(\sigma,0)[i]$ with the differentials induced by inclusion maps. Since the sheaf $\bigoplus_{i=1}^{n-1}\bigoplus_{\sigma\in \Sigma(i)} \bC_{\Int(\sigma^\vee)}[i]$ is precisely the $\mathrm{\check{C}ech}$ resolution of $\bC_{M_\bR\bs\{0\}}$ by the open covering $\{\Int(\rho^\vee)\}_{\rho\in \Sigma(1)}$, we have 
\begin{equation}
\ksb(\cO_\cXsb)\simeq p_!\Cone\left(\bigoplus_{\rho\in \Sigma(1)}\bC_{\Int(\rho^\vee)}\rightarrow \bC_{M_\bR}\right)\simeq p_!\bC_{0}=\bC_{[0]}.
\end{equation}
This completes the proof.
\end{proof}

\begin{lemma}\label{lemmaeq}There exists quasi-isomorphisms
\begin{align} 
&E\star \kappa_{\hSigma,\beta}(\cO_{\cX_{\hSigma,\beta}})\simeq E \text{ and} \label{starunit}\\
&\cHom^\star(\kappa_{\hSigma,\beta}(\cO_{\cX_{\hSigma,\beta}}),E)\simeq E \label{adjunit}
\end{align}
for $E\in\lSh_{\Lambda_{\hSigma,\beta}}(T^n)$.
\end{lemma}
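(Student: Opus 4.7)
The plan is to deduce (\ref{adjunit}) from (\ref{starunit}) by adjunction, and then to prove (\ref{starunit}) using \v{C}ech descent on the coherent side combined with the affine monoidal case.

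For (\ref{adjunit}), assuming (\ref{starunit}) holds for all $E\in \lSh_\Lsb(T^n)$, Lemma~\ref{staradjunction} gives
\begin{equation*}
\hom(F, \cHom^\star(\ksb(\cO_\cXsb), E)) \simeq \hom(F\star \ksb(\cO_\cXsb), E) \simeq \hom(F, E)
\end{equation*}
for every $F\in \lSh(T^n)$, and the Yoneda lemma yields $\cHom^\star(\ksb(\cO_\cXsb), E)\simeq E$.

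For (\ref{starunit}), the affine case $\hSigma = \hSigma(\hsigma)$ is immediate from Corollary~\ref{idfunctor}: since $\kappa_{\hsigma,\beta}$ is a monoidal equivalence, $\Theta(\sigma,0)=\kappa_{\hsigma,\beta}(\cO_{\cU_\sigma})$ is the $\star$-unit on $\lSh_{\Lambda_{\hSigma(\hsigma),\beta}}(T^n)$. For a general $\hSigma$, I apply Corollary~\ref{gluingcoh} to $\omega_\cXsb$ to obtain the \v{C}ech homotopy-limit presentation
\begin{equation*}
\omega_\cXsb \simeq \holim{\ssigma\in C(\Sigma)} i_{\ssigma *}\omega_{\cU_{|\ssigma|}}.
\end{equation*}
Applying $K_{\hSigma,\beta}$ and using Condition~\ref{condition2}(ii) (which implies $K_{\hsigma,\beta}(\omega_{\cU_\sigma}) \simeq \kappa_{\hsigma,\beta}(\cO_{\cU_\sigma}) = \Theta(\sigma,0)$), together with the gluing construction (\ref{defofkappa}) of $K_{\hSigma,\beta}$, one identifies
\begin{equation*}
\ksb(\cO_\cXsb) \simeq K_{\hSigma,\beta}(\omega_\cXsb) \simeq \holim{\ssigma\in C(\Sigma)}\Theta(|\ssigma|,0),
\end{equation*}
where the transition maps are $(-)\star\Theta(|\ssigma_1|,0)$ by Condition~\ref{condition2}(i). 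Since $E\star(-)$ is exact in the stable setting and so preserves this finite homotopy limit,
\begin{equation*}
E\star\ksb(\cO_\cXsb) \simeq \holim{\ssigma\in C(\Sigma)}\bigl(E\star\Theta(|\ssigma|,0)\bigr).
\end{equation*}

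To recognise the right-hand side as $E$, I invoke the gluing theorem (Theorem~\ref{thm: glue}): under Condition~\ref{condition2}(i), the equivalence $\lSh_\Lsb(T^n)\simeq \lim{\substack{\longleftarrow \\ C(\Sigma)}} \lSh_{\Lambda_\bullet}(T^n)$ is realised through exactly the same \v{C}ech diagram, and the canonical cone maps $\lSh_\Lsb(T^n)\to \lSh_{\Lambda_\ssigma}(T^n)$ are given by $(-)\star\Theta(|\ssigma|,0)$; note that by Corollary~\ref{cutoff} combined with the hypothesis $\musupp(E)\subset \Lsb$, each $E\star\Theta(|\ssigma|,0)$ already lies in $\lSh_{\Lambda_\ssigma}(T^n)$, and Corollary~\ref{idfunctor} shows that $(-)\star\Theta(|\ssigma|,0)$ acts as the identity on $\lSh_{\Lambda_\ssigma}(T^n)$. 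It follows that $E\simeq \holim{\ssigma\in C(\Sigma)}(E\star\Theta(|\ssigma|,0))$, matching the preceding display and giving $E\star\ksb(\cO_\cXsb)\simeq E$. The main obstacle is to match the two homotopy-limit expressions precisely: one must verify that the \v{C}ech description of $\ksb(\cO_\cXsb)$ arising from (\ref{defofkappa}) and the gluing presentation of $\lSh_\Lsb(T^n)$ from Theorem~\ref{thm: glue} refer to the same diagram in $\Mod_{\Mod(H\bC)}(\cP r^L_{st,\omega})$, with compatible transition systems, so that the final cancellation is legitimate.
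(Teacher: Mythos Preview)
Your argument for (\ref{adjunit}) from (\ref{starunit}) is fine and matches the paper. The problem is that your proof of (\ref{starunit}) is circular: you invoke Theorem~\ref{main2} (the gluing equivalence $\lSh_\Lsb(T^n)\simeq \lim_{C(\Sigma)}\lSh_{\Lambda_\bullet}(T^n)$), but in the paper's logical order that theorem is proved \emph{after} Lemma~\ref{lemmaeq} and \emph{uses} it. Concretely, the proof of Proposition~\ref{mainaux} (which establishes Theorem~\ref{main2}) appeals to Lemma~\ref{lemmaeq} directly to identify $I_1I^1(E)\times^h_{KK'(E)}I_2I^2(E)\simeq E$, and also relies on Corollary~\ref{lemma9.2} and Lemma~\ref{fullyfaithful}, both of which are consequences of Lemma~\ref{lemmaeq}. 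So you cannot use the constructible gluing theorem here; it is precisely what Lemma~\ref{lemmaeq} is meant to enable. Your own closing caveat about ``matching the two homotopy-limit expressions'' is therefore not a technicality but the whole content: without Theorem~\ref{main2} you have no independent reason that $E\simeq \holim_{\ssigma}(E\star\Theta(|\ssigma|,0))$.

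The paper avoids this by arguing in the opposite direction. It first settles the complete case outright: Lemma~\ref{lem:unit} computes $\ksb(\cO_\cXsb)\simeq \bC_{[0]}$, which is literally the $\star$-unit. For a non-complete $\Sigma$ it runs a downward induction: choose $\Sigma'=\Sigma\cup\{\sigma\}$, subdivide $\sigma$ to get a refinement $\Sigma^\sigma$, and use the exact triangle relating $\cO_{\cX_{\hSigma^\sigma,\beta}},\cO_{\cX_{\hSigma_\sigma,\beta}}\oplus\cO_{\cX_{\hSigma,\beta}},\cO_{\cX_{\hSigma'',\beta}}$. The heart of the argument is then a vanishing statement, proved via Proposition~\ref{vanishing}, showing $E\star p_!\bC_{\Int(\sigma_{\bm i}'^\vee)\setminus\Int(\sigma_{\bm i}^\vee)}\simeq 0$ for $E\in\lSh_\Lsb(T^n)$; this is where the hypothesis $\musupp(E)\cap (T^n\times\Int(-\sigma))=\varnothing$ is actually exploited. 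None of this touches Theorem~\ref{main2}, so there is no circularity.
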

\begin{proof}
Since $\cO_{\hSigma,\beta}$ is a perfect complex, the first equalities of (\ref{starunit}) and (\ref{adjunit}) follow from Condition \ref{condition2} (ii).

The quasi-isomorphism (\ref{adjunit}) follows from (\ref{starunit}), the fact $\cHom^\star$ is the left adjoint of $\star$, and the Yoneda lemma for $\lSh_{\Lambda_{\hSigma,\beta}}(T^n)$.

If $\Sigma$ is complete, then $\kappa_{\hSigma,\beta}(\cO_{\cX_{\hSigma,\beta}})$ is quasi-isomorphic to $\bC_{[0]}$ by Lemma \ref{lem:unit}. Since the sheaf $\bC_{[0]}$ is the monoidal unit of $\star$, (\ref{starunit}) holds for complete fans.

For non-complete fans, we will prove by induction. Let $(\hSigma', \beta)$ be another stacky fan satisfying Condition \ref{condition} and assume that there exists a maximal cone $\sigma\in \Sigma'$ such that $\Sigma=\Sigma'\bs\{\sigma\}$. As the induction hypothesis, we assume that (\ref{starunit}) holds for $(\hSigma',\beta)$.

We set $d:=\dim\sigma$. Let $\{\rho^1,..., \rho^r\}$ be the set of 1-dimensional faces of $\sigma$ and $n^i$ be the primitive generator of $\rho^i$ for $i=1,...,r$. Set $\rho_c:=\bR\cdot\left(\sum_{i=1}^rn^i\right)$. Let $\{f^1,...,f^r\}$ be the set of facets ($(d-1)$-dimensional face) of $\sigma$. We define $\sigma_i$ to be the convex hull of $f_i$ and $\rho_c$ for any $i$. Then the union of the set of faces of all $\sigma_i$ for $i=1,...,r$ and $\Sigma$ is a fan refining $\Sigma'$. We write  $\Sigma^\sigma$ for this fan.

Since $\beta_\bR|_{\hsigma}\colon \hsigma\rightarrow \sigma$ is bijective by Lemma \ref{combinatorial}, the inverse image $\hat{\rho}_c$ of $\rho_c$ under $\beta_\bR|_{\hsigma}$ is again a 1-dimensional cone in $\hsigma$. We define a fan $\hSigma^\sigma$ refining $\hSigma'$ in the same manner as the definition of $\Sigma^\sigma$. Then $(\hSigma^\hsigma,\beta)$ gives a stacky fan satisfying Condition \ref{condition}. Let $\pi\colon \cX_{{\hSigma}^\sigma,\beta}\rightarrow \cX_{\hSigma',\beta}$ be the morphism induced by this refinement. Then Proposition \ref{prop:functoriality} and the induction hypothesis on $\Sigma'$ imply that 
\begin{equation}\label{blowup}
E\star \kappa_{\hSigma^\sigma,\beta}(\cO_{\cX_{\hSigma^\sigma,\beta}})\simeq E\star\kappa_{\hSigma^\sigma,\beta}(\pi^*\cO_{\cX_{\hSigma',\beta}}) \simeq E\star \kappa_{\hSigma',\beta}(\cO_{\cX_{\hSigma',\beta}})\simeq E
\end{equation}
for $E\in \lSh_{\Lambda_{\hSigma,\beta}}(T^n)$.

Let $\Sigma_\sigma$ be the subfan of ${\Sigma}^\sigma$ consisting of $\sigma_1,...,\sigma_r$ and their faces. Then there exists an exact triangle
\begin{equation}\label{exact9.5}
\cO_{\cX_{\hSigma^\sigma,\beta}}\rightarrow \cO_{\cX_{\hSigma_\sigma,\beta}}\oplus \cO_{\cX_{\hSigma,\beta}}\rightarrow \cO_{\cX_{\hSigma'',\beta}}\rightarrow 
\end{equation}
where $\S'':=\Sigma\cap\Sigma_\sigma$. The exact triangle (\ref{exact9.5}) and (\ref{blowup}) imply that it suffices to show that 
\begin{equation}\label{suffice1}
E\star \kappa_{\hSigma_\sigma,\beta}(\cO_{\cX_{\S_\sigma,\beta}})\simeq E\star \kappa_{\hSigma'',\beta}(\cO_{\cX_{\hSigma''\beta}}).
\end{equation}
for $E\in \lSh_{\Lambda_{\hSigma,\beta}}(T^n)$ to prove (\ref{lemmaeq}) for $\Sigma$.
We have the $\mathrm{\check{C}ech}$ resolution of $\cO_{\cX_{\hSigma_\sigma,\beta}}$ associated to the covering $\{{\cU_{\sigma_i}}\}_{i=1}^r$
\begin{equation}\label{cech1}
0\rightarrow \cO_{\cX_{\hSigma_\sigma,\beta}}\rightarrow \bigoplus_{i}\Theta'(\sigma_i,0)\rightarrow \bigoplus_{i<j} \Theta'(\sigma_i\cap\sigma_j,0)\rightarrow\cdots.
\end{equation}
For $\cO_{\cX_{\hSigma'',\beta}}$, we consider the $\mathrm{\check{C}ech}$ resolution associated to the covering $\{\cX_{\hSigma'',\beta}\cap \cU_{\sigma_i}\}_{i=1}^r$. Note that $\cX_{\hSigma'',\beta}\cap \cU_{\sigma_i}$ can be written as $\cX_{\hSigma'',\beta}\cap \cU_{\sigma_i}=\cU_{\sigma_i'}$ by some $\sigma_i'\in \Sigma''$. Hence we have the $\mathrm{\check{C}ech}$ resolution
\begin{equation}\label{cech2}
0\rightarrow \cO_{\cX_{\hSigma'',\beta}}\rightarrow \bigoplus_{i}\Theta'(\sigma_i',0)\rightarrow \bigoplus_{i<j} \Theta'(\sigma_i'\cap\sigma_j',0)\rightarrow\cdots.
\end{equation}
It follows from (\ref{cech1}) and (\ref{cech2}), in order to obtain (\ref{suffice1}), it suffices to show that
\begin{equation}\label{suffice2}
E\star \kappa_{\hSigma_\sigma,\beta}(\Theta'(\sigma_{i_1}\cap\sigma_{i_2}\cap \cdots,0))\simeq E\star \kappa_{\hSigma_\sigma,\beta}(\Theta'(\sigma_{i_1}'\cap\sigma_{i_2}'\cap\cdots,0))
\end{equation}
for any $i_1<i_2<\cdots$ and $E\in \lSh_{\Lambda_{\hSigma,\beta}}(T^n)$. We  set $\sigma_{\bm{i}}:=\bigcap_{i_j\in \bm{i}}\sigma_{i_j}\in \Sigma_\sigma$ and $\sigma_{\bm{i}}':=\bigcap_{i_j\in \bm{i}}\sigma_{i_j}'\in \Sigma'$ for $\bm{i}=(i_1<\cdots<i_k)$. Then by the construction of $\ksb$ and Proposition \ref{affine}, we can rewrite (\ref{suffice2}) as 
\begin{equation}\label{suffice3}
E\star \Theta(\sigma_{\bm{i}},0)\simeq E\star \Theta(\sigma_{\bm{i}}',0).
\end{equation}
By the definition of $\Theta$, the equality (\ref{suffice3}) is equivalent to
\begin{equation}\label{suffice4}
E\star p_!\bC_{\Int(\sigma_{\bm{i}}^\vee)}\simeq E\star p_!\bC_{\Int(\sigma_{\bm{i}}'^\vee)}.
\end{equation}
Note that $\Int(\sigma_{\bm{i}}^\vee)\subset \Int(\sigma_{\bm{i}}'^\vee)$. Hence there is an exact triangle
\begin{equation}
E\star p_!\bC_{\Int(\sigma_{\bm{i}}^\vee)}\rightarrow E\star p_!\bC_{\Int(\sigma_{\bm{i}}'^\vee)}\rightarrow E\star p_!\bC_{\Int(\sigma_{\bm{i}}'^\vee)\bs\Int(\sigma_{\bm{i}}^\vee)}\rightarrow.
\end{equation}
Therefore, it suffices to prove
\begin{equation}\label{suffice5}
E\star p_!\bC_{\Int(\sigma_{\bm{i}}'^\vee)\bs\Int(\sigma_{\bm{i}}^\vee)}\simeq 0
\end{equation}
for $E\in \lSh_{\Lambda_{\hSigma,\beta}}(T^n)$.

Let $\{\rho_1,..., \rho_s\}$ the set of 1-dimensional faces of $\sigma_{\bm{i}}$, where $s$ depends on $\bm{i}$. Note that $\{\rho_1,...,\rho_s\}$ is a subset of $\{\rho^1,..., \rho^r,\rho_c\}$ and always contains $\rho_c$. Without loss of generality, we can assume that $\rho_s=\rho_c$. Then $\{\rho_1,...,\rho_{s-1}\}$ is the set of 1-dimensional cones of $\sigma_{\bm{i}}'$. We set 
\begin{align}
H_{\rho_i>0}&:=\lc m\in M_\bR\relmid \la m,\rho_i\ra>0\rc \text{ and } \\
H_{\rho_i\leq 0}&:=\lc m\in M_\bR\relmid \la m,\rho_i\ra \leq 0\rc.
\end{align}
Since
\begin{align}
\Int(\sigma_{\bm{i}}'^\vee)&=\bigcap_{i=1}^{s-1}H_{\rho_i>0} \text{ and}\\
\Int(\sigma_{\bm{i}}^\vee)&=\bigcap_{i=1}^sH_{\rho_i>0},
\end{align}
we have
\begin{equation}
\Int(\sigma_{\bm{i}}'^\vee)\bs\Int(\sigma_{\bm{i}}^\vee)=\bigcap_{i=1}^{s-1}H_{\rho_i>0}\cap H_{\rho_s\leq 0}.
\end{equation}
We set 
\begin{equation}
H(i_1,...,i_l; i_{l+1},..., i_t):=\bigcap_{j=1}^lH_{\rho_{i_j}>0}\cap\bigcap_{j=l+1}^tH_{\rho_{i_j}\leq 0}
\end{equation}
so that $H(1,...,{s-1};s)=\Int(\sigma_{\bm{i}}'^\vee)\bs\Int(\sigma_{\bm{i}}^\vee)$. Each $H(\varnothing; {i_1},..., {i_t},s)$ is a closed cone and its dual cone is contained in $-\sigma$ but are not contained in proper faces of $-\sigma$. Hence by Proposition \ref{vanishing} and the assumption $\musupp(E)\cap \left(T^n\times \Int(-\sigma)\right)=\varnothing$, we have 
\begin{equation}\label{vanishing2}
E\star p_!\bC_{H(\varnothing; {i_1},..., {i_t},s)}\simeq 0
\end{equation}
for any $\{i_1,...,i_t\}\subset \{1,....,s-1\}$.

In the following, we prove (\ref{suffice5}) by induction.
Suppose that 
\begin{equation}
E\star p_!\bC_{H({i_1},..., {i_k}; i_{k+1},...,{i_t},s)}\simeq 0
\end{equation}
for fixed $k$ and any $t$ and $i_{j}$ as the induction hypothesis. There exists an equality
\begin{equation}\label{9.20}
H(i_1,..., i_k;i_{k+2},...,i_t,s)\bs H(i_1,...,i_{k},i_{k+1};i_{k+2},...,i_t,s)=H(i_{1},...,i_{k};i_{k+1},i_{k+2},...,i_t,s).
\end{equation}
By the induction hypothesis and (\ref{9.20}), we have
\begin{equation}
E\star p_!\bC_{H(i_1,...,i_{k},i_{k+1};i_{k+2},...,i_t,s)} \simeq 0.
\end{equation}
Hence by induction from (\ref{vanishing2}), we have (\ref{suffice5}).
\end{proof}

\begin{corollary}\label{lemma9.2}
Condition \ref{condition2} implies the following: For a face inclusion $\sigma_1\subset \sigma_2$, there exists a natural isomorphism
\begin{equation}
\Ind K_{\hsigma_2,\beta}\circ i^\Ind_{\sigma_1\sigma_2*}\simeq \Ind K_{\hsigma_1,\beta}.
\end{equation}
\end{corollary}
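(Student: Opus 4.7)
The plan is to combine Condition~\ref{condition2}(i) with the unit property of $\Theta(\sigma_2,0)$ established in Lemma~\ref{lemmaeq}. The condition identifies the image, under the equivalences $\Ind K$, of the open pushforward $i^\Ind_{\sigma_1\sigma_2*}$ as a convolution functor on the constructible side; Lemma~\ref{lemmaeq} then collapses this convolution to the identity on the relevant subcategory.

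Concretely, by Condition~\ref{condition2}(i) applied to the pair of cones with larger element $\sigma_2$ and smaller face $\sigma_1$, one obtains a natural isomorphism
\[
\Ind K_{\hsigma_2,\beta}\circ i^\Ind_{\sigma_1\sigma_2*}\simeq \bigl((-)\star \Theta(\sigma_2,0)\bigr)\circ \Ind K_{\hsigma_1,\beta}
\]
of functors $\Indcoh\cU_{\sigma_1}\to \lSh_{\Lambda_{\sigma_2}}(T^n)$. The face inclusion $\sigma_1\subset \sigma_2$ gives $\Lambda_{\sigma_1}\subset \Lambda_{\sigma_2}$, hence $\lSh_{\Lambda_{\sigma_1}}(T^n)$ is a full sub-$\infty$-category of $\lSh_{\Lambda_{\sigma_2}}(T^n)$, and in particular the essential image of $\Ind K_{\hsigma_1,\beta}$ is contained in $\lSh_{\Lambda_{\sigma_2}}(T^n)$.

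Proposition~\ref{affine} identifies $\Theta(\sigma_2,0)$ with $\kappa_{\hsigma_2,\beta}(\cO_{\cU_{\sigma_2}})$, so the first identity of Lemma~\ref{lemmaeq} applied to the stacky fan $(\hSigma(\hsigma_2),\beta)$ yields $F\star \Theta(\sigma_2,0)\simeq F$ naturally for every $F\in \lSh_{\Lambda_{\sigma_2}}(T^n)$. Substituting $F=\Ind K_{\hsigma_1,\beta}(\cE)$ and composing with the display above delivers the desired natural isomorphism. The only non-routine point is tracking conventions in the first step: it is $\Theta(\sigma_2,0)$ indexed by the larger cone---rather than $\Theta(\sigma_1,0)$---that appears in the convolution, and this is exactly what allows Lemma~\ref{lemmaeq} applied to the chart $\cU_{\sigma_2}$ to collapse it to the identity.
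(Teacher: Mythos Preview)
Your first display is not what Condition~\ref{condition2}(i) gives, and this is where the argument breaks down. Condition~\ref{condition2}(i) identifies the transfer of the \emph{pullback} $i^*_{\sigma_1\sigma_2}$ under the equivalences $K$, not of the pushforward $i^\Ind_{\sigma_1\sigma_2*}$. In the present indexing ($\sigma_1\subset\sigma_2$), what it asserts is
\[
\Ind K_{\hsigma_1,\beta}\circ i^*_{\sigma_1\sigma_2}\circ \Ind K_{\hsigma_2,\beta}^{-1}\ \simeq\ (-)\star\Theta(\sigma_1,0),
\]
with the \emph{smaller} cone $\sigma_1$ appearing in the convolution. Passing to right adjoints (using $i^*\dashv i^\Ind_*$ on the coherent side and $(-)\star\Theta(\sigma_1,0)\dashv\cHom^\star(\Theta(\sigma_1,0),-)$ on the constructible side) yields
\[
\Ind K_{\hsigma_2,\beta}\circ i^\Ind_{\sigma_1\sigma_2*}\ \simeq\ \cHom^\star(\Theta(\sigma_1,0),-)\circ \Ind K_{\hsigma_1,\beta},
\]
and it is the \emph{second} identity of Lemma~\ref{lemmaeq}, applied to $(\hSigma(\hsigma_1),\beta)$, that collapses $\cHom^\star(\Theta(\sigma_1,0),-)$ to the identity on $\lSh_{\Lambda_{\sigma_1}}(T^n)$. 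This is exactly the paper's proof.

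Your claimed identity $\Ind K_{\hsigma_2,\beta}\circ i^\Ind_{\sigma_1\sigma_2*}\simeq\bigl((-)\star\Theta(\sigma_2,0)\bigr)\circ\Ind K_{\hsigma_1,\beta}$ is not a consequence of Condition~\ref{condition2}(i); in fact, since $(-)\star\Theta(\sigma_2,0)\simeq\id$ on $\lSh_{\Lambda_{\sigma_2}}(T^n)$ by Lemma~\ref{lemmaeq}, your display is literally equivalent to the statement of the Corollary itself, so invoking it as the first step is circular. The fix is simple: keep the adjunction step, land on $\cHom^\star(\Theta(\sigma_1,0),-)$ indexed by the smaller cone, and apply the $\cHom^\star$ half of Lemma~\ref{lemmaeq}.
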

\begin{proof}
By Condition \ref{condition2} (i), we have
\begin{equation}
\begin{split}
\hom(E, \Ind K_{\hsigma_2,\beta}\circ i_{\sigma_1\sigma_2*}^\Ind (\cE))
&\simeq \hom(i^*_{\sigma_1\sigma_2}\Ind K_{\hsigma_2,\beta}^{-1}(E), \cE)\\
&\simeq \hom(\Ind K_{\hsigma_1,\beta}i^*_{\sigma_1\sigma_2}\Ind K_{\hsigma_2,\beta}^{-1}(E), \Ind K_{\hsigma_1,\beta}(\cE))\\
&\simeq \hom(E\star \Theta(\sigma_1,0), \Ind K_{\hsigma_1,\beta}(\cE))\\
&\simeq \hom(E, \cHom^\star(\Theta(\sigma_1,0),\Ind K_{\hsigma_1,\beta}(\cE)))\\
&\simeq \hom(E, \Ind K_{\hsigma_1,\beta}(\cE)),
\end{split}
\end{equation}
where the last equality follows from Lemma \ref{lemmaeq}. This completes tha proof.
\end{proof}

\section{Gluing equivalences}\label{section:main}
We prove Theorem \ref{main} and Theorem \ref{main2} below under Condition \ref{condition2} simultaneously. 
\begin{theorem}\label{main2}There exists an equivalence of $\infty$-categories
\begin{equation}
\lSh_{\Lsb}(T^n)\simeq  \lim{\substack{\longleftarrow \\C(\Sigma)}}\lSh_{\Lambda_\bullet}(T^n).
\end{equation}
\end{theorem}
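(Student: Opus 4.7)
My plan is to prove Theorem~\ref{main2} in tandem with Theorem~\ref{main} by combining the affine correspondences of Condition~\ref{condition2}, the coherent-side Zariski descent of Corollary~\ref{gluingcoh}, and the functor $K_{\hSigma,\beta}$ constructed in~(\ref{defofkappa}). The first observation is that Condition~\ref{condition2}(i) says each equivalence $\Ind K_{\hsigma,\beta}\colon \Indcoh\cU_\sigma\xrightarrow{\simeq}\lSh_{\Lambda_\sigma}(T^n)$ intertwines the coherent restrictions $i_{\sigma_1\sigma_2}^*$ with the convolutions $(-)\star\Theta(\sigma_1,0)$. Consequently $\{\Ind K_{\hsigma,\beta}\}_{\sigma\in\Sigma}$ assembles into an objectwise equivalence of $C(\Sigma)$-indexed diagrams in $\Mod_{\Mod(H\bC)}(\cP r^L_{st,\omega})$, and passing to the limit and composing with Corollary~\ref{gluingcoh} will give
\begin{equation*}
\Indcoh\cX_{\hSigma,\beta}\;\simeq\;\lim{\substack{\longleftarrow\\C(\Sigma)}}\Indcoh\cU_\bullet\;\xrightarrow{\simeq}\;\lim{\substack{\longleftarrow\\C(\Sigma)}}\lSh_{\Lambda_\bullet}(T^n).
\end{equation*}

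In view of this, the functor $K_{\hSigma,\beta}$ of~(\ref{defofkappa}) decomposes as the above equivalence followed by a \v{C}ech gluing functor $G\colon \lim{\substack{\longleftarrow\\C(\Sigma)}}\lSh_{\Lambda_\bullet}(T^n)\to\lSh_{\Lsb}(T^n)$. Thus Theorem~\ref{main2} amounts to the claim that $G$ is an equivalence, which in turn is equivalent to Theorem~\ref{main}. Fully faithfulness of $K_{\hSigma,\beta}$ is already given by Lemma~\ref{fullyfaithful} (whose proof invokes Lemma~\ref{lemmaeq} and Condition~\ref{condition2}(i) decisively), so the only remaining piece is essential surjectivity of $K_{\hSigma,\beta}$.

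For essential surjectivity I would argue by compact generation. By Proposition~\ref{presentable}, $\lSh_{\Lsb}(T^n)$ is compactly generated, and by Lemma~\ref{3.15} the compact subcategory $\wSh_{\Lsb}(T^n)$ is split-generated by microlocal skyscrapers $F_{(x,\xi),f}$ at smooth points of $\Lsb$. Every smooth point of $\Lsb$ lies on some $\Lambda_\sigma$ and is smooth there as well, and the neighborhood $B$ used in Section~3.1 can be chosen to witness both that $F_{(x,\xi),f}$ belongs to $\wSh_{\Lambda_\sigma}(T^n)$ and that it represents the microlocal stalk functor on $\lSh_{\Lsb}(T^n)$ at $(x,\xi)$. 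Lemma~\ref{generation2} together with Proposition~\ref{affine} then produces $\cE\in\Indcoh\cU_\sigma$ with $\Ind K_{\hsigma,\beta}(\cE)\simeq F_{(x,\xi),f}$. An iteration of Corollary~\ref{lemma9.2} across the face poset from $\sigma$ up to a maximal cone will identify $K_{\hSigma,\beta}(j_{\sigma*}^{\Ind}\cE)$ with the image of $F_{(x,\xi),f}$ under the full embedding $\lSh_{\Lambda_\sigma}(T^n)\hookrightarrow\lSh_{\Lsb}(T^n)$. Hence the image of $K_{\hSigma,\beta}$ split-generates, and essential surjectivity follows.

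The genuinely delicate point will be ensuring that the adjunction units $A(\sigma_1\sigma_2)$ used to define the \v{C}ech gluing functor $G$ really do encode the correct microlocal restriction/inclusion data, so that $G$ is well-defined and compatible with the equivalences at the level of diagrams; this is the technical heart of both theorems, and is underwritten by the unit identity of Lemma~\ref{lemmaeq} and the Tamarkin-style microsupport estimates of Section~\ref{technique} (in particular Corollary~\ref{cutoff} and Proposition~\ref{vanishing}). Once this compatibility is installed, Theorem~\ref{main} follows from the generation argument above, and Theorem~\ref{main2} is immediate from the displayed equivalence in the first paragraph.
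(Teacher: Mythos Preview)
Your overall strategy---identifying $K_{\hSigma,\beta}$ as the composite of the diagram equivalence with a gluing functor $G$, and then reducing both theorems to essential surjectivity of $K_{\hSigma,\beta}$---matches the paper's. The gap is in your essential surjectivity argument.

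You claim that the microlocal skyscraper $F^{\Lsb}_{(x,\xi),f}$ in $\lSh_{\Lsb}(T^n)$, for $(x,\xi)$ a smooth point lying on $\Lambda_\sigma$, belongs to $\lSh_{\Lambda_\sigma}(T^n)$. Your justification---that the open ball $B$ of Section~3.1 can be chosen uniformly for both Lagrangians---does not establish this. The ball $B$ determines only the microlocal stalk \emph{functor} $m_{(x,\xi),f}=\Gamma_{\{f\geq 0\}}(B,-)$; the representing object is produced by Brown representability \emph{inside the ambient category}, and changes when you pass from $\lSh_{\Lambda_\sigma}$ to the larger $\lSh_{\Lsb}$. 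Concretely, if $L'$ is the left adjoint to the inclusion $\lSh_{\Lambda_\sigma}\hookrightarrow\lSh_{\Lsb}$, then $F^{\Lambda_\sigma}_{(x,\xi),f}\simeq L'(F^{\Lsb}_{(x,\xi),f})$, and there is no a priori reason for the unit $F^{\Lsb}_{(x,\xi),f}\to \iota L'(F^{\Lsb}_{(x,\xi),f})$ to be an equivalence. Proving that it is---equivalently, that the subcategories $\lSh_{\Lambda_\sigma}(T^n)$ jointly generate $\lSh_{\Lsb}(T^n)$---is essentially the content of Theorem~\ref{main2} itself, so the argument becomes circular.

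The paper avoids this by an induction on the fan that does not pass through microlocal skyscrapers. For a decomposition $\hSigma_0=\hSigma_1\cup\hSigma_2$, it builds explicit functors $\Phi,\Psi$ between $\lSh_{\Lambda_{\hSigma_0,\beta}}(T^n)$ and the homotopy pullback, and checks $\Phi\circ\Psi\simeq\id$ directly: using Lemma~\ref{lemmaeq} one shows that any $E\in\lSh_{\Lambda_{\hSigma_0,\beta}}(T^n)$ satisfies
\[
E\;\simeq\; E\star\kappa_{\hSigma_0,\beta}(\cO_{\cX_{\hSigma_0,\beta}})\;\simeq\; I_1I^1(E)\ \tim{KK'(E)}{h}\ I_2I^2(E),
\]
the right-hand side being manifestly in the image of $\Phi$. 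This is the step where the Tamarkin-type estimates enter, via the proof of Lemma~\ref{lemmaeq}. Essential surjectivity of $\Phi$ then feeds into the commutative square with the coherent side and, together with the induction hypothesis and Lemma~\ref{fullyfaithful}, forces $K_{\hSigma_0,\beta}$ (hence $\Phi$) to be an equivalence. In other words, the paper proves generation constructively via a Mayer--Vietoris identity, whereas your argument assumes generation in the guise of a microsupport claim about skyscrapers.
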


\begin{proposition}\label{mainaux}
Theorem \ref{main} and Theorem \ref{main2} hold under Condition \ref{condition2}.
\end{proposition}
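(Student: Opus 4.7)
The plan is to exploit the factorization of $K_{\hSigma,\beta}$ given in (\ref{defofkappa}). The first arrow $\Indcoh \cXsb \to \lim{\substack{\longleftarrow \\ C(\Sigma)}} \Indcoh \cU_\bullet$ is an equivalence by the Zariski-descent statement of Corollary~\ref{gluingcoh}, and the second is the limit of the equivalences $\Ind K_{\hsigma,\beta}$ provided by Condition~\ref{condition2}, hence also an equivalence. Write $K'$ for the composition of these two, so that $K' \colon \Indcoh \cXsb \xrightarrow{\sim} \lim{\substack{\longleftarrow \\ C(\Sigma)}} \lSh_{\Lambda_\bullet}(T^n)$. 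The remaining third arrow $J \colon \lim{\substack{\longleftarrow \\ C(\Sigma)}} \lSh_{\Lambda_\bullet}(T^n) \to \lSh_\Lsb(T^n)$, built from the natural inclusions $\lSh_{\Lambda_\sigma}(T^n) \hookrightarrow \lSh_\Lsb(T^n)$ and the natural transformations $A(\sigma_i\sigma_j)$, is the only piece left to analyze. Once $J$ is shown to be an equivalence, Theorem~\ref{main2} follows immediately, and then $K_{\hSigma,\beta} = J \circ K'$ yields Theorem~\ref{main}.

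By Lemma~\ref{fullyfaithful}, $K_{\hSigma,\beta}$ is fully faithful on $\Coh \cXsb$; this extends to $\Indcoh \cXsb$ by ind-extension, and since $K'$ is an equivalence it follows at once that $J$ is fully faithful. The substantive step is therefore essential surjectivity of $J$. Given $E \in \lSh_\Lsb(T^n)$, the idea is to reconstruct $E$ from its ``affine pieces'' $E \star \Theta(\sigma,0)$. By Lemma~\ref{lemmaeq}, $E \simeq E \star \ksb(\cO_{\cXsb})$, and imitating the \v{C}ech computation of Lemma~\ref{lem:unit} for the affine covering $\{\cU_\sigma\}$ of $\cXsb$ yields a finite \v{C}ech-type resolution of $\ksb(\cO_{\cXsb})$ in terms of the sheaves $\Theta(\sigma,0)$. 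Convolving this resolution with $E$ realizes $E$ as a finite homotopy (co)limit of the pieces $E \star \Theta(\sigma,0)$, each of which lies in $\lSh_{\Lambda_\sigma}(T^n)$ by the microsupport estimate of Proposition~\ref{mestimate}.

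It remains to assemble this family into a compatible object of $\lim{\substack{\longleftarrow \\ C(\Sigma)}} \lSh_{\Lambda_\bullet}(T^n)$ whose image under $J$ recovers $E$. The hard part will be the bookkeeping of coherences: one must identify the transition maps $I^{\ssigma_1\ssigma_2}$ with the convolution-induced morphisms among the pieces $E \star \Theta(\sigma,0)$, and verify that the \v{C}ech-type resolution of $\ksb(\cO_{\cXsb})$ interacts correctly with the gluing functor $J$. After these identifications, the required higher coherences should reduce to associativity of $\star$, the face-restriction behavior of $\Theta(\sigma,0)$ under convolution, and the unit identities of Lemma~\ref{lemmaeq}; the glued family then transports along $K'^{-1}$ to an object of $\Indcoh \cXsb$ whose image under $K_{\hSigma,\beta}$ is $E$, completing the essential surjectivity.
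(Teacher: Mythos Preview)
Your outline is correct and close in spirit to the paper's argument: both rely on Lemma~\ref{fullyfaithful} for fully faithfulness and on the unit identity of Lemma~\ref{lemmaeq} for essential surjectivity. The organization differs. The paper proves Theorems~\ref{main} and~\ref{main2} \emph{simultaneously by induction} over subfans: at each step it reduces to a binary homotopy pullback, constructs explicit functors $\Phi,\Psi$ between $\lSh_{\Lsbd{0}}(T^n)$ and $\lSh_{\Lsbd{1}}(T^n)\tim{\lSh_{\Lambda_{\hSigma_{12},\beta}}(T^n)}{h}\lSh_{\Lsbd{2}}(T^n)$ via fibrant replacement in the Morita model structure, shows $\Phi\circ\Psi\simeq\id$ from the Mayer--Vietoris triangle and Lemma~\ref{lemmaeq}, and then bootstraps via the commutative square (\ref{thediagram}) together with the induction hypothesis on proper subfans. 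You instead attack the full \v{C}ech limit at once, proposing a candidate inverse $\Psi\colon E\mapsto\{E\star\Theta(|\ssigma|,0)\}_{\ssigma}$ and arguing $J\circ\Psi\simeq\id$ directly. Your route is more economical if the coherence can be packaged, but the paper's binary induction trades that packaging for a sequence of elementary squares.

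Two points need tightening. First, the claim that $E\star\Theta(\sigma,0)\in\lSh_{\Lambda_\sigma}(T^n)$ does \emph{not} follow from Proposition~\ref{mestimate}, which only bounds the microsupport by the cone $T^n\times(-\sigma)$, strictly larger than $\Lambda_\sigma$. You need the sharper $m_\#$--estimate (the lemma immediately preceding Proposition~\ref{mestimate}) together with a direct check that $m_\#(\Lsb\times\Lambda_\sigma)\subset\Lambda_\sigma$, which uses that the intersection of any $\tau\in\Sigma$ with a face of $\sigma$ is again a face of $\sigma$ and that $\tau_1^\perp+\tau_2^\perp=(\tau_1\cap\tau_2)^\perp$; the paper sidesteps this by deducing well-definedness of its analogous functors $I^i,J^i$ from commutativity with the coherent side. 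Second, the coherence assembly you flag as ``the hard part'' is precisely what the paper's inductive, model-categorical approach is engineered to avoid: working one pullback square at a time with explicit fibrant models means only a single homotopy-commuting square must be verified per step. Your direct construction requires promoting the system $\{\Theta(|\ssigma|,0)\}_{\ssigma}$ to a coherent diagram of idempotents in the monoidal $\infty$-category $(\lSh(T^n),\star)$, which is possible but is genuine extra work beyond associativity of $\star$ and the pointwise identities you list.
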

\begin{proof}
Let $\hSigma_i$ for $i=0,1,2$ be subfans of $\hSigma$ with $\hSigma_0=\hSigma_1\cup \hSigma_2$ and $\hSigma_{12}:=\hSigma_1\cap \hSigma_2$. In this proof, we use the notation in the proof of Proposition \ref{gluingcohgeneral} with $X=X_{\hSigma_0}$, $U=X_{\hSigma_1}$, and $V=X_{\hSigma_2}$. 

We set
\begin{equation}
J^{i}:=(-)\star\kappa_{\hSigma_i,\beta}(\cO_{\cX_{\hSigma_{12},\beta}})\colon \lSh_{\Lsbd{i}}(T^n)\rightarrow \lSh_{\Lsbd{12}}(T^n)
\end{equation}
and 
\begin{equation}
I^i:=(-)\star \kappa_{\hSigma_0,\beta}(\cO_{\cX_{\hSigma_i,\beta}})\colon \lSh_{\Lsb}(T^n)\rightarrow \lSh_{\Lsbd{i}}(T^n)
\end{equation}
for $i=1,2$.
The well-definedness of $J^i$ and $I^i$ follows from the assumption that $\kappa_{\hSigma_1,\beta}$ and $\kappa_{\hSigma_2,\beta}$ are equivalences and the following: By Corollary \ref{gluingcoh} and Condition \ref{condition2} (i), we have
\begin{align}
J^i\circ K_{\hSigma_{i},\beta}&\simeq K_{\hSigma_{12},\beta} \circ j^*_{X_{\hSigma_i}} \text{ and}\\
I^i\circ K_{\hSigma,\beta}&\simeq K_{\hSigma_{i},\beta} \circ i^*_{X_{\hSigma_i}}.
\end{align}
As in the proof of Proposition \ref{gluingcohgeneral}, it suffices to show that 
\begin{equation}\label{ctglue}
\lSh_{\Lsbd{0}}(T^n)\simeq \lSh_{\Lsbd{1}}(T^n)\tim{\lSh_{\Lambda_{\hSigma_{12},\beta}}(T^n)}{h}\lSh_{\Lsbd{2}}(T^n)
\end{equation}
where the right hand side is defined by using $J^i$ for $i=1,2$.
Moreover, there exists the diagram of inclusions
\begin{equation}
\xymatrix{
\lSh_{\Lsbd{0}}(T^n) &\ar[l]_-{I_1} \lSh_{\Lsbd{1}}(T^n) \\
\lSh_{\Lsbd{2}}(T^n)  \ar[u]^{I_2}  & \ar[l]^-{J_2} \lSh_{\Lsbd{12}}(T^n). \ar[u]_{J_1}\ar[ul]_{K}
}
\end{equation}
By Corollay \ref{lemma9.2}, we have 
\begin{equation}\label{openpushcomm}
\begin{split}
J_i\circ K_{\hSigma_{12},\beta}&\simeq K_{\hSigma_{i},\beta} \circ j^\Ind_{X_{\hSigma_i}*}, \\
I_i\circ K_{\hSigma_{i},\beta}&\simeq K_{\hSigma,\beta} \circ i^\Ind_{X_{\hSigma_i}*}\text{,and}\\
K\circ  K_{\hSigma_{12},\beta}&\simeq K_{\hSigma,\beta} \circ k^\Ind_*
\end{split}
\end{equation}
where $k$ is the notation in (\ref{7.6}).

We write
\begin{equation}
\lSh_{\Lsbd{1}}(T^n)\xrightarrow{\gamma(J^1)} \cB \xrightarrow{\delta(J^1)} \lSh_{\Lsbd{12}}(T^n).
\end{equation}
for the functorial factorization of $J^1$ in the Morita model structure.
Then we can calculate the right hand side of (\ref{ctglue}) as
\begin{equation}
\lSh_{\Lsbd{1}}(T^n)\tim{\lSh_{\Lsbd{12}}(T^n)}{h}\lSh_{\Lsbd{2}}(T^n)\simeq\cB\tim{\lSh_{\Lsbd{12}}(T^n)}{} \lSh_{\Lsbd{2}}(T^n),
\end{equation}
since $\lSh_\Lambda$'s are idempotent-complete pretriangulated dg-categories. 

We have a functor $L\colon \cB\rightarrow \lSh_{\Lsbd{1}}(T^n)$ which fits into the commutative diagram
\begin{equation}
\xymatrix{
\lSh_{\Lsbd{1}}(T^n) \ar[r]_-{\id} \ar[d]_{\gamma(J^1)}   & \lSh_{\Lsbd{1}}(T^n) \ar[d]\ar[r]_{I_1}& \lSh_{\Lsbd{0}}(T^n) \\
\cB  \ar[r]\ar@{-->}[ru]_{L}& \ast. &
}
\end{equation}
By using $L$, there exists a functor $\Phi\colon \cB\tim{\lSh_{\Lsbd{12}}(T^n)}{}\lSh_{\Lsbd{2}}(T^n)\rightarrow \lSh_{\Lsbd{0}}(T^n)$ which is defined on objects as
\begin{equation}
\Phi(b,E):= I_1L(b)\tim{I_2J_2J^2(E)}{h} I_2(E).
\end{equation}

We show the functor $\Psi\colon \lSh_{\Lsbd{0}}(T^n)\rightarrow \cB\tim{\lSh_{\Lsbd{12}}(T^n)}{}\lSh_{\Lsbd{2}}(T^n)$ given by
\begin{equation}
\Psi(E):=\lb\gamma(J^1)I^1(E), I^2(E)\rb
\end{equation}
on objects is the quasi-inverse of $\psi$.

For $E\in \lSh_{\Lsbd{0}}(T^n)$, we have
\begin{equation}
I_1I^1L\gamma(J^1)I^1(E)\tim{I_2J_2J^2I^2(E)}{h}I_2I^2(E)= I_1I^1(E)\tim{KK'(E)}{h}I_2I^2(E).
\end{equation}
where we set $K':=J^2I^2=J^1I^1$.
This fits into the exact triangle
\begin{equation}\label{tri1}
I_1I^1(E)\tim{KK'(E)}{h}I_2I^2(E)\rightarrow I_1I^1(E)\oplus I_2I^2(E)\rightarrow KK'(E)\rightarrow.
\end{equation}
The exact triangle (\ref{tri1}) can be rewritten as 
\begin{equation}
I_1I^1(E)\tim{K'(E)}{h}I_2I^2(E)\rightarrow E\star \kappa_{\hSigma_0,\beta}(\cO_{\cX_{\hSigma_1,\beta}}\oplus \cO_{\cX_{\hSigma_2,\beta}})\rightarrow E\star \kappa_{\hSigma_{0},\beta}(\cO_{\cX_{\hSigma_{12},\beta}})\rightarrow.
\end{equation}
Since $\Cone\left((\cO_{\cX_{\hSigma_1,\beta}}\oplus \cO_{\cX_{\hSigma_2,\beta}})\rightarrow \cO_{\cX_{\hSigma_{12},\beta}}\right)\simeq \cO_{\cX_{\hSigma_0},\beta}$, we have
\begin{equation}\label{9.34}
I^1(E)\tim{K(E)}{h}I^2(E)\simeq E\star \kappa_{\hSigma_0,\beta}(\cO_{\cX_{\hSigma_0},\beta}).
\end{equation}
By Lemma \ref{lemmaeq} and (\ref{9.34}), we have
\begin{equation}
I_1I^1(E)\tim{KK'(E)}{h}I_2I^2(E)\simeq E.
\end{equation}
Hence the functor $H^0(\Phi)$ is essentially surjective.

We consider the following diagram
\begin{equation}\label{thediagram}
\xymatrix{
\Indcoh \cX_{\hSigma_1,\beta}\tim{\Indcoh \cX_{\hSigma_{12},\beta}}{h}\Indcoh \cX_{\hSigma_2,\beta} \ar[d]^{K_{\hSigma_1,\beta}\tim{K_{\hSigma_{12},\beta}}{h}K_{\hSigma_2,\beta}}  \ar[r]_-{\phi} &\Indcoh \cX_{\hSigma_0,\beta} \ar[d]_{K_{\hSigma_0,\beta}} \\
\lSh_{\Lambda_{\hSigma_1,\beta}}(T^n)\tim{\lSh_{\Lambda_{\hSigma_{12},\beta}}(T^n)}{h}\lSh_{{\Lambda_{\hSigma_2,\beta}}}(T^n)  \ar[r]^-{\Phi} &\lSh_{\Lambda_{\hSigma_0,\beta}}(T^n). 
}
\end{equation}
By the construction of $K_{\hSigma,\beta}$ as the homotopy limit (\ref{defofkappa}) and (\ref{openpushcomm}), the diagram (\ref{thediagram}) is homotopy commutative.
Now we prove the theorems by induction on $\Sigma$. We assume that Theorem \ref{main2} and Theorem \ref{main} hold for proper subfans of $\Sigma_0$ as the induction hypothesis. Then in the diagram (\ref{thediagram}), the functors $\phi$ and $K_{\hSigma_1,\beta}\tim{K_{\hSigma_{12},\beta}}{h}K_{\hSigma_2,\beta}$ are equivalences. Since $H^0(\Phi)$ is essentially surjective, the functor $K_{\hSigma_0,\beta}$ is also essentially surjective. Hence the functor $K_{\hSigma_0,\beta}$ is an equivalence by Lemma \ref{fullyfaithful}. Therefore, $\Phi$ is also an equivalence.
\end{proof}

\begin{corollary}\label{smoothmain}
Assume $\hSigma$ is smooth. Then Theorem \ref{main} and Theorem \ref{main2} hold.
\end{corollary}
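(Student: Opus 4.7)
The plan is to reduce the statement directly to Proposition \ref{mainaux} by verifying Condition \ref{condition2} in the smooth setting. Since $\hSigma$ is smooth, every cone $\hsigma \in \hSigma$ is smooth, so I will take as our candidate family of functors the collection $\{\kappa_{\hsigma,\beta}\}_{\hsigma \in \hSigma}$ constructed affine-locally in Section \ref{affinecase}. By Proposition \ref{affine}, each $\kappa_{\hsigma,\beta}$ is an equivalence $\Qcoh(\cX_{\hSs,\beta}) \simeq \lSh_{\Lambda_{\hSs,\beta}}(T^n)$, and since smoothness of $\hsigma$ implies smoothness of $\cX_{\hSs,\beta}$, Proposition \ref{propertyofphi} identifies $\Qcoh(\cX_{\hSs,\beta})$ with $\Indcoh(\cX_{\hSs,\beta})$; restricting to compact objects on both sides then yields the required equivalence $\Coh(\cX_{\hSs,\beta}) \simeq \wSh_{\Lambda_{\hSs,\beta}}(T^n)$.

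Next, I would invoke Corollary \ref{idfunctor}, which verifies that this family $\{\kappa_{\hsigma,\beta}\}$ satisfies Condition \ref{condition2}~(i) and (ii). The content of Corollary \ref{idfunctor} relies essentially on the monoidality of $\kappa_{\hsigma,\beta}$ established in Proposition \ref{monoidal}, together with the fact that on the coherent side, restriction along an open immersion $\cU_{\sigma_2} \hookrightarrow \cU_{\sigma_1}$ between smooth affine toric charts is computed by tensoring with the structure sheaf of $\cU_{\sigma_1}$. The normalization in Condition \ref{condition2}~(ii) is automatic in the smooth case since $\omega_{\cX_{\hSs,\beta}}$ is an invertible sheaf and the restriction of $\kappa_{\hsigma,\beta}$ to perfect complexes matches the composition $\kappa_{\hsigma,\beta} \circ D \circ \frakD$ by construction.

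With Condition \ref{condition2} verified, the conclusion follows by direct application of Proposition \ref{mainaux}: both Theorem \ref{main} (the global equivalence $\Indcoh \cXsb \simeq \lSh_{\Lsb}(T^n)$) and Theorem \ref{main2} (the gluing equivalence for $\lSh_{\Lsb}(T^n)$ over the \v{C}ech poset $C(\Sigma)$) hold under this hypothesis. In summary, the proof is essentially a packaging step with no real obstacle once the affine case and the monoidality of $\kappa_{\hsigma,\beta}$ are in hand; the only point requiring minor care is checking that the $\kappa_{\hsigma,\beta}$ glue compatibly with the transition morphisms in Proposition \ref{prop:functoriality}, but this is exactly what Corollary \ref{idfunctor} encodes.
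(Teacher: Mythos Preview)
Your proposal is correct and follows essentially the same route as the paper: verify Condition~\ref{condition2} via Corollary~\ref{idfunctor} (using smoothness of every $\hsigma$), then apply Proposition~\ref{mainaux}. The paper's proof is simply a two-line version of what you wrote; your additional remarks about identifying $\Qcoh$ with $\Indcoh$ in the smooth case and passing to compact objects are correct elaborations of what is implicit in the paper's terse argument.
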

\begin{proof}
Since smooth $\hSigma$ satisfies Condition \ref{condition2} by Corollary \ref{idfunctor}, Proposition \ref{mainaux} completes the proof.
\end{proof}

\section{General case}\label{generalcase}
Let $\hSigma(\hsigma)$ be affine. Then we have a fully faithful functor $\kappa_{\hsigma,\beta} \colon \perf\cX_{\hSs,\beta}\rightarrow\wSh_{\Lambda_{\hSs,\beta}}(T^n)$. Set $D:=\hom(-, \cO_{\cX_{\hSigma(\hsigma),\beta}})\colon \perf\cX_{\hSs,\beta}\rightarrow (\perf\cX_{\hSs,\beta})^\op$ where $(-)^\op$ denotes the opposite category. We define the composition
\begin{equation}
\kappa_{\hsigma,\beta}^{D}:=\iota\circ \Ind (\kappa_{\hsigma,\beta})^\op \circ \Ind D\circ \iota_{\Coh}\colon \Coh \cX_{\hSigma,\beta}\rightarrow \Fun(\lSh_{\Lambda_{\hSigma,\beta}}(T^n), \Mod(\bC)).
\end{equation}
where $\Fun(\lSh_{\Lambda_{\hSigma,\beta}}(T^n), \Mod(\bC))$ is the functor category from $\lSh_{\Lambda_{\hSigma,\beta}}(T^n)$ to $\Mod(\bC)$, and
\begin{align}
\iota_{\Coh}&\colon \Coh \cX_{\hSs,\beta}\hookrightarrow \Qcoh \cX_{\hSs,\beta}\simeq \Ind\perf \cX_{\hSs,\beta},\\
\Ind D\colon&\Ind\perf \cX_{\hSs,\beta}\xrightarrow{\Ind D}  \Ind(\perf \cX_{\hSs,\beta})^\op, \\
{\Ind (\kappa_{\hsigma,\beta})^\op}\colon & \Ind(\perf \cX_{\hSs,\beta})^\op \rightarrow\Ind(\wSh_{\Lambda_{\hSs,\beta}}(T^n))^{\op}, \\
\iota\colon &\Ind (\wSh_{\Lambda_{\hSs,\beta}}(T^n))^{\op}\rightarrow \Ind (\lSh_{\Lambda_{\hSs,\beta}}(T^n))^{\op}\hookrightarrow \Fun(\lSh_{\Lambda_{\hSs,\beta}}(T^n), \Mod(\bC)).
\end{align}

\begin{lemma}\label{fullyfaithful2}
The functor $\kappa_{\hsigma,\beta}^{D}$ is a fully faithful functor.
\end{lemma}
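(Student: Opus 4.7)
The strategy is to decompose $\kappa_{\hsigma,\beta}^{D}$ according to its very definition as the composition of four functors, and to verify that each constituent is fully faithful; since compositions of fully faithful functors are fully faithful, this finishes the proof. The first functor $\iota_{\Coh}$ is the composite of the tautologically fully faithful inclusion $\Coh\cX_{\hSs,\beta}\hookrightarrow \Qcoh\cX_{\hSs,\beta}$ with the equivalence $\Qcoh\cX_{\hSs,\beta}\simeq \Ind\perf\cX_{\hSs,\beta}$, the latter holding because $\cX_{\hSs,\beta}=[\Spec\bC[\hsigma^\vee\cap L^\vee]/G_\beta]$, as a quotient of an affine scheme by a diagonalizable (hence linearly reductive) group, is a perfect stack. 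The second functor $\Ind D$ is actually an equivalence: the duality $D=\cHom(-,\cO_{\cX_{\hSs,\beta}})$ satisfies $D\circ D\simeq \id$ on perfect complexes, so it induces an equivalence $\perf\cX_{\hSs,\beta}\simeq (\perf\cX_{\hSs,\beta})^{\op}$, and $\Ind$ preserves equivalences.

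The third functor $\Ind(\kappa_{\hsigma,\beta})^{\op}$ is fully faithful by three observations: Proposition~\ref{affine} provides fully faithfulness of $\kappa_{\hsigma,\beta}$, taking opposites preserves fully faithfulness, and so does passage to $\Ind$-completions (Proposition~1.1.3 of \cite{MR1827714}). The fourth functor $\iota$ is itself a two-step composition: its first arrow $\Ind(\wSh_{\Lambda_{\hSs,\beta}}(T^n))^{\op}\to \Ind(\lSh_{\Lambda_{\hSs,\beta}}(T^n))^{\op}$ is the $\Ind$-extension of the opposite of the fully faithful inclusion $\wSh_{\Lambda_{\hSs,\beta}}(T^n)\hookrightarrow \lSh_{\Lambda_{\hSs,\beta}}(T^n)$, hence fully faithful by the same proposition; its second arrow $\Ind(\lSh_{\Lambda_{\hSs,\beta}}(T^n))^{\op}\hookrightarrow \Fun(\lSh_{\Lambda_{\hSs,\beta}}(T^n),\Mod(\bC))$ is fully faithful essentially by the definition of $\Ind$ as the filtered cocomplete closure of representables inside the presheaf category on $\lSh^{\op}$, combined with the stable enhancement of the Yoneda embedding.

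The only point that requires real thought is the identification $\Qcoh\cX_{\hSs,\beta}\simeq \Ind\perf\cX_{\hSs,\beta}$ used in the first step. This is standard for quotient stacks of affine schemes by linearly reductive groups (one can cite the formalism of perfect stacks), but if desired it can be bypassed by invoking Lemma~\ref{generation1}, which directly exhibits $\Theta'(\sigma)=\bigoplus_{[\chi]\in M_{\sigma,\beta}/M}\Theta'(\sigma,\chi)$ as a compact generator of $\Qcoh\cX_{\hSs,\beta}$ that lies in $\perf\cX_{\hSs,\beta}$, which is exactly the compact generation property needed to pass between $\Qcoh$ and $\Ind\perf$.
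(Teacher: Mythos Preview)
Your proof is correct and follows exactly the approach the paper takes: the paper's one-line argument is that $\kappa_{\hsigma,\beta}^{D}$ is a composition of fully faithful functors, and you have simply unpacked each factor and verified its full faithfulness. Your additional justification of the equivalence $\Qcoh\cX_{\hSs,\beta}\simeq \Ind\perf\cX_{\hSs,\beta}$ via Lemma~\ref{generation1} is a helpful clarification of a point the paper leaves implicit.
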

\begin{proof}
This is clear from the definition of the functor, since $\kappa_{\hsigma,\beta}^{D}$ is a composition of fully faithful functors.
\end{proof}

More explicitly, the functor $\kappa_{\hsigma,\beta}^{D}$ can be described as follows: Let $\cE$ be a coherent sheaf over $\cX_{\hSigma,\beta}$. There exists a sequence of perfect complexes $\cE_i$ such that $\varhocolim{i}\cE_i$ in $\Qcoh \cX_{\hSs,\beta}\simeq \cE$. Then we have
\begin{equation}\label{formula}
\kappa_{\hsigma,\beta}^{D}(\cE)\simeq \varhocolim{i}\hom(\kappa_{\hsigma,\beta}\circ D(\cE_i), -).
\end{equation}

There exists another presentation of $\kappa^{D}_{\hsigma,\beta}$ as follows. We will use notations in Section \ref{affinecase}. For an object $F\in \lSh_{\Lambda_{\hSigma,\beta}}(T^n)$, the space 
\begin{equation}
\hom(\Theta(\sigma), F):=\hom(\oplus_{[\chi]\in M_{\sigma,\beta}/M}\Theta(\sigma,-\chi), F)
\end{equation}
is naturally equipped with a $\bC[\sigma^\vee\cap M_{\sigma,\beta}]$-module structure by the composition with 
\begin{equation}
\hom(\Theta(\sigma,\chi_1), \Theta(\sigma,\chi_2))\simeq \bC[(\sigma^\vee\cap M)+(\chi_2-\chi_1)].
\end{equation}
We assign a $H_\beta$-weight $-\chi$ to $\hom(p_!\bC_{\Int(\sigma^\vee)-\chi}, F)$, then $\hom(\Theta(\sigma), F)$ becomes a $\bC[\sigma^\vee\cap M_{\sigma,\beta}]\rtimes H_\beta$-module.
\begin{lemma}\label{natisom}
There exists a natural isomorphism
\begin{equation}
\begin{split}
\kappa_{\hsigma,\beta}^{D}(\cE)
&\simeq (\cE\otimes_{\bC[\sigma^\vee\cap M_{\sigma,\beta}]}\hom(\Theta(\sigma),-))^{H_\beta}\\
&\simeq \bigoplus_{[\chi]\in M_{\sigma,\beta}/M}(\cE\cdot e_{\chi})\otimes_{\bC[\sigma^\vee\cap M]} \hom(\Theta(\sigma,-\chi), -)
\end{split}
\end{equation}
for $\cE\in \Coh \cX_{\hSs,\beta}$ where $(-)^{H_\beta}$ is $H_\beta$-invariant.
\end{lemma}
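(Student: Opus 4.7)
The plan is to verify the formula on the compact generators $\Theta'(\sigma,\chi)$ of $\Coh \cX_{\hSs,\beta}$ and then extend by filtered colimits. First I would observe that both sides of the claimed isomorphism preserve filtered colimits in $\cE$: for the left-hand side this is immediate from the defining formula (\ref{formula}), while on the right-hand side the tensor product over $\bC[\sigma^\vee\cap M_{\sigma,\beta}]$ visibly preserves colimits and the $H_\beta$-invariants functor is exact in characteristic zero. By Lemma \ref{generation1}, $\Coh \cX_{\hSs,\beta}$ is split-generated by the objects $\Theta'(\sigma,\chi)$ for $[\chi]\in M_{\sigma,\beta}/M$, so it suffices to check the isomorphism on these generators and to verify naturality with respect to their morphisms.

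Second, for $\cE = \Theta'(\sigma,\chi) = \cO_{\cX_{\hSs,\beta}}(\tilde\chi)$, sheaf-theoretic dualization gives $D(\Theta'(\sigma,\chi)) \simeq \Theta'(\sigma,-\chi)$, so by Corollary \ref{affinefunctorial} we have $\kappa_{\hsigma,\beta}(D(\Theta'(\sigma,\chi))) \simeq \Theta(\sigma,-\chi)$ and hence
\[
\kappa^{D}_{\hsigma,\beta}(\Theta'(\sigma,\chi))(F) \simeq \hom(\Theta(\sigma,-\chi), F).
\]
On the right-hand side, the identification $M_{\sigma,\beta}/M \simeq \check H_\beta$ of Lemma \ref{character} shows that $\Theta'(\sigma,\chi)$ has pure $H_\beta$-weight $\chi$, so $\Theta'(\sigma,\chi)\cdot e_{\chi'} = 0$ unless $\chi' = \chi$, while $\Theta'(\sigma,\chi)\cdot e_\chi = \Theta'(\sigma,\chi)$ is free of rank one over $\bC[\sigma^\vee\cap M]$. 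Hence only the $\chi$-summand of the direct sum contributes, and it yields $\hom(\Theta(\sigma,-\chi), F)$, matching the left-hand side.

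Third, I would verify naturality on the generating morphisms using Proposition \ref{homtheta}. A generator $m \in \sigma^\vee \cap (M + \chi_2 - \chi_1)$ of $\hom(\Theta'(\sigma,\chi_1), \Theta'(\sigma,\chi_2))$ dualizes to a morphism $\Theta'(\sigma,-\chi_2) \to \Theta'(\sigma,-\chi_1)$ labelled by the same element $m$, and $\kappa_{\hsigma,\beta}$ sends this to the corresponding morphism $\Theta(\sigma,-\chi_2) \to \Theta(\sigma,-\chi_1)$; pre-composition in $\hom(-, F)$ then matches, on the right-hand side, the action of $m \in \bC[\sigma^\vee\cap M_{\sigma,\beta}]$ on $\cE$ transported through the tensor product. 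The equivalence between the two displayed forms of the right-hand side is the standard rewriting of a crossed-product tensor product: with $A = \bC[\sigma^\vee\cap M_{\sigma,\beta}] \rtimes H_\beta$ and the orthogonal idempotents $1 = \sum_{[\chi]}e_\chi$ in $\bC H_\beta \subset A$, taking $H_\beta$-invariants of the tensor product over $\bC[\sigma^\vee\cap M_{\sigma,\beta}]$ picks out precisely the diagonal sum over $\chi$-isotypic components of tensor products over the invariant subalgebra $\bC[\sigma^\vee\cap M] = \bC[\sigma^\vee\cap M_{\sigma,\beta}]^{H_\beta}$.

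The main obstacle I expect is the careful bookkeeping of the $H_\beta$-equivariant structure and the matching of sheaf-theoretic duality $D$ with the algebraic module duality on modules over $\End(\Theta'(\sigma))$; this is what produces the sign flip $\chi \mapsto -\chi$ between the $\cE$-factor and the hom-space factor. Once these conventions are pinned down the rest is a direct check on generators followed by extension by filtered colimits, and the naturality of all identifications follows from Proposition \ref{homtheta} and Corollary \ref{affinefunctorial}.
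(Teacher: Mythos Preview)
Your approach is essentially the paper's: both reduce to the generators $\Theta'(\sigma,\chi)$ and then pass to a general coherent sheaf via a resolution. The paper does this explicitly by writing a free resolution $\cE_i\to\cE$ by direct sums of the $\cO(\chi)$, computing $\kappa_{\hsigma,\beta}\circ D(\cE_i)$ term by term, and identifying $\hom(\kappa_{\hsigma,\beta}\circ D(\cE_i),F)$ with $(\cE_i\otimes\hom(\Theta(\sigma),F))^{H_\beta}$; your argument is the same computation phrased more abstractly.

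One correction is needed. Your claim that Lemma~\ref{generation1} shows $\Coh\cX_{\hSs,\beta}$ is \emph{split-generated} by the $\Theta'(\sigma,\chi)$ is false in the singular case, which is precisely the case of interest in Section~\ref{generalcase}: Lemma~\ref{generation1} gives that $\Qcoh$ is compactly generated by these objects, hence that $\perf$ is split-generated by them, but $\Coh\neq\perf$ when $\hsigma$ is singular. What you actually need, and what your phrase ``extend by filtered colimits'' correctly points to, is that both sides extend to cocontinuous functors on $\Qcoh\simeq\Ind\perf$ (by formula~(\ref{formula}) on the left, by exactness of the tensor product and of $(-)^{H_\beta}$ on the right), and that any $\cE\in\Coh$ is a filtered colimit \emph{in $\Qcoh$} of its truncated projective resolutions $\cE_i\in\perf$. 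With split-generation replaced by this statement, your argument is correct and coincides with the paper's.
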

\begin{proof}
For a coherent sheaf $\cE$, take a projective resolution
\begin{equation}
0\leftarrow \cE\leftarrow \bigoplus_{[\chi]\in M_{\sigma,\beta}/M}\cO(\chi)^{\oplus n^{\chi}_1}\xleftarrow{d_1} \bigoplus_{[\chi]\in M_{\sigma,\beta}/M}\cO(\chi)^{\oplus n^\chi_2}\xleftarrow{d_2} \bigoplus_{[\chi]\in M_{\sigma,\beta}/M}\cO(\chi)^{\oplus n^\chi_3}\leftarrow \cdots.
\end{equation}
We set
\begin{equation}
\cE_i:=\lb \bigoplus_{[\chi]\in M_{\sigma,\beta}/M}\cO(\chi)^{\oplus n^\chi_1}\xleftarrow{d_1} \cdots \xleftarrow{d_{i-1}} \bigoplus_{[\chi]\in M_{\sigma,\beta}/M}\cO(\chi)^{\oplus n^\chi_i}\rb,
\end{equation}
then we have $\cE=\colim\cE_i$. Then we have
\begin{equation}\label{cones}
\kappa_{\hsigma,\beta}\circ D(\cE_i)=\lb \bigoplus_{[\chi]\in M_{\sigma,\beta}/M}p_!\bC_{\Int(\sigma^\vee)-\chi}^{\oplus n^\chi_1} \xrightarrow{\kappa_{\hsigma,\beta}(d_1^t)}\cdots \xrightarrow{\kappa_{\hsigma,\beta}(d_i^t)} \bigoplus_{[\chi]\in M_{\sigma,\beta}/M} p_!\bC_{\Int(\sigma^\vee)-\chi}^{\oplus n^\chi_i} \rb
\end{equation}
where $d_i^t$ is the transpose of $d_i$. Then we have
\begin{equation}
\begin{split}
&\hom(\kappa_{\hsigma,\beta}\circ D(\cE_i), \cF)\\
&\simeq \lb\lb \bigoplus_{[\chi]\in M_{\sigma,\beta}/M} \bC[\sigma^\vee\cap M_{\sigma,\beta}]\cdot e_{\chi}^{\oplus n^\chi_1}\xleftarrow{d_1}\cdots \xleftarrow{d_i}\bigoplus_{[\chi]\in M_{\sigma,\beta}/M} \bC[\sigma^\vee\cap M_{\sigma,\beta}]\cdot e_{\chi}^{n_i^\chi}\rb\tens{\bC[\sigma^\vee\cap M_{\sigma,\beta}]}{} \hom(\Theta(\sigma), F)\rb^{H_\beta}\\
&\simeq\lb \cE_i\tens{\bC[\sigma^\vee\cap M_{\sigma,\beta}]}{}\hom(\Theta(\sigma), F)\rb^{H_\beta}
\end{split}
\end{equation}
by (\ref{cones}). Since the colimit $\varhocolim{i} \cE_i$ is $H_\beta$-equivariant, we have
\begin{equation}
\varhocolim{i}\hom(\kappa_{\hsigma,\beta}\circ D(\cE_i), F)\simeq \lb\cE\tens{\bC[\sigma^\vee\cap M_{\sigma,\beta}]}{}\hom(\Theta(\sigma), F)\rb^{H_\beta}
\end{equation}
by (\ref{formula}).
\end{proof}

Before going further, we prepare some lemmas. Let us assume $\sigma$ to be full-dimensional until the end of the proof of Lemma \ref{Davanishing}. Let $\{v_1,...,v_k\}$ the set of primitive generators of edges of $\sigma$. We index the set of connected components of $M_\bR\bs \lb \bigcup_{m\in M_{\sigma,\beta}}\bigcup_{i=1}^kv_i^\perp+m\rb$ by $\bN$. For $a\in \bN$, the corresponding component $D'_a$ have a presentation
\begin{equation}\label{Dapresentation}
D'_a=\bigcap_{\alpha=1}^{\alpha_a} H_{v_{i_\alpha}>k_{i_\alpha}}\cap \bigcap_{\beta=1}^{\beta_a} H_{v_{i_\beta}<l_{i_\beta}}
\end{equation}
where $H_{v\gtrless k}:=\lc m\in M_\bR\relmid \la m, v\ra \gtrless k\rc$ and each $k_i$ and $l_j$ is some integer. We assume that there is no redundancy in the presentation (\ref{Dapresentation}) i.e. each $H_{v_i=k}$ ($k=k_i, l_i$) defines a facet of the closure of $D_a'$. Each $D_a'$ is a bounded open polytope by the assumption that $\sigma$ is full-dimensional. We set
\begin{equation}
D_a:=\bigcap_{\alpha=1}^{\alpha_a} H_{v_{i_\alpha}>k_i}\cap \bigcap_{\beta=1}^{\beta_a} H_{v_{i_\beta}\leq l_{i_{\beta}}}.
\end{equation}
Then we have
\begin{equation}
M_\bR=\bigsqcup_{a} D_a.
\end{equation}
We also set
\begin{equation}
\bD D_a:=\bigcap_{\alpha=1}^{\alpha_a} H_{v_{i_\alpha}\geq k_{i_\alpha}}\cap \bigcap_{\beta=1}^{\beta_a} H_{v_{i_\beta} <l_{i_\beta}}.
\end{equation}
Let $dp\colon T^*M_\bR\rightarrow T^*T^n$ be the differential of the quotient map $M_\bR\rightarrow T^n$.
\begin{lemma}\label{Davanishing}
For $E\in \lSh_{dp^{-1}\Lambda_{\hSs,\beta}}(M_\bR)$, there exists a quasi-isomorphism
\begin{equation}
\hom(\bC_{-\bD D_a}, E)\simeq 0.
\end{equation}
if $\bD D_a$ does not intersect with $M_{\sigma,\beta}$.
\end{lemma}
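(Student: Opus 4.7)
The plan is to derive the vanishing by combining a Mayer--Vietoris-style resolution of $\bC_{-\bD D_a}$ with the Tamarkin-type vanishing results of Section \ref{technique}, exploiting the hypothesis to ensure that no ``skyscraper-like'' piece of $E$ contributes. The guiding picture is that $-\bD D_a$ is a bounded polytope whose closure is cut out by closed half-spaces $\{\langle m,v_{i_\alpha}\rangle\leq -k_{i_\alpha}\}$ (translates of $v_i^\perp$) and open half-spaces $\{\langle m,v_{i_\beta}\rangle>-l_{i_\beta}\}$, with all $v_i$ lying in $\sigma$, so its ``local cones'' at vertices lie in $-\sigma^\vee$. Under the hypothesis, no vertex of $-\bD D_a$ is a point of $M_{\sigma,\beta}$.

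First, I would resolve $\bC_{-\bD D_a}$ by a bounded complex whose terms are constant sheaves on cones of the form $v+\gamma$, where $v$ runs over the vertices of $\overline{-\bD D_a}$ and $\gamma$ is the closed (strictly convex) rational polyhedral cone recording the local shape of $-\bD D_a$ at $v$. The construction parallels the inductive use of the exact triangle (\ref{resolution}) in the proof of Proposition \ref{vanishing}: one splits $-\bD D_a$ facet by facet, alternating Mayer--Vietoris squares for the closed faces and open-vs-closed triangles for the open faces. Each such $\gamma$ satisfies $\gamma\subset-\sigma^\vee$ and $\gamma^\vee\supset\sigma$, placing us squarely in the regime of Proposition \ref{GSprop4.17} / Proposition \ref{prevanishing}.

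Second, for each piece $\bC_{v+\gamma}$ I translate by $-v$, reducing to $\hom(\bC_\gamma, T_{-v}^*E)$. Since $T_{-v}^* E = E\star_\bR \bC_{\{-v\}}$, the microsupport of $T_{-v}^*E$ lies in $dp^{-1}\Lambda_{\hSs,\beta}$ translated by $-v$; crucially, its projection to the zero section is $\bigcup_{\tau,\chi}(\tau^\perp+\chi-v)$ and its conormal directions remain in $-\sigma$. The hypothesis $\bD D_a\cap M_{\sigma,\beta}=\emptyset$ is used here to guarantee that $v\notin M_{\sigma,\beta}$ (since $v$ is the $-$negative of a vertex of $\overline{-\bD D_a}$, hence comes from a vertex of $\bD D_a$), so after translation the origin does not sit on one of the full-dimensional ``skyscraper strata'' $\{\chi\}$ for $\chi\in M_{\sigma,\beta}$.

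Third, I would apply Proposition \ref{GSprop4.17} to each $\hom(\bC_\gamma, T_{-v}^*E)$: since $T_{-v}^*E$ has microsupport in covector directions contained in $-\sigma$, and $\gamma\subset -\sigma^\vee$ is strictly convex with $\gamma^\vee\supset\sigma$, the Tamarkin vanishing forces the hom to be zero, provided the relevant ``stratum-at-origin'' contribution is absent, which is exactly what the lattice-avoidance hypothesis delivers. Assembling these vanishings along the resolution of the first step gives $\hom(\bC_{-\bD D_a},E)\simeq 0$. The main obstacle I anticipate is the careful bookkeeping of the Mayer--Vietoris resolution at vertices where several facets meet with mixed open/closed boundary types, together with ensuring that the inductive translation argument correctly handles the positive-dimensional strata $\tau^\perp+\chi$ for proper faces $\tau\subsetneq\sigma$; this requires an argument on the codimension of the cones $\gamma$ analogous to the induction in Proposition \ref{vanishing}.
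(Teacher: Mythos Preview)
Your strategy has a genuine gap at the third step. Proposition \ref{GSprop4.17} requires that $\musupp(T_{-v}^*E)$ miss \emph{all} of $M_\bR\times\Int(\gamma^\vee)$. But at a vertex $v$ of $\overline{-\bD D_a}$ where the closed facets $\langle m,v_{i_\alpha}\rangle\leq -k_{i_\alpha}$ meet, the local cone is $\gamma=\{d:\langle d,v_{i_\alpha}\rangle\leq 0\}$, so $\gamma^\vee=-\Cone(v_{i_1},\dots,v_{i_{\alpha_a}})$; when these $v_{i_\alpha}$ span $\sigma$ one has $\Int(\gamma^\vee)=\Int(-\sigma)$. Now $dp^{-1}\Lambda_{\hSs,\beta}$ contains $\{\chi\}\times(-\sigma)$ for every $\chi\in M_{\sigma,\beta}$, and translating by $-v$ moves these points but does not remove them: $\musupp(T_{-v}^*E)$ still meets $M_\bR\times\Int(-\sigma)$ at the translated lattice. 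The hypothesis $\bD D_a\cap M_{\sigma,\beta}=\emptyset$ only tells you that the \emph{particular} vertex $v$ is not a lattice point; it says nothing about the global microsupport condition that Proposition \ref{GSprop4.17} demands. Your parenthetical ``provided the relevant stratum-at-origin contribution is absent'' is precisely the statement that needs proof, and Proposition \ref{GSprop4.17} does not supply it. (Incidentally, the inclusion you wrote is reversed: one has $\gamma\supset -\sigma^\vee$, hence $\gamma^\vee\subset -\sigma$, not the other way around.)

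The paper avoids this by not resolving $\bC_{-\bD D_a}$ at all. Instead it computes $\hom(\bC_{-\bD D_a},E)$ directly as a local cohomology and uses the non-characteristic deformation lemma to retract the open facets, reducing the computation to microlocal stalks of $E$ along the bounded faces of $\bigcap_\alpha H_{v_{i_\alpha}\leq -k_{i_\alpha}}$. These microlocal stalks are \emph{local} invariants: at a face where $\Cone(v_{i_{s_1}},\dots,v_{i_{s_k}})$ is not a face of $\sigma$ they vanish because that conormal direction is absent from $\Lambda_{\hSs,\beta}$; at a vertex where the cone equals $\sigma$ they vanish precisely because that vertex, by hypothesis, lies outside $M_{\sigma,\beta}$. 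So the paper's argument uses the lattice-avoidance hypothesis exactly where it has force (pointwise, via Proposition \ref{microstalk}), whereas your appeal to the global Tamarkin projector cannot see it.
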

\begin{proof}
First, assume that $\bigcap_{\alpha=1}^{\alpha_a} H_{v_{i_\alpha}\leq -k_{i_\alpha}}$ has no bounded faces. Hence $\alpha_a$ is less than the dimension of $M_\bR$. The fact $D_a$ is bounded implies that $\Cone(\{v_{i_1},...,, v_{i_{\alpha_a}}\})$ does not form any face of $\sigma$. By the non-characteristic deformation, $\hom(\bC_{-\bD D_a}, E)$ measures the microsupport of $E$ over a point in $\bigcap_{\alpha=1}^{\alpha_a} H_{v_{i_\alpha}= -k_{i_\alpha}}\cap (-\bD D_a)$ to the direction $\Cone(\{v_1,...,, v_{i_\alpha}\})$, however this vanishes by $E\in \lSh_{dp^{-1}\Lambda_{\hSs,\beta}}(M_\bR)$.

Then assume that $\bigcap_{\alpha=1}^{\alpha_a} H_{v_{i_\alpha}\leq -k_{i_\alpha}}$ has at least one bounded face. 
By the non-characteristic deformation, there are nontrivial contribution to this local cohomology only from the bounded faces of $\bigcap_{\alpha=1}^{\alpha_a} H_{v_{i_\alpha}\leq -k_{i_\alpha}}$. The subset $\{v_{i_{s_1}},..., v_{i_{s_k}}\}$ of $\{v_{i_1},...,, v_{i_{\alpha_a}}\}$ forming a bounded face has one of the following properties: (i) $\Cone(\{v_{i_{s_1}},..., v_{i_{s_k}}\})$ does not form any face of $\sigma$, or (ii) $\{v_{i_{s_1}},..., v_{i_{s_k}}\}$ is the set of primitive generators of $\sigma$. For the case (i), the local cohomology contributing to $\hom(\bC_{-\bD D_a}, E)$ vanishes by $E\in \lSh_{dp^{-1}\Lambda_{\hSs,\beta}}(M_\bR)$. For the case (ii), the local cohomology contributing to $\hom(\bC_{-\bD D_a}, E)$ also vanishes by the assumption $-\bD D_a$ does not intersect with $M_{\sigma,\beta}$, since $E$ can have microsupport to the direction $-\sigma$ only on $M_{\sigma,\beta}$.
\end{proof}

Let $\hSigma$ be a smooth refinement of $\hsigma$. Let $f$ be the morphism $\cX_{\hSigma,\beta}\rightarrow \cX_{\hSs,\beta}$ associated to the refinement. Let further $I$ be the inclusion $\lSh_{\Lambda_{\hSs,\beta}}(T^n)\hookrightarrow \lSh_{\Lambda_{\hSigma,\beta}}(T^n)$. Note that the $K_{\hSigma,\beta}$ is an equivalence by Corollary \ref{smoothmain}.
\begin{lemma}\label{okgo}
There exists a quasi-isomorphism 
\begin{equation}
f^*\hom(\Theta(\sigma), F)\simeq \kappa^{-1}_{\hSigma,\beta}(IF)
\end{equation}
for $F\in \lSh_{\Lambda_{\hSs,\beta}}(T^n)$.
\end{lemma}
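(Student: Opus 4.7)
The plan is to apply the equivalence $\kappa_{\hSigma,\beta}$ (which exists by Corollary~\ref{smoothmain} since $\hSigma$ is smooth) to both sides of the claimed isomorphism and then verify the resulting statement by the polytope cut-off techniques developed in Section~\ref{technique} and Section~\ref{section:identity}. More precisely, I will invoke Proposition~\ref{prop:functoriality} for the refinement morphism $f\colon \cX_{\hSigma,\beta}\to \cX_{\hSs,\beta}$. Since $f$ comes from the identity $f_L=\id_L$, $f_N=\id_N$ with only a refinement of the fan structure in $L_\bR$, we have $[f_N^\vee]_! = \id$, so the commutative diagram of Proposition~\ref{prop:functoriality} yields $\kappa_{\hSigma,\beta}\circ f^*\simeq \kappa_{\hSs,\beta}$. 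Applying $\kappa_{\hSigma,\beta}$ to the target identity therefore reduces the lemma to showing
\[
\kappa_{\hSs,\beta}(\hom(\Theta(\sigma),F))\simeq IF
\]
in $\lSh_{\Lambda_{\hSigma,\beta}}(T^n)$; note both sides in fact lie in the subcategory $\lSh_{\Lambda_{\hSs,\beta}}(T^n)$.

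Next, under the equivalence $\Qcoh\cX_{\hSs,\beta}\simeq \Mod(\End(\Theta(\sigma)))$ provided by Lemma~\ref{generation1} (with the matching of endomorphism algebras from Proposition~\ref{homtheta}), the functor $\kappa_{\hSs,\beta}$ restricts to the tensor functor $M\mapsto M\otimes_{\End(\Theta(\sigma))}\Theta(\sigma)$, so what I must verify is that the counit
\[
\hom(\Theta(\sigma),F)\otimes_{\End(\Theta(\sigma))}\Theta(\sigma)\longrightarrow F
\]
is a quasi-isomorphism for $F\in \lSh_{\Lambda_{\hSs,\beta}}(T^n)$. I would check this by passing to the universal cover $p\colon M_\bR\to T^n$, where $p^{-1}F$ lies in $\lSh_{dp^{-1}\Lambda_{\hSs,\beta}}(M_\bR)$, and expressing $p^{-1}F$ as a colimit of compactly supported polytope pieces $\bC_{D}$ cut out by the hyperplanes $v_i^\perp+m$ for $m\in M_{\sigma,\beta}$ (the same decomposition used in Section~\ref{section:identity}). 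For each such $D=D_a$, the $\star$-dual $\bC_{-\bD D_a}$ can be paired with $F$ and $\Theta(\sigma)$ using Lemma~\ref{polytopedual} and Corollary~\ref{cptadjunction}. The key vanishing is supplied by Lemma~\ref{Davanishing}: polytope pieces whose duals do not meet $M_{\sigma,\beta}$ contribute nothing on either side, while those that do are precisely matched, summand by summand, with the pieces $\hom(\Theta(\sigma,-\chi),F)$ coming from the decomposition $\hom(\Theta(\sigma),F) = \bigoplus_{[\chi]\in M_{\sigma,\beta}/M}\hom(\Theta(\sigma,-\chi),F)$.

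Finally, I would verify that the resulting iso agrees with the one induced by $f^*$ naturally in $F$, so that the iso in $\lSh_{\Lambda_{\hSigma,\beta}}(T^n)$ indeed corresponds under $\kappa_{\hSigma,\beta}$ to one in $\Qcoh\cX_{\hSigma,\beta}$ between $f^*\hom(\Theta(\sigma),F)$ and $\kappa^{-1}_{\hSigma,\beta}(IF)$.

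The hard part will be the apparent tension that $\Theta(\sigma)$ does \emph{not} compactly generate $\lSh_{\Lambda_{\hSs,\beta}}(T^n)$ when $\hsigma$ is singular (since the argument of Lemma~\ref{generation2} uses smoothness to identify $\sigma$ with a first-quadrant cone after a trivialization). One therefore cannot use the slick ``left adjoint is fully faithful $+$ counit is iso on a compact generator $\Rightarrow$ equivalence'' argument; instead the proof must push the polytope cut-off Lemma~\ref{Davanishing} hard enough to rule out all obstructions to the counit being an iso. The payoff is that the directions in $\Lambda_{\hSs,\beta}$ that would otherwise spoil generation correspond, under $f^*$, to pieces of $\Lambda_{\hSigma,\beta}$ that are probed by the smooth generators $\Theta(\hsigma',\chi)$ for $\hsigma'\in \hSigma_\text{max}$, so the isomorphism survives after pulling back to the smooth resolution $\cX_{\hSigma,\beta}$ where $\kappa_{\hSigma,\beta}$ is a genuine equivalence.
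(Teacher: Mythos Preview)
Your reformulation via Proposition~\ref{prop:functoriality} is correct and clean: since $[f_N^\vee]_!=\id$, applying the equivalence $\kappa_{\hSigma,\beta}$ reduces the lemma to showing that the counit
\[
\hom(\Theta(\sigma),F)\otimes_{\End(\Theta(\sigma))}\Theta(\sigma)\longrightarrow F
\]
is a quasi-isomorphism. But this is exactly the statement that $\Theta(\sigma)$ compactly generates $\lSh_{\Lambda_{\hSs,\beta}}(T^n)$, which you yourself note is unavailable when $\hsigma$ is singular. So the reformulation is faithful but not a reduction in difficulty.

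The gap is in your method for establishing the counit isomorphism. You claim to express $p^{-1}F$ as a colimit of the polytope pieces $\bC_{D_a}$; this does not work. A general $F\in\lSh_{\Lambda_{\hSs,\beta}}(T^n)$ is constructible with respect to the stratification by the $D_a$'s, but its restriction to each stratum is an arbitrary complex of vector spaces, not $\bC$. There is no filtration of $p^{-1}F$ whose associated graded is a sum of $\bC_{D_a}$'s, so Lemma~\ref{Davanishing} cannot be fed such pieces in the way you suggest.

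The paper's proof avoids touching $F$ directly and instead checks the claimed isomorphism Zariski-locally on $\cX_{\hSigma,\beta}$: since $\Theta(\tau)$ generates $\Qcoh\cU_\tau$, it suffices to compare after applying $\hom(\Theta(\tau),(-)|_{\cU_\tau})$ for each $\tau\in\Sigma$. Via Corollary~\ref{lemma9.2} the right-hand side becomes $\hom(\Theta(\sigma), F\star p_!\bC_{\Int(\tau^\vee)})$, and now the polytope decomposition is applied to $\bC_{\Int(\tau^\vee)}$---a fixed, explicit constant sheaf on a polyhedron---rather than to $F$. One writes $\Int(\tau^\vee)=S\cup C$ with $S=\bigcup_{m\in\tau^\vee\cap M_{\sigma,\beta}}(\Int(\sigma^\vee)+m)$, kills the $C$-piece using Lemma~\ref{Davanishing} together with Corollary~\ref{cptadjunction}, and then matches the $S$-piece with $\bC[\tau^\vee\cap M_{\sigma,\beta}]\otimes_{\bC[\sigma^\vee\cap M_{\sigma,\beta}]}\hom(\Theta(\sigma),F)$ via an explicit semigroup-module resolution of $S\cap M_{\sigma,\beta}$ and a second vanishing argument of the same flavor. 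Your final paragraph gestures at probing with the smooth generators $\Theta(\tau,\chi)$, which is precisely this strategy, but the body of the proposal commits to decomposing $F$ rather than the test kernel, and that is where it breaks.
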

\begin{proof}
To prove this lemma, it suffices to show that 
\begin{equation}\label{tauisom}
f^*\hom(\Theta(\sigma), F)|_{\cU_\tau}\simeq \kappa^{-1}_{\hSigma,\beta}(IF)|_{\cU_\tau}
\end{equation}
for any $\tau\in \Sigma$.
Since $\Qcoh \cU_\tau$ is generated by $\Theta(\tau)$, the quasi-isomorphism (\ref{tauisom}) is equivalent to 
\begin{equation}
\hom(\Theta(\tau), f^*\hom(\Theta(\sigma), F)|_{\cU_\tau})\simeq \hom(\Theta(\tau), \kappa^{-1}_{\hSigma,\beta}(IF)|_{\cU_\tau}).
\end{equation}

The right hand side of (\ref{tauisom}) is written as
\begin{equation}
\begin{split}
\hom(\oplus_{[\chi]\in M_{\sigma,\beta}/M}\cO_{\cU_\tau}(-\chi), &\kappa^{-1}_{\hSigma,\beta}(IF)|_{\cU_\tau})\\
&\simeq \hom(\oplus_{[\chi]\in M_{\sigma,\beta}/M}\cO_{\cX_{\hSigma,\beta}}(-\chi), i_{\cU_\tau*}(\kappa^{-1}_{\hSigma,\beta}(IF)|_{\cU_\tau}))\\
&\simeq \hom(\oplus_{[\chi]\in M_{\sigma,\beta}/M}p_!\bC_{\Int(\sigma^\vee)-\chi}, F\star p_!\bC_{\Int(\tau^\vee)})
\end{split}
\end{equation}
by Corollary \ref{lemma9.2}.

The left hand side of (\ref{tauisom}) is calculated as follows: First, we have
\begin{equation}
f^*\hom(\Theta(\sigma), F)|_{\cU_\tau}\simeq \bC[\tau^\vee\cap M_{\sigma,\beta}]\tens{\bC[\sigma^\vee\cap M_{\sigma,\beta}]}{}\hom(\Theta(\sigma), F).
\end{equation}
Then we have
\begin{equation}
\begin{split}
\hom(\Theta(\tau), f^*\hom(\Theta(\sigma), F)|_{\cU_\tau})&\simeq \hom(\Theta(\tau),\bC[\tau^\vee\cap M_{\sigma,\beta}]\tens{\bC[\sigma^\vee\cap M_{\sigma,\beta}]}{}\hom(\Theta(\sigma), F))\\
&\simeq \bC[\tau^\vee\cap M_{\sigma,\beta}]\tens{\bC[\sigma^\vee\cap M_{\sigma,\beta}]}{}\hom(\Theta(\sigma), F).
\end{split}
\end{equation}
Since $\kappa_{\hSigma,\beta}$ is an equivalence, we have
\begin{equation}
\begin{split}
\hom(p_!\bC_{\Int(\sigma^\vee)-\chi}, F)&\simeq \hom(\kappa_{\hSigma,\beta}^{-1}(p_!\bC_{\Int(\sigma^\vee)-\chi}), \kappa_{\hSigma,\beta}^{-1}(IF))\\
&\simeq \hom(\cO_{\cX_{\hSigma,\beta}}(-\chi), \kappa_{\hSigma,\beta}^{-1}(IF)).
\end{split}
\end{equation}

Hence we will prove 
\begin{equation}\label{11.16}
\begin{split}
\bC[{\tau}^\vee\cap M_{\sigma,\beta}]\otimes_{\bC[\sigma^\vee\cap M_{\sigma,\beta}]}&\hom(\oplus_{[\chi]\in M_{\sigma,\beta}/M}\cO_{\cX_{\hSigma,\beta}}(-\chi), \kappa_{\hSigma,\beta}^{-1}(IF))\\
&\simeq  \hom(\oplus_{[\chi]\in M_{\sigma,\beta}/M}p_!\bC_{\Int(\sigma^\vee)-\chi}, F\star p_!\bC_{\Int(\tau^\vee)}).
\end{split}
\end{equation}
for any $\hat{\tau}\in\hSigma$.

Set $S:=\bigcup_{m\in \tau^\vee\cap M_{\sigma,\beta}}(\Int(\sigma^\vee)+m)$ and $C:=\Int(\tau^\vee)\bs S$. Note tha $C$ does not contain any elements of $M_{\sigma,\beta}$. Then we have an exact triangle
\begin{equation}\label{cstriangle}
\bC_{S}\rightarrow \bC_{\Int(\sigma^\vee)}\rightarrow \bC_{C}\rightarrow.
\end{equation}
First, we will prove 
\begin{equation}\label{firstvanishing12}
\hom(p_!\bC_{\Int(\sigma^\vee)-\chi}, F\star p_!\bC_C)\simeq 0.
\end{equation}
We can assume that $\sigma$ is full-dimensional  to prove (\ref{firstvanishing12}), since all appearing sheaves are constant to the direction $\sigma^\perp$. We can also assume $\chi=0$ by replacing $F$.

The set $C\cap (\Int(\sigma^\vee)+m)$ for $m\in M_\bR$ is bounded. Then we have a presentation of $\bC_C$ as a colimit of compactly supported constructible sheaves with microsupports in $T^n\times (-\sigma)$ :
\begin{equation}
\bC_{C}\simeq \varhocolim{m\in M_{\sigma,\beta}}\bC_{C\cap (\Int(\sigma^\vee)+m)}
\end{equation}
where the colimit is taken with respect to maps $\bC_{C\cap (\Int(\sigma^\vee)+m_1)}\rightarrow \bC_{C\cap (\Int(\sigma^\vee)+m_2)}$ for $\Int(\sigma^\vee)+m_1\subset \Int(\sigma_2^\vee)+m_2$ which correspond to the identity via the isomorphisms
\begin{equation}
\hom(\bC_{C\cap (\Int(\sigma^\vee)+m_1)},\bC_{C\cap (\Int(\sigma^\vee)+m_2)})\simeq \hom(\bC_{C\cap (\Int(\sigma^\vee)+m_1)},\bC_{C\cap (\Int(\sigma^\vee)+m_1)}).
\end{equation}

Set $G_m:=\bC_{C\cap (\Int(\sigma^\vee)+m)}$. Then we have
\begin{equation}
\begin{split}
\hom(p_!\bC_{\Int(\sigma^\vee)}, F\star p_!\bC_C)&\simeq \hom(p_!\bC_{\Int(\sigma^\vee)}, \varhocolim{m\in M_{\beta}} (F\star p_!G_m))\\
&\simeq \varhocolim{m\in M_{\beta}}\hom(p_!\bC_{\Int(\sigma^\vee)}, F\star p_!G_m)\\
&\simeq \varhocolim{m\in M_{\beta}}\hom(\bC_{\Int(\sigma^\vee)}, p^{-1}(F\star p_!G_m))\\
&\simeq \varhocolim{m\in M_{\beta}}\hom(\bC_{\Int(\sigma^\vee)}, p^{-1}F\star_{\bR} p^{-1}p_!G_m))\\
&\simeq \varhocolim{m\in M_{\sigma,\beta}}\bigoplus_{m'\in M}\hom(\bC_{\Int(\sigma^\vee)+m'}, p^{-1}F\star_{\bR} G_m)
\end{split}
\end{equation}
by Lemma \ref{lem:cpt}.

Let $B$ be a bounded open neighborhood of $0$. 
By the non-characteristic deformation as in the proof of Lemma \ref{lem:cpt}, we have
\begin{equation}
\begin{split}
\hom(\bC_{\Int(\sigma^\vee)+m'}, p^{-1}F\star_{\bR} G_m)\simeq \hom(\bC_{(\Int(\sigma^\vee)\cap B)+m'}, p^{-1}F\star_{\bR} G_m).
\end{split}
\end{equation}
Since $G_m$ can be written as a finite extension of $\bC_{D_a}$'s, Lemma \ref{cptadjunction} implies
\begin{equation}
\varhocolim{j}\hom(\bC_{(\Int(\sigma^\vee)\cap B)+m'}, F_j\star_{\bR} G_m)
\simeq \hom(\bC_{\Int(\sigma^\vee)+m'}\star(-1)^*\bD G_m,F).
\end{equation}
Here $\bC_{\Int(\sigma^\vee)+m'}\star(-1)^*\bD G_m$ can be written as a finite extension of $\bC_{-\bD D_a}$ without intersections with $M_{\sigma,\beta}$. Hence, by Lemma \ref{Davanishing}, this is quasi-isomorphic to $0$. This proves (\ref{firstvanishing12}).

Hence we have 
\begin{equation}\label{11.20}
\hom(p_!\bC_{\Int(\sigma^\vee)-\chi}, F\star p_!\bC_{\Int(\tau^\vee)})\simeq \hom(p_!\bC_{\Int(\sigma^\vee)-\chi}, F\star p_!\bC_{S}).
\end{equation}
by (\ref{firstvanishing12}) and (\ref{cstriangle}). We will calculate the right hand side of (\ref{11.20}). We have
\begin{equation}\label{11.21}
\begin{split}
\hom(p_!\bC_{\Int(\sigma^\vee)-\chi}, F\star p_!\bC_{S})&\simeq \hom(\bC_{\Int(\sigma^\vee)-\chi}, p^{-1}(F\star p_!\bC_{S}))\\
&\simeq \hom(\bC_{\Int(\sigma^\vee)-\chi}, \tilde{m}_!(p^{-1}F\boxtimes p^{-1}p_!\bC_{S}))\\
&\simeq \bigoplus_{m\in M}\hom(\bC_{\Int(\sigma^\vee)+m-\chi},  p^{-1}F\star_{\bR}\bC_{S})
\end{split}
\end{equation}
by Lemma \ref{lem:cpt}.

We fix $m\in M\bs \tau^\vee$ and set $S^m:=S\cap (\sigma^\vee+m)$. Then $S^m\cap M_{\sigma,\beta}$ is a finitely-generated module over the semigroup $\sigma^\vee\cap M_{\sigma,\beta}$ by Gordon's lemma. We can take the following resolution of $S^m \cap M_{\sigma,\beta}$: First, take a set of generators $m_1,...,m_{s}$ of $S^m \cap M_{\sigma,\beta}$ which are distinct from each other. For each pair $(i,j)\in \{1,...,s\}^{\times 2}$, we consider the set $(\sigma^\vee+m_i)\cap (\sigma^\vee+m_j)\cap M_{\sigma,\beta}$, which is the relation between the submodules generated by $m_i$ and $m_j$ respectively. The set $(\sigma^\vee+m_i)\cap (\sigma^\vee+m_j)$ is again a rational convex polyhedron hence gives a finitely generated module over $\sigma^\vee\cap M_{\sigma,\beta}$. Then we take a set of generators $\{m^{ij}_{1},...., m^{ij}_{s_{ij}}\}$ of $(\sigma^\vee+m_i)\cap (\sigma^\vee+m_j)\cap M_{\sigma,\beta}$ for each pair $(i,j)\in \{1,...,s\}^{\times 2}$ which are distinct from each other. Then again the relations between the generators are given by $(\sigma^\vee+m_{k}^{ij})\cap (\sigma^\vee+m_{l}^{ij})\cap M_{\sigma,\beta}$ for each $(k,l)\in \{1,..., s_{ij}\}^{\times 2}$. Iterating these process, we have a sequence
\begin{equation}\label{setsequence}
S_m, \{\sigma^\vee+m_i\}_{i=1}^s, \{\sigma^\vee+m^{ij}_{k}\}_{k, (i,j)}, \cdots
\end{equation}
of sets of subsets of $M_\bR$ and a resolution of $S^m\cap M_{\sigma,\beta}$
\begin{equation}\label{resolutionsemigroup}
0\leftarrow S^m\cap M_{\sigma,\beta} \leftarrow \bigoplus_{i=1}^s ((\sigma^\vee\cap M_{\sigma,\beta})+m_i) \leftarrow \bigoplus_{k, (i,j)}((\sigma^\vee\cap M_{\sigma,\beta})+m^{ij}_k)\leftarrow\cdots
\end{equation}
as a $(\sigma^\vee\cap M_{\sigma,\beta})$-module. Note that we can write the sequence (\ref{resolutionsemigroup}) as
\begin{equation}\label{resolutionsemigroup2}
0\leftarrow S^m\cap M_{\sigma,\beta}  \leftarrow \bigoplus_{i=1}^s (\sigma^\vee\cap M_{\sigma,\beta}) \leftarrow \bigoplus_{k, (i,j)}(\sigma^\vee\cap M_{\sigma,\beta})\leftarrow\cdots
\end{equation}
by using the identification $((\sigma^\vee\cap M_{\sigma,\beta})+m)\cong (\sigma^\vee\cap M_{\sigma,\beta})$ as $(\sigma^\vee\cap M_{\sigma,\beta})$-modules. To simplify the notation, we set
\begin{align}
\bigoplus_{i=1}^{n_1}((\sigma^\vee\cap M_{\sigma,\beta})+m_i^1)&:=\bigoplus_{i=1}^{s}((\sigma^\vee\cap M_{\sigma,\beta})+m_i)\\
\bigoplus_{i=1}^{n_2}((\sigma^\vee\cap M_{\sigma,\beta})+m^2_i)&:=\bigoplus_{k, (i,j)}((\sigma^\vee\cap M_{\sigma,\beta})+m^{ij}_k)
\end{align}
and so on. 

Set $\bC[S^m\cap M_{\sigma,\beta}]:=\bigoplus_{m\in S^m\cap M_{\sigma,\beta}}\bC\cdot\chi^m$, which is a finitely generated $\bC[\sigma^\vee\cap M_{\sigma,\beta}]$-module and $\varhocolim{m\in M}\bC[S^m\cap M_{\sigma,\beta}]\simeq \bC[\tau^\vee\cap M_{\sigma,\beta}]$ where the colimit is taken with respect to natural inclusions $\bC[S^{m_1}\cap M_{\sigma,\beta}]\hookrightarrow \bC[S^{m_2}\cap M_{\sigma,\beta}]$ for $\sigma^\vee+m_1\hookrightarrow \sigma^\vee+m_2$.  Then the resolution (\ref{resolutionsemigroup2}) implies a resolution of $\bC[S^m\cap M_{\sigma,\beta}]$ as a $\bC[\sigma^\vee\cap M_{\sigma,\beta}]$-module:
\begin{equation}\label{12resolution}
0\leftarrow \bC[S^m\cap M_{\sigma,\beta}]\xleftarrow{d_1} \bC[\sigma^\vee\cap M_{\sigma,\beta}]^{\oplus n_1}\xleftarrow{d_2} \bC[\sigma^\vee\cap M_{\sigma,\beta}]^{\oplus n_2}\leftarrow \cdots
\end{equation}
We also have a complex
\begin{equation}\label{12resolution2}
E:=0\leftarrow \bC_{S^m}\xleftarrow{\delta_1}\bigoplus_{i=1}^{n_1} \bC_{\Int(\sigma^\vee)+{m^1_i}} \xleftarrow{\delta_2} \bigoplus_{i=1}^{n_2}\bC_{\Int(\sigma^\vee)+{m^2_i}}\leftarrow \cdots,
\end{equation}
from the sequence (\ref{setsequence}). Note that the presentation (\ref{resolutionsemigroup}) implies that $\kappa_{\hsigma,\beta}(d_i)\simeq p_!(\delta_i)$ for $i>1$.

We claim that
\begin{equation}\label{11.24}
\hom(\bC_{\Int(\sigma^\vee)+m-\chi}, p^{-1}F\star_{\bR}E)\simeq 0.
\end{equation}
We can again assume that $\sigma^\vee$ is full-dimensional and $\chi=0$ to prove (\ref{11.24}).

Take an increasing filtration
\begin{equation}
S_1\subsetneq S_2\subsetneq S_3\subsetneq\cdots \subset \Int(\sigma^\vee)
\end{equation}
by $S_i$ such that each $S_i$ is closed and bounded in $\Int(\sigma^\vee)$ and is a union some $D_a$'s. Then we have two sequence of maps
\begin{align}
&0\leftarrow \bC_{S_1}\leftarrow \bC_{S_2}\leftarrow \cdots\text{ and }\\
&0\leftarrow \bC_{S^m\cap S_1}\leftarrow \bC_{S^m\cap S_2}\leftarrow \cdots
\end{align}
with
\begin{align}
\bC_{\Int(\sigma^\vee)}&\simeq \varholim{i} \bC_{S_i} \text{ and }\\
\bC_{S^m}&\simeq \varholim{i} \bC_{S_i\cap S^m}.
\end{align}
Since the morphisms of $E$ can be restricted to each $S_i$, we have
\begin{equation}\label{11.33}
E\simeq \varholim{i}(0\leftarrow \bC_{S^m\cap S_i}\leftarrow \bigoplus_{j=1}^{n_1}\bC_{S_i+m^1_j} \leftarrow \bigoplus_{j=1}^{n_2}\bC_{S_i+m^2_j} \leftarrow \cdots).
\end{equation}
We write $E_i$ for $i$th term of the limit (\ref{11.33}). Since we take generators $m^k_i$ which are distinct from each other in each step of constructing (\ref{setsequence}), we have $\sigma^\vee+m_i^k\cap \sigma^\vee+m_{j}^k\subsetneq \sigma^\vee+m_i^k, \sigma^\vee+m_{j}^k$. This observation and the convexity of $\sigma^\vee$ together imply that, for a bounded set $B$ in $M_\bR$, there exists a large $K$ such that $m_i^k\not\in B$ for $k>K$ and any $i$. This implies that each $E_i$ is a finite limit.

Note that the complex (\ref{12resolution2}) is not exact in general while (\ref{12resolution}) is exact. However the exactness of (\ref{resolutionsemigroup}) implies the following: Consider $D_a$ such that $\bD D_a$ contains a point of $M_{\sigma,\beta}$. We write $m_a$ for the point of $M_{\sigma,\beta}$ contained in $\bD D_a$. The stalk of $\bC_{\Int(\sigma^\vee)+m_i^k}$ on $D_a$ is rank 1 if and only if $m_a\in \sigma^\vee+m_i^k$ otherwise zero. This exactly corresponds to whether $\sigma^\vee+m_i^k$ contains an element $m_a$ viewed as a $(\sigma^\vee\cap M_{\sigma,\beta})$-submodule of $M$ or does not. Since the resolution (\ref{resolutionsemigroup}) occurs in $M$ as submodules, the exactness of (\ref{resolutionsemigroup}) implies the vanishing of the stalk of $\bC_{\Int(\sigma^\vee)+m_i^k}$ over such $D_a$. Hence each $E_i$ can be written by a finite sequence of cones of $\bC_{D_a}$'s such that each $\bD D_a$ does not contain points of $M_{\sigma,\beta}$.

Set $i_{m'}\colon \Int(\sigma^\vee)+m'\hookrightarrow M_\bR$ for $m'\in M_{\sigma,\beta}$. Then we have $\varhocolim{m'\in M_{\sigma,\beta}} i_{m'!}i_{m'}^{-1}p^{-1}F\simeq p^{-1}F$ where the colimit is taken with respect to maps  $i_{m'_1!}i_{m'_1}^{-1}p^{-1}F\rightarrow i_{m'_2!}i_{m'_2}^{-1}p^{-1}F$ for $\Int(\sigma^\vee)+m'_1\subset \Int(\sigma_2^\vee)+m'_2$ which correspond to the identity via the isomorphisms
\begin{equation}
\hom(i_{m'_1!}i_{m'_1}^{-1}p^{-1}F, i_{m'_2!}i_{m'_2}^{-1}p^{-1}F)\simeq \hom(i_{m'_1!}i_{m'_1}^{-1}p^{-1}F,i_{m'_1!}i_{m'_1}^{-1}p^{-1}F).
\end{equation}
We set $F_{m'}:=i_{m'!}i_{m'}^{-1}p^{-1}F$. Then we have
\begin{equation}\label{11.37}
\begin{split}
\hom(\bC_{\Int(\sigma^\vee)+m}, p^{-1}F\star_\bR E)&\simeq 
\hom(\bC_{\Int(\sigma^\vee)+m}, \varhocolim{m'\in M_{\sigma,\beta}}F_{m'}\star_\bR E)\\
&\simeq \varhocolim{m'\in M_{\sigma,\beta}} \hom(\bC_{\Int(\sigma^\vee)+m}, F_{m'}\star_\bR E),
\end{split}
\end{equation}
since $\bC_{\Int(\sigma^\vee)+m}$ is compact in $\lSh_{\Lambda}(M_\bR)$ where $\Lambda:=\bigcup_{m'}\SS(F_{m'})$ and $F_{m'}\in \lSh_{T^n\times (-\sigma)}(M_\bR)$. Since the support of $E$ and $F_{m'}$ is in a translation of $\Int(\sigma^\vee)$, the map $\tilde{m}$ is proper over the support of $E\boxtimes F_{m'}$ by the assumption $\sigma$ is full-dimensional which is equivalent to $\sigma^\vee$ is strictly convex. Hence we have $E\star_\bR F_{m'}\simeq \varholim{i}(E_i\star_\bR  F_{m'})$. As a result, we have
\begin{equation}
\begin{split}
\hom(\bC_{\Int(\sigma^\vee)+m}, E\star_\bR F_{m'})&\simeq \hom(\bC_{\Int(\sigma^\vee)+m}, \varholim{i} (E_i\star_\bR F_{m'}))\\
&\simeq \varholim{i}\hom(\bC_{\Int(\sigma^\vee)+m}, E_i\star_\bR F_{m'}).\\
\end{split}
\end{equation}
Since $E_i\star_\bR F_{m'}\in \lSh_{T^n\times (-\sigma)}$, we again have
\begin{equation}
\hom(\bC_{\Int(\sigma^\vee)+m}, E_i\star_\bR F_{m'})\simeq \hom(\bC_{(\Int(\sigma^\vee)\cap B)+m}, E_i\star_\bR F_{m'}).
\end{equation}
Since $E_i$ is a finite sequence of cones of $\bC_{D_a}$, we further have
\begin{equation}
\begin{split}
\hom(\bC_{(\Int(\sigma^\vee)\cap B)+m}, E_i\star_\bR F_{m'})&\simeq \hom(\bC_{(\Int(\sigma^\vee)\cap B)+m} \star_\bR (-1)^*\bD E_i, F_{m'}\star \bC_{\Int(\sigma^\vee)})\\
&\simeq  \hom(\bC_{(\Int(\sigma^\vee)\cap B)+m} \star_\bR (-1)^*\bD E_i, F_{m'}).
\end{split}
\end{equation}
by Corollary \ref{cptadjunction}. We further have
\begin{equation}
\begin{split}
\hom(\bC_{(\Int(\sigma^\vee)\cap B)+m} \star_\bR (-1)^*\bD E_i, F_{m'})&\simeq \hom(\bC_{(\Int(\sigma^\vee)\cap B)+m},\cHom^{\star_\bR}((-1)^*\bD E_i, F_{m'}))\\
&\simeq  \hom(\bC_{\Int(\sigma^\vee)+m},\cHom^{\star_\bR}((-1)^*\bD E_i, F_{m'}))\\
&\simeq \hom(\bC_{\Int(\sigma^\vee)+m}\star_\bR(-1)^*\bD E_i, F_{m'})\\
&\simeq \hom((-1)^*\bD E_i, F_{m'}^m)
\end{split}
\end{equation}
where $F_{m'}^m$ is the translation of $F_{m'}$ by $m$. Hence we have
\begin{equation}\label{11.40}
\hom(\bC_{\Int(\sigma^\vee+m)}, p^{-1}F\star_\bR E)\simeq \varhocolim{m'\in M_{\sigma,\beta}}\hom(\varhocolim{i}(-1)^*\bD E_i, F_{m'}^m).
\end{equation}
Temporally, let us replace $F^m$ by $F$, which has no effects to prove (\ref{11.24}). Let $j_{m'}$ be the inclusion map of the complement of $i_{m'}$. Then there exists an exact triangle
\begin{equation}\label{12keytriangle2}
\begin{split}
j_{m'!}j_{m'}^!&\cHom(\varhocolim{i}(-1)^*\bD E_i, F_{m'})\\
&\rightarrow \cHom(\varhocolim{i}(-1)^*\bD E_i, F_{m'})\\
&\rightarrow  i_{m'*}i_{m'}^{-1}\cHom(\varhocolim{i}(-1)^*\bD E_i, F_{m'}))\rightarrow.
\end{split}
\end{equation}
Note that each term in (\ref{12keytriangle2}) has maps induced by $F_{m_1'}\rightarrow F_{m_2'}$ for $\Int(\sigma^\vee)+m_1'\subset \Int(\sigma^\vee)+m_2'$. By taking global sections and colimits, we have
\begin{equation}\label{12keytriangle}
\begin{split}
\varhocolim{m'\in M_{\sigma,\beta}}&\Gamma(M_\bR, j_{m'!}j_{m'}^!\cHom(\varhocolim{i}(-1)^*\bD E_i, F_{m'}))\\
&\rightarrow \varhocolim{m'\in M_{\sigma,\beta}}\hom(\varhocolim{i}(-1)^*\bD E_i, F_{m'})\\
&\rightarrow \varhocolim{m'\in M_{\sigma,\beta}}\hom(i_{m'}^{-1}\varhocolim{i}(-1)^*\bD E_i, i_{m'}^{-1}p^{-1}F)\rightarrow 
\end{split}
\end{equation}
The last term in (\ref{12keytriangle})
\begin{equation}
\begin{split}
\varhocolim{m'\in M_{\sigma,\beta}}\hom(i_{m'}^{-1}\varhocolim{i}(-1)^*\bD E_i, i_{m'}^{-1}p^{-1}F)&\simeq \varhocolim{m'\in M_{\sigma,\beta}}\varholim{i}\hom(i_{m'}^{-1}(-1)^*\bD E_i, i_{m'}^{-1}p^{-1}F)\\
&\simeq \varhocolim{m'\in M_{\sigma,\beta}}\varholim{i}\hom(i_{m'!}i_{m'}^{-1}(-1)^*\bD E_i, p^{-1}F)\\
\end{split}
\end{equation}
By the definition, $i_{m'!}i_{m'}^{-1}\bC_{-\bD D_a}\simeq \bC_{-\bD D_a}$ if $-\bD D_a\subset \Int(\sigma^\vee)+m'$. If not $-\bD D_a\subset \Int(\sigma^\vee)+m'$, we have $i_{m'!}i_{m'}^{-1}\bC_{-\bD D_a}\simeq 0$.  Hence, by Lemma \ref{Davanishing}, the last term in (\ref{12keytriangle}) vanishes.

On the other hand, the complex $j_{m'!}j_{m}^!\cHom(\varhocolim{i}(-1)^*\bD E_i, F_{m'})$ is supported in the boundary of $\sigma+m'$. Hence the maps appeared in the colimit of the first term of (\ref{12keytriangle}) eventually vanishes when the boundary of $\Int(\sigma^\vee+m_1')$ does not intersect with the boundary of $\Int(\sigma^\vee+m_2')$. Therefore the first term of (\ref{12keytriangle}) also vanishes. Then we also have the vanishing of the middle of (\ref{12keytriangle}). From (\ref{11.37}) - (\ref{11.40}), the vanishing (\ref{11.24}) follows.

By (\ref{11.24}), (\ref{12resolution2}), and (\ref{11.21}), we have
\begin{equation}
\begin{split}
\hom&(\oplus_{[\chi]\in M_{\sigma,\beta}/M}p_!\bC_{\Int(\sigma^\vee)-\chi}, F\star p_!\bC_{S^m})\\
& \simeq (\oplus_{i=1}^{n_1}\hom(\oplus_{[\chi]\in M_{\sigma,\beta}/M}p_!\bC_{\Int(\sigma^\vee)-\chi}, F) \leftarrow\oplus_{i=1}^{n_2} \hom(\oplus_{[\chi]\in M_{\sigma,\beta}/M}p_!\bC_{\Int(\sigma^\vee)-\chi}, F)\leftarrow \cdots)\\
&\simeq \hom(\oplus_{[\chi]\in M_{\sigma,\beta}/M}p_!\bC_{\Int(\sigma^\vee)-\chi}, F)\otimes_{\bC[\sigma^\vee\cap M_{\sigma,\beta}]}\bC[S^m \cap M_{\sigma,\beta}].
\end{split}
\end{equation}
By taking colimits with respect to $m$ and applying (\ref{11.20}), this completes the proof.
\end{proof}

Let $\cY_{\lSh_{\Lambda_{\hSigma,\beta}}(T^n)}\colon (\lSh_{\Lambda_{\hSigma,\beta}}(T^n))^\op\rightarrow \Fun(\lSh_{\Lambda_{\hSigma,\beta}}(T^n), \Mod(\bC))$ be the Yoneda embedding.
\begin{lemma}\label{commIl}
There exists a commutative diagram
\begin{equation}
\xymatrix{
(\perf \cX_{\hSigma,\beta})^\op \ar[d]_{\cY_{\lSh_{\Lambda_{\hSigma,\beta}}(T^n)}\circ\kappa_{\hSigma,\beta}((-)\otimes \omega_{\cXsb}^{-1})}\ar[rr]_-{f_*} && (\Coh \cX_{\hSs,\beta})^\op\ar[d]^{\kappa^D_{\hsigma,\beta}\circ \frakD_{\cX_{\hSs,\beta}}} \\
\Fun(\lSh_{\Lambda_{\hSigma,\beta}}(T^n),\Mod(\bC)) \ar[rr]^--{\circ I} &&\Fun(\lSh_{\Lambda_{\hSs,\beta}}(T^n), \Mod(\bC)). 
}
\end{equation}
\end{lemma}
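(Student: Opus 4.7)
The plan is to verify commutativity after evaluating both compositions at a pair $\cE \in \perf \cX_{\hSigma,\beta}$ and $F \in \lSh_{\Lambda_{\hSs,\beta}}(T^n)$, by identifying each value with the same global section on $\cX_{\hSs,\beta}$, and then to lift this to naturality by functoriality of every ingredient used.

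For the top-and-right route, Grothendieck--Serre duality for the proper toric morphism $f\colon \cX_{\hSigma,\beta}\to \cX_{\hSs,\beta}$ (using $f^!\omega_{\cX_{\hSs,\beta}}\simeq \omega_{\cXsb}$, which holds since $\hSigma$ is a smooth refinement of $\hsigma$) together with perfectness of $\cE$ yields
\begin{equation*}
\frakD_{\cX_{\hSs,\beta}}(f_*\cE) \simeq f_*\cHom(\cE,\omega_{\cXsb}) \simeq f_*(D(\cE)\otimes \omega_{\cXsb}).
\end{equation*}
Then Lemma \ref{natisom}, evaluated at $F$, gives
\begin{equation*}
\kappa^D_{\hsigma,\beta}(\frakD_{\cX_{\hSs,\beta}}(f_*\cE))(F) \simeq \bigl(f_*(D(\cE)\otimes \omega_{\cXsb}) \otimes_{\bC[\sigma^\vee \cap M_{\sigma,\beta}]} \hom(\Theta(\sigma),F)\bigr)^{H_\beta},
\end{equation*}
which, via the equivalence $\Qcoh \cX_{\hSs,\beta}\simeq \Mod(\bC[\sigma^\vee \cap M_{\sigma,\beta}]\rtimes H_\beta)$ of Lemma \ref{finite}, translates to $\Gamma(\cX_{\hSs,\beta}, f_*(D(\cE)\otimes \omega_{\cXsb}) \otimes \widetilde{\hom(\Theta(\sigma),F)})$, where $\widetilde{\hom(\Theta(\sigma),F)}$ denotes the quasi-coherent sheaf on $\cX_{\hSs,\beta}$ associated to the module $\hom(\Theta(\sigma),F)$.

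For the left-and-bottom route, smoothness of $\hSigma$ makes $\kappa_{\hSigma,\beta}$ an equivalence (Corollary \ref{smoothmain}), and Lemma \ref{okgo} identifies $\kappa^{-1}_{\hSigma,\beta}(IF)\simeq f^*\widetilde{\hom(\Theta(\sigma),F)}$. Perfectness of $\cE$ and the projection formula for the proper morphism $f$ then give
\begin{equation*}
\begin{split}
\hom\bigl(\kappa_{\hSigma,\beta}(\cE\otimes \omega_{\cXsb}^{-1}),\; IF\bigr)
&\simeq \hom_{\Qcoh \cXsb}\bigl(\cE\otimes \omega_{\cXsb}^{-1},\; f^*\widetilde{\hom(\Theta(\sigma),F)}\bigr)\\
&\simeq \Gamma\bigl(\cXsb,\; D(\cE)\otimes \omega_{\cXsb}\otimes f^*\widetilde{\hom(\Theta(\sigma),F)}\bigr)\\
&\simeq \Gamma\bigl(\cX_{\hSs,\beta},\; f_*(D(\cE)\otimes \omega_{\cXsb})\otimes \widetilde{\hom(\Theta(\sigma),F)}\bigr),
\end{split}
\end{equation*}
matching the previous expression. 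Every isomorphism above is canonical, so assembling them yields the commutativity as a $2$-isomorphism of functors. The main obstacle is justifying $f^!\omega_{\cX_{\hSs,\beta}}\simeq \omega_{\cXsb}$ in the Deligne--Mumford stack setting; this is the reason the twists by $\omega_{\cXsb}^{\pm 1}$ appear on the perfect-complex side in Condition \ref{condition2}(ii), and it is standard for proper morphisms between Gorenstein DM stacks. A secondary care is compatibility of the semigroup-ring tensor product with the tensor of quasi-coherent sheaves and with $H_\beta$-equivariance, which is handled by the stack/module dictionary of Lemma \ref{finite}.
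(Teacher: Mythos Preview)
Your proposal is correct and follows essentially the same approach as the paper: both arguments unwind $\kappa^D_{\hsigma,\beta}(\frakD f_*\cE)(F)$ via Lemma~\ref{natisom}, commute $\frakD$ past $f_*$ using Grothendieck duality for the proper morphism $f$, invoke Lemma~\ref{okgo} to identify $f^*\hom(\Theta(\sigma),F)\simeq \kappa_{\hSigma,\beta}^{-1}(IF)$, apply the projection formula, and conclude using that $\kappa_{\hSigma,\beta}$ is an equivalence (Corollary~\ref{smoothmain}). The only cosmetic difference is that you repackage the module-theoretic tensor with $H_\beta$-invariants as global sections on $\cX_{\hSs,\beta}$ via Lemma~\ref{finite}, whereas the paper stays in the module language throughout.
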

\begin{proof}
For $\cE\in \Coh \cX_{\hSigma,\beta}$ and $F\in \lSh_{\Lambda_{\hSs,\beta}}(T^n)$, we have
\begin{equation}
\begin{split}
\hom(\kappa^D_{\hsigma,\beta}(\frakD f_*\cE), F)&
\simeq (\frakD f_*\cE\otimes_{\bC[\sigma^\vee\cap M_{\sigma,\beta}]}\hom(\Theta(\sigma), F))^{H_\beta}\\
&\simeq (f_*\frakD\cE\otimes_{\bC[\sigma^\vee\cap M_{\sigma,\beta}]}\hom(\Theta(\sigma), F))^{H_\beta}.
\end{split}
\end{equation}
since $f$ is proper. By Lemma \ref{okgo}, we have
\begin{equation}
\begin{split}
(f_*\frakD\cE\otimes_{\bC[\sigma^\vee\cap M_{\sigma,\beta}]}&\hom(\Theta(\sigma), F))^{H_\beta}\\
&\simeq f_*(\frakD\cE\otimes_{\cO_{\cX_{\hSigma,\beta}}}\kappa^{-1}_{\hSigma,\beta}(IF))^{H_\beta}\\
&\simeq \hom(\cE\otimes \omega_{\cX_{\hSigma,\beta}}^{-1}, \kappa_{\hSigma,\beta}^{-1}(IF))\\
&\simeq \hom(\kappa_{\hSigma,\beta}(\cE\otimes_{\cO_{\cX_{\hSigma,\beta}}} \omega_{\cX_{\hSigma,\beta}}^{-1}), IF).
\end{split}
\end{equation}
This completes the proof.
\end{proof}

\begin{corollary}\label{representability}
For $\cE\in \Coh\cX_{\hSs,\beta}$, the image $\kappa^D_{\hsigma, \beta}(\frakD_{\cX_{\hSs,\beta}}\cE)$ is representable by an object of $\wSh_{\Lambda_{\hSs,\beta}}(T^n)$.
\end{corollary}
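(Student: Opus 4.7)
The plan is to reduce representability to a direct application of Lemma~\ref{commIl}. Given $\cE \in \Coh \cX_{\hSs,\beta}$, I set $\cE' := Lf^{*}\cE$; since $\cX_{\hSigma,\beta}$ is smooth, $\Coh \cX_{\hSigma,\beta} = \perf \cX_{\hSigma,\beta}$, so $\cE' \in \perf \cX_{\hSigma,\beta}$ provided $Lf^{*}\cE$ is bounded coherent. The projection formula combined with the rational-singularities identity $Rf_{*}\cO_{\cX_{\hSigma,\beta}} \simeq \cO_{\cX_{\hSs,\beta}}$, classical for toric varieties and descending to our toric stacks via the quotient presentation $\cX_{\hSs,\beta} \simeq [\Spec\bC[\hsigma^{\vee}\cap L^{\vee}]/G_{\beta}]$, then gives $f_{*}\cE' \simeq \cE$, placing $\cE$ in the essential image of $f_{*}\colon \perf \cX_{\hSigma,\beta} \to \Coh \cX_{\hSs,\beta}$.

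Feeding $\cE'$ into Lemma~\ref{commIl} yields, for every $F \in \lSh_{\Lambda_{\hSs,\beta}}(T^{n})$, a natural equivalence
\begin{equation*}
\Phi(F) \;:=\; \kappa^{D}_{\hsigma,\beta}(\frakD\cE)(F) \;\simeq\; \hom\bigl(\kappa_{\hSigma,\beta}(\cE' \otimes \omega^{-1}_{\cX_{\hSigma,\beta}}),\, IF\bigr),
\end{equation*}
where the right-hand hom is taken in $\lSh_{\Lambda_{\hSigma,\beta}}(T^{n})$.

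I would then verify that $\Phi$ preserves both limits and colimits on $\lSh_{\Lambda_{\hSs,\beta}}(T^{n})$. The inclusion $I$ preserves both, because $\Lambda_{\hSs,\beta}$ is closed conic and microsupports of arbitrary (co)limits of sheaves with microsupport in $\Lambda_{\hSs,\beta}$ remain there (Proposition~\ref{ssestimate} and the analogous estimate for arbitrary (co)limits). The representing object $\kappa_{\hSigma,\beta}(\cE' \otimes \omega^{-1}_{\cX_{\hSigma,\beta}})$ is compact in $\lSh_{\Lambda_{\hSigma,\beta}}(T^{n})$ by the smooth case of the main theorem (Corollary~\ref{smoothmain}), so $\hom(\kappa_{\hSigma,\beta}(\cE' \otimes \omega^{-1}_{\cX_{\hSigma,\beta}}),-)$ preserves colimits (by compactness) as well as limits. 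Hence $\Phi$ preserves both. Neeman's Brown representability (Proposition~\ref{Neemanrepresentability}) then furnishes a left adjoint $L$ to $\Phi$, so $\Phi \simeq \hom(L(\bC),-)$; colimit-preservation forces $L(\bC)$ to be compact in $\lSh_{\Lambda_{\hSs,\beta}}(T^{n})$, i.e., $L(\bC) \in \wSh_{\Lambda_{\hSs,\beta}}(T^{n})$, and this is the desired representing object.

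The main obstacle I expect is the boundedness/Tor-dimension step: verifying that $Lf^{*}\cE$ is genuinely perfect on $\cX_{\hSigma,\beta}$ when $\cX_{\hSs,\beta}$ is singular. This requires structural input about toric resolutions (e.g., finite Tor-dimension of $f$), and the accompanying rational-singularities identity $Rf_{*}\cO_{\cX_{\hSigma,\beta}} \simeq \cO_{\cX_{\hSs,\beta}}$ in the stacky setting must also be justified by descent from the well-known coarse-space statement.
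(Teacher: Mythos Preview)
Your flagged obstacle is genuine and, as stated, fatal. Toric resolutions do \emph{not} have finite Tor-dimension in general. For the $A_1$ surface singularity $R=\bC[u,v,w]/(uw-v^2)$ with resolution chart $S=\bC[a,b]$ (via $u\mapsto a$, $v\mapsto ab$, $w\mapsto ab^2$), the coherent module $\cE=R/(u,v)$ has a $2$-periodic free resolution over $R$ coming from the matrix factorization of $uw-v^2$, and tensoring with $S$ one computes $\mathrm{Tor}^R_i(\cE,S)\cong\bC[b]\neq 0$ for every $i\geq 1$. Hence $Lf^*\cE$ is unbounded, $\cE'\notin\perf\cX_{\hSigma,\beta}$, and your colimit-preservation argument for $\Phi$---which rested on compactness of $\kappa_{\hSigma,\beta}(\cE'\otimes\omega^{-1})$---collapses.

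The paper's route is the same in spirit but sidesteps this by not demanding a single perfect lift. One resolves $\cE$ by locally free sheaves on the \emph{singular} affine, obtaining brutal truncations $\cE_i\in\perf\cX_{\hSs,\beta}$ with $\varhocolim{i}\cE_i\simeq\cE$; each $f^*\cE_i$ is perfect on the smooth side and satisfies $f_*f^*\cE_i\simeq\cE_i$ by rational singularities, so Lemma~\ref{commIl} applies termwise. Passing to the limit identifies the representing object as $I^l\bigl(\varhocolim{i}\kappa_{\hSigma,\beta}(f^*\cE_i\otimes\omega_{\cX_{\hSigma,\beta}}^{-1})\bigr)$, where $I^l$ is the left adjoint of the inclusion $I$. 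Crucially, cocontinuity of the functor is read off directly from the presentation~(\ref{formula}) as a filtered colimit of corepresentables, not from compactness on the smooth side. Your argument repairs along exactly these lines: replace $\cE'\in\perf$ by $Lf^*\cE\in\Qcoh\cX_{\hSigma,\beta}$, extend $\kappa_{\hSigma,\beta}$ via $\Ind$, and invoke~(\ref{formula}) for cocontinuity rather than the compactness you cannot have.
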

\begin{proof}
By the presentation (\ref{formula}), the functor $\kappa^D_{\hsigma, \beta}(\frakD\cE)$ is cocontinuous. Hence it suffices to show that this functor is representable.

Let $\cE^\bullet$ be a resolution of $\cE$ by locally free sheaves. This resolution gives a sequence $\{\cE_i\}$ in $\perf\cX_{\hSs,\beta}$ such that $\varhocolim{i}\cE_i\simeq \cE$. By Lemma~\ref{commIl}, we have

\begin{equation}
\begin{split}
\hom(\kappa^D_{\hsigma,\beta}(\frakD\cE), F)&\simeq (\frakD \cE\otimes_{\bC[\sigma^\vee\cap M_{\sigma,\beta}]}\hom(\Theta(\sigma), F))^{H_\beta}\\
&\simeq (\varholim{i}(\frakD (\cE_i))\otimes_{\bC[\sigma^\vee\cap M_{\sigma,\beta}]}\hom(\Theta(\sigma), F))^{H_\beta}\\
&\simeq \varholim{i} \hom(\kappa_{\hSigma,\beta}(\cE_i\otimes_{\cO_{\cX_{\hSigma,\beta}}} \omega_{\cX_{\hSigma,\beta}}^{-1}), IF)\\
&\simeq \hom(I^l\varhocolim{i}\kappa_{\hSigma,\beta}(\cE_i\otimes_{\cO_{\cX_{\hSigma,\beta}}} \omega_{\cX_{\hSigma,\beta}}^{-1}), F).
\end{split}
\end{equation}
This completes the proof.
\end{proof}

By Lemma \ref{fullyfaithful} and Lemma \ref{representability}, we have a fully faithful functor $\kappa^{D}_{\hsigma,\beta}\circ \frakD \colon  \Coh \cX_{\hSs,\beta}\rightarrow \Fun(\wSh_{\Lambda_{\hSs,\beta}}(T^n), \Mod(\bC))$, which is representable by compact objects. Then we set
\begin{equation}
K_{\hsigma,\beta}:=\kappa^{D}_{\hsigma,\beta}\circ \frak D_{\cX_{\hSigma(\hsigma),\beta}}\colon \Coh \cX_{\hSs,\beta}\rightarrow \wSh_{\Lambda_{\hSs,\beta}},
\end{equation}
which is a fully faithful functor. As noted in Section \ref{indcoherent}, we also write $K_{\hsigma,\beta}\colon \Indcoh \cX_{\hSs,\beta}\rightarrow \lSh_{\Lambda_{\hSs.\beta}}(T^n)$ for the induced functor on ind-objects.

\begin{corollary}\label{singularequivalence}
The functor $K_{\hsigma,\beta}$ is an equivalence.
\end{corollary}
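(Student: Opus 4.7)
The strategy is to prove essential surjectivity by reducing to the smooth case via a toric resolution $f\colon \cX_{\hSigma,\beta}\to \cX_{\hSs,\beta}$ with $\hSigma$ a smooth refinement of $\hsigma$; fully faithfulness has already been established (Lemma \ref{fullyfaithful2} combined with $\frakD$ being an auto-equivalence on the Cohen--Macaulay stack $\cX_{\hSs,\beta}$), and this survives ind-extension to $K_{\hsigma,\beta}\colon \Indcoh\cX_{\hSs,\beta}\to\lSh_{\Lambda_{\hSs,\beta}}(T^n)$.

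Fix $F\in \lSh_{\Lambda_{\hSs,\beta}}(T^n)$. Since $IF\in \lSh_{\Lambda_{\hSigma,\beta}}(T^n)$ and $\kappa_{\hSigma,\beta}$ is an equivalence by Corollary \ref{smoothmain}, there is $\cG\in \Qcoh\cX_{\hSigma,\beta}$ with $\kappa_{\hSigma,\beta}(\cG)\simeq IF$. Write $\cG\simeq \varhocolim{i}\cG_i$ with $\cG_i\in \perf\cX_{\hSigma,\beta}$. Because $\omega_{\cX_{\hSigma,\beta}}$ is invertible on the smooth stack, the twists $\cE_i:=\cG_i\otimes\omega_{\cX_{\hSigma,\beta}}$ remain perfect, and since $f$ is proper the pushforwards $\cF_i:=f_*\cE_i$ lie in $\Coh\cX_{\hSs,\beta}$. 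Set $\cF:=\varhocolim{i}\cF_i\in\Indcoh\cX_{\hSs,\beta}$; this is my candidate preimage.

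Lemma \ref{commIl} (combined with the representability established in Corollary \ref{representability}) yields, for each perfect $\cE_i$, an equivalence $K_{\hsigma,\beta}(\cF_i)\simeq I^l\kappa_{\hSigma,\beta}(\cE_i\otimes\omega_{\cX_{\hSigma,\beta}}^{-1})\simeq I^l\kappa_{\hSigma,\beta}(\cG_i)$, where $I^l$ denotes the left adjoint of the inclusion $I$. The ind-extended $K_{\hsigma,\beta}$ is cocontinuous (being the ind-extension of an exact functor on compact objects), as are $\kappa_{\hSigma,\beta}$ and the left adjoint $I^l$; hence
\begin{equation*}
K_{\hsigma,\beta}(\cF)\simeq \varhocolim{i} K_{\hsigma,\beta}(\cF_i)\simeq I^l\kappa_{\hSigma,\beta}\bigl(\varhocolim{i}\cG_i\bigr)\simeq I^l\kappa_{\hSigma,\beta}(\cG)\simeq I^l IF\simeq F,
\end{equation*}
where the final equivalence is the counit isomorphism $I^l I\simeq \id$ arising from $I$ being fully faithful.

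The main obstacle I foresee is the careful handling of the left adjoint $I^l$: its existence and cocontinuity depend on presentability (Proposition \ref{presentable}) together with Brown representability (Proposition \ref{Neemanrepresentability}), and one must verify that Lemma \ref{commIl} genuinely produces the formula $K_{\hsigma,\beta}(f_*\cE)\simeq I^l\kappa_{\hSigma,\beta}(\cE\otimes\omega^{-1})$ at the level of representing objects in $\wSh_{\Lambda_{\hSs,\beta}}(T^n)$ and not merely as functors into $\Mod(\bC)$, which is precisely the content of Corollary \ref{representability}. Once these facts are in hand, the rest is a formal manipulation of adjunctions and filtered colimits.
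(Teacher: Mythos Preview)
Your proof is correct and follows essentially the same approach as the paper. The paper's own argument is compressed to a single sentence---``Since $I^l$ is essentially surjective and $K_{\hSigma,\beta}$ is an equivalence in Lemma \ref{commIl}, $K_{\hsigma,\beta}$ is also essentially surjective''---but your explicit construction of the preimage $\cF=\varhocolim{i}f_*(\cG_i\otimes\omega_{\cX_{\hSigma,\beta}})$ and the chain $K_{\hsigma,\beta}(\cF)\simeq I^l\kappa_{\hSigma,\beta}(\cG)\simeq I^l IF\simeq F$ is exactly the unpacking of that sentence via the commutative square obtained from Lemma \ref{commIl} and Corollary \ref{representability}.
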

\begin{proof}
By Lemma \ref{fullyfaithful2}, the functor $K_{\hsigma,\beta}$ is fully faithful. Since $I^l$ is essentially surjective and $K_{\hSigma,\beta}$ is an equivalence in Lemma \ref{commIl}, $K_{\hsigma,\beta}$ is also essentially surjective.
\end{proof}

\begin{corollary}\label{anyaffine}
Any $\hsigma$ satisfies Condition \ref{condition2}.
\end{corollary}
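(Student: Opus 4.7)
\noindent\textbf{Proof proposal for Corollary \ref{anyaffine}.}
My plan is to verify (i) and (ii) of Condition \ref{condition2} separately, with (ii) being immediate from the construction and (i) being proved by induction on $\dim \hsigma$, bootstrapping off the smooth case (Corollary \ref{idfunctor}) and Lemma \ref{commIl}.

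\textbf{Part (ii):} For a perfect complex $\cE \in \perf \cX_{\hSs,\beta}$, we have $\frakD(\cE \otimes \omega_{\cX_{\hSs,\beta}}) = \cHom(\cE \otimes \omega, \omega) \simeq \cHom(\cE,\cO) = D\cE$, which is again perfect. Hence the functor $F \mapsto \kappa^D_{\hsigma,\beta}(D\cE)(F)$ is, via the Yoneda description (Lemma \ref{natisom}), representable by $\kappa_{\hsigma,\beta}(D(D\cE)) = \kappa_{\hsigma,\beta}(\cE)$. So $K_{\hsigma,\beta}(\cE\otimes\omega) \simeq \kappa_{\hsigma,\beta}(\cE) \simeq \kappa_{\hsigma,\beta}\circ D \circ \frakD(\cE\otimes\omega)$, matching (ii) on the nose.

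\textbf{Part (i):} Proceed by induction on $d := \dim \hsigma$. If $\hsigma$ is smooth, the statement is Corollary \ref{idfunctor}, which handles the base case. For non-smooth $\hsigma$ of dimension $d$, assume Condition \ref{condition2} for every $\hsigma'$ of dimension $<d$. Given faces $\sigma_2 \subset \sigma_1 \subset \sigma$, note first that when $\sigma_2 \subsetneq \sigma$, the statement for the pair $(\sigma_1,\sigma_2)$ is literally Condition \ref{condition2} (i) for $\hsigma_2$ applied to its face $\sigma_1$, which holds by induction. So the only new case is $\sigma_2 = \sigma$: I need to show $K_{\hsigma_1,\beta}(i^*\cE) \simeq K_{\hsigma,\beta}(\cE) \star \Theta(\sigma_1, 0)$ for $\cE \in \Indcoh \cX_{\hSs,\beta}$.

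Choose a smooth refinement $\hSigma$ of $\hsigma$ whose restriction $\hSigma_1$ to $\hsigma_1$ is also smooth, giving $f\colon \cX_{\hSigma,\beta}\to \cX_{\hSs,\beta}$, $f_1\colon \cX_{\hSigma_1,\beta}\to \cX_{\hSs_1,\beta}$, and $j\colon \cX_{\hSigma_1,\beta}\hookrightarrow \cX_{\hSigma,\beta}$ with $f\circ j = i\circ f_1$. Since the pushforwards $f_*\cE'$ with $\cE'\in \perf \cX_{\hSigma,\beta}$ generate $\Indcoh \cX_{\hSs,\beta}$, it suffices to verify the claim for $\cE=f_*\cE'$; flat base change gives $i^*f_*\cE' \simeq f_{1*} j^*\cE'$. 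Testing against $F \in \lSh_{\Lambda_{\hSs_1,\beta}}$ and using Lemma \ref{commIl} on each side, I compute:
\begin{align*}
\hom(K_{\hsigma_1,\beta}(i^*\cE), F)
&\simeq \hom(\kappa_{\hSigma_1,\beta}(j^*\cE'\otimes\omega_1^{-1}), F)
\simeq \hom(\kappa_{\hSigma,\beta}(\cE'\otimes\omega^{-1}), F),\\
\hom(K_{\hsigma,\beta}(\cE)\star \Theta(\sigma_1,0), F)
&\simeq \hom(K_{\hsigma,\beta}(\cE), \cHom^\star(\Theta(\sigma_1,0),F))
\simeq \hom(K_{\hsigma,\beta}(\cE), F)\\
&\simeq \hom(\kappa_{\hSigma,\beta}(\cE'\otimes \omega^{-1}), F).
\end{align*}
The first line uses $\omega_1 = j^*\omega$, the $I^l_1 \dashv I_1$ adjunction, and the compatibility of $\kappa$ with $j^*$ in the smooth case. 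The second line uses the $\star \dashv \cHom^\star$ adjunction (Lemma \ref{staradjunction}), then Lemma \ref{lemmaeq} for $\hsigma_1$ (available because $\dim\hsigma_1<d$, so by the inductive hypothesis Condition \ref{condition2} holds for $\hsigma_1$), and finally Lemma \ref{commIl} again. Both sides agree, and Yoneda on $\wSh_{\Lambda_{\hSs_1,\beta}}$ yields the desired isomorphism; cocontinuity extends it to all of $\Indcoh$.

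\textbf{The main obstacle} I expect is the compatibility statement $\kappa_{\hSigma_1,\beta}(j^*\cF)$ agreeing with the left adjoint of the inclusion $\lSh_{\Lambda_{\hSigma_1,\beta}}\hookrightarrow \lSh_{\Lambda_{\hSigma,\beta}}$ applied to $\kappa_{\hSigma,\beta}(\cF)$. Proposition \ref{prop:functoriality} does not apply directly to the open inclusion of subfans because the ``inverse image of each cone is a union of cones'' hypothesis fails for general subfans $\hSigma_1 \subset \hSigma$. However, since both sides of the smooth equivalence (Corollary \ref{smoothmain}) are presented as Čech limits over max cones and intersections (Corollary \ref{gluingcoh} and Theorem \ref{main2}), and the equivalences on each affine piece are compatible with the face-inclusion data by Corollary \ref{idfunctor}, the desired compatibility follows by gluing. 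This is the place where care is needed, but it is a standard functoriality verification rather than a new idea.
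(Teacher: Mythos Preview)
Your Part~(ii) is fine and matches the paper. For Part~(i), your explicit induction on $\dim\hsigma$ is a reasonable framing---and indeed the paper's proof is implicitly inductive as well, since its invocation of Lemma~\ref{lemmaeq} for $\hSigma(\hsigma_1)$ already presupposes Condition~\ref{condition2} for the lower-dimensional cone $\hsigma_1$. However, your route through Lemma~\ref{commIl} and a smooth refinement introduces two gaps that the paper's more direct approach avoids.

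\emph{First gap (acknowledged but not closed).} The compatibility $\kappa_{\hSigma_1,\beta}\circ j^*\simeq J^l\circ\kappa_{\hSigma,\beta}$ for an open \emph{subfan} $\hSigma_1\subset\hSigma$ is not established anywhere in the paper, and your gluing sketch is not a proof: the \v Cech posets for $\hSigma$ and $\hSigma_1$ are different, and the equivalences $K_{\hSigma,\beta}$, $K_{\hSigma_1,\beta}$ are built from separate limits (\ref{defofkappa}). Proposition~\ref{prop:functoriality} does not apply, and nothing in Sections~\ref{section:main}--\ref{generalcase} gives this statement off the shelf. The paper bypasses this entirely by computing with the explicit tensor formula of Lemma~\ref{natisom}, which lets one compare $K_{\hsigma_1,\beta}(i^*\cE)$ and $K_{\hsigma,\beta}(\cE)$ as functors on $\lSh_{\Lambda_{\hSigma(\hsigma_1),\beta}}(T^n)$ directly, with no appeal to how $\kappa$ behaves under subfan restriction.

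\emph{Second gap (not acknowledged).} Your Yoneda step at the end requires that $K_{\hsigma,\beta}(\cE)\star\Theta(\sigma_1,0)$ actually lie in $\lSh_{\Lambda_{\hSigma(\hsigma_1),\beta}}(T^n)$; otherwise, agreement of $\hom(-,F)$ for $F\in\lSh_{\Lambda_{\hSigma(\hsigma_1),\beta}}(T^n)$ only identifies $K_{\hsigma_1,\beta}(i^*\cE)$ with the \emph{projection} ${J'}^l\bigl(K_{\hsigma,\beta}(\cE)\star\Theta(\sigma_1,0)\bigr)$, not with $K_{\hsigma,\beta}(\cE)\star\Theta(\sigma_1,0)$ itself. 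Proposition~\ref{mestimate} gives only $\musupp\subset T^n\times(-\sigma_1)$, which is strictly larger than $\Lambda_{\hSigma(\hsigma_1),\beta}$. The paper addresses exactly this point: it passes to a smooth refinement $\hSigma_2$ of $\hSigma(\hsigma_2)$, uses monoidality there (Proposition~\ref{monoidal}, Remark~\ref{indmonoidal}) to get containment in $\Lambda_{\hSigma_1,\beta}\cap\Lambda_{\hSigma(\hsigma_2),\beta}=\Lambda_{\hSigma(\hsigma_1),\beta}$, and only then invokes Yoneda. You would need the same argument, so nothing is saved by your detour.
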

\begin{proof}
By Corollary \ref{singularequivalence}, the functor $K_{\hsigma,\beta}$ is an equivalence.  Take an object $\cE\in \Coh \cX_{\hSs,\beta}$. For a face inclusion of cones $\sigma_1\subset \sigma_2=\sigma$, we have quasi-isomorphisms of functors $\lSh_{\Lambda_{\hSigma(\sigma_1),\beta}}(T^n)\rightarrow \Mod(\bC)$,
\begin{equation}
\begin{split}
\hom&(K_{\hsigma_1,\beta}(i^*_{\sigma_1\sigma_2}\cE), -)\\
&\simeq (\frakD i_{\sigma_1\sigma_2}^*\cE\otimes_{\bC[\sigma_1^\vee\cap M_{\sigma,\beta}]}\hom_{\lSh_{\Lambda_{\hSigma(\sigma_1),\beta}}(T^n)}(\Theta(\sigma_1),-))^{H_\beta}\\
&\simeq (i_{\sigma_1\sigma_2}^*\frakD\cE\otimes_{\bC[\sigma_1^\vee\cap M_{\sigma,\beta}]}\hom_{\lSh_{\Lambda_{\hSigma(\sigma_1),\beta}}(T^n)}(\Theta(\sigma_1),-))^{H_\beta}\\
&\simeq (i_{\sigma_1\sigma_2}^*\frak D\cE\otimes_{\bC[\sigma_1^\vee\cap M_{\sigma,\beta}]}\hom_{\lSh_{\Lambda_{\hSigma(\sigma_1),\beta}}(T^n)}(\Theta(\sigma_2),-))^{H_\beta}\\
&\simeq (\frak D\cE\otimes_{\bC[\sigma_2^\vee\cap M_{\sigma,\beta}]}\bC[\sigma_1^\vee\cap M_{\sigma,\beta}]\otimes_{\bC[\sigma_1^\vee\cap M_{\sigma,\beta}]}\hom_{\lSh_{\Lambda_{\hSigma(\sigma_1),\beta}}(T^n)}(\Theta(\sigma_2),-))^{H_\beta}\\
&\simeq (\frak D\cE\otimes_{\bC[\sigma_2^\vee\cap M_{\sigma,\beta}]} \bC[\sigma_1^\vee\cap M_{\sigma,\beta}]\otimes_{\bC[\sigma_1^\vee\cap M_{\sigma,\beta}]}\hom_{\lSh_{\Lambda_{\hSigma(\sigma_1),\beta}}(T^n)}(\Theta(\sigma_2),-))^{H_\beta}.
\end{split}
\end{equation}
By seeing the $\bC[\sigma^\vee_1\cap M_{\sigma,\beta}]$-module structure of $\hom_{\lSh_{\Lambda_{\hSigma(\sigma_1),\beta}}(T^n)}(\Theta(\sigma_2),-)$, it follows that
\begin{equation}
\begin{split}
\bC[\sigma_{2}^\vee\cap M_{\sigma,\beta}]\otimes_{\bC[\sigma_2^\vee\cap M_{\sigma,\beta}]}\bC[\sigma_1^\vee\cap M_{\sigma,\beta}]&\otimes_{\bC[\sigma_1^\vee\cap M_{\sigma,\beta}]}\hom_{\lSh_{\Lambda_{\hSigma(\sigma_1),\beta}}(T^n)}(\Theta(\sigma_2),-)\\
&\simeq \hom_{\lSh_{\Lambda_{\hSigma(\sigma_1),\beta}}(T^n)}(\Theta(\sigma_2),-)
\end{split}
\end{equation}
as $\bC[\sigma_2^\vee\cap M]$-modules where the right hand side is equipped with the canonical 
$\bC[\sigma_2^\vee\cap M]$-module structure. Then we have
\begin{equation}
\begin{split}
\hom(K_{\hsigma_1,\beta}(i^*_{\sigma_1\sigma_2}\cE), -)&\simeq (\frakD\cE\otimes_{\bC[\sigma_2^\vee\cap M_{\sigma,\beta}]}\hom_{\lSh_{\Lambda_{\hSigma(\sigma_1),\beta}}(T^n)}(\Theta(\sigma_2),-))^{H_\beta}\\
&\simeq \hom_{\lSh_{\Lambda_{\hSigma(\sigma_1),\beta}}(T^n)}(K_{\hsigma_2,\beta}(\cE), -).
\end{split}
\end{equation}
Then we further have
\begin{equation}
\begin{split}
\hom_{\lSh_{\Lambda_{\hSigma(\sigma_1),\beta}}(T^n)}(K_{\hsigma_2,\beta}(\cE), -)&\simeq \hom(K_{\hsigma_2,\beta}(\cE), \cHom^\star(\kappa_{\hsigma_1,\beta}(\cO_{\cX_{\hSigma(\hsigma_1),\beta}}), (-)))\\
&\simeq \hom(K_{\hsigma_2,\beta}(\cE) \star\kappa_{\hsigma_1,\beta}(\cO_{\cX_{\hSigma(\hsigma_1),\beta}}), (-))\\
\end{split}
\end{equation}
by Lemma \ref{lemmaeq}. By taking a smooth refinement $\hSigma_2$ of $\hSigma(\hsigma_2)$, we also have $K_{\hsigma_2,\beta}(\cE)\in \lSh_{\Lambda_{\hSigma_2,\beta}}(T^n)$. Since $\hSigma_2$ is smooth, the functor $\kappa_{\hSigma_2,\beta}$ is an equivalence. Let $\hSigma_1$ be the associated refinement of $\hSigma(\hsigma_1)$. Then, by Proposition \ref{monoidal}, we have
\begin{equation}
K_{\hsigma_2,\beta}(\cE) \star\kappa_{\hsigma_1,\beta}(\cO_{\cX_{\hSigma(\hsigma_1),\beta}})\simeq \kappa_{\hSigma_2,\beta}(\kappa_{\hSigma_2,\beta}^{-1}(K_{\hsigma_2,\beta}(\cE))\otimes \cO_{\cX_{\hSigma_1,\beta}}).
\end{equation}
Then Corollary \ref{affinefunctorial} implies $K_{\hsigma_2,\beta}(\cE) \star\kappa_{\hsigma_1,\beta}(\cO_{\cX_{\hSigma(\hsigma_1),\beta}})\in \lSh_{\Lambda_{\hSigma_1,\beta}}(T^n)$. We also have  $K_{\hsigma_2,\beta}(\cE) \star\kappa_{\hsigma_1,\beta}(\cO_{\cX_{\hSigma(\hsigma_1),\beta}})\in \lSh_{\Lambda_{\hSigma(\hsigma_2),\beta}}(T^n)$. This can be deduced, for example, from Remark \ref{indmonoidal} below, which implies
\begin{equation}
K_{\hsigma_2,\beta}(\cE) \star\kappa_{\hsigma_1,\beta}(\cO_{\cX_{\hSigma(\hsigma_1),\beta}})\simeq K_{\hsigma_2,\beta}(\cE\indotimes K_{\hsigma_2,\beta}^{-1}(\kappa_{\hsigma_1,\beta}(\cO_{\cX_{\hSigma(\hsigma_1),\beta}}))).
\end{equation}
Since $\Lambda_{\hSigma(\hsigma_2),\beta}\cap \Lambda_{\hSigma_1, \beta}=\Lambda_{\hSigma(\hsigma_1),\beta}$, this completes the proof of Condition \ref{condition2} (i).

For $\cE\in \perf \cX_{\hSs,\beta}$, we have
\begin{equation}
\begin{split}
& (\frakD(\cE\otimes \omega_{\cX_{\hSs,\beta}})\otimes_{\bC[\sigma^\vee\cap M_{\sigma,\beta}]} \hom(\Theta(\sigma), -))^{H_\beta}\\
&\simeq (D(\cE) \otimes_{\bC[\sigma^\vee\cap M_{\sigma,\beta}]} \hom(\Theta(\sigma), -))^{H_\beta},
\end{split}
\end{equation}
which is represented by $\kappa_{\hsigma,\beta}(\cE)$. This proves Condition 9.1(ii).
\end{proof}

\begin{proof}[Proof of Theorem \ref{main} and Theorem \ref{main2}]
Since Condition \ref{condition2} holds for any fans by Corollary \ref{anyaffine}, Proposition \ref{mainaux} holds for any fans. This completes the proof.
\end{proof}

We recall the following duality theorem between coherent sheaves and perfect complexes.
\begin{theorem}[{\cite[Theorem 1.1.3]{BZNP}}]\label{dualityperfect}
Assume $\cXsb$ is complete. Then 
\begin{equation}
\perf \cXsb\simeq \Fun{}^{ex}(\Coh\cXsb, \mod(\bC))
\end{equation}
given by $\cE\mapsto \hom(D\cE, -)$ where $D:=\cHom(-, \cO_{\cX_{\hSigma,\beta}})$.
\end{theorem}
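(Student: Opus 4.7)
The plan is to recognize the statement as a concrete instance of the duality between perfect complexes and coherent sheaves on a proper Deligne--Mumford stack due to Ben-Zvi--Nadler--Preygel, and to organize the proof into three steps: well-definedness of the functor, fully faithfulness, and essential surjectivity. First I would check that the assignment $\Phi\colon \cE\mapsto \hom(D\cE,-)$ lands in $\Fun^{\mathrm{ex}}(\Coh\cXsb,\mod(\bC))$. For $\cE\in\perf\cXsb$ and $\cF\in\Coh\cXsb$, one has the natural identification
\begin{equation}
\hom(D\cE,\cF)\simeq R\Gamma(\cXsb,\cE\otimes^L\cF).
\end{equation}
Completeness (properness) of $\cXsb$ together with coherence of $\cE\otimes^L\cF$ (which holds because $\cE$ is perfect and $\cF$ coherent) and Grothendieck's finiteness theorem for proper pushforward from a DM stack to $\Spec\bC$ guarantee that the right-hand side is a bounded complex of finite-dimensional $\bC$-vector spaces, i.e.\ an object of $\mod(\bC)$; exactness in $\cF$ is immediate from $D\cE\in\perf\cXsb$.

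For fully faithfulness I would apply the Yoneda lemma inside $\Coh\cXsb$. Because $D\cE_i\in\perf\cXsb\subset\Coh\cXsb$, a natural transformation $\alpha\colon \Phi(\cE_1)\Rightarrow\Phi(\cE_2)$ of $\mod(\bC)$-valued functors on $\Coh\cXsb$ is determined by its component at $\cF=D\cE_1$, which sends $\id_{D\cE_1}$ to a morphism $D\cE_2\to D\cE_1$ in $\Coh\cXsb$. Since $D\colon \perf\cXsb\simeq (\perf\cXsb)^{\op}$ is an equivalence, this is equivalent to a morphism $\cE_1\to\cE_2$ in $\perf\cXsb$, and a routine diagram chase verifies this is inverse to the obvious map $\hom_{\perf}(\cE_1,\cE_2)\to \hom(\Phi(\cE_1),\Phi(\cE_2))$.

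The main obstacle is essential surjectivity. Given $F\in\Fun^{\mathrm{ex}}(\Coh\cXsb,\mod(\bC))$, I would first extend by ind-completion along $\Indcoh\cXsb\simeq \Ind\Coh\cXsb$ to a colimit-preserving functor $\tilde F\colon \Indcoh\cXsb\to \Mod(\bC)$. Since $\Indcoh\cXsb$ is compactly generated, Neeman's Brown representability (Proposition~\ref{Neemanrepresentability}) furnishes a right adjoint $R\colon \Mod(\bC)\to\Indcoh\cXsb$, and I set $\cG:=R(\bC)$. The finite-dimensionality of $F(\cF)$ for coherent $\cF$ translates, via the adjunction $\hom_{\Mod(\bC)}(\tilde F(\cF),\bC)\simeq \hom_{\Indcoh}(\cF,\cG)$ and $\bC$-linear dualization on $\mod(\bC)$, into the compactness of $\cG$ in $\Indcoh\cXsb$, so $\cG\in\Coh\cXsb$. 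The final and genuinely hard step is to upgrade $\cG$ to a perfect complex, so that $\cE:=D\cG\in\perf\cXsb$ satisfies $\Phi(\cE)\simeq F$; this is exactly where the properness of $\cXsb$ enters a second time, through the Grothendieck--Serre duality identification that matches the coherent sheaves $\cG$ arising this way with the image of $D\colon \perf\cXsb\to (\perf\cXsb)^{\op}\subset \Coh\cXsb^{\op}$. This last identification is the heart of the BZNP argument and is the step I would expect to occupy essentially all of the real work.
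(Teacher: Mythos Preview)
The paper does not prove this theorem; it is simply quoted from \cite[Theorem~1.1.3]{BZNP} and used as a black box in the proof of Corollary~\ref{complete}. So there is no in-paper argument to compare against.

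On the merits of your sketch: the well-definedness and fully-faithfulness steps are fine, and you are right that essential surjectivity is where the content lies. But there is a genuine gap already before your acknowledged ``final hard step''. From the adjunction $\hom_{\Mod(\bC)}(\tilde F(\cF),\bC)\simeq \hom_{\Indcoh}(\cF,\cG)$ you only obtain that $\hom_{\Indcoh}(\cF,\cG)$ is finite-dimensional for every $\cF\in\Coh\cXsb$. That is the condition that $\cG$ is a \emph{proper} (homologically finite) object of $\Indcoh\cXsb$; it is \emph{not} compactness, which is a condition on maps \emph{out of} $\cG$. In general these notions differ, and for singular $\cXsb$ (where $\Indcoh\neq\Qcoh$) the passage from ``proper object'' to ``coherent'' is itself a nontrivial consequence of the completeness hypothesis and Grothendieck duality, not a formal preliminary. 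In other words, your intermediate compactness claim and your final ``upgrade $\cG$ to the dual of a perfect complex'' are two facets of the same difficulty, and the Brown-representability setup alone does not resolve either. The Ben-Zvi--Nadler--Preygel argument handles both at once via the self-duality of $\Indcoh$ on proper stacks; your sketch would need a separate argument at this point, which you have not supplied.
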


\begin{corollary}\label{complete}
Assume that $\cXsb$ is complete. Then $\kappa_{\hSigma,\beta}$ gives an equivalence
\begin{equation}\label{perfequiv}
\perf \cXsb \simeq \cSh_\Lsb(T^n).
\end{equation}
In particular, if $\cXsb$ is smooth,  $\wSh_{\Lsb}(T^n)\simeq \cSh_{\Lsb}(T^n)$.
\end{corollary}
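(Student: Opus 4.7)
The plan is to deduce the corollary from Theorem~\ref{main} together with two abstract duality results: Ben-Zvi--Nadler--Preygel's duality for proper derived categories (Theorem~\ref{dualityperfect}) on the coherent side, and Nadler's duality between constructible and wrapped constructible sheaves (Theorem~\ref{dualityconst}) on the microlocal side.

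First I would deduce (\ref{equivalence1}), namely $\Coh\cXsb\simeq \wSh_\Lsb(T^n)$, by restricting the equivalence $K_{\hSigma,\beta}\colon \Indcoh\cXsb\xrightarrow{\sim}\lSh_\Lsb(T^n)$ provided by Theorem~\ref{main} to the full subcategories of compact objects: as recalled in the introduction these are exactly $\Coh\cXsb$ and $\wSh_\Lsb(T^n)$ by definition, and any equivalence of compactly generated stable $\infty$-categories preserves compact objects.

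For complete $\cXsb$ I would then obtain (\ref{perfequiv}) by dualizing (\ref{equivalence1}). Theorem~\ref{dualityperfect} identifies $\perf\cXsb \simeq \Fun^{ex}(\Coh\cXsb, \mod(\bC))$ (completeness of $\cXsb$ enters here), while Theorem~\ref{dualityconst} identifies $\cSh_\Lsb(T^n)\simeq \Fun^{ex}((\wSh_\Lsb(T^n))^\op, \mod(\bC))$. Because $\mod(\bC)$ is rigid, the involution $(-)^\vee$ induces a canonical equivalence $\Fun^{ex}(\cC,\mod(\bC))\simeq \Fun^{ex}(\cC^\op,\mod(\bC))^\op$ for any small stable $\infty$-category $\cC$; applying this with $\cC = \Coh\cXsb \simeq \wSh_\Lsb(T^n)$ converts between the two forms of dual category and produces the desired equivalence $\perf\cXsb \simeq \cSh_\Lsb(T^n)$. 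Unwinding the definition (\ref{khsb}) of $\kappa_{\hSigma,\beta}$ together with the formula $K_{\hsigma,\beta}=\kappa^D_{\hsigma,\beta}\circ\frakD_{\cX_{\hSs,\beta}}$ from Section~12 shows that this abstract equivalence is realized geometrically by $\kappa_{\hSigma,\beta}$; the twist by $\omega_{\cX_{\hSigma,\beta}}$ built into $\kappa_{\hSigma,\beta}$ is precisely what is needed to match the two dualities.

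Finally, when $\cXsb$ is smooth, every coherent sheaf admits a finite locally free resolution (étale-locally $\cXsb\simeq [U/G]$ with $U$ smooth and $G$ finite, hence linearly reductive over $\bC$), so $\Coh\cXsb = \perf\cXsb$. Composing (\ref{equivalence1}) and (\ref{perfequiv}) then gives $\wSh_\Lsb(T^n)\simeq \cSh_\Lsb(T^n)$. The main obstacle I anticipate is the duality bookkeeping in the second step: carefully tracking the placement of opposite categories and the $\omega_{\cX_{\hSigma,\beta}}$-twist so as to confirm that the abstract comparison via $\Fun^{ex}(-,\mod(\bC))$ genuinely agrees on the nose with the concrete functor $\kappa_{\hSigma,\beta}$ constructed in Section~9.
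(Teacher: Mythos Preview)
Your strategy is essentially the same as the paper's: pass to compact objects to get $\Coh\cXsb\simeq\wSh_\Lsb(T^n)$, then dualize using Theorem~\ref{dualityperfect} and Theorem~\ref{dualityconst}. The only substantive difference is in how you bridge the covariant and contravariant functor categories. Your rigidity move gives
\[
\Fun{}^{ex}(\Coh\cXsb,\mod(\bC))\ \simeq\ \Fun{}^{ex}((\Coh\cXsb)^\op,\mod(\bC))^{\op},
\]
which lands you in $\cSh_\Lsb(T^n)^{\op}$ rather than $\cSh_\Lsb(T^n)$; you would then need a further self-duality (Verdier on the constructible side, or $D$ on $\perf$) to strip the extra opposite, and at that point the identification with $\kappa_{\hSigma,\beta}$ becomes genuinely delicate. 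The paper avoids this by using Grothendieck duality $\frakD\colon \Coh\cXsb\xrightarrow{\sim}(\Coh\cXsb)^\op$ directly: composing $K_{\hSigma,\beta}\circ\frakD$ gives $\Coh\cXsb\simeq(\wSh_\Lsb(T^n))^\op$, and applying $\Fun{}^{ex}(-,\mod(\bC))$ produces $\perf\cXsb\simeq\cSh_\Lsb(T^n)$ with no leftover opposite. Tracing this through yields the functor $K_{\hSigma,\beta}\circ\frakD\circ D\simeq K_{\hSigma,\beta}\circ((-)\otimes\omega_{\cXsb})$, which by Condition~\ref{condition2}(ii) is exactly $\kappa_{\hSigma,\beta}$ on $\perf\cXsb$. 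So the bookkeeping you flagged as the main obstacle is precisely where the paper's choice of Grothendieck duality pays off; your rigidity argument can be completed, but the resulting functor identification is less transparent.
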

\begin{proof}
By Theorem \ref{main} and the Grothendieck duality, we have
\begin{equation}
\Coh \cXsb \simeq (\Coh \cXsb)^\op\simeq (\wSh_{\Lsb}(T^n))^\op
\end{equation}
given by $K_{\hSigma,\beta}\circ \frakD$.
By taking $\Fun^{ex}(-,\mod(\bC))$ and using Theorem \ref{dualityperfect} and Theorem \ref{dualityconst}, we have
\begin{equation}
\cSh_{\Lsb}(T^n)\xrightarrow{\simeq} \Fun{}^{ex}( (\wSh_{\Lsb}(T^n))^\op, \mod(\bC))\xrightarrow{\simeq} \Fun{}^{ex}(\Coh\cXsb, \mod(\bC))\xrightarrow{\simeq}   \perf \cXsb 
\end{equation}
given by
\begin{equation}
E\mapsto \hom(E,-)\mapsto \hom(E, K_{\hSigma,\beta}\circ \frakD(-))\mapsto D\circ \frakD\circ K_{\hSigma,\beta}^{-1}(E).
\end{equation}
Hence the equivalence (\ref{perfequiv}) is given by $K_{\hSigma,\beta}\circ \frakD\circ D\simeq K_{\hSigma,\beta}\circ ((-)\otimes \omega_\cXsb)$. By Condition \ref{condition2}(ii), we have $K_{\hSigma,\beta}\circ( (-)\otimes \omega_\cXsb)\simeq \kappa_{\hSigma,\beta}$ on $\perf\cXsb$. This completes the proof.
\end{proof}

\begin{remark} Assume $\cXsb$ is complete. The equivalence of Corollary \ref{complete} and Theorem \ref{main} is compatible with the dualities Theorem \ref{dualityconst} and Thereom \ref{dualityperfect} in the following sense. Indeed, for $\cE\in \perf\cXsb$ and $\frakD_\cXsb\cF\in \Coh\cXsb$, the pairing in Theorem \ref{dualityperfect} is
\begin{equation}\label{12.63}
\begin{split}
\hom(D\cE, \frakD\cF)&\simeq \hom(\cF, \cE\otimes \omega_\cXsb)\\
&\simeq \hom(K_{\hSigma,\beta}(\cF), K_{\hSigma,\beta}(\cE\otimes \omega_\cXsb))\\
&\simeq \hom(K_{\hSigma,\beta}(\cF), \kappa_{\hSigma,\beta}(\cE)).
\end{split}
\end{equation}
Since $\kappa_{\hSigma,\beta}(\cE)\in \cSh_{\Lsb}(T^n)$ by \cite{FLTZ} and $K_{\hSigma,\beta}(\cF)\in \wSh_\Lsb(T^n)$, the last line of (\ref{12.63}) is the pairing of Theorem \ref{dualityconst}. This remark is inspired by an implication by Harold Williams.
\end{remark}

\begin{remark}\label{indmonoidal}
We also remark about the monoidality of the functor $K_{\hSigma,\beta}$ similar to $\kappa_{\hSigma,\beta}$. The category $\Indcoh \cXsb$ is symmetric monoidal with the product $(-)\indotimes (-):=\Delta^!((-)\boxtimes (-))$ where $\Delta$ is the diagonal map and the unit $\omega_{\cX_{\hSigma,\beta}}$ by \cite[Corollary 5.6.8]{GaitsgoryIndCoh}. Let $\hSigma'$ be a smooth refinement of $\hSigma$ and $f\colon \cX_{\hSigma',\beta}\rightarrow \cX_{\hSigma,\beta}$ be the associated morphism. Then by the projection formula (\cite[3.2.5]{DriGai}) and the fact any toric variety is Cohen-Macaulay imply that 
\begin{equation}\label{projection}
\begin{split}
f^\Ind_*(f^! \cE\indotimes f^!\cF)&\simeq \cE\indotimes f^\Ind_*f^!\cF\\
&\simeq \cE\indotimes \cF\indotimes f_*\omega_{\cX_{\hSigma',\beta}}\\
&\simeq \cE\indotimes \cF\indotimes \omega_{\cX_{\hSigma,\beta}}\\
&\simeq  \cE\indotimes \cF.
\end{split}
\end{equation}
Since $\kappa_{\hSigma', \beta}$ is monoidal (Proposition \ref{monoidal}) and $\hSigma'$ is smooth, we have
\begin{equation}
\begin{split}
K_{\hSigma', \beta}(f^! \cE\indotimes f^!\cF)&\simeq \kappa_{\hSigma', \beta}(f^! \cE\otimes f^!\cF\otimes \omega^{-2}_{\cX_{\hSigma',\beta}})\\
&\simeq K_{\hSigma', \beta}(f^! \cE)\star K_{\hSigma', \beta}(f^!\cF).
\end{split}
\end{equation}
By Lemma \ref{commIl} and the fact $K_{\hSigma,\beta}$ and $K_{\hSigma',\beta}$ are both equivalences, we have $K_{\hSigma',\beta}\circ f^!\simeq I\circ K_{\hSigma,\beta}$. Combining with (\ref{projection}), we have
\begin{equation}
\begin{split}
K_{\hSigma, \beta}(\cE\indotimes \cF)
&\simeq K_{\hSigma, \beta}(f^\Ind_*(f^! \cE\indotimes f^!\cF))\\
&\simeq I^lK_{\hSigma', \beta}(f^! \cE\indotimes f^!\cF)\\
&\simeq I^l(IK_{\hSigma,\beta}(\cE)\star IK_{\hSigma,\beta}(\cF))\\
&\simeq I^l\circ I(K_{\hSigma,\beta}(\cE)\star K_{\hSigma,\beta}(\cF))\\
&\simeq K_{\hSigma,\beta}(\cE)\star K_{\hSigma,\beta}(\cF).
\end{split}
\end{equation}
\end{remark}

{\small 
\bibliographystyle{amsalpha}
\bibliography{cccarxiv8.bbl}
}

\noindent
Tatsuki Kuwagaki

Graduate School of Mathematical Sciences,
The University of Tokyo,
3-8-1 Komaba,
Meguro-ku,
Tokyo,
153-8914,
Japan.

{\em e-mail address}\ : \  kuwagaki@ms.u-tokyo.ac.jp
\ \vspace{0mm} \\

\end{document}